\DeclareMathAlphabet{\mathscr}{OT1}{pzc}{m}{it}
\newcommand{\stkout}[1]{\ifmmode\text{\sout{\ensuremath{#1}}}\else\sout{#1}\fi}
\newtheorem{thm}{Theorem}[section]
\newtheorem{prop}[thm]{Proposition}
\newtheorem{lemma}[thm]{Lemma}
\newtheorem{cor}[thm]{Corollary}
\newtheorem*{asm*}{Assumptions}
\newtheorem{asm}{Assumption}
\theoremstyle{remark}
\newtheorem{rem}[thm]{Remark}
\newtheorem*{rem*}{Remark}
\theoremstyle{definition}
\newtheorem{defn}[thm]{Definition}
\newcommand{\ra}{\rightarrow}
\newcommand{\N}{\mathbb N}     
\newcommand{\R}{\mathbb R}     
\newcommand{\Z}{\mathbb Z}     
\newcommand{\cal}{\mathcal}
\renewcommand{\epsilon}{\varepsilon}
\newcommand{\fl}[1]{\lfloor #1 \rfloor}  
\newcommand{\ceil}[1]{\lceil #1 \rceil}  
\newcommand{\bint}[1]{\llbracket{#1}\rrbracket}
\newcommand{\ind}[1]{ \mathbbm{1}_{\{ #1 \}} } 
\newcommand{\bzmk}{\Bar{Z}^{m,k}}
\renewcommand{\dim}{\mathscr{d}}
\newcommand{\edsrn}{\fl{\delta_1\epsilon\sqrt{n}}}
\newcommand{\esrn}{\fl{\epsilon\sqrt{n}}}
\renewcommand{\kappa}{\varkappa}
\newcommand{\twe}{\tilde{W}^\epsilon}
\newcommand{\tie}{\tilde{I}^\epsilon}
\newcommand{\tse}{\tilde{S}^\epsilon}
\newcommand{\txe}{\tilde{\cal X}^\epsilon}
\newcommand{\twen}{\tilde{W}^{\epsilon,n}}
\newcommand{\tien}{\tilde{I}^{\epsilon,n}}
\newcommand{\tsen}{\tilde{S}^{\epsilon,n}}
\newcommand{\txen}{\tilde{\cal X}^{\epsilon,n}}
\newcommand{\tenk}{T^{\epsilon,n}_k}
\newcommand{\xenk}{X_{\tenk}}
\newcommand{\senk}{S_{\tenk}}
\newcommand{\ienk}{I_{\tenk}}
\newcommand{\tcp}{\tau_{\text{coup}}}
\newcommand{\wo}{\overline{w}}
\newcommand{\wu}{\underline{w}}
\renewcommand{\P}{\mathbb{P}}   
\newcommand{\E}{\mathbb{E}}
\newcommand{\tvmj}{\tilde{V}^+_{m,j}}
\newcommand{\tvmjj}{\tilde{V}^+_{m,j-1}}
\newcommand{\tvmmk}{\tilde{V}^+_{m,\fl{km^{1/4}}}}
\newcommand{\tvmmkk}{\tilde{V}^+_{m,\fl{(k-1)m^{1/4}}}}
\DeclareMathOperator{\Var}{Var}
\newcommand{\br}{\mathbf{r}}
\let\emptyset\varnothing
\title[Recurrent excited random walks]{Convergence of random walks with
  markovian cookie stacks to Brownian motion perturbed at
  extrema}
\author{Elena Kosygina}
\address{Elena Kosygina\\One Bernard Baruch Way \\ Department of Mathematics, Box B6-230 \\ Baruch College \\ New York, NY 10010 \\ USA}
\email{elena.kosygina@baruch.cuny.edu}
\urladdr{http://www.baruch.cuny.edu/math/elenak/}
\thanks{The collaboration of the authors was supported in part by the Simons Foundation through Collaboration Grants for
Mathematicians \#209493 (EK) and \#635064 (JP)}
\author{Thomas Mountford}
\address{Thomas Mountford\\\'Ecole Polytechnique F\'ed\'eral de Lausanne\\Department of Mathematics\\EPFL SB MATH PRST\\
MA B1 517 (Bâtiment MA)
Station 8
CH-1015 Lausanne\\
Switzerland}
\email{thomas.mountford@epfl.ch}
\urladdr{http://people.epfl.ch/thomas.mountford}
\author{Jonathon Peterson}
\address{Jonathon Peterson\\Purdue University\\Department of Mathematics\\150 N University Street\\West Lafayette, IN  47907\\USA}
\email{peterson@purdue.edu}
\urladdr{http://www.math.purdue.edu/~peterson}
\subjclass[2010]{Primary 60K35; Secondary 60F17, 60J55}
\keywords{Excited random walk, markovian cookie stacks, Brownian motion perturbed at its extrema, branching-like processes, generalized Ray-Knight theorems}
\begin{document}

\begin{abstract}
  We consider one-dimensional excited random walks (ERWs) with
  i.i.d.\ markovian cookie stacks in the non-boundary
  recurrent regime.  We prove that under diffusive scaling such
    an ERW converges in the standard Skorokhod topology to a multiple
    of Brownian motion perturbed at its extrema (BMPE). All parameters
    of the limiting process are given explicitly in terms of
    those of the cookie markov chain at a single site.
    While our results extend the results in \cite{dkSLRERW}
    (ERWs with boundedly many cookies per site) and \cite{kpERWPCS}
    (ERWs with periodic cookie stacks), the approach taken is very
    different and involves coarse graining of both the ERW and the
    random environment changed by the walk.  Through a careful
    analysis of the environment left by the walk after each
    ``mesoscopic'' step, we are able to construct a coupling of the
    ERW at this ``mesoscopic'' scale with a suitable discretization of
    the limiting BMPE. The analysis is based on generalized Ray-Knight
    theorems for the directed edge local times of the ERW stopped at
    certain stopping times and evolving in both the original random cookie
    environment and (which is much more challenging) in the
    environment created by the walk after each ``mesoscopic'' step.
\end{abstract}

\maketitle

\section{Introduction and the main result}\label{intro} 

\subsection{Introduction} Over the past several decades, a number of
different one-dimensional self-interacting random walks have been
studied through what may be called a ``Ray-Knight'' approach.  It was
observed that for these walks the joint distributions of edge local
times have the structure of a Markov chain, and by analyzing this
Markov chain one is able to obtain information about the original
self-interacting random walk.  Examples of this approach are numerous
and include
\cite{kksStable,tTSAWGBR,tTSAW,tGRK,bsCRWspeed,bsRGCRW,TV08,kzPNERW,Pin10,kmLLCRW,dkSLRERW,pERWLDP,kzERWsurvey,kzEERW,MPV14,DK14,kosERWPC,CdHPP16,kpERWMCS,PT17,HLSH18,Tra18}.

We refer to this line of thought as a ``Ray-Knight'' approach in
reference to the Ray-Knight theorems for Brownian motion which give a
description of the local time profiles of a standard Brownian motion
stopped when the local time at a fixed site exceeds a fixed level. The
Ray-Knight theorems describe these local time profiles (viewed as
processes in the spatial coordinate) as a gluing together of certain
diffusion processes. In fact, for several models
of self-interacting random walks one can prove that the
Markov chains which correspond to the directed (or undirected) edge
local times of the walk have scaling limits which are diffusion
processes. This was first noticed by T\'oth in
\cite{tTSAWGBR,tTSAW,tGRK} and, more recently, found to be true for
other models, \cite{kzPNERW,kmLLCRW,kpERWMCS,PT17}. Yet the goal had
now become different, namely, to study properties of the original
process from information about its local times and not the other way
around as in the classical Ray-Knight theorems.

Regarding scaling limits of self-interacting random walks, the
Ray-Knight approach is easier to use when the
process is transient, i.e.\ when
with probability one it goes to $+\infty$ (or $-\infty$) as the time
tends to infinity, see  
\cite{kksStable,bsRGCRW,kzPNERW,kmLLCRW,kpERWMCS,PT17,Tra18}. This is because
the Ray-Knight information on local times can be readily used to
deduce limiting distributions for the hitting times of the random
walk, and if the walk is transient to the right then by inverting the
role of time and space one can deduce a limiting distribution for the
running maximum of the walk.
If one can also control the distance between the walk and its
running maximum, then one obtains a limiting
distribution for the walk. On the other
hand, proving the existence of a scaling
limit through the Ray-Knight approach when the
walk is recurrent (in the sense that it returns to the starting point
infinitely often) is a more delicate task.
In the aforementioned series of papers, T\'oth introduced
  generalized Ray-Knight theorems and showed how to exploit them to
  show the convergence in distribution of the endpoint of a class of
rescaled ``recurrent'' self-interacting random walks along a sequence
of random geometric times independent of the walk.  For one particular
model, Mountford, Pimentel and
Valle \cite{MPV14} were able to obtain additional
estimates that allowed them to prove the
convergence of one dimensional distributions of the walk with
T\'oth's method. Even in this case, however, characterization of
multi-dimensional limiting distributions using this ``roadmap''
seems out of reach.

In this paper, we show how a Ray-Knight approach can be used for a
particular self-interacting random walk model (excited random walks with
markovian cookie stacks) to prove not just the convergence of finite
dimensional distributions but a full functional limit theorem.
Our method is completely different from that of T\'oth in that instead
of ``inverting'' the Ray-Knight theorems to get information on the
distribution of the endpoint of the walk, we use information from the
Ray-Knight-type results to construct a coupling of the walk with the
conjectured scaling limit (a Brownian motion perturbed at its
extrema). It is also completely different from methods used in
\cite{dkSLRERW,DK14,kpERWPCS,HLSH18} for variants of this model where
the random walk was decomposed in a natural way into two parts, a
martingale and an accumulated drift, each of which contributed the
corresponding part of a similar decomposition of the limiting process.
We refer to \cite[Section 5]{kpERWPCS} for a discussion as to why the
same kind of decomposition cannot work for the general model
considered in the current paper.  The main approach in this paper is
robust in the sense that it could, in theory, be applied to other
self-interacting random walks as long as one can prove the type of
Ray-Knight theorems for the walk that are needed.  Since there are a
number of self-interacting random walks for which similar (but weaker)
Ray-Knight theorems have been proved but for which full limiting
distributions have not yet been obtained (e.g.,
\cite{tGRK,Tra18}), it may be possible to adapt our techniques to
get functional limit theorems for these random
walks as well.



\subsection{Excited random walks with markovian cookie stacks}

Excited random walks (ERW), sometimes also called cookie random walks, are a model of self-interacting random walks where the transition probabilities of the walk depend on the local time of the walk at the present site. This model was first introduced by Benjamini and Wilson in \cite{bwERW} where the transition probabilities were only different on the first visit to a site (only a single excitation at each site). The model was then generalized in \cite{zMERW} and \cite{kzPNERW} to include multiple excitations at each site and to allow for randomness in the excitation environment.  

For one-dimensional ERW, the model is described as follows. 
A \emph{cookie environment} is an element $\omega = \{ \omega_x(j) \}_{x\in\Z, \, j\geq 1} \in (0,1)^{\Z \times \N}$. Given a fixed cookie environment $\omega$ we can then construct a random walk $\{X_n\}_{n\geq 0}$ as follows. The walk starts at $X_0 = 0$ and then when at the site $x$ for the $j$-th time steps to the right with probability $\omega_x(j)$ or to the left with probability $1-\omega_x(j)$. That is, letting $P_\omega$ denote the law of the process in the cookie environment $\omega$ we have 
\[
 P_\omega( X_{n+1} = X_n+1 \, | \, X_0, X_1,\ldots, X_n )
 = \omega_{X_n}\left( \sum_{i=0}^n \ind{X_i = X_n} \right). 
\]
The distribution $P_\omega$ of the walk in a fixed environment is called the \emph{quenched} law. 
We will assume that the cookie environment $\omega$ is chosen randomly according to some distribution $\P$ on cookie environments so that the annealed law of the walk $P$ is defined by averaging the quenched law with respect to $\P$. That is $P(\cdot) = \E[ P_\omega(\cdot) ]$. 

The ``cookie'' terminology for these walks dates back to \cite{zMERW}
and comes from the following interpretation of the walk. Each site has
a (possibly infinite) stack of cookies initially at that site. The
random walker then always eats the top remaining cookie at his current
location; the cookie induces some excitation/drift to the walker which
determines the law of his next step.  If there is a finite $M<\infty$
for which $\omega_x(j) = 1/2$ for all $x\in \Z$ and $j>M$ then we say
that there are only $M$ cookies per site and the walker takes steps
which are equally likely to the right or left when at a site where all
the cookies are already eaten.  With this cookie terminology we will
refer to $\omega_x(j)$ as the $j$-th cookie at site $x$ and
$\omega_x = \{ \omega_x(j) \}_{j\geq 1}$ as the \emph{cookie stack} at
site $x$.

To give some additional structure to the model we need to describe the
distribution of the cookie environment $\P$. We will assume that a
cookie stack at each site is generated by an independent copy of a
finite state Markov chain. 
\begin{asm}\label{asm:Markov}
  There is a function $p:\{1,2,\ldots,N\} \to (0,1)$ such that
  $\omega_x(j)=p(R^x_j)$, $j\in\N$, where $\{R^x_j\}_{j\geq 1}$,
  $x\in\Z$, are i.i.d.\ Markov chains on $\{1,2,\ldots,N\}$ with
  transition matrix $K$ and initial distribution $\eta$. The Markov
  chain $\{R^x_j\}_{j\geq 1}$ has a unique stationary distribution
  $\mu$ and 
  $\bar{p} := \sum_{i=1}^N \mu(i) p(i) = \frac{1}{2}$.
\end{asm}

The assumption of markovian cookie stacks was first made in
\cite{kpERWMCS} where it was shown that a number of asymptotic
behaviors of the walk (such as recurrence/transience, ballistic
behavior, and limiting distributions for the transient cases) can be
explicitly characterized.  If the condition $\bar{p} = 1/2$ is
dropped, then clearly the random walk should have some asymptotic
drift to the right/left. In fact, in \cite{kpERWMCS} it was shown that
if $\bar{p}\neq 1/2$ then the walk has a non-zero limiting speed and
satisfies a CLT for a limiting distribution  under the
  annealed measure $P$. However, if $\bar{p} = 1/2$ then the behavior
can be much more varied.  For instance, the walk can be either
recurrent or transient depending (in a complicated but explicit way) on
the parameters of the model.
\begin{thm}[\cite{kpERWMCS}]\label{th:rtcon}
  There exist two parameters $\theta^+$ and $\theta^-$
which characterize
  the recurrence/transience of the excited random walk as follows.
\begin{enumerate}
 \item If $\theta^+ > 1$ then $P(\lim_{n\to\infty} X_n = +\infty) = 1$. 
 \item If $\theta^- > 1$ then $P( \lim_{n\to\infty} X_n = -\infty) = 1$. 
 \item If $\max\{\theta^+,\theta^-\} \leq 1$ then $P( \liminf_{n\to\infty} X_n = -\infty, \, \limsup_{n\to\infty} X_n = +\infty ) = 1$. 
\end{enumerate}
\end{thm}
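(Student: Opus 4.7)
The plan is to exploit the generalized Ray-Knight description of the edge local times of the ERW stopped at a hitting time. Fix $n > 0$ and set $\tau_n = \inf\{k \geq 0 : X_k = n\}$; on the event $\{\tau_n < \infty\}$, let $V_i$ denote the number of steps from site $i$ to site $i-1$ strictly before $\tau_n$. Standard excursion bookkeeping for nearest-neighbor walks shows that, under $P$, the backward sequence $(V_{n-1}, V_{n-2}, \ldots, V_0)$ is a Markov chain on $\Z_+$: once $V_{i+1}$ is known, the transition to $V_i$ is obtained by counting ``downward'' outcomes in a random-length prefix of the Bernoulli trials driven by the cookie stack $\omega_i$, and by Assumption~\ref{asm:Markov} that stack is an independent realization of $(R^i_j)_j$. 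Thus $(V_i)$ is a branching-like process (BLP) whose transition kernel is explicit in terms of $K$, $\eta$, and $p(\cdot)$. A symmetric construction based on $\tau_{-n}$ produces a left-side BLP controlling excursions toward $-\infty$.

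The next step is to translate recurrence/transience of $\{X_n\}$ into recurrence/transience at $0$ of these two BLPs. Up to null sets, $\{X_n \to +\infty\} = \{\tau_n < \infty \text{ for all } n\}$, and under the Ray-Knight identification this is equivalent to the right-side BLP having positive probability of escaping to $+\infty$ from $0$; the analogous correspondence holds for $\{X_n \to -\infty\}$ and the left-side BLP. Because the cookie Markov chain is finite-state with stationary mean $\bar p = \tfrac12$, a coupling between a finite Markov-chain run and its stationary version yields an asymptotic drift formula
\[
\E[V_i - V_{i+1} \mid V_{i+1} = v] = (\theta^+ - 1) + o(1) \quad \text{as } v \to \infty,
\]
with $\theta^+$ an explicit functional of $(K,\eta,p)$, and an analogous formula defining $\theta^-$ on the other side. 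Thus $\theta^+>1$ signals positive BLP drift at infinity, and $\theta^+\le 1$ signals non-positive drift.

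The three alternatives in the theorem then follow from standard recurrence criteria for irreducible $\Z_+$-valued Markov chains with controlled jumps. If $\theta^+ > 1$, the right-side BLP has positive drift at infinity, so by a Lyapunov/Foster argument it avoids $0$ with positive probability from any sufficiently high state; hence $\tau_n < \infty$ for all $n$ almost surely and $X_n \to +\infty$. The spatial reflection $X_n \mapsto -X_n$ gives the symmetric conclusion when $\theta^- > 1$. If $\max\{\theta^+, \theta^-\} \leq 1$, both BLPs are recurrent at $0$, which translates to $\sup_n X_n = +\infty$ and $\inf_n X_n = -\infty$ almost surely; combined with a $0$--$1$ law for the tail events $\{\limsup_n X_n = +\infty\}$ and $\{\liminf_n X_n = -\infty\}$, which uses the independence of distant cookie stacks, this gives the two-sided divergence in case~(3).

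The main obstacle is the quantitative analysis of the BLP increments underlying the identification of $\theta^\pm$ and the recurrence classification in the critical regime $\theta^\pm = 1$. A single BLP step from $V_{i+1} = v$ requires reading of order $v$ states of the cookie Markov chain, so the conditional mean increment is an average over a long chain trajectory whose convergence to the stationary mean only holds up to $O(\sqrt{v})$ fluctuations. To place the $o(1)$ correction in the drift formula inside a summable envelope and thus apply Foster-type criteria rigorously, one needs sharp moment and exponential-mixing estimates for the driving Markov chain together with a Lyapunov function tailored to the BLP; this quantitative control is where the bulk of the work in \cite{kpERWMCS} lies. Once it is in place, the translation back to recurrence/transience of the ERW is routine.
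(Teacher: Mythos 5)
Theorem~\ref{th:rtcon} is stated in this paper as a citation to \cite{kpERWMCS}; the present paper does not prove it, so there is no in-text proof to compare against line by line. That said, the machinery developed here (Sections~2 and~5) faithfully reflects the proof in \cite{kpERWMCS}, and your outline is in the same general spirit: reduce recurrence/transience of the ERW to extinction/survival of the directed-edge-local-time BLPs. However, there is a genuine conceptual error in your drift formula that would sink the argument as written.

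You claim the backward BLP increment satisfies
$\E[V_i - V_{i+1} \mid V_{i+1} = v] = (\theta^+ - 1) + o(1)$
and conclude that ``$\theta^+>1$ signals positive BLP drift at infinity.'' This is not the criterion, and the formula itself is off. Recall \eqref{rpmdef} and \eqref{thedef}: the asymptotic mean increment of the $U^+$ process started at a large value $v$ is $\eta\cdot\mathbf{r}^+$, and the variance grows like $\nu v$, with
\[
\theta^+ = \frac{2\,\eta\cdot\mathbf{r}^+}{\nu}.
\]
Thus $\theta^+$ is the Lamperti-type ratio of twice the drift to the local variance, \emph{not} the drift itself shifted by one. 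Positivity of the drift (i.e.\ $\theta^+>0$) is \emph{not} enough for the BLP to survive; the correct threshold for the BESQ$^{2\theta^+}$ approximation to avoid zero is dimension $>2$, i.e.\ $\theta^+>1$, precisely because one must compare the drift $\nu\theta^+/2$ against the martingale fluctuations of order $\sqrt{\nu v}$ per step. Your plan to settle recurrence by a Foster--Lyapunov argument based only on the sign of a corrected drift therefore does not see the phase transition at $\theta^\pm = 1$; in the regime $0 < \theta^+ < 1$ the drift is strictly positive, yet the $U^+$ chain is recurrent. Any rigorous implementation must run a drift-and-variance Lamperti criterion (or, equivalently, the diffusion approximation by a squared Bessel process as in Theorem~\ref{DA0}), with the Lyapunov function calibrated to the variance. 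The rest of your translation --- identifying $\{X_n\to+\infty\}$ with survival of one of the BLPs, using independence of cookie stacks to run a $0$--$1$ law, and noting that $\theta^+ + \theta^- < 1$ precludes both $\theta^\pm>1$ --- is essentially right, and the boundary case $\theta^\pm=1$ (recurrent) is indeed where the sharp tail estimates on BLP extinction times from \cite{kpERWMCS} are needed.
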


\begin{rem}
  It was shown in \cite{kpERWMCS} that the parameter $\theta^+$ can be
  written as an \emph{explicit} function
  $\theta^+ = \Theta(\eta,K,p(\cdot))$ of the parameters $\eta$, $K$
  and $p(\cdot)$. Moreover, $\theta^- = \Theta(\eta,K,1-p(\cdot))$ is
  given by the same function but with $p(\cdot)$ replaced by
  $1-p(\cdot)$.  In the present paper, the parameters $K$ and
  $p(\cdot)$ will always be fixed, but we will at times be
  interested in cookie environments with different initial cookie
  distributions. Thus, for any distribution $\eta'$ on
  $\{1,2,\ldots,N\}$ we will write $\theta^+(\eta')$ and
  $\theta^-(\eta')$ for $\Theta(\eta',K,p(\cdot))$ and
  $\Theta(\eta',K,1-p(\cdot))$, respectively.  In the special case
  where $\eta' = \eta$ from Assumption \ref{asm:Markov} we will just
  write $\theta^\pm$ instead of $\theta^\pm(\eta)$.

\end{rem}

\begin{rem}
It can be shown from the explicit formulas for $\theta^{\pm}$  (see Section~\ref{sec:parameter}) that   
$\theta^+ + \theta^- < 1$ so
 that Theorem \ref{th:rtcon} gives a complete characterization of
 recurrence and transience for excited random walks with markovian
 cookie stacks \cite[Section 4]{kpERWMCS}.
\end{rem}

\begin{rem}[$M$ cookies per stack]
In a particular case when the Markov chain $\{R^x_j\}_{j\ge 1}$ has an
  absorbing state $a\in\{1,2,\dots,N\}$ (which is unique by Assumption~\ref{asm:Markov}) with $p(a)=1/2$ and reaches it by the $M$-th step with
  probability 1, that is when  
  \begin{equation}\label{Mcookies}
   \P( \omega_x(j) = 1/2, \forall j > M ) = 1, 
  \end{equation}
  the formulas for $\theta^\pm$ have a particularly simple
    form, 
  namely, $\theta^+ = -\theta^-=\delta$ where
 \begin{equation}\label{deltadef}
 \delta = \sum_{j=1}^M \E\left[ 2\omega_0(j) - 1 \right]. 
\end{equation}
For additional examples we refer to \cite[Section 1.4]{kpERWMCS}.
\end{rem}

In addition to the criteria for recurrence/transience
stated in Theorem \ref{th:rtcon}, the paper \cite{kpERWMCS}
  also contains characterizations of ballisticity (non-zero limiting
linear speed) and limit laws in the transient cases.  These
results generalized 
some of those that had been proved earlier in
\cite{zMERW,bsCRWspeed,bsRGCRW,kzPNERW,kmLLCRW} for ERWs with $M$
cookies per stack.  A notable omission, 
however, was the limiting behavior in the recurrent case when
  $\max\{\theta^+,\theta^-\}<1$. This is the focus of the present
paper.

\subsection{Main results}
In the case when there are $M$ cookies per stack and cookies
  stacks are i.i.d., functional limit theorems for recurrent ERW were
first obtained by Dolgopyat, \cite{dCLTERW}, and Dolgopyat and
Kosygina, \cite{dkSLRERW}.  Before stating their and our results we
need the following definition.
\begin{defn}
  For any $\alpha,\beta<1$, a Brownian motion
    $(\alpha,\beta)$-perturbed at its extrema ($(\alpha,\beta)$-BMPE)
    is a process $\{W(t)\}_{t\geq 0}$ started at $W(0) = 0$,
    continuous in $t$, and solving the functional equation
\begin{equation}\label{BMPEdef}
 W(t) = B(t) + \alpha\, \sup_{s\leq t} W(s) + \beta\, \inf_{s\leq t} W(s),
\end{equation}
where here and throughout the paper $\{B(t)\}_{t\geq 0}$ is a standard one-dimensional Brownian motion. 
\end{defn}

While it is not obvious that the functional equation
\eqref{BMPEdef} has a solution, it was shown in \cite{pwPBM,cdPUPBM}
that for all $\alpha,\beta<1$ there is a pathwise unique continuous
solution and it is adapted to the filtration of $B$.  In the special
case when $\alpha=0$ or $\beta = 0$ the solution can be made
explicit. For instance, if $\beta = 0$ then as shown in
\cite[p.\,242]{cpyBetaPBM} 
\begin{equation}\label{BMPE-ex}
 W(t) = B(t) + \frac{\alpha}{1-\alpha} B^*(t), \quad \text{where } B^*(t) = \sup_{s\leq t} B(s). 
\end{equation}

In the theorem below and throughout the remainder of the paper 
the symbol
$\overset{J_1}{\Longrightarrow}$ will denote convergence in
distribution with respect to the Skorokhod $J_1$ topology.

\begin{thm}[\cite{dkSLRERW}]\label{th:McookieLimit}
Suppose that $\omega_x,\ x\in\Z$, are i.i.d., \eqref{Mcookies}
    holds, and
    $\P(\omega_x(j)\in(0,1)\ \forall j\in\{1,2,\dots,M\})>0$. Let
    $\{X_n\}_{n\geq 0}$ be an ERW in this cookie environment and 
    $\delta$ be given by \eqref{deltadef}. Then the following
    statements hold with respect to the averaged measure $P$.
\begin{enumerate}
  \item  If $\delta \in (-1,1)$, then $\left\{\frac{X_{\fl{nt}}}{\sqrt{n}} \right\}_{t\geq 0} \overset{J_1}{\underset{n\to\infty}{\Longrightarrow}} \left\{W(t)\right\}_{t\geq 0}$, where $W$ is a $(\delta,-\delta)$-BMPE. 
  \item If $\delta\in\{-1,1\}$, then there exists a constant $a>0$ such that $\left\{ \frac{ \delta X_{\fl{nt}} }{ a \sqrt{n} \log n } \right\}_{t\geq 0} \overset{J_1}{\underset{n\to\infty}{\Longrightarrow}} \left\{B^*(t)\right\}_{t\geq 0}$.
  \end{enumerate}
\end{thm}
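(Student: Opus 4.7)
The plan is to decompose $X_n = M_n + D_n$ into a martingale plus accumulated drift, show each rescaled piece converges, and identify the limit of $X_{\lfloor nt\rfloor}/\sqrt n$ via pathwise uniqueness of solutions to \eqref{BMPEdef}. Set $\xi_k := 2\omega_{X_k}(j_k)-1$, where $j_k$ is the number of visits to $X_k$ up through time $k$, and
\begin{equation*}
 M_n := \sum_{k=0}^{n-1}(X_{k+1}-X_k-\xi_k), \qquad D_n := \sum_{k=0}^{n-1}\xi_k.
\end{equation*}
The martingale $M_n$ has bounded increments and per-step conditional variance $1-\xi_k^2$; since at most $M$ visits per site contribute nonzero $\xi_k$, the number of such steps by time $n$ is bounded by $M$ times the range, which is $O(\sqrt n)$. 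Thus $\langle M\rangle_n/n\to 1$ and the martingale FCLT yields $M_{\lfloor nt\rfloor}/\sqrt n \overset{J_1}{\Longrightarrow} B(t)$.

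For the drift, rearrange by spatial site:
\begin{equation*}
 D_n = \sum_{x\in\Z}\sum_{j=1}^{L_n(x)\wedge M}(2\omega_x(j)-1),
\end{equation*}
where $L_n(x)$ counts visits to $x$ by time $n$. Write $\underline X_n := \min_{k\le n}X_k$ and $\overline X_n := \max_{k\le n}X_k$. In the recurrent regime $\delta\in(-1,1)$ every site in the interior of $[\underline X_n,\overline X_n]$ is visited at least $M$ times outside an $O(1)$-sized boundary, so its contribution equals the i.i.d.\ variable $\Delta_x := \sum_{j=1}^M(2\omega_x(j)-1)$ with $\E[\Delta_x]=\delta$. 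Since $\overline X_n - \underline X_n = O(\sqrt n)$, the centered spatial sum $\sum_{x=\underline X_n}^{\overline X_n}(\Delta_x-\delta)$ is $O(n^{1/4}) = o(\sqrt n)$ by a spatial CLT, leaving
\begin{equation*}
 D_n = \delta\bigl(\overline X_n - \underline X_n\bigr) + o(\sqrt n).
\end{equation*}
After diffusive rescaling, $X_{\lfloor nt\rfloor}/\sqrt n$ asymptotically satisfies the functional equation \eqref{BMPEdef} with $(\alpha,\beta)=(\delta,-\delta)$, and uniqueness of the solution \cite{pwPBM,cdPUPBM} identifies the limit as a $(\delta,-\delta)$-BMPE.

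The crucial missing ingredient is tightness of $\{X_{\lfloor nt\rfloor}/\sqrt n\}$ in the $J_1$ topology. I would obtain this via generalized Ray-Knight theorems for the walk: the directed edge local time profile of $X$ stopped at a hitting time $T_n := \inf\{k:X_k=n\}$ is a Markov chain of branching-like type whose diffusion scaling limit yields convergence of $T_n/n^2$ together with uniform control on oscillations of $X$ over short time windows. The boundary case $\delta=\pm 1$ is critical: for $\delta=1$ the perturbation $\delta(\overline X_n - \underline X_n)$ is strong enough that the running maximum grows like $a\sqrt n\log n$ rather than $\sqrt n$; the walker essentially shadows its maximum, and a Ray-Knight analysis with logarithmic corrections yields $X_{\lfloor nt\rfloor}/(a\sqrt n\log n)\overset{J_1}{\Longrightarrow} B^*(t)$. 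The chief obstacle throughout is controlling the joint fluctuations of $M_n$ with the environment sum $\sum_{x=\underline X_n}^{\overline X_n}(\Delta_x-\delta)$ evaluated at walk-dependent endpoints — the interleaving of temporal (walk) and spatial (environment) randomness is precisely what necessitates the Ray-Knight detour.
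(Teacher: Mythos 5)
Your proposal reproduces the martingale-plus-accumulated-drift decomposition used in \cite{dkSLRERW}, which is exactly the strategy the paper summarizes right after stating this theorem as the common route behind all earlier BMPE convergence results for ERW (write $X_n$ as a martingale plus $\delta(\overline X_n-\underline X_n)+o(\sqrt n)$, prove a martingale FCLT, then prove tightness and identify the limit via pathwise uniqueness of \eqref{BMPEdef}), and you correctly flag tightness -- supplied by Ray-Knight / BLP hitting-time estimates -- as the essential remaining ingredient. One should just note that the present paper \emph{cites} rather than reproves this theorem, and that Section~1.3 together with \cite[Section 5]{kpERWPCS} explains why this very decomposition fails for general Markovian cookie stacks (the site contributions $\Delta_x$ are no longer realized after finitely many visits, so the drift cannot be localized to the extrema), which is what forces the entirely different mesoscopic-coupling strategy the paper develops for Theorem~\ref{main}.
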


\begin{rem}
  Note that the limit in the boundary cases $\delta \in \{-1,1\}$ is
  somewhat surprising since the ERW is recurrent but the
  scaling limit is transient.  In the non-boundary cases, it is not
  hard to see that BMPE is a reasonable scaling limit. Indeed, since
  there are only $M$-cookies per site it is natural to expect that the
  scaling limit should be a process that behaves like a Brownian
  motion when not near the running minimum or maximum and experiences
  some sort of additional drift when at the minimum or maximum.
\end{rem}

In this paper we show that the results of Theorem \ref{th:McookieLimit} can be extended to the case of markovian cookie stacks. Both theorems below hold with respect to the averaged measure $P$.
\begin{thm}\label{main}
If $\max\{ \theta^+, \theta^-\} < 1$, then $\left\{ \frac{ X_{\fl{nt}} }{ a \sqrt{n}} \right\}_{t\geq 0} \overset{J_1}{\underset{n\to\infty}{\Longrightarrow}} \left\{W(t)\right\}_{t\geq 0}$ where $W$ is a $(\theta^+,\theta^-)$-BMPE and the constant $a = \sqrt{1-\theta^+-\theta^-}>0$. 
\end{thm}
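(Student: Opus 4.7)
The plan is to establish the theorem by coupling the ERW at an appropriate \emph{mesoscopic} scale with a time-discretization of a $(\theta^+,\theta^-)$-BMPE, rather than decomposing the walk into a martingale plus accumulated drift as in \cite{dkSLRERW,kpERWPCS} or inverting Ray-Knight theorems to recover only single-time distributions in the spirit of T\'oth's work. Fix a small $\epsilon>0$ and introduce mesoscopic stopping times $0=T_0^{\epsilon,n}<T_1^{\epsilon,n}<\cdots$ at which the walk has either advanced its running maximum by $\epsilon\sqrt{n}$, receded its running minimum by $\epsilon\sqrt{n}$, or crossed an interval of length $\epsilon\sqrt{n}$ strictly interior to the current max--min window. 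The aim is to show that $k\mapsto X_{T_k^{\epsilon,n}}/(a\sqrt{n})$ can be coupled, up to $o_P(1)$ error, with the values of a $(\theta^+,\theta^-)$-BMPE sampled on a matching time grid, and that tightness then promotes this to $J_1$-convergence of the full trajectories.

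The engine of the argument is a quantitative generalized Ray-Knight theorem for the directed edge local times between consecutive mesoscopic times. Stopped at a hitting time of a spatial level, the local-time profile of the ERW is, after diffusive rescaling, approximately a squared-Bessel-type diffusion in the spatial variable whose drift is determined by the cookie distribution at fresh sites (producing the boundary perturbations $\theta^\pm$ of the BMPE at the extrema) and whose quadratic variation yields the variance constant $a^2 = 1-\theta^+-\theta^-$. In the \emph{original} i.i.d.~environment much of this machinery is already available from \cite{kpERWMCS}; the first technical step is to upgrade those computations to uniform, quantitative bounds suitable for a single mesoscopic step, from which one reads off the conditional distributions of the endpoint displacement $X_{T_k^{\epsilon,n}} - X_{T_{k-1}^{\epsilon,n}}$ and of the time increment $T_k^{\epsilon,n} - T_{k-1}^{\epsilon,n}$.

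The main obstacle, and the bulk of the technical work, is controlling the environment left behind by the walk after each mesoscopic step. Once $T_k^{\epsilon,n}$ is reached the cookie stacks at previously visited sites are partially consumed and are no longer i.i.d.~with initial distribution $\eta$; to iterate the Ray-Knight analysis on the next step one must describe this residual environment precisely. On sites visited many times the cookie Markov chain has mixed to its stationary distribution $\mu$, and the assumption $\bar p=1/2$ ensures that further visits to such sites contribute no net drift at the mesoscopic scale; on sites visited only a bounded number of times one must show that partially consumed stacks contribute negligibly. The mesoscopic drift is therefore generated essentially only at sites being reached for the first time when the walk advances its running max or min, where the cookie stack is fresh with initial distribution $\eta$ and the contribution is captured precisely by $\theta^\pm = \Theta(\eta,K,p(\cdot))$ and $\Theta(\eta,K,1-p(\cdot))$. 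Proving this requires a coupling of each cookie chain with its stationary version, pathwise two-sided bounds on the post-step local-time profile, and a generalized Ray-Knight theorem adapted to non-i.i.d.~environments.

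Once the conditional law of each mesoscopic increment given the walk's history is shown to match, within the targeted error, that of a discretized $(\theta^+,\theta^-)$-BMPE (drift $\theta^+\epsilon$ when setting a new maximum, $\theta^-\epsilon$ when setting a new minimum, and a centered Brownian-like increment otherwise), one assembles these increments into a coupling with the discretized BMPE, appeals to pathwise uniqueness of \eqref{BMPEdef} \cite{pwPBM,cdPUPBM} to control the continuum limit, and handles the implicit time change via bounds on the total elapsed time between mesoscopic steps; letting $\epsilon\to 0$ after $n\to\infty$ then yields the claimed $J_1$-convergence. The decisive difficulty throughout is the characterization of the walk-modified environment, since this is where the i.i.d.~structure of the original cookie stacks is destroyed and where the decomposition-based methods of \cite{dkSLRERW,kpERWPCS} cease to apply.
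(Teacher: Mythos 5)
Your overall strategy—couple the mesoscopic walk $X_{T^{\epsilon,n}_k}/\sqrt{n}$ with a discretization of BMPE via generalized Ray-Knight theorems, with the hard technical work being the control of the environment created by the walk—is the same strategy the paper follows. But there is a genuine conceptual error in your description of that residual environment, and it is exactly at the step you correctly flag as decisive.

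You claim that at sites visited many times the cookie Markov chain has mixed to its stationary distribution $\mu$, and that the assumption $\bar p=\sum_i\mu(i)p(i)=1/2$ then ensures no net drift in the bulk. This is not what happens. The next-cookie distribution at a site within the range is conditioned on the direction of the walk's last departure from that site: at sites between the current position and the running maximum the last step was to the left (a "failure"), and the conditional law converges to $\pi^+$; at sites between the running minimum and the current position the last step was to the right, and the conditional law converges to $\pi^-$. These are two different distributions, not $\mu$, and only their equal mixture $\frac12\pi^++\frac12\pi^-$ equals $\mu$. Moreover the fact that the bulk behaves like Brownian motion is not a consequence of $\bar p=1/2$; indeed the parameters $\theta^\pm(\mu)$ are generally nonzero because $\theta^+(\eta')+\theta^-(\eta')=1-2/\nu$ for every initial law $\eta'$. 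What actually kills the bulk drift is the structural identity $\pi^+\cdot\mathbf{r}^+=0$ and $\pi^-\cdot\mathbf{r}^-=0$, i.e.\ $\theta^+(\pi^+)=0$ and $\theta^-(\pi^-)=0$ — an asymmetric pair of vanishings (notably $\theta^-(\pi^+)\ne 0$ and $\theta^+(\pi^-)\ne 0$), one applied on each side of the walker. Proving and exploiting this is a separate and delicate point: the residual environment is genuinely asymmetric (law $\pi^+$ on one side, $\pi^-$ on the other) and one must show this asymmetry is invisible at the diffusive scale. The paper encodes this through the notion of an interval being $(m^{1/4},\rho)$-good, with $\rho=0$ to the right of the walker and $\rho=\nu/2-1$ to the left, precisely the two values that yield BESQ$^{0}$ and BESQ$^{2}$ in the Ray-Knight profile and hence Brownian behavior. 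If you instead argue from mixing to $\mu$ and $\bar p=1/2$, the argument does not close: you would be asserting the wrong limiting cookie law and invoking a condition ($\bar p=1/2$) that does not by itself annihilate the drift parameters of the BLP.

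Two further, smaller, imprecisions. First, your description of the mesoscopic stopping times is muddled; the paper simply takes $T^{\epsilon,n}_k$ to be the exit time of the window $X_{T^{\epsilon,n}_{k-1}}\pm\fl{\epsilon\sqrt n}$, with no separate case for being at an extremum. Second, "sites visited only a bounded number of times contribute negligibly" is not the right quantitative mechanism; what the paper controls is the empirical average of $r^\pm(R^y_{\cdot})$ over sliding windows of length $n^{1/8}$ (Lemma~\ref{18good}), an averaged statement rather than a per-site one, together with separate lifting/grounding properties near mesoscopic lattice points (Lemma~\ref{e1a}). You should also note explicitly how the constant $a$ emerges: the variance parameter $\nu$ in the BLP diffusion approximation \eqref{daa} satisfies $2/\nu=1-\theta^+-\theta^-$, and the LLN for the mesoscopic times (Lemma~\ref{time}) converts the counting index into real time at rate $\nu/2$ per $\epsilon^2 n$, which produces exactly $a=\sqrt{1-\theta^+-\theta^-}$.
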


\begin{thm}\label{th:BoundaryCase}
 If $\theta^+ = 1$ then there exists a constant $a>0$ such that $\left\{ \frac{ X_{\fl{nt}} }{ a \sqrt{n} \log n } \right\}_{t\geq 0}\overset{J_1}{\underset{n\to\infty}{\Longrightarrow}} \left\{B^*(t)\right\}_{t\geq 0}$. 
 Similarly, if $\theta^- = 1$ then the above statement holds with $-X_{\fl{nt}}$ in place of  $X_{\fl{nt}}$.
\end{thm}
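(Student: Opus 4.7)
By the symmetry $p(\cdot)\leftrightarrow 1-p(\cdot)$ (reflecting the walk), it suffices to treat $\theta^+=1$; since $\theta^++\theta^-<1$, this forces $\theta^-<0$. Following the hitting-time strategy of \cite{dkSLRERW} for the bounded-cookie boundary case $\delta=1$, I would analyze $T_n:=\min\{k\ge 0:X_k=n\}$ via the generalized Ray-Knight representation developed in earlier sections of this paper and in \cite{kpERWMCS}. Under that representation, the profile of directed-edge local times stopped at $T_n$, read from the site $n$ backward to $0$, is a markovian branching-like process (BLP) $\{Z_j\}_{j\ge 0}$ with $Z_0=0$ whose mean-offspring parameter is precisely $\theta^+$; thus $T_n$ equals the total progeny of $Z$ up to a negligible number of forward steps. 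When $\theta^+=1$, the BLP is critical, and unlike the subcritical case treated in Theorem~\ref{main} (where the BLP relaxes to a stable profile and $T_n$ is of order $n^2$), here there is no stationary profile; instead the return of $Z$ to $0$ is effectively killing because of $\theta^-<0$. A Feller/squared-Bessel diffusion approximation for $Z$, combined with this killing, should yield
\[
\frac{(\log n)^2}{n^2}\,T_n\underset{n\to\infty}{\Longrightarrow}\xi
\]
for an explicit positive random variable $\xi$ defined through the diffusion limit of $Z$.

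The next step is to upgrade this to joint convergence of $(T_{\fl{ns_1}},\dots,T_{\fl{ns_k}})$ using the Markov-renewal structure: the cookie stacks at fresh sites are i.i.d., so increments $T_{\fl{ns_{i+1}}}-T_{\fl{ns_i}}$ are asymptotically independent. Inverting via $\{M_{\fl{nt}}\ge m\}=\{T_m\le \fl{nt}\}$, with $M_k:=\max_{j\le k}X_j$, then gives
\[
\left\{\frac{M_{\fl{nt}}}{a\sqrt n\,\log n}\right\}_{t\ge 0}\overset{J_1}{\underset{n\to\infty}{\Longrightarrow}}\{B^*(t)\}_{t\ge 0}
\]
for an explicit $a>0$; the appearance of $B^*$ reflects the fact that the inverse (in the spatial variable) of the limiting hitting-time process has the law of the supremum of a Brownian motion, via the classical identity relating inverses of $1/2$-stable-like subordinators to Brownian suprema. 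To replace $M_{\fl{nt}}$ by $X_{\fl{nt}}$, I would show
\[
\sup_{0\le t\le T}\frac{|X_{\fl{nt}}-M_{\fl{nt}}|}{\sqrt n\,\log n}\underset{n\to\infty}{\overset{P}{\longrightarrow}}0.
\]
Excursions below the current maximum take place in the environment already altered by the walk, whose effective parameter is $\theta^-<0$ (subcritical); such excursions have typical depth $O(\sqrt n)$, which is negligible at the scale $\sqrt n\,\log n$.

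The main obstacle is the critical BLP analysis: at criticality $\{Z_j\}$ does not relax to a stationary profile, so one cannot simply invoke a CLT as in the proof of Theorem~\ref{main}. Instead, one must quantitatively balance critical branching against the killing-at-$0$ produced by $\theta^-<0$ in order to extract the correct $(\log n)^{-2}$ correction in the scaling of $T_n$ and to identify the explicit constant $a$. Carrying this out for markovian (rather than finite, i.i.d.) cookie stacks is the principal technical difficulty; once the scaling of $T_n$ is in hand, the Markov-renewal independence of hitting-time increments, the spatial inversion to $M$, and the subcritical control of excursions below the maximum are comparatively standard applications of the machinery developed for Theorem~\ref{main}.
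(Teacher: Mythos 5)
The paper does not prove this theorem; immediately after its statement the authors note that the boundary-case argument of \cite{dkSLRERW} for $M$-cookie ERW ``is exactly the same'' here ``and depends only on certain tail estimates for the directed edge local time processes that have already been obtained for the case of Markov cookie stacks,'' citing \cite[p.~8]{kpERWPCS} and \cite[Theorem~2.7]{kpERWMCS}. So the entire content of the paper's proof of Theorem~\ref{th:BoundaryCase} is the observation that the \cite{dkSLRERW} argument is insensitive to the cookie-stack structure beyond a handful of BLP tail asymptotics, and these are already available for markovian stacks.

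Your sketch follows the same roadmap as \cite{dkSLRERW} --- hitting-time asymptotics for $\tau_n^X$ via the Ray--Knight/BLP representation at scale $n^2/(\log n)^2$, a $1/2$-stable inverse-subordinator limit giving $M_{\fl{nt}}/(a\sqrt n\,\log n)\Rightarrow B^*$, and control of $|X-M|$ at scale $\sqrt n\,\log n$ --- so in outline you are reconstructing the reference rather than taking a new route. Where you go astray is in the mechanism for the crucial $(\log n)^2$ correction, which you rightly flag as the crux. You attribute it to ``killing at $0$ because $\theta^-<0$.'' For $\tau_n^X$ the relevant BLP on $\bint{0,n}$ is a $V^-$ process with immigration in every generation, whose diffusion approximation has generalized dimension $2(1-\theta^+)=0$; the parameter $\theta^-$ enters only through the $U^-$ continuation on $(-\infty,0]$, which has dimension $2\theta^-<0$, dies quickly, and contributes negligibly. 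The $V^-$ process is not killed at zero --- immigration relaunches it --- and the logarithmic correction comes from the intrinsic critical behavior of this dimension-$0$-with-immigration process, which is exactly what the slowly varying tail asymptotics of \cite[Theorem~2.7]{kpERWMCS} quantify. Moreover, the ``squared-Bessel diffusion approximation plus killing'' you propose cannot on its own extract this: BESQ$^0$ started at zero is identically zero, so Theorem~\ref{da0.25} and its relatives cannot see the sublinear growth of $V^-$, and one must work with the discrete BLP tails directly --- which is precisely why the authors cite \cite[Theorem~2.7]{kpERWMCS} rather than rederive anything. Your justification of $|X-M|=o_P(\sqrt n\,\log n)$ has a related imprecision: you assign the previously visited territory an ``effective parameter $\theta^-<0$,'' but the environment between the running minimum and the current location is approximately $\pi^-$-product with $\theta^-(\pi^-)=0$ (Section~\ref{sec:parameter}), and near a freshly attained maximum the cookies are still approximately $\eta$-distributed. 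The conclusion that excursion depth is $O_P(\sqrt n)$ is right --- it is a diffusive-scale deviation over time at most $n$ --- but not for the reason you give.
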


We have separated the statements of the scaling limits in the boundary
and non-boundary cases because the proof techniques are completely
different. In fact, the proof of the scaling limits for recurrent ERW
in the boundary case ($\theta^+=1$ or $\theta^-=1$) is exactly
the same as that in \cite{dkSLRERW} for the case of $M$
cookies per stack and depends only on certain tail estimates for the
directed edge local time processes that have already been obtained for
the case of Markov cookie stacks.  See \cite[p. 8]{kpERWPCS} and
\cite[Theorem 2.7]{kpERWMCS} for further details.

The proof of Theorem \ref{main}, on the other hand, is quite different
from previous cases and thus is the focus of the remainder of the
paper.  As we have noted above, BMPE was already
shown to be the scaling limit of ERW with $M$ cookies per stack, but
there have also been a few other self-interacting random walks
which have been shown to converge to
BMPE. We list all cases we are aware of below.
\begin{enumerate}
\item Random walk with partial reflection at extrema \cite{Dav96}. In
  this walk the random walk has a drift when at its running
  maximum/minimum and jumps to the left/right with equal probability
  otherwise. This walk is clearly a discrete analog of the BMPE.
\item ERW with $M$ cookies per stack with $\delta \in (-1,1)$. As noted above this was proved in \cite{dkSLRERW}. 
\item ERW with periodic cookie stacks with $\max\{\theta^+,\theta^-\} < 1$.  This special case of Theorem \ref{main} was proved in \cite{kpERWPCS}. 
\item Broken rotor walk \cite{HLSH18}. This walk, though not described
  as such in the original paper, can be seen as an ERW with markovian
  cookie stacks where the Markov chain is a two state Markov chain
  with transition matrix
  $K = \begin{pmatrix} 1-\alpha & \alpha \\ \alpha &
    1-\alpha \end{pmatrix}$ and where the cookie values are degenerate
  in that $p(1) = 1$ and $p(0) = 0$ (that is, the behavior of the walk
  is deterministic given the realization of the cookie environment).
\end{enumerate}
In all of these previous papers, the proof followed the same general
strategy. First, one proves that the random walk can be approximated
by a martingale plus a linear combination of the running maximum and
minimum of the walk. Next, one proves that the martingale term in this
approximation converges to Brownian motion under diffusive
scaling. Finally, one proves tightness for the random walk process
under diffusive scaling and from this concludes that any scaling limit
must satisfy a functional equation like \eqref{BMPEdef} in the
definition of BMPE. This strategy does not seem to work for the
current model, at least not without involving an intermediate scale
and an additional control on the environment. As mentioned in the
  introduction, a more detailed discussion of the problems arising
when implementing this approach can be found in \cite[Section
5]{kpERWPCS}.

\subsection{Ideas of the proof}


The main idea of our proof is to use information on the local time
processes to determine the movement of the ERW on a macroscopic scale.
For a BMPE this is understood through the Ray-Knight type theorems
proved in \cite{cpyBetaPBM}.  For a $(\theta^+,\theta^-)$-BMPE $W$ let
$\{\ell_{x,t}^W\}_{x\in\R, \, t\geq 0}$ be the local time process of
$W$, and if $\tau_x^W = \inf\{ t\geq 0: \, W(t) = x\}$ is the hitting
time of $x \in \R$ then it was shown in \cite[Theorem 3.4]{cpyBetaPBM}
that $\{\ell_{x,\tau_{-1}^W}^W \}_{x\geq -1}$ is a gluing together of
two Bessel squared processes; that is,
$\{\ell_{x,\tau_{-1}^W}^W \}_{x \in [-1,0]}$ is a Bessel squared
process of dimension $2(1-\theta^-)$ started at 0 and
$\{\ell_{x,\tau_{-1}^W}^W \}_{x \geq 0}$ is a Bessel squared process
of dimension $2\theta^+$ which is killed when reaching zero.  See
Figure \ref{fig:RayKnight}.  From this Ray-Knight theorem for BMPE we
can deduce some information about macroscopic behavior of $W$. For
instance, the event that $W$ exits the interval $(-1,1)$ to the left
is equal to the event that the local time process
$\{\ell_{x,\tau_{-1}^W}^W\}_{x \geq -1}$ dies out somewhere in
$(0,1)$. Moreover, when this event happens, the location where the
local time process dies out is equal to the running maximum of $W$ by
time $\tau_{-1}$ and the area under the curve of
$x\mapsto \ell_{x,\tau_{-1}^W}^W$ is equal to the time for $W$ to exit
the interval $(-1,1)$. A similar analysis of the local time profile at
time $\tau_1^W $ can be used to determine the distribution of the
exit time and the running minimum of $W$ when the process exits
$(-1,1)$ to the right.

The above explains how one can describe the \emph{initial} macroscopic
behavior of a BMPE using the Ray-Knight theorems for BMPE. However,
understanding the macroscopic behavior of the BMPE at later times is
a little more complicated because the BMPE $W$ is
not a Markov process. Nevertheless, if we define
\begin{equation}
  \label{IS}
  I(t)=\inf_{s\le t}W(s)\quad \text{and}\quad S(t)=\sup_{s\le t}W(s),\quad t\ge 0, 
\end{equation}
to be the running minimum and maximum of $W$ respectively, then
$\{(I(t),W(t),S(t))\}_{t\ge 0}$ is a Markov process. Suppose that at
time $t$ we have $(I(t),W(t),S(t)) = (w+\wu,w,w+\wo)$ for some
$w \in \R$ and $\wu\leq 0 \leq \wo$ and we want to know the
probability that $W$ will subsequently exit the interval
$(w-1,w+1)$ to the left. By the Markov property  and
  translation invariance of Brownian motion we can then consider the
process started at $(I(0),W(0),S(0)) = (\wu,0,\wo)$ (that is, started
with artificial non-zero minimum and maximum) and use the local time
profiles stopped at times $\tau_{-1}^W$ or $\tau_1^W$ as
before. However, in this case since the minimum and maximum are not
initially zero the distributions of the local time profiles are
different.
In this case (see, for example, \cite[Proposition 2.1]{cdhULL}) if  we
start from $(I(0),W(0),S(0))= (\wu,0,\wo)$ then
$\{\ell_{x,\tau_{-1}^W}^W\}_{x \geq -1}$ is a gluing together of (up
to) 4 squared Bessel processes of (1) dimension $2(1-\theta^-)$ on the
interval $[-1, \wu \vee -1]$, (2) dimension 2 on the interval
$[\wu \vee -1 ,0]$, (3) dimension 0 on the interval $[0,\wo]$, and (4)
dimension $2\theta^+$ on the interval $[\wo,\infty)$. See Figure \ref{fig:RayKnight}.
\begin{figure}[ht]
 \includegraphics[width=0.45\textwidth,page=1]{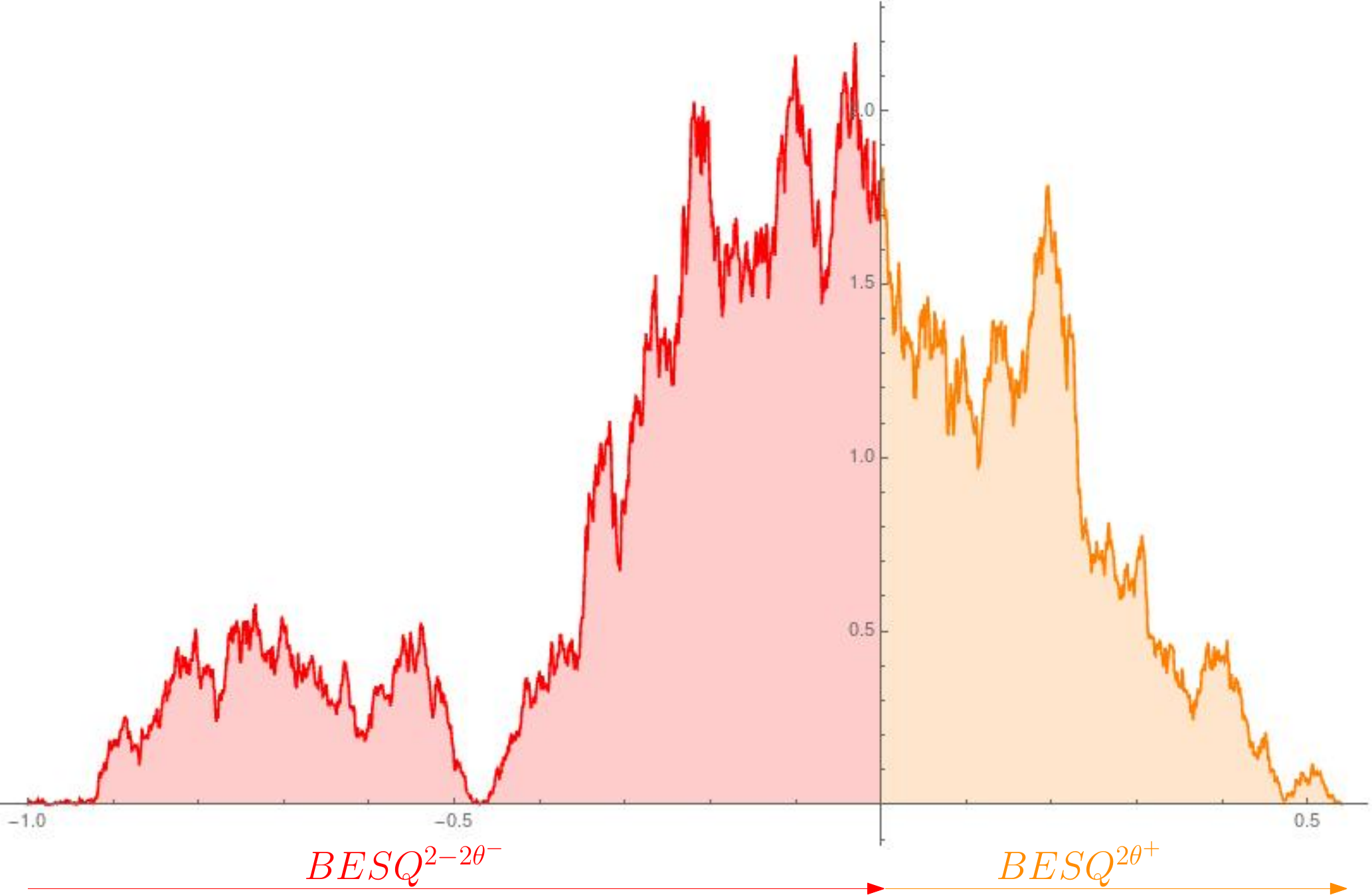}
 \includegraphics[width=0.45\textwidth,page=2]{RayKnightERWgraphs}
 \caption{On the left is a graphical representation of the Ray-Knight
   theorem for a standard $(\theta^+,\theta^-)$-BMPE stopped when the
   process first reaches $-1$.  On the right is a graphical
   representation of the Ray-Knight theorem for a
   $(\theta^+,\theta^-)$-BMPE started with initial condition
   $(\wu,0,\wo) = (-0.5,0,0.5)$ and stopped when the process first
   reaches
   $-1$. 
 }
  \label{fig:RayKnight}
\end{figure}

One of the key results  of the present
work is a set of generalized Ray-Knight theorems for the ERW
on a ``mesoscopic'' scale.
More precisely, we first fix an  $\epsilon\in(0,1)$ and define stopping times $\{T_k^{\epsilon,n}\}_{k\geq 0}$ for the ERW by 
\begin{equation}\label{tenk}
  T^{\epsilon,n}_0=0,\ \tenk=T^{\epsilon,n}_{k,+}\wedge
  T^{\epsilon,n}_{k,-},\ \mbox{ where } T^{\epsilon,n}_{k,\pm}=\inf\{j>T^{\epsilon,n}_{k-1}:\,X_j-X_{T^{\epsilon,n}_{k-1}}=\pm\fl{\epsilon\sqrt{n}}\},\
  k\in\N.
\end{equation}
(We refer to $\sqrt{n}$ as the macroscopic scale for the ERW and $\epsilon \sqrt{n}$ as the mesoscopic scale since we will later take $\epsilon \to 0$.)
First of all, we show that the local time profile of the ERW when it first reaches $\fl{-\epsilon\sqrt{n}}$, converges when scaled by $\fl{\epsilon\sqrt{n}}$ to a concatenation of Bessel squared processes of generalized dimension $2(1-\theta^-)$ and $2\theta^+$ just as in the Ray-Knight Theorems for BMPE.
This then allows us to couple the first step of the induced mesoscopic walk $X_{T_1^{\epsilon,n}}$ with the first macroscopic step of a BMPE.  

Yet the most challenging and technical part of the paper
is in an extension of  this
coupling via a Ray-Knight approach to subsequent steps of the induced
mesoscopic walk.  In order to do this, we need rather strong control
on the distribution of the remaining cookie environment at the
stopping times $T_k^{\epsilon,n}$. That is, while initially the
distribution of first cookies was independent with marginal
$\eta$ at each site, after the walk has run for
a long time the distribution of the next cookie to be used at sites
within the range of the walk is no longer $\eta$ and no longer
necessarily independent for different sites.  However, we are able to
approximate the
distribution of next cookies in a convenient
way. There are two distributions $\pi^+$ and $\pi^-$, which we can
explicitly identify (see Section~\ref{sec:parameter} and \cite[Lemma 3.2 and (37)]{kpERWMCS}), such that the next cookie distribution is
approximately i.i.d.\ $\pi^-$ between the running minimum and the
current location, approximately i.i.d.\ $\pi^+$ between the current
location and the running maximum, and i.i.d.\ $\eta$ outside of the
range of the walk.
Moreover, recalling that the parameters $\theta^+= \theta^+(\eta)$ and
$\theta^-= \theta^-(\eta)$ depend on the initial distribution $\eta$
of first cookies and since it follows from \cite[Corollary 3.5 and
equation (38)]{kpERWMCS} that $\theta^+(\pi^+) = 0$ and
$\theta^-(\pi^-) = 0$, from this we are able to show that the local
time process of the ERW after time $T_{k-1}^{\epsilon,n}$ and up
until time $T_{k,-}^{\epsilon,n}$ can be approximated by a
concatenation of Bessel squared processes of dimensions
$2(1-\theta^-)$, $2$, $0$, and $2\theta^+$ just as in the case of
the BMPE shown on the
  right in Figure~\ref{fig:RayKnight}.  A similar result can be
obtained for the local time process  between times $T_{k-1}^{\epsilon,n}$ and $T_{k,+}^{\epsilon,n}$.


\begin{rem}
  One can, in fact, check using the definitions of the distributions
  $\pi^+$ and $\pi^-$ that the initial distribution of cookies that
  are independent and distributed according to $\pi^-$ on
  $(-\infty,-1] \cap \Z$, $\pi^+$ on $[1,\infty) \cap \Z$, and
  $\frac{1}{2} \pi^- + \frac{1}{2} \pi^+ =  \mu$ at $0$ is
  stationary for the
  cookie environment seen from the walker. That is, if this is the
  distribution of the initial first cookies then at any later time the
  remaining next cookies, shifted so that the current location of the
  random walk is taken to the origin, has the same distribution. We
  did not use this fact in our proof, nor are we able to even see how
  it could be used to prove convergence of the ERW to a BMPE. However,
  it may be possible use Kipnis-Varadhan techniques to prove that the
  path of an ERW with this stationary initial configuration of cookies
  converges in distribution to a Brownian motion. Again, since
  $\theta^+(\pi^+) = 0$ and
  $\theta^-(\pi^-) = 0$ this is consistent with
  what would be expected from our main results since a $(0,0)$-BMPE is
  just a standard Brownian motion.
\end{rem}

\begin{rem}
  It is interesting to note, and somewhat surprising, that while there
  is an asymmetry to the cookie environment in the interior of the
  range of the walk (approximately distribution $\pi^-$ to the left
  and $\pi^+$ to the right) this asymmetry is not seen in the
    scaling limit which behaves
    like a Brownian motion in the interior of its range. We note,
    however, that the steps of the walk in the interior of the range
    are still highly correlated and this is reflected in the presence
    of the scaling parameter $a$ in the statement of Theorem
    \ref{main} which in general is not equal to 1.
\end{rem}

\subsection{Outline of the paper.}
The paper is organized as follows.  In Section~\ref{sec:BLP} we define
branching-like processes (BLPs) and recall from \cite{kpERWMCS} their
fundamental properties.  These processes are essential to describe the
behavior of local times of ERWs and to apply a Ray-Knight approach.
Section~\ref{sec:BMPE} discusses some basic properties of BMPEs,
including the Ray-Knight Theorems and couplings of BMPEs started
from slightly  different initial conditions.
In Section~\ref{sec:disc} we construct various discretizations of
BMPEs which will be used in Section~\ref{sec:coup} for coupling with
our ERW.

  Section~\ref{sec:tb}, for the most part, discusses diffusion approximations
  for the local times which are needed to relate exiting probabilities
  of ERWs to those of BMPEs.  It establishes ``classical'' results
  where the Markov chains that generate the cookie stacks initially
  have product distribution. This is then extended to the case when
  the initial values of the Markov chains are regular in a scale that
  is small compared to the macroscopic scale.  Section~\ref{sec:ei}
  concerns the regularity of cookies environments in two ways.
  Firstly, we prove that throughout time scale of order $n$ the
  next states of the cookie Markov chains are to scale $n^{1/4}$
  distributed like (in a crude averaging sense) $\pi^+$-product measure
  between the current position of the ERW and the current
  maximum and like $\pi^-$-product measure  between the current minimum
  and the current position of the ERW.  Secondly, we show that at each time $\tenk$ 
  the distribution of the next states of the cookie Markov chains
  around points
  $(\fl{\epsilon \sqrt n}\Z)\setminus \{X_{\tenk}\}$ is very
  close to appropriate product measures in a total variation sense.
  These two results permit us to argue that the past does not play too
  big a role in the future at macroscopic level.

  Thereafter the paper works to implement the argument that
  $\{n^{-1/2}X_{\tenk}\}_{k\ge 1}$ evolves like a
    discretized BMPE and that the times $\tenk$ are
    well-controlled.  More precisely, Section~\ref{sec:coup}, drawing
    on diffusion approximations and the ``environmental'' results of
    Section~\ref{sec:ei}, constructs a coupling of our ERW and
    discretized BMPE
    , while the final Section~\ref{sec:fin} establishes a law of large
    numbers for $\{\tenk\}_{k\ge 1}$, which enables us to
    pass from the discretized process to the general renormalized
    process and complete the proof of Theorem~\ref{main}.

  The proofs of many results that are of a technical nature and that
  are easy to believe are placed in an appendix, since the reader may
  wish to omit them on a first pass.


\subsection{Notation}
For the convenience of the reader, we collect here some notation that will be used throughout the  paper.
\begin{enumerate}[-]
\item We  write $x_+$ for  $x\vee 0$ and $\Z_+$ for $\N\cup\{0\}$.
\item For any $a<b$ we will let $\bint{a,b} = [a,b] \cap \Z$. 
Similarly, we will use $\llbracket a,\infty)$  for $[a,\infty) \cap \Z$. 

\item We write $\|\mu_1-\mu_2\|_{TV}$ for the total variation
    distance between two measures $\mu_1$ and $\mu_2$. For two random
  variables $V$ and $U$,
  $d_{TV}(V,U)$ will denote the total variation distance between their
  distributions.

\item   We denote by $P_\gamma$ the averaged probability measure when
  the first cookies are i.i.d.\ with marginal distribution
  $\gamma$. We shall typically drop the subscript and write $P$
  instead of $P_\eta$ if $\gamma=\eta$, the original initial
  distribution of the first cookies.

\item The local time of the ERW at $x$ by time $n$
is given by
\[{\cal L}(0,x)=0,\quad {\cal L}(n,x)=\sum_{j=0}^{n-1}\ind{X_j=x},\ \
  n\in\N,\ \ x\in\Z.\]

\item For a stochastic process $Z=(Z_n)_{n\ge 0}$ and $a\in\mathbb{R}$ we
define the hitting times
\[\tau^Z_a=\inf\{n\ge 0:\ Z_n\ge a\},\quad \sigma^Z_a=\inf\{n\ge 0:\
  Z_n\le a\},\quad \sigma_{m,a}^Z = \inf\{ n\geq m: \, Z_n \leq a\},\]
with $\inf \emptyset=\infty$.  For instance
$\sigma_{-m}^X \wedge \tau_m^X$ will denote the exit time of the
excited random walk from the interval $(-m,m)$.  A similar definition
will apply to hitting times of processes in continuous time. We shall
occasionally drop the superscript whenever there is no ambiguity about
which process we are talking about.
\item With mild abuse of terminology we shall refer to $\{R^x_1\}_{x\in\Z}$
as ``the first cookies''. The expression ``the first cookies at time
$\tau$'' will refer to the collection
$\{R^x_{L(x,\tau)+1}\}_{x\in \Z}$ for a stopping time $\tau$ and will
denote the next states of the cookie Markov chains at time $\tau$.
\end{enumerate}

\section{The branching-like processes (BLPs)}\label{sec:BLP}

In this section we introduce four Markov chains $U^+,U^-,V^+,$ and
$V^-$ taking values on $\Z_+$ which are useful in analyzing excited
random walks. We will refer to these Markov chains as the
\emph{branching-like processes} (BLPs) due to a similarity in
structure to Galton-Watson branching process (or branching processes
with migration).  We will first describe the transition probabilities
of the four BLPs and then give a brief description of their relation
to the directed edge local times of excited random walks.

From this point on we will shift the meaning of {\it cookie} and {\it
  cookie environments}.  Henceforth, the cookie at site $x$ at time $k$
will refer to $R^x_{\cal{L} (x,k) +1}$.  In particular, given time
$k$, the first cookie environment for the ERW will refer to the
variables $\{R^x_{\cal{L} (x,k) +1}\}_{x \in \Z}$.  Of course, the
distinction between the former usage of {\it cookie} and the present
and future usage is moot if $p: \{1,2 \ldots N \} \rightarrow (0,1)$
is injective. We note that while the (present sense) cookie
environment shifted by the current location of the ERW, is always a
Markov chain, in general the former cookie environment need not have
this property.  Given the first cookie at site $x$ at time $k$, the
evolution of $\{R^x_{\cal{L} (x,n) +1}\}_{n \ge k}$ is independent of
the past history of $X$ (again unlike the cookie evolution in the
previous sense).

We will describe the distribution of the four BLPs given the
distribution of the first cookies $\{ R^x_1\}_{x \in \bint{\ell,r}}$
on an interval $\bint{\ell,r} \subset \Z$.  The distribution of the
first cookies can either be deterministic or random with
  independence over the sites (e.g., $\{R^x_1\}_{x\in \Z}$ can be
i.i.d.\ with distribution $\eta$).  Given the distribution of the
first cookies on $\bint{\ell,r}$, we can construct the BLP $U^+$ as
follows.  First, we generate the remainder of the environment
$\omega_x(j)=p(R^x_j), j\geq 1$, at each site $x\in \bint{\ell,r}$ by
letting $\{R^x_j\}_{j\geq 1}$ be a realization of the Markov chain in
Assumption \ref{asm:Markov} but with $R^x_1$ having the prescribed
initial distribution. The realizations of the Markov chains at
different sites are independent.  Next, given the entire cookie
environment on $\bint{\ell,r}$, we let
$\{\xi^x_j \}_{x\in \bint{\ell,r}, \, j\geq 1}$ be a family of
independent Bernoulli random variables with
$\xi^x_j \sim \text{Ber}(\omega_x(j))$.  Finally, we let the BLP $U^+$
started with initial value $U^+_0 = m \in\Z_+$ be defined as follows.
 \[U^+_0=m,\quad
 U^+_{i} = \inf \left\{ k\geq 0: \, \sum_{j=1}^{k+U^+_{i-1}} (1-\xi^{\ell+i}_j) = U^+_{i-1} \right\}\ \ \text{for }i\in\{1,2,\dots,r-\ell\}. 
 \]
That is, $U_{i}^+$ is the number of ``successes''  before the $U_{i-1}^+$-th ``failure''  in the sequence of Bernoulli trials $\{\xi^{\ell+i}_j\}_{j\geq 1}$. 
The BLP $V^+$ is defined similarly, but instead we have 
\[
 V^+_0=m,\quad V^+_{i} = \inf \left\{ k\geq 0: \, \sum_{j=1}^{k+V^+_{i-1}+1} (1-\xi^{\ell+i}_j) = V^+_{i-1} +1 \right\}\ \ \text{for }i\in\{1,2,\dots,r-\ell\}, 
\]
so that $V_{i}^+$ is the number of successes before the $(V_{i-1}^+ + 1)$-th failure in the sequence $\{\xi^{\ell+i}_j\}_{j\geq 1}$.
The BLPs $U^-$ and $V^-$  are constructed similarly but reversing the role of ``successes'' and ``failures'' and using the cookie stacks from right to left instead. That is, given the initial values of $U^-_0$ or $V^-_0$ we let 
\[
 U^-_{i} = \inf \left\{ k\geq 0: \, \sum_{j=1}^{k+U^+_{i-1}} \xi^{r-i}_j = U^-_{i-1} \right\}\ \ \text{for }i\in\{1,2,\dots,r-\ell\},
\]
and
\[
 V^-_{i} = \inf \left\{ k\geq 0: \, \sum_{j=1}^{k+V^+_{i-1}+1} \xi^{r-i}_j = V^-_{i-1} + 1 \right\}\ \ \text{for }i\in\{1,2,\dots,r-\ell\}. 
\]

Before giving the connection of the BLPs with the excited random walk,
we first mention some properties of the BLPs that we will use
throughout the paper.
\begin{enumerate}
\item Because the Markov chains $\{R^x_j\}_{j\geq 1}$ are independent
  at different sites, it follows that all four of the BLPs $U^\pm$ and
  $V^\pm$ are Markov chains.  For general first cookie conditions the
  BLPs are time inhomogeneous Markov chains with the transition
  probabilities at different times depending on the distribution of
  the first cookies at different sites, but if the initial
  distribution $\{R_1^x\}_x$ is i.i.d.\ then the BLPs are time
  homogeneous Markov chain. 
 \item The processes $U^+$ and $U^-$ have $0$ as an absorbing state. In contrast, $V^+$ and $V^-$ are irreducible Markov chains on $\Z_+$. 
 \item The BLPs all have a natural monotonicity property with respect to the initial condition. If $Z$ and $Z'$ are two instances of the same BLP started from the same first cookie environments but with different initial conditions $Z_0 = k < k' = Z_0'$, then our construction above provides a coupling  such that $Z_i \leq Z_i'$ for all $i$ (as long as  both processes use the same Bernoulli random variables $\{\xi_x^j\}_{x,j}$). 
 \end{enumerate}

 We now explain the connection of the BLPs to the study of excited
 random walks.  For any $\ell \geq 1$ and $x\geq -\ell$ let
\begin{equation}\label{downsteps}
 \mathcal{E}^{(-\ell)}_x = \sum_{i=0}^{\sigma^X_{-\ell}-1} \ind{X_i = x, \, X_{i+1} = x+1}
\end{equation}
be the number of steps the ERW takes from $x$ to $x+1$ prior to the first visit to $-\ell$. Then, it can be seen that the sequence $\{ \mathcal{E}_x^{(-\ell)} \}_{x\geq -\ell}$ has the same distribution as a  concatenation of a $V^+$ and $U^+$ process.\footnote{Implicitly we are using here that under the assumptions of this paper the walk is recurrent. Thus, for $\P$-a.e.\ cookie environment $\omega$ we have that $P_\omega( \sigma_{-\ell}^X < \infty) = 1$.} More precisely, 
\begin{itemize}
 \item $(\mathcal{E}^{(-\ell)}_{-\ell},  \mathcal{E}^{(-\ell)}_{-\ell+1}, \cdots, \mathcal{E}^{(-\ell)}_{-1}, \mathcal{E}^{(-\ell)}_{0} )$ has the same distribution as $(V^+_0,V^+_1,\ldots,V^+_\ell)$ started with $V_0^+ = 0$ and using the cookie environment on the interval $\bint{-\ell,0}$. 
 \item Given $\mathcal{E}^{(-\ell)}_{0} = m$ the sequence $(\mathcal{E}^{(-\ell)}_0,\mathcal{E}^{(-\ell)}_1,\mathcal{E}^{(-\ell)}_2,\ldots )$ has the same distribution as \linebreak $(U^+_0,U^+_1,U^+_2,\ldots)$ started with $U^+_0 = m$ and using the cookie environment on the interval $\llbracket 0,\infty)$. 
\end{itemize}
(See \cite{kzPNERW} or \cite{kpERWMCS} for more details.)  Let
$\{Z_i\}_{i\ge 0}$ denote the concatenation of the above $V^+$ and
$U^+$ processes.  This connection of the ERW with the BLPs allows us
to restate an exit distribution problem for the ERW as a question
about the process $Z$. Indeed, one sees that the random walk exits the
interval $(-\ell,\ell)$ at $-\ell$ if and only if the process
$\{ \mathcal{E}_x^{(-\ell)}\}_{x\geq \ell}$ dies out before $x=\ell$.
Therefore, letting
$\sigma^Z_{\ell,0} = \inf\{i \geq \ell:\, Z_i = 0 \}$ we have that
\[
 P( \sigma_{-\ell}^X < \tau_\ell^X ) = P(\sigma_{\ell,0}^Z < 2\ell ). 
\]
Moreover, since $\sigma_{-\ell}^X = \ell + 2 \sum_{x\geq -\ell} \mathcal{E}^{(-\ell)}_x$,  it follows that conditioned on the event $\{ \sigma_{-\ell}^X < \tau_\ell^X \}$ the exit time $\sigma_{-\ell}^X \wedge \tau_\ell^X$ for the ERW has the same distribution as $\ell + 2 \sum_{i=0}^{\sigma_{\ell,0}^Z-1} Z_i $ conditioned on the event $\{ \sigma_{\ell,0}^Z < 2\ell \}$. 

In this paper we will often be interested in similar exit distribution
and exit time problems for the ERW but conditioned on some knowledge
of the walk up to a certain time.  For instance, suppose the random
walk has already evolved for some amount of time $T$ (either a
deterministic time or a stopping time for the walk) and that we know
by this time the maximum and minimum are $S= \max_{k\leq T} X_k$ and
$I = \min_{k\leq T} X_k$, the current position of the walk is
$X_T = z \in \bint{I,S}$, and we also know the values of the next
cookies to be used at all sites $x \in \bint{I,S}$ that have been
visited thus far. Given all this information, we wish to know after
time $T$ whether the walk will reach $z-\ell$ or $z+\ell$ first. This
can be translated to a problem about concatenated BLPs as follows.
Let $\{Z_k\}_{k=0}^{2\ell}$ be a concatenation of BLPs such that
\begin{itemize}
 \item $(Z_0,Z_1,\ldots,Z_\ell)$ is a $V^+$ process started from $V^+_0 = 0$ and using the remaining first cookie environment on $\bint{z-\ell,z}$.
 \item Given $Z_\ell=m$, the process $(Z_\ell,Z_{\ell+1},\ldots,Z_{2\ell})$ is a $U^+$ process started from $U^+_0 = m$ and using the remaining first cookie environment on $\bint{z,z+\ell}$. 
\end{itemize}
As above, questions about the exit distribution and exit time of the walk after time $T$ and until hitting $z-\ell$ or $z+\ell$ can be translated to questions about the concatenated BLP $Z$. 

This illustrates how the BLPs $U^+$ and $V^+$ arise in connection with
the study of ERW. The BLPs $U^-$ and $V^-$ arise in a somewhat similar
manner. For instance, let
$\mathcal{D}^{(\ell)}_x = \sum_{i=0}^{\tau^X_{\ell}-1} \ind{X_i = x,
  \, X_{i+1} = x-1}$ be the number of steps from $x$ to $x-1$ prior to
the walk first reaching $\ell$. Then, one can see that the process
$(\mathcal{D}^\ell_\ell,
\mathcal{D}^\ell_{\ell-1},\cdots,\mathcal{D}^\ell_0)$ has the same
distribution as the concatenation of a $V^-$ process and a $U^-$
process.  One can use this concatenated BLP process to study the
probability the ERW exits an interval to the right and the
distribution of the time it takes the walk to exit an interval on the
event that it exits to the right.

Because arguments involving the BLPs $U^-$ and $V^-$ are symmetric to
those involving $U^+$ and $V^+$, we will give all proofs only for the
processes $U^+$ and $V^+$.

\subsection{Parameters associated to the BLPs}\label{sec:parameter}

The parameters $\theta^+$ and $\theta^-$ which appear in the statement
of the main results are defined in terms of the BLPs. We close this
section by giving the description of these parameters along with
several other related parameters that will be used throughout the
paper.  While explicit formulas for all  parameters
discussed below can be found in \cite[equation (37)]{kpERWMCS}, we
restrict our attention here to the probabilistic definition of these
parameters in terms of the BLPs.

Let $\mathbf{r}^+ = (r^+(i))_{1\leq i\leq N}$ and $\mathbf{r}^- = (r^-(i))_{1\leq i\leq N}$ be the vectors with entries  
\begin{equation}\label{rpmdef}
 r^+(i) = \lim_{n\to \infty} E[ U^+_1 - n \mid U_0^+ = n, \, R^1_1 = i ], 
\quad \text{and}\quad 
 r^-(i) = \lim_{n\to \infty} E[ U^-_1 - n \mid U_0^- = n, \, R^{-1}_1 = i ].
\end{equation}
That is, $r^\pm(i)$ gives the limit of the expected ``drift'' of the first step of the process $U^{\pm}$ when the first cookie to be used is of type $i$ and the  BLP is started from a very large initial value $U^\pm_0 = n$. 
Next let 
\begin{equation}\label{nudef}
 \nu = \lim_{n\to\infty} \frac{\Var(U^+_1 \mid U^+_0 = n, \, R^1_1 = i)}{n} = \lim_{n\to\infty} \frac{\Var(U^-_1 \mid U^-_0 = n, \, R^{-1}_1 = i)}{n}. 
\end{equation}
The proof that the limits in \eqref{rpmdef} and \ref{nudef} exist and that the limits in \eqref{nudef} are equal and do not depend on the distribution of the first cookie can be found in \cite{kpERWMCS}. Moreover, it was shown in \cite[Proposition 4.3 and Lemma 4.4]{kpERWMCS} that these parameters have the following relation: 
\begin{equation}\label{vecr}
 r^+(i) + r^-(i) =\frac{\nu}{2} -1 , \quad \forall i\in \{1,2,\ldots,N \}. 
\end{equation}
Finally, the parameters $\theta^+$ and $\theta^-$ are defined by 
\begin{equation}
  \label{thedef}
  \theta^+ = \theta^+(\eta)=\frac{2 \eta \cdot \mathbf{r}^+}{\nu}
\quad \text{and}\quad 
\theta^- = \theta^-(\eta)= \frac{2 \eta \cdot \mathbf{r}^-}{\nu}. 
\end{equation}
Note that the equations \eqref{vecr} and \eqref{thedef} imply that
$\theta^+ + \theta^- = 1 - \frac{2}{\nu}$ and, thus,
$\theta^+ + \theta^- < 1$.  The relevance of the parameters $\theta^+$
and $\theta^-$ is that the BLPs have scaling limits which are Bessel
squared processes, and the parameters $\theta^\pm$ identify the
generalized ``dimension'' of these Bessel squared processes (this will
be detailed further in Sections \ref{daiid} and \ref{0.25}).

It should be noted that $\mathbf{r}^+$, $\mathbf{r^-}$ and $\nu$
depend only on the transition matrix $K$ and the function $p(\cdot)$
which appear in the description of markovian cookie stacks in
Assumption \ref{asm:Markov}. The parameters $\theta^+$ and $\theta^-$,
however, depend not only on $K$ and $p(\cdot)$ but also on the initial
distribution $\eta$ of the first cookies.  

  Finally, we will introduce two distributions $\pi^+$ and
  $\pi^-$  which depend only on $K$ and $p(\cdot)$ and play an
  important role in this paper.  Let
$\pi^\pm = (\pi^\pm(i))_{1\leq i\leq N}$ be defined by\footnote{The
  existence of these limits in the definition of $\pi^\pm$ and the
  fact that the limits do not depend on the first cookie
  distribution can be found in \cite[Section 3.1 and (37)]{kpERWMCS}.}
\begin{equation*}
 \pi^\pm(i) = \lim_{n\to\infty} P\left( R^{\pm1}_{U^\pm_1 + n + 1} = i \mid U_0^\pm = n, \, R^{\pm1}_1 = i' \right)
.
\end{equation*}
In words, using the sequence of Bernoulli random variables
$\{\xi^{\pm 1}_j\}_{j\geq 1}$, the distributions $\pi^+$ and $\pi^-$
give the limiting distribution of the next value in the underlying
Markov chain $\{R^{\pm 1}_j\}_{j\geq 1}$ immediately following the
$n$-th ``failure'' or ``success,'' respectively, as $n\to \infty$.

The relevance of the distributions $\pi^\pm$ is that they are good
approximations for the distribution of the remaining first cookie
environment at sites with a large local time (which is most sites in
the range). Indeed, for a site $x$ within the range of the walk but to
the right of the current location, since the last step from that site
was to the left (corresponding to a ``failure'' in a sequence
$\{\xi^x_j\}_{j\geq 1}$), the probability that the remaining first
cookie at $x$ is of ``type $i$'' can be approximated by $\pi^+(i)$.

Since we will at times be using the BLPs in cookie environments which
have first cookie distributions which are approximately $\pi^+$ or
$\pi^-$, it is important to note what the parameters $\theta^+$ and
$\theta^-$ are with these distributions on the first
cookies. It was shown in \cite[Corollary
  3.5]{kpERWMCS} that $\pi^+ \cdot \mathbf{r}^+ = 0$ and
  $\pi^- \cdot \mathbf{r}^- = 0$. Substituting these equations into
  \eqref{thedef} we get $\theta^+(\pi^+)= 0$ and
$\theta^-(\pi^-) = 0$.

\section{Brownian motion perturbed at extrema: preliminaries}\label{sec:BMPE}
Recall the notation \eqref{IS}.  Though in the introduction a
  BMPE had initial value $0$, henceforth the process triple
  $(I,W,S)\coloneqq \{(I(t),W(t),S(t))\}_{t\ge 0}$ is a Markov process
  which can be considered starting from any initial state
  $(\wu,w,\wo)$, $\wu\le w\le \wo$.  When BMPE starts from
  $(0,0,0)$ we shall call it {\em a standard BMPE}. We remark
    that a standard $(\alpha,\beta)$-BMPE \eqref{BMPEdef} inherits the
    scaling property of the Brownian motion: for every $c>0$ the
    process $\left\{c W\left(c^{-2}t\right)\right\}_{t\ge 0}$ is a
    standard BMPE with the same parameters.

For reference convenience we shall use $(\theta^+,\theta^-)$-BMPE in
place of $(\alpha,\beta)$-BMPE and assume throughout this section that
$\theta^+$ and $\theta^-$ are arbitrary real numbers strictly less
than 1. In later sections $\theta^+$ and $\theta^-$ will be fixed as
parameters of our ERW but in this section we only require that
$\theta^+,\theta^-<1$.


\subsection{Exit probabilities.} We shall define for $a\in\R$, $\wu\le w\le \wo$,
\begin{equation}\label{hitp}
  \tau_a(\wu,w,\wo)=\inf\{t>0:\ W(t)=a\mid (
  I(0),W(0),S(0))=(\wu,w,\wo)\}
\end{equation}
and for $\epsilon>0$
\begin{equation}
  \label{exit}
  \tau(\epsilon,\wu,w,\wo)\coloneqq \tau_{w-\epsilon}(\wu,w,\wo)\wedge\tau_{w+\epsilon}(\wu,w,\wo).
\end{equation}
These are respectively the first time $W$ hits $a$ and the first time
$W$ exits $(w-\epsilon,w+\epsilon)$ given that it started at
$(\wu,w,\wo)$. We also drop the arguments $\wu,w,\wo$ whenever
$(\wu,w,\wo)=(0,0,0)$.  Clearly, when
$\wu+\epsilon\le w\le \wo-\epsilon$,
$P(W(\tau(\epsilon,\wu,w,\wo))=w\pm\epsilon)=1/2$. In many other
cases, these probabilities can also be computed explicitly. The
following lemma can be found in \cite[Proposition 4(iii)]{pwPBM} (see
also Proposition 3 in \cite{pwPBM}).
\begin{lemma} \label{mel1}
  Let $W$ be a standard $(\theta^+,\theta^-)$-BMPE. Then for $a<0<b$
  \[P(\tau_a<\tau_b)=\frac{1}{B(1-\theta^+,1-\theta^-)}\int_0^{\frac{b}{b-a}}t^{-\theta^-}(1-t)^{-\theta^+}\,dt,\]
  where $B(\cdot,\cdot)$ is the beta function.
\end{lemma}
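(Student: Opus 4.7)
The plan is to reduce the computation of $P(\tau_a < \tau_b)$ to a well-known identity, and I see two natural routes.

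\emph{Route 1 (via Ray--Knight).} The event $\{\tau_a < \tau_b\}$ is precisely $\{S(\tau_a) < b\}$, since $\tau_a < \tau_b$ iff $W$ reaches $a$ before its running maximum reaches $b$. By the Ray--Knight theorem for BMPE \cite[Theorem 3.4]{cpyBetaPBM} (also described in the introduction of this paper and illustrated in Figure~\ref{fig:RayKnight}), the local time profile $\{\ell^W_{x,\tau_a}\}_{x \ge a}$ of the standard BMPE stopped at $\tau_a$ is, after a deterministic spatial scaling by $|a|$, the concatenation of (i) a squared Bessel process of dimension $2(1-\theta^-)$ on $[a,0]$ started at $0$ and (ii) a squared Bessel process of dimension $2\theta^+$ on $[0,\infty)$ started from the endpoint value of (i) and killed at $0$. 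The running maximum $S(\tau_a)$ coincides with the position at which the second BESQ hits zero. Hence
\[
P(\tau_a<\tau_b) \;=\; P\bigl(\text{BESQ}^{2\theta^+} \text{ started from } Z \text{ hits } 0 \text{ before time } b\bigr),
\]
where $Z = \ell^W_{0,\tau_a}$ has the explicit distribution of $\text{BESQ}^{2(1-\theta^-)}$ at time $|a|$ (a constant multiple of a $\chi^2$-variable). Computing the first-passage probability of $\text{BESQ}^{2\theta^+}$ to $0$ before reaching ``time'' $b$ is classical and gives an integrand $u^{\theta^+-1}e^{-\text{stuff}}$; integrating against the density of $Z$ and recognizing the resulting Laplace-type integral as a beta integral yields the stated formula after the change of variables $t=b/(b-a)$.

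\emph{Route 2 (scaling and ODE).} By the Brownian scaling property of the standard BMPE, $F(a,b):=P(\tau_a<\tau_b)$ depends only on the single parameter $p:=b/(b-a)\in(0,1)$. Set $g(p):=F(a,b)$. The plan is to derive the ODE satisfied by $g$ by conditioning on whether the walk first reaches $a+h$ or $b-h$ for a small $h$, then passing to the limit; the reflection structure in \eqref{BMPEdef} at the extrema translates into the boundary behavior $g(0^+)=0,\ g(1^-)=1$ and contributes the exponents $-\theta^\pm$ in the integrand. The candidate
\[
g(p) \;=\; \frac{1}{B(1-\theta^+,1-\theta^-)}\int_0^p t^{-\theta^-}(1-t)^{-\theta^+}\,dt
\]
can then be checked by direct differentiation to satisfy both the ODE and the boundary conditions.

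The main obstacle in Route 1 is performing the two probabilistic integrations in closed form and massaging the answer into the normalized incomplete beta form; this is essentially a beta-function identity, but is technically delicate when $\theta^+$ or $\theta^-$ is negative (the $\text{BESQ}$ dimension exceeds $2$). The main obstacle in Route 2 is justifying the ODE, since BMPE is not a classical diffusion on $\R$ but is Markovian only in the triple $(I,W,S)$; one must carefully exploit the fact that, starting at $(0,0,0)$, before either $\tau_a$ or $\tau_b$ occurs, the dynamics on the interior of $(a,b)$ is just Brownian motion and the perturbation terms only act at the extrema $I(t)$ and $S(t)$. Since the identity itself is standard in the literature on perturbed Brownian motion (Proposition 4(iii) of \cite{pwPBM}), the cleanest write-up is simply to invoke that result after matching conventions.
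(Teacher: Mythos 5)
Your final recommendation to simply invoke \cite[Propositions 3 and 4(iii)]{pwPBM} is exactly what the paper does — the lemma is stated with that citation and no independent proof. Your Route~1 via the Ray--Knight theorem is precisely the alternative derivation that the paper itself flags as possible in the remark following Corollary~\ref{exitc}, so your sketch of how one could reprove the identity in-house is well-calibrated.
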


\begin{cor} \label{roc1} Let $W$ be an $(\theta^+,\theta^-)$-BMPE starting
  from $(\wu,0,\wo)$. \\If $\wu\le a< 0\le \wo\le b$,
  then
  \[P(\tau_b(\wu,0,\wo)<\tau_a(\wu,0,\wo))=\frac{-a}{b-a}\left(\frac{b-a}{\wo-a}\right)^{\theta^+}.\]
  If $a\le \wu\le 0< b\le \wo$
  then
  \[P(\tau_a(\wu,0,\wo)<\tau_b(\wu,0,\wo))=\frac{b}{b-a}\left(\frac{b-a}{b-\wu}\right)^{\theta^-}.\] 
\end{cor}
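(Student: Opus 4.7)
My plan is to use the strong Markov property of the triple $(I,W,S)$ to split the event $\{\tau_b<\tau_a\}$ at the hitting time of the initial maximum $\wo$, reducing the problem to an exit probability for a BMPE in which only one of the two perturbation parameters is nontrivial. That reduced problem is then handled by Lemma~\ref{mel1}. I describe the argument for Case~1 in detail; Case~2 follows by applying the same reasoning to $-W$, which is a $(\theta^-,\theta^+)$-BMPE started from $(-\wo,0,-\wu)$, and then relabeling.

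First I would observe that before $W$ first touches $\wo$ or first drops below $\wu$, both the running maximum and running minimum are constant (equal to $\wo$ and $\wu$ respectively), so the defining equation \eqref{BMPEdef} shows that $W$ evolves as a standard Brownian motion in $(\wu,\wo)$. Since $\wu\le a<0<\wo$, the standard Brownian motion exit formula on the subinterval $(a,\wo)$ gives
\[\P\bigl(\tau_\wo<\tau_a\mid (\wu,0,\wo)\bigr)=\frac{-a}{\wo-a}.\]
Moreover, because $b\ge\wo$, the event $\{\tau_b<\tau_a\}$ is contained in $\{\tau_\wo<\tau_a\}$, so $\P(\tau_b<\tau_a)=\P(\tau_\wo<\tau_a)\cdot\P(\tau_b<\tau_a\mid \tau_\wo<\tau_a)$.

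Next, by the strong Markov property at $\tau_\wo$ and translation invariance, the post-$\tau_\wo$ process $W(\tau_\wo+\cdot)-\wo$ is a $(\theta^+,\theta^-)$-BMPE $W'$ started from $(\wu-\wo,0,0)$, and the conditional probability becomes $\P(\tau^{W'}_{b'}<\tau^{W'}_{a'})$ with $a'=a-\wo<0$ and $b'=b-\wo\ge 0$ satisfying $\wu-\wo\le a'$. To evaluate this I would couple $W'$ with a $(\theta^+,0)$-BMPE $\widetilde W$ started from $(0,0,0)$ and driven by the same underlying Brownian motion. As long as $W'(t)>\wu-\wo$, the running minimum of $W'$ remains at its initial value $\wu-\wo$ and the min-perturbation contribution to $W'$ is identically zero, so $W'$ and $\widetilde W$ satisfy the same equation and hence coincide by the pathwise uniqueness of BMPE \cite{pwPBM,cdPUPBM}. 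Since $a'\ge \wu-\wo$, this coupling persists on the event $\{\tau^{W'}_{b'}<\tau^{W'}_{a'}\}$ and, symmetrically, on $\{\tau^{\widetilde W}_{b'}<\tau^{\widetilde W}_{a'}\}$, giving $\P(\tau^{W'}_{b'}<\tau^{W'}_{a'})=\P(\tau^{\widetilde W}_{b'}<\tau^{\widetilde W}_{a'})$. Now Lemma~\ref{mel1} applied with $\theta^-=0$, together with $B(1-\theta^+,1)=1/(1-\theta^+)$ and an elementary integration, gives
\[\P\bigl(\tau^{\widetilde W}_{b'}<\tau^{\widetilde W}_{a'}\bigr)=\left(\frac{-a'}{b'-a'}\right)^{1-\theta^+}=\left(\frac{\wo-a}{b-a}\right)^{1-\theta^+}.\]
Multiplying this by the factor $\frac{-a}{\wo-a}$ from the previous paragraph and simplifying produces exactly $\frac{-a}{b-a}\bigl(\frac{b-a}{\wo-a}\bigr)^{\theta^+}$. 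The degenerate boundary cases $b=\wo$ (where the second factor is trivially $1$) and $a=\wu$ cause no trouble.

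The hard part of the argument is the coupling step: one must justify both that the min-perturbation truly does not contribute on the event of interest and that the two processes can be genuinely identified before the first exit. This uses pathwise uniqueness of \eqref{BMPEdef} from \cite{pwPBM,cdPUPBM}; once that is granted, everything else is a direct computation from Lemma~\ref{mel1}.
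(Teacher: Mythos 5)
Your proof is correct and follows essentially the same route as the paper: decompose at the first hitting time of the interior extremum (you split at $\tau_{\wo}$ for Case 1, the paper splits at $\tau_{\wu}$ for Case 2), use gambler's ruin for the standard‐BM phase, then reduce via the strong Markov property to a one‐sided BMPE and apply Lemma~\ref{mel1}. The only cosmetic difference is that you spell out the pathwise‐uniqueness coupling that identifies the post‐$\tau_{\wo}$ process with a $(\theta^+,0)$-BMPE, which the paper simply asserts in one line; otherwise the two arguments are the same.
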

\begin{proof}
  We shall prove the second statement, the first one is obtained in a
  symmetric way.

  Observe that to reach $a$ before $b$ the process $W$ has to reach
  $\wu$ before $b$ and that between $\wu$ and $b$ the process $W$ behaves
  simply as a standard Brownian motion. Using this observation and the
  Markov property of the triple $(I,W,S)$ we get
  \[P(\tau_a(\wu,0,\wo)<\tau_b(\wu,0,\wo))=\frac{b}{b-\wu}\,P(\tau_a(\wu,\wu,\wo)<\tau_b(\wu,\wu,\wo)).\]
  Next, note that the last probability is equal to the probability that
  a standard $(0,\theta^-)$-BMPE reaches $a-\wu$ before $b-\wu$. Applying
  Lemma~\ref{mel1} we obtain the desired result.
\end{proof}

\subsection{Ray-Knight theorems for BMPE}

As we described in the introduction, to approximate exit probabilities
and the exit time of our ERW from an interval we shall use an approach
based on edge local time BLPs. In this subsection we discuss the
continuous counterpart of these results in more detail.

As noted in the Introduction (see, for example, \cite{cpyBetaPBM}) the
local times of $(\theta^+,\theta^-)$-BMPE satisfy analogs of the first
and second Ray-Knight theorems. These theorems involve squared Bessel
processes of generalized dimensions $\dim\in\R$ which we shall denote
by BESQ$^\dim$. BESQ$^\dim$ process starting at $y\ge 0$ is a unique
strong solution of the SDE (see, for example, \cite[Chapter
XI]{ryCMBM} for $\dim\ge 0$ and \cite[Section 3]{gySGBP} for $\dim<0$)
\begin{equation}
  \label{besq0}
  y(t)=y+\dim t+2\int_0^t\sqrt{|y(s)|}\,dB(s).
\end{equation}
When $\dim\ge 0$ and $y(0)\ge 0$, the solution $y(s)$ of the above equation
is always non-negative, and the absolute value in \eqref{besq0} can
be simply dropped. We recall that when $\dim\ge 2$ the process
$\{y(t)\}_{t\ge 0}$ with $y(0)\ge 0$ is strictly positive for all
$t>0$ with probability 1. When $\dim <2$ then with probability 1 the
process hits zero in finite time. Up to this time, $\tau^y_0$, we
also can drop the absolute value even if $\dim<0$.

In this paper we will start with $y(0)\ge 0$ and stop the process with
$\dim\le 0$ at time $\tau^y_0$. This means that we are always in
the setting when $|y(s)|=y(s)$ in \eqref{besq0}. However, for
convenience we often use $(y(s))_+$ instead. With this choice, when
$\dim\le 0$, after time $\tau^y_0$ the process continues
degenerately as $y(s+\tau^y_0)=\dim s\le 0$ for all $s\ge 0$. We
continue to refer to solutions of
\[y(t)=y+\dim t +2\int_0^t\sqrt{(y(s))_+}\,dB(s),\quad y(0)=y\ge 0,\]
for any $\dim\in\R$ as BESQ$^\dim$. This definition coincides with
\eqref{besq0} for all $\dim\ge 0$ and $y\ge 0$ and for all $\dim<0$
and $y\ge 0$ up to $\tau^y_0$. 

Denote by $(\ell^W_{x,t})_{x\in\R,t\ge 0}$ the jointly continuous
family of local times of $(\theta^+,\theta^-)$-BMPE $W$. The starting
triple for $W$ will not be reflected in the notation. It will be given
explicitly in each case.

The following proposition states Ray-Knight theorems for BMPE in the
most convenient form for our purposes.
\begin{prop}[\cite{cdhULL}, Proposition 2.1]\label{RNbmpe}
  Let $W$ be a $(\theta^+,\theta^-)$-BMPE starting from $(\wu,w,\wo)$,
  $0\le \wu\le w\le \wo$, and $\tau^W_0=\inf\{t\ge 0: W(t)=0\}$. Then
  the local time process $\{\ell^W_{x,\tau^W_0}\}_{x\ge 0}$ has the same
  law as $\{y(x\wedge \sigma^y_{w,0})\}_{t\ge 0}$ where $\{y(x)\}_{x\ge 0}$ is
  the unique strong solution of the
  equation
  \[y(x)=2\int_0^x\sqrt{(y(s))_+}\,dB(s)+\int_0^x(2(1-\theta^-)\ind{0\le s\le
      \wu}+2\ind{\wu\le s\le w}+2\theta^+\ind{s\ge \wo})\,ds.\] 
In words, the process
  $\{ \ell^W_{x,\tau^W_0}\}_{x\ge 0}$ is an inhomogeneous Markov process
  which is a BESQ$^{2(1-\theta^-)}$ on $[0,\wu]$, a BESQ$^{2}$ on
  $[\wu,w]$, a BESQ$^{0}$ on $(w,\wo]$ and a BESQ$^{2\theta^+}$ on
  $[\wo,\infty)$, absorbed at its first zero after $w$.
\end{prop}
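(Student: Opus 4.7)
The plan is to derive the SDE satisfied by $y(x):=\ell^W_{x,\tau^W_0}$ in the spatial variable $x$, then identify $y$ with the stated BESQ concatenation by pathwise uniqueness.

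First I would apply Tanaka's formula to $(W(t)-x)_+$ at each $x\ge 0$:
\[
 (W(t)-x)_+ = (w-x)_+ + \int_0^t\ind{W(s)>x}\,dW(s) + \tfrac{1}{2}\ell^W_{x,t},
\]
and substitute $dW=dB+\theta^+dS+\theta^-dI$. Since $dS$ is supported on $\{W=S\}$ with $S$ non-decreasing from $\wo$, and $dI$ on $\{W=I\}$ with $I$ non-increasing from $\wu$, a direct evaluation gives $\int_0^t\ind{W>x}\,dS=(S(t)-(x\vee\wo))_+$ and, at $t=\tau^W_0$ (where $I=0$), $\int_0^{\tau^W_0}\ind{W>x}\,dI=-(\wu-x)_+$. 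Stopping at $\tau^W_0$ (so $W=0$) collapses Tanaka's identity to the pathwise formula
\[
 \tfrac{1}{2}y(x) = -(w-x)_+ - \theta^+(S(\tau^W_0)-(x\vee\wo))_+ + \theta^-(\wu-x)_+ - M(x),
\]
with $M(x):=\int_0^{\tau^W_0}\ind{W(s)>x}\,dB(s)$.

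Next, differentiating the three deterministic terms in $x$ produces a piecewise-constant drift equal to $2(1-\theta^-)$, $2$, $0$, $2\theta^+$ on $[0,\wu]$, $[\wu,w]$, $(w,\wo]$, $[\wo,S(\tau^W_0))$ respectively (and $0$ beyond), matching the proposition's SDE exactly. The stochastic term $-2M$ should then coincide with $2\int_0^x\sqrt{(y(s))_+}\,d\tilde B(s)$ for an appropriate Brownian motion $\tilde B$: one checks, using the filtration generated by the excursions of $W$ above level $x$ (parametrized by decreasing $x$, since new excursions appear as $x$ drops), that $x\mapsto -2M(x)$ is a continuous local martingale whose quadratic variation is $4\int_x^\infty y(u)\,du$ by the occupation density formula, and then Dambis-Dubins-Schwarz produces $\tilde B$. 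Pathwise uniqueness for the resulting concatenated BESQ SDE (classical for $\dim\ge 0$; \cite{gySGBP} when $2\theta^+<0$) identifies $y$ in law as the stated gluing, with initial condition $y(0)=0$ (since $W$ first visits $0$ at $\tau^W_0$) and absorbed at its first zero after $w$ (automatic, as BESQ$^0$ on $(w,\wo]$ is non-increasing and BESQ$^{2\theta^+}$ with $\theta^+<1$ a.s.\ reaches zero in finite time).

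The principal technical obstacle is the spatial martingale identification of $M$, which requires a careful choice of the backwards-in-$x$ filtration and verification that the Dambis-Dubins-Schwarz time-change yields a genuine Brownian motion adapted to it; this is the standard subtle step in Ray-Knight arguments, and for BMPE it is carried out in \cite{cdhULL,cpyBetaPBM}. A secondary point is continuity of $y$ through the transitions at $\wu,w,\wo$, which follows from the joint continuity of $(x,t)\mapsto\ell^W_{x,t}$ and permits the four BESQ pieces to be concatenated with matching boundary values.
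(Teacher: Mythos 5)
The paper does not prove this proposition; it cites it directly from \cite{cdhULL}, Proposition~2.1, so there is no in-paper proof to compare against. Your reconstruction follows the standard Tanaka/Ray--Knight route and is essentially correct: Tanaka's formula applied to $(W-x)_+$ at $\tau^W_0$, with $dW=dB+\theta^+\,dS+\theta^-\,dI$, gives exactly the pathwise identity you write, and the piecewise derivatives of the three deterministic terms do produce the drifts $2(1-\theta^-)$, $2$, $0$, $2\theta^+$ on $[0,\wu]$, $[\wu,w]$, $(w,\wo]$, $[\wo,S(\tau^W_0))$. The occupation-density formula gives the bracket $\int_x^\infty y(u)\,du$ for $M$, so the stochastic part matches $2\int\sqrt{(y)_+}\,d\tilde B$ once the spatial martingale property and the Dambis--Dubins--Schwarz time change are in place. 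The identification of $x\mapsto M(x)$ as a local martingale in the excursion filtration is, as you say, the genuinely delicate step; you are right to defer to the literature for it rather than claim it is automatic. Two small checks worth recording in a full write-up: the initial value $y(0)=0$ is consistent because $M(0)=B(\tau^W_0)=-w-\theta^+(S(\tau^W_0)-\wo)+\theta^-\wu$, exactly cancelling the deterministic terms at $x=0$; and after $S(\tau^W_0)$ the process is identically zero while the stated SDE still has drift $2\theta^+$, which is why the absorption clause ``at its first zero after $w$'' (the stopping at $\sigma^y_{w,0}$) is needed, consistent with $2\theta^+<2$ guaranteeing the BESQ$^{2\theta^+}$ piece hits zero a.s.
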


This proposition immediately implies the following statement. Its
discrete version, Lemma~\ref{hitpr} (``concatenation lemma''), 
is one of the main tools of this paper.
\begin{cor}\label{exitc}
  Let $W$ be a $(\theta^+,\theta^-)$-BMPE starting from $(\wu,w,\wo)$,
  $0\le \wu\le w\le \wo$ and $\{y(x)\}_{x\ge 0}$ be the process defined
  in Proposition~\ref{RNbmpe}. Then
  \begin{enumerate}[(i)]
  \item for any $b> w$,
    $P(\tau_0(\wu,w,\wo)<\tau_b(\wu,w,\wo))=P(\sigma^y_{w,0}<b)$;
  \item for any $b>\wo$ and any interval $J\subseteq (\wo,b)$
    \[P(\tau_0(\wu,w,\wo)<\tau_b(\wu,w,\wo),\ S(\tau_0(\wu,w,\wo))\in
      J)=P(\sigma^y_{w,0}\in J).\]
  \end{enumerate}
\end{cor}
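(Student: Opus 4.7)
The plan is to deduce both statements directly from Proposition~\ref{RNbmpe} by translating the exit event $\{\tau_0<\tau_b\}$ and the random variable $S(\tau_0)$ into assertions about the local time profile $\{\ell^W_{x,\tau_0}\}_{x\ge 0}$, whose law is explicitly identified there.

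The key geometric observation is the following: set $M\coloneqq \sup_{0\le t\le \tau_0}W(t)$, the maximum of the \emph{path} of $W$ up to $\tau_0$, to be distinguished from the running maximum $S(\tau_0)=\wo\vee M$, which also incorporates the initial value $\wo$. Path-continuity of $W$, together with the standard fact that the local time of a continuous semimartingale is strictly positive on the interior of the path's range, gives for each $x>w$ that $\ell^W_{x,\tau_0}>0$ if and only if $x<M$, hence
\[M=\sup\{x>w:\ \ell^W_{x,\tau_0}>0\}.\]
On the Ray-Knight side, the process $\{y(x\wedge \sigma^y_{w,0})\}_{x\ge 0}$ is strictly positive on $(w,\sigma^y_{w,0})$ and vanishes on $[\sigma^y_{w,0},\infty)$, so $\sigma^y_{w,0}=\sup\{x>w:\ y(x\wedge\sigma^y_{w,0})>0\}$. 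Proposition~\ref{RNbmpe} asserts that the two profiles have the same law as processes indexed by $x$, so applying this sup-functional to both sides yields the distributional identity $M\stackrel{\mathrm d}{=}\sigma^y_{w,0}$.

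Once this is in place the two claims are essentially immediate. For (i): the path reaches $0$ before $b$ iff its running maximum stays below $b$, so $\{\tau_0<\tau_b\}=\{M<b\}$, and the result follows. For (ii): writing $S(\tau_0)=\wo\vee M$ and using $J\subseteq(\wo,b)$, the event $\{S(\tau_0)\in J\}$ forces $M>\wo$ (whence $S(\tau_0)=M$) and $M<b$ (which already implies $\tau_0<\tau_b$), so
\[\{\tau_0<\tau_b,\ S(\tau_0)\in J\}=\{M\in J\},\]
and the distributional identity above finishes the job.

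I do not anticipate a serious obstacle. The one technical point worth checking is that $\tau_0$ and $\sigma^y_{w,0}$ are almost surely finite so that $M$ is well defined; both are consequences of $\theta^\pm<1$, the latter because the BESQ$^{2\theta^+}$ segment of $y$ on $[\wo,\infty)$ has dimension strictly less than $2$ and hence hits zero in finite time almost surely, and the former because the corresponding BMPE started from $(\wu,w,\wo)$ with $w\ge 0$ hits $0$ almost surely (which can also be read off from Lemma~\ref{mel1} by letting the upper barrier tend to $+\infty$).
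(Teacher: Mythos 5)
Your proof is correct and reconstructs exactly the argument the paper treats as immediate: identify the path maximum $M=\sup_{t\le\tau_0}W(t)$ with the level at which the local-time profile vanishes, match it via Proposition~\ref{RNbmpe} to $\sigma^y_{w,0}$, and then read off the two exit events as events about $M$. The only point worth making fully explicit (which you gesture at) is that $\ell^W_{x,\tau_0}>0$ for every $x$ strictly between $w$ and $M$ because $W$ has quadratic variation $t$ and visits such $x$ at an interior time, and that $y(w)>0$ a.s.\ so $\sigma^y_{w,0}>w$; with those in place the measurable functional $f\mapsto\sup\{x>w: f(x)>0\}$ transports the law of one process to the other and yields $M\stackrel{d}{=}\sigma^y_{w,0}$.
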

\begin{rem}
  Note that (ii) and (i) imply that \[P(\tau_0(\wu,w,\wo)<\tau_b(\wu,w,\wo),\ S(\tau_0(\wu,w,\wo))=\wo)=P(\sigma^y_{w,0}\le \wo).\]
\end{rem}
\begin{rem}
  Part (i) of this corollary and known facts about the distribution of BESQ processes can be used to derive Lemma~\ref{mel1}.
\end{rem}

\subsection{Coupling of BMPEs with different initial data.} We shall
need the following coupling result about Brownian motions perturbed
only at one extremum. For definiteness, we assume that the
perturbation is at the maximum, i.e.\ we shall consider BMPEs with
$\theta^-=0$. In this case, the process $\{(W(t),S(t))\}_{t\geq 0}$, is
markovian.
\begin{lemma}\label{BMPEcoup}
  Let $\{(W_i(t),S_i(t))\}_{t\geq 0}$, $i=1,2$, be Brownian motions
  perturbed at the maximum, i.e.\ BMPE with parameters $\theta^-=0$
  and $\theta^+\in(-\infty,1)$. Suppose that
  $(W_i(0),S_i(0))\in\{(w,\wo)\in[-1,1]^2:\,w\le \wo\},\ i=1,2$. There
  exists a coupling such that for some $\Cl{0}>0$
  uniformly over all initial conditions in
  $\{(w,\wo)\in[-1,1]^2:\,w\le \wo\}$
  \[P\left((W_1(1),S_1(1))=(W_2(1),S_2(1))\ \text{and }W_i(t)\in[-4,4]\
      \forall t\in[0,1],\ i=1,2\right)\ge \Cr{0}.\]
\end{lemma}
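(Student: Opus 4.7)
The strategy is to exploit the fact that for $\theta^- = 0$ the BMPE admits an explicit representation in terms of the driving Brownian motion, then couple the two processes via a maximal coupling applied to the terminal state $(W(1), S(1))$.

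\emph{Step 1: Explicit representation.} Starting from \eqref{BMPE-ex} for standard $(\theta^+,0)$-BMPE and running the process started from $(w,\wo)$, $w \le \wo$, as a Brownian motion from $w$ until it first hits $\wo$ and then invoking the Markov property of the triple $(I, W, S)$, one obtains
\begin{equation*}
W(t) = w + B(t) + \frac{\theta^+}{1-\theta^+}\bigl(B^*(t) - (\wo - w)\bigr)_+,
\qquad
S(t) = \wo + \frac{\bigl(B^*(t) - (\wo - w)\bigr)_+}{1-\theta^+},
\end{equation*}
where $B$ is the driving standard Brownian motion. Hence $(W(1),S(1))$ is a smooth, explicit function of $(B(1), B^*(1))$, whose joint density is $h(b, m) = \tfrac{2(2m-b)}{\sqrt{2\pi}}\exp(-(2m-b)^2/2)$ on $\{m > \max(b,0)\}$ by the reflection principle.

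\emph{Step 2: Density lower bound at a target.} I would target the terminal state $(0, s^*)$ with $s^* > 1$ chosen as a function of $\theta^+$ so that (i) $s^* > \wo$ for every admissible $\wo$, and (ii) the induced post-max displacement is small enough to leave room for $|W(t)|\le 4$. Inverting the explicit formula, the event $(W(1), S(1)) = (0, s^*)$ corresponds to $(B(1), B^*(1)) = \bigl(-w - \theta^+(s^* - \wo),\ (1-\theta^+)s^* + \theta^+ \wo - w\bigr)$. As $(w,\wo)$ ranges over $\{(w,\wo)\in[-1,1]^2:w\le\wo\}$ these implied values remain in a compact subset of the open set $\{m > \max(b,0)\}$ on which $h$ is continuous and strictly positive. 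Thus there exist a neighborhood $U$ of $(0, s^*)$ and a constant $c_1 > 0$ such that the density $f_{w,\wo}(x,s)$ of $(W(1),S(1))$ satisfies $f_{w,\wo}(x,s) \ge c_1$ for all $(x,s) \in U$, uniformly in the starting state.

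\emph{Step 3: Uniform path containment.} For each admissible $(w,\wo)$ and each $(x, s) \in U$, I bound below by $c_2 > 0$ the conditional probability that $W(t) \in [-4, 4]$ for all $t \in [0,1]$ given $(W(1), S(1)) = (x,s)$. Using the representation from Step 1, this amounts to an estimate for a Brownian motion conditioned on $(B(1), B^*(1))$ taking prescribed values in a compact subset of the interior of the support; the resulting (Brownian meander / bridge-type) law stays in a bounded tube with positive probability, uniformly over such conditioning values. Combined with the explicit formula, this gives the required bound on $W$.

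\emph{Step 4: Maximal coupling.} Let $\mu_i(\cdot) = P\bigl((W_i(1), S_i(1)) \in \cdot,\ W_i(t) \in [-4, 4]\ \forall t\in[0,1]\bigr)$ starting from $(w_i, \wo_i)$. By Steps~2 and~3, $d\mu_i/d\lambda \ge c_1 c_2$ on $U$, uniformly in $i$. A standard maximal coupling then yields a joint law of $(B_1, B_2)$ under which the common event
\[
\bigl\{(W_1(1), S_1(1)) = (W_2(1), S_2(1)) \in U\bigr\} \cap \bigl\{W_i(t)\in[-4,4]\ \forall t\in[0,1],\ i=1,2\bigr\}
\]
has probability at least $c_1 c_2 |U| =: \Cr{0} > 0$. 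Concretely, with probability $\Cr{0}$ one samples a common target $Y \in U$ from the overlap density, then samples the paths $B_1, B_2$ conditionally independently from the laws of $B_i$ given $(W_i(1), S_i(1)) = Y$ and $\{W_i \text{ stays in } [-4,4]\}$; the residual mass is filled in so that each $B_i$ marginally remains a standard Brownian motion.

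\emph{Main obstacle.} The delicate point is Step 3, and the corresponding choice of $s^*$: when $\theta^+$ is close to $1$ the amplification $\theta^+/(1-\theta^+)$ is large, so $s^*$ must be only slightly above $\wo_{\max}=1$ for $W$ to remain in $[-4,4]$; this in turn pushes the implied $(B(1), B^*(1))$ target close to the boundary $m = \max(b,0)$, where the density $h$ degenerates. Balancing these two competing constraints and verifying the Brownian meander / bridge estimate underlying $c_2$ is the main technical work. Since the constant $\Cr{0}$ is allowed to depend on $\theta^+$, this balance can be struck once $\theta^+ < 1$ is fixed.
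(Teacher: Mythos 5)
Your Step~1 representation is correct and is essentially the same starting point as the paper's proof (the paper encodes the nonzero initial data by extending $B$ to $[-1,0]$, you do it by a shifted-plus-part formula; both are equivalent ways of writing $(W,S)$ as an affine functional of $(B,B^*)$). From there, though, you diverge: you propose a \emph{one-shot} maximal coupling aimed at a fixed terminal target $(0,s^*)\in U$, so that all the work is pushed into a density lower bound (Step~2) and a conditional path-containment estimate (Step~3). The paper instead uses a \emph{three-stage} scheme: (a) bring both processes to a common small window $[3/2,2]$ with bounded paths, with probability bounded below uniformly in the initial data; (b) apply a rescaled version of a small-scale coupling result on the tiny time interval $[1/2,1/2+1/r_1^2]$, so that the coupling succeeds while the paths move at most $O(1)$; (c) run freely on the remainder of $[0,1]$, observing that staying in $[-4,4]$ has positive probability for a single process started near $2$. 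The crucial point is that stage (b) uses only a local coupling whose path fluctuations are $O(1/r_1)\cdot r_1 = O(1)$, so the paper never needs to condition on the terminal state and then control a full path on $[0,1]$.

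The genuine gap in your argument is precisely Step~3, which you flag as ``the main obstacle'' but do not prove. You need to show that
\[
P\bigl(W(t)\in[-4,4]\ \forall t\in[0,1]\ \big|\ (W(1),S(1))=(x,s)\bigr)\ \ge\ c_2>0
\]
uniformly over $(x,s)\in U$ and over the initial condition $(w,\wo)$. Under your representation this is a constraint of the form
$B^*(t)-B(t)\le S(t)+4$ for all $t$, i.e.\ a drawdown constraint on the conditioned Brownian path, where $S(t)$ is an increasing (path-dependent) function starting at $\wo$. For $\theta^+$ far from $0$ (especially $\theta^+\to-\infty$, or $\theta^+\uparrow1$ as you already note), the conditioning point $(b,m)$ is pushed to a regime where $m$ is large and the required drawdown bound is tight relative to $m$, so one must quantify a Brownian-meander/bridge estimate and verify that it is bounded below uniformly on a compact set of conditioning values. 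This is true, but it is a nontrivial computation that your proposal does not carry out, and it is not obviously shorter than the paper's three-stage reduction which bypasses it entirely. A minor additional omission: in Step~2 the density of $(W(1),S(1))$ on $\{s>\wo\}$ is $h$ evaluated at the preimage times the constant Jacobian factor $|\det\partial(W,S)/\partial(b,m)|^{-1}=(1-\theta^+)$; this should be stated when claiming the lower bound $c_1$, though it does not affect the conclusion since $\theta^+<1$ is fixed. In sum: the plan is a plausible alternative route, but as written it leaves the technically load-bearing step unverified, and the paper's argument is not merely a variant of it but a different construction designed specifically to avoid that step.
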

The proof of this lemma is given in the Appendix.

\begin{cor}\label{BMPEcoup1}
  Let $\{(W_i(t),S_i(t))\}_{t\geq 0}$, $i=1,2$, be as in Lemma
  \ref{BMPEcoup}. There exists a coupling and nontrivial constants
  $\Cl{one} $ and $K_0$ (which do not depend on the initial conditions
  $(W_i(0),S_i(0)) = (w_i,\wo _i)  \in [-1,1]^2$) such that for all
    $R\ge 4$ outside probability $\Cr{one}R^{- \frac{1}{K_0 }} $
  \[
  (W_1(t),S_1(t))=(W_2(t),S_2(t)) \quad \forall t> \rho_R,
  \]
  where $\rho_R=\inf \{t \ge 0 : \max\{ |W_1(t)|, \,  |W_2(t)| \} \ge R\}$.
  \end{cor}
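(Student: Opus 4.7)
The plan is to iterate Lemma~\ref{BMPEcoup} along a geometric sequence of spatial scales, using the Brownian-type scaling of $(\theta^+,0)$-BMPE: for any $c>0$ the process $\{(cW(c^{-2}t),cS(c^{-2}t))\}_{t\ge 0}$ is again a $(\theta^+,0)$-BMPE, started from $(cw,c\wo)$ if the original starts from $(w,\wo)$. Consequently, at any stopping time $T$ at which both pairs $(W_i(T),S_i(T))$ lie in $[-r,r]^2$, we may rescale by $1/r$ and apply Lemma~\ref{BMPEcoup} to the rescaled pair. This yields a coupling on $[T,T+r^2]$ under which, with conditional probability at least $\Cr{0}$, the two processes agree from $T+r^2$ onward (after merging they stay merged by the Markov property of the triples $(I_i,W_i,S_i)$) and their $W$-coordinates remain in $[-4r,4r]$ throughout the interval.

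Set $R_k=4^k$, $T_0=0$, and for $k\ge 1$
\[T_k=\inf\{t\ge 0:\ \max(|W_1(t)|,|W_2(t)|)\ge R_k\}.\]
Since the initial data lie in $[-1,1]^2$, on $\{\tcp>T_k\}$ both $(W_i(T_k),S_i(T_k))$ lie in $[-R_k,R_k]^2$: the $W$-bound is by definition of $T_k$, and $S_i(T_k)=\max(S_i(0),\sup_{s\le T_k}W_i(s))\le R_k$ since $S_i(0)\le 1\le R_k$ and $W_i(s)<R_k$ for $s<T_k$. We construct the joint coupling inductively: on $\{\tcp>T_k\}$, apply the rescaled Lemma~\ref{BMPEcoup} (with $r=R_k$) to produce a coupling on $[T_k,T_k+R_k^2]$; on the gaps between the end of a failed attempt and the subsequent $T_{k+1}$, evolve the two processes under, say, independent Brownian drivers. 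Concatenation via regular conditional distributions at the stopping times $T_k$ yields a measurable joint law on path space.

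The decisive observation is that success of the attempt at level $k$ forces coupling before $T_{k+1}$: on that success event the $W$-paths stay in $[-R_{k+1},R_{k+1}]$ throughout $[T_k,T_k+R_k^2]$, so $T_{k+1}>T_k+R_k^2\ge \tcp$. Hence
\[\{\tcp>T_{k+1}\}\ \subseteq\ \bigcap_{j=0}^{k}\{\text{attempt }j\text{ fails}\}.\]
Applying the strong Markov property at each $T_j$ together with the uniform lower bound $\Cr{0}$ from Lemma~\ref{BMPEcoup}, the failure events compound to give $P(\tcp>T_{k+1})\le (1-\Cr{0})^{k+1}$. For $R\ge 4$, set $k=\lfloor\log_4 R\rfloor\ge 1$; since $R_k\le R$ implies $\rho_R\ge T_k$, we conclude
\[P(\tcp>\rho_R)\ \le\ P(\tcp>T_k)\ \le\ (1-\Cr{0})^{k}\ \le\ \Cr{one}R^{-1/K_0},\]
with $K_0=\log 4\,/\,\log\!\bigl(1/(1-\Cr{0})\bigr)$ and $\Cr{one}$ depending only on $\Cr{0}$.

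The only nontrivial piece of the plan is the measure-theoretic bookkeeping required to splice the countable family of conditional couplings from Lemma~\ref{BMPEcoup} into a single coupling of the two BMPE laws on path space. This is handled by a standard regular conditional distribution construction combined with the strong Markov property for $(I_i,W_i,S_i)$. Beyond this bookkeeping the argument is an essentially mechanical application of Brownian scaling to Lemma~\ref{BMPEcoup} across dyadic scales, so I expect no serious obstacle beyond keeping track of the constants.
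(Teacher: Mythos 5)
Your proof is correct and follows essentially the same route as the paper: iterate the fixed-scale coupling of Lemma~\ref{BMPEcoup} across dyadic spatial scales via Brownian scaling, concatenate attempts at stopping times where the processes reach level $4^k$, and compound the failure probability geometrically to obtain the polynomial bound in $R$. The paper phrases this through the scaled function $a_R$ and its supremum over $\Delta_r$ rather than the explicit stopping times $T_k$, but the underlying argument is the same.
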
    

\begin{proof}
Let $\tcp = \inf\{t\geq 0: \,  (W_1(t),S_1(t))=(W_2(t),S_2(t))  \}$ be the coupling time of the BMPEs,
 and for any choice of $w_i \leq \wo_i$, $i=1,2$ and $R>0$ let
\[
 a_R(w_1,\wo_1,w_2,\wo_2) = P\left( \tcp > \rho_R \mid (W_i(0),S_i(0)) = (w_i,\wo_i), \, i=1,2 \right). 
\]
If for any $r>0$ we denote $\Delta_r = \{(w,\wo) \in [-r,r]^2: \, w \leq \wo \}$, then it follows easily from Lemma \ref{BMPEcoup} that 
\[
 \sup_{(w_1,\wo_1), (w_2,\wo_2) \in \Delta_1 } a_4(w_1,\wo_1,w_2,\wo_2) \leq 1-\Cr{0}. 
\]
Note that since a rescaled BMPE is again a (time changed) BMPE it follows that $a_R(w_1,\wo_1,w_2,\wo_2) = a_{cR}(c w_1, c \wo_1, c w_2,c \wo_2)$ for any $c>0$. 
In particular, this implies that 
\[
 \sup_{(w_1,\wo_1), (w_2,\wo_2) \in \Delta_r } a_{4r}(w_1,\wo_1,w_2,\wo_2) \leq 1-\Cr{0}. 
\]
The coupling in Lemma \ref{BMPEcoup} gives a coupling of the two processes up until time $\rho_4$.  If the coupling constructed in Lemma \ref{BMPEcoup} doesn't succeed by this time (i.e., if $\tcp > \rho_4$) then from time $\rho_4$ to $\rho_{16}$ we can use a rescaled version of the coupling from Lemma \ref{BMPEcoup} to obtain
 \begin{align*}
 &a_{16}(w_1,\wo_1,w_2,\wo_2) \\
 &= E\left[ a_{16}( W_1(\rho_4),  S_1(\rho_4) ,  W_2(\rho_4) ,  S_2(\rho_4) ) \ind{ \tcp > \rho_4 } \mid  (W_i(0),S_i(0)) = (w_i,\wo_i), \, i=1,2 \right] \\
 &\leq \left( \sup_{(w'_1,\wo'_1), (w'_2,\wo'_2) \in \Delta_4} a_{16}(w'_1,\wo'_1,w'_2,\wo'_2) \right) a_4(w_1,\wo_1,w_2,\wo_2)\\
 &\leq (1-\Cr{0})^2. 
\end{align*}
Similarly, we can show that 
\[
\sup_{(w_1,\wo_1), (w_2,\wo_2) \in \Delta_1 } a_{4^k} (w_1,\wo_1,w_2,\wo_2) \leq (1-\Cr{0})^k, \qquad \forall k\geq 1.  
\]
Therefore, if $4^k \leq R < 4^{k+1}$ we have 
\[
\sup_{(w_1,\wo_1), (w_2,\wo_2) \in \Delta_1 } a_{R} (w_1,\wo_1,w_2,\wo_2) \leq (1-\Cr{0})^k 
\leq(1-\Cr{0})^{\log_4(R)-1} 
= (1-\Cr{0})^{-1} R^{-\log_4(\frac{1}{1-\Cr{0}})}. 
\]
This completes the proof of the corollary with $\Cr{one} = (1-\Cr{0})^{-1}$ and
$\frac{1}{K_0}= \log_4(\frac{1}{1-\Cr{0}})$. 
\end{proof}


\section{Discretizations of BMPE}\label{sec:disc}

Recall that the standard BMPE has Brownian scaling, that is for every
$\epsilon>0$
\[\{(I(t),W(t),S(t))\}_{t\ge 0}\overset{\text{Law}}{=}\{ (\epsilon
  I(\epsilon^{-2}t),\epsilon W(\epsilon^{-2}t),\epsilon
  S(\epsilon^{-2}t))\}_{t\ge 0}.\]

\subsection{Basic BMPE-walk.}\label{walk} Our first step will be to define a natural
sequence of random walks $\{(I_k, W_k, S_k)\}_{k\ge 0}$ which after
rescaling converges to BMPE. Set $\tau_0=0$ and let
\[(I_k, W_k,S_k)=(I(\tau_k),W(\tau_k),S(\tau_k)),\quad
  \tau_{k+1}\coloneqq \inf\{t>\tau_k:\ |W(t)-W(\tau_k)|=1\},\ \ k\in\N_0.\]
The walk $(I_k,W_k,S_k)$, $k\ge 0$, is markovian and can be also be
constructed directly by specifying its transition probabilities. Set
$(I_0,W_0,S_0)=(0,0,0)$ and define the
transition probabilities as follows.
\begin{itemize}
\item ``In the bulk'', i.e.\ on the set $\{I_k+1\le W_k\le S_k-1\}$, \[W_{k+1}=W_k\pm1\ \text{with equal probabilities,}\ I_{k+1}=I_k,\ S_{k+1}=S_k.\]
\item ``At the extrema'', i.e.\ on $\{S_k-W_k<1\}\cup\{W_k-I_k<1\}$,
  \begin{align*}
  P(W_{k+1}=W_k+1\mid I_k,W_k,S_k)&=1-P(W_{k+1}=W_k-1 \mid I_k,W_k,S_k)
\\&=P(W(\tau(1, I_k,W_k,S_k))=W_k+1).
\end{align*}
\end{itemize}
Now we need to see what happens to the extrema. This is easy when
$k>0$ and the end point of the walk, $W_k$, lands outside of
$\bint{I_k,S_k}$. But if $k=0$ or if the walk is, say, close to the max but
``decides'' to jump to the left, the new max should be chosen
according to the distribution of the BMPE. More precisely, for every
Borel set $B$
\begin{align*}
  &P(I_{k+1}\in B\mid I_k,W_k,S_k)=P(I(\tau(1,I_k,W_k,S_k)\in B);\\
  &P(S_{k+1}\in B\mid I_k,W_k,S_k)=P(S(\tau(1,I_k,W_k,S_k)\in B).
\end{align*}

\begin{prop}\label{DiscBMPE}
  Let 
  $(I_0,W_0,S_0)=(0,0,0)=(I(0),W(0),S(0))$.
  Then for each $ T>0 $
  \begin{equation}
\label{labe1}
 \sup_{0 \leq s \leq T} \left\vert \epsilon W(\epsilon^{-2}s)- \epsilon W_{\fl{\epsilon^{-2}s}} \right\vert\overset{\text{P}}{\longrightarrow} 0\quad\text{as }\ \epsilon\to 0.
\end{equation}
Here $\overset{\text{P}}{\longrightarrow}$ denotes the convergence in
probability.
\end{prop}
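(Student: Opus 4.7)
The plan is to exploit the identity $W_k = W(\tau_k)$ (which holds by the very definition of $\tau_k$) together with a law of large numbers for the embedded times. The quantity in \eqref{labe1} equals $\epsilon \sup_{s \leq T} |W(\epsilon^{-2}s) - W(\tau_{\lfloor \epsilon^{-2}s\rfloor})|$. By the Brownian scaling of BMPE recalled at the start of Section~\ref{sec:disc}, the rescaled process $\widetilde W(t) := \epsilon W(\epsilon^{-2}t)$ is itself a standard $(\theta^+,\theta^-)$-BMPE, and the rescaled embedded times $\widetilde\tau_k := \epsilon^2 \tau_k$ are the successive times at which $\widetilde W$ moves by $\pm\epsilon$ from its previous such position. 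Hence \eqref{labe1} becomes
\[ \sup_{s \leq T} \bigl| \widetilde W(s) - \widetilde W(\widetilde\tau_{\lfloor \epsilon^{-2}s\rfloor}) \bigr| \overset{\text{P}}{\longrightarrow} 0, \]
and by almost-sure uniform continuity of $\widetilde W$ on any bounded interval this reduces to proving the time-change estimate $\sup_{s \leq T} | \widetilde\tau_{\lfloor \epsilon^{-2}s\rfloor} - s | \overset{\text{P}}{\to} 0$.

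To establish this, set $\Delta_j := \widetilde\tau_{j+1} - \widetilde\tau_j$. Conditionally on $\mathcal{F}_{\widetilde\tau_j}$, $\Delta_j$ is the exit time of a $(\theta^+,\theta^-)$-BMPE from the $\epsilon$-neighborhood of $\widetilde W(\widetilde\tau_j)$, started from the triple $(\widetilde I(\widetilde\tau_j), \widetilde W(\widetilde\tau_j), \widetilde S(\widetilde\tau_j))$. If $j$ is a \emph{bulk} step, meaning $\widetilde W(\widetilde\tau_j) - \widetilde I(\widetilde\tau_j) \geq \epsilon$ and $\widetilde S(\widetilde\tau_j) - \widetilde W(\widetilde\tau_j) \geq \epsilon$, then $\widetilde W$ evolves as a Brownian motion inside this $2\epsilon$-interval, giving $E[\Delta_j \mid \mathcal{F}_{\widetilde\tau_j}] = \epsilon^2$ and $\Var(\Delta_j \mid \mathcal{F}_{\widetilde\tau_j}) = c_0 \epsilon^4$ for a universal $c_0$. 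Otherwise ($j$ is an \emph{extrema} step), translation invariance and Brownian scaling identify the conditional law of $\epsilon^{-2}\Delta_j$ as the exit time from a unit-length interval of a BMPE with initial data in a compact set, whence $c_2 \epsilon^2 \leq E[\Delta_j \mid \mathcal{F}_{\widetilde\tau_j}] \leq C_1 \epsilon^2$ and $\Var(\Delta_j \mid \mathcal{F}_{\widetilde\tau_j}) \leq C_1 \epsilon^4$ for constants $0 < c_2 \leq C_1$ depending only on $\theta^\pm$.

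Let $\mathcal N_k$ be the number of extrema steps among $\{0,\dots,k-1\}$ and $N := \lfloor \epsilon^{-2}(T+1)\rfloor$. During $[\widetilde\tau_j, \widetilde\tau_{j+1}]$ at an extrema step $j$, $\widetilde W$ stays within $2\epsilon$ of $\widetilde S$ or $\widetilde I$, and the expected duration of the step is at least $c_2\epsilon^2$, hence $c_2\epsilon^2 \, E[\mathcal N_N] \leq E[\text{total occupation time of } \widetilde W \text{ in the }2\epsilon\text{-strip around its extrema on }[0,T+1]]$. This occupation time is $O_P(\epsilon)$, because $\widetilde S - \widetilde W$ and $\widetilde W - \widetilde I$ are nonnegative continuous semimartingales whose local time at $0$ is a.s.\ finite (this can be read off from the Ray-Knight description of the spatial local times of $\widetilde W$ given by Proposition~\ref{RNbmpe}). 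Therefore $\mathcal N_N = O_P(\epsilon^{-1}) = o_P(N)$. Setting $M_k := \widetilde\tau_k - \sum_{j=0}^{k-1} E[\Delta_j \mid \mathcal{F}_{\widetilde\tau_j}]$, the increment analysis yields $\langle M\rangle_N = O(N\epsilon^4) = O(\epsilon^2)$, so Doob's $L^2$-maximal inequality gives $\sup_{k \leq N} |M_k| = O_P(\epsilon)$. Meanwhile, the extrema bound gives $\sum_{j=0}^{k-1} E[\Delta_j \mid \mathcal{F}_{\widetilde\tau_j}] = k\epsilon^2 + O(\mathcal N_k \epsilon^2) = k\epsilon^2 + o_P(1)$ uniformly in $k \leq N$. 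Combining these two estimates yields $\sup_{k \leq N} |\widetilde\tau_k - k\epsilon^2| \overset{\text{P}}{\to} 0$, which is the required time-change estimate.

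The main obstacle in this plan is the occupation-time bound used in the third paragraph. BMPE itself is not Markov (only the triple $(\widetilde I,\widetilde W,\widetilde S)$ is), so one cannot just invoke standard reflected-Bessel occupation estimates; the control must instead be extracted from the Markovian triple or, more efficiently, from the Ray-Knight theorem, which identifies the spatial local time profile of $\widetilde W$ as a concatenation of squared-Bessel processes and thereby pins down the expected occupation time in $\epsilon$-strips around the extrema. Once this bound is available, the rest is a routine martingale LLN plus modulus-of-continuity argument.
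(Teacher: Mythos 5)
Your proposal is correct, and it takes a genuinely different route at the key step. Both you and the paper reduce the statement to a law of large numbers for the exit times $\tau_k$ and observe that the issue is to control the contribution of the ``extrema'' steps, where the conditional mean of $\tau_k-\tau_{k-1}$ deviates from $1$ but is still uniformly bounded (via stochastic domination by the exit time of a Brownian motion from a strip, or equivalently the representation \eqref{rep}). Where you diverge is in how you show extrema steps are asymptotically rare. The paper argues combinatorially at the level of the embedded walk $(I_k,W_k,S_k)$: it defines $\chi_m$ to be the total number of extrema visits at spatial level $m$, observes that $\{\chi_m\}_{m\geq 1}$ is i.i.d.\ and geometrically bounded (so the SLLN applies), and couples this with the estimate $\rho_n/n\to\infty$ for the hitting times $\rho_n$ of level $n$ to conclude that the fraction of extrema steps vanishes a.s. You instead control extrema steps through an occupation-time bound on the continuous process: each extrema step occupies a time interval of conditional expected length at least $c_2\epsilon^2$ during which $\widetilde W$ stays within $O(\epsilon)$ of its running max or min, while the total occupation time of $\widetilde W$ in an $O(\epsilon)$-strip around its extrema is $O_P(\epsilon)$ by the occupation-time formula applied to the nonnegative semimartingales $\widetilde S-\widetilde W$ and $\widetilde W-\widetilde I$ (whose quadratic variations equal $t$ and whose local times at $0$ are a.s.\ finite and continuous). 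This gives the sharper bound $\mathcal N_N=O_P(\epsilon^{-1})=O_P(N^{1/2})$, whereas the paper only establishes, and only needs, $o(N)$; both suffice. Your approach is more analytical and avoids the discrete renewal structure; the paper's is more elementary and self-contained. The obstacle you flag at the end (BMPE is not Markov) is correctly dispatched exactly as you indicate: the relevant local-time finiteness can be read off either from the explicit one-sided representation \eqref{rep} (near the max, $\widetilde S-\widetilde W$ is, up to a constant, reflected Brownian motion) or from the Ray--Knight theorem, Proposition~\ref{RNbmpe}.
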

The proof of this proposition is given in the Appendix.

\subsection{Modified BMPE-walk.}\label{mwalk} For our coupling it will be convenient to have a slightly modified
discretization which allows for small shifts of the running
extrema. For each $\epsilon>0$ we shall need a process
$\{(\tie_k,\twe_k,\tse_k)\}_{k\ge 0}$ adapted to some filtration
$({\cal F}^\epsilon_k)_{k\ge 0}$ and satisfying properties listed
below. Since we shall be coupling this process with a rescaled ERW,
the filtration will contain information about both processes. At this
time we shall make only the necessary specifications and shall not
describe the filtration. Our goal is to show that if we have a family
of processes indexed by $\epsilon$ which satisfies the properties
listed below then after rescaling this family converges weakly to a
BMPE.

We wish that the evolution of $\{(\tie_k,\twe_k,\tse_k)\}_{k\ge 0}$ be
close to that of the walk $\{(I_k,W_k,S_k)\}_{k \geq 0}$ when $\epsilon$
is small.  To describe this ``closeness'' we
divide up the interval $(-1, 1)$
into $2L = 2 \fl{\epsilon^{-3(K_0+1)}}$  equal intervals with disjoint
interior of length $L^{-1}$,
\begin{equation}
  \label{Jell}
  (-1,1)=\left(\bigcup_{\ell=-L}^{-1}\left(\frac{\ell}{L},\frac{\ell+1}{L}\right]\right)\bigcup\left(\bigcup_{\ell=1}^L\left[\frac{\ell-1}{L},\frac{\ell}{L}\right)\right)=:\bigcup_{0<|\ell|\le L}
  J_\ell.
\end{equation}
Here $K_0 $ is the constant from Lemma \ref{BMPEcoup1}. Then
$[x,x+1)=\left(\bigcup_{\ell=1}^L J_\ell\right)+x$ where $J+x$ is simply the
translation of set $J$ by $x$.

\medskip

{\em Properties of $\{(\tie_k,\twe_k,\tse_k)\}_{k\ge 0}$.} We shall
require that the process $\{(\tie_k,\twe_k,\tse_k)\}_{k\ge 0}$ satisfy the
following three conditions.
\begin{enumerate}[(i)]
\item (Starting point) $\tie_0=\twe_0=\tse_0=0$.
\item (Steps of the walk)
  $\twe_{k+1}\in\{\twe_k-1,\twe_k+1\}$ with probability
  1 for all $k\ge 0$ and
  \[
    P(\twe_{k+1}=\twe_k+1\mid {\cal F}^\epsilon_k)=
    1-P(\twe_{k+1}=\twe_k-1\mid {\cal
      F}^\epsilon_k)=P(W(\tau(1, \tie_k,\twe_k,\tse_k))=\twe_k+1).
  \]
  where the right hand side probabilities are those for the BMPE (see
  notation \eqref{hitp}, \eqref{exit}).
\item (``Choice'' of extrema) If
  $\tie_k+1\le \twe_k\le \tse_k-1$ then
  $\tie_{k+1}=\tie_k$ and $\tse_{k+1}=\tse_k$.  
  The definitions when $\twe_k$ is close to its minimum or
  maximum are  more complicated.
  Set
  \[x_k=\twe_k,\quad a_k=\tie_k-\twe_k,\quad b_k=\tse_k-\twe_k.\] Note
  that the first step is special as both $|a_0|<1$ and $b_0<1$ so that
  we need to choose a new minimum and a new maximum. For all other
  steps only one extremum might need to be changed. Suppose for
  definiteness that $b_k<1$ and we need to determine a new maximum. If
  $b_k<1$ and the walk moves to the right then the new maximum is
  simply $x_k+1$ and as in (ii)
\[
  P(\tse_{k+1}=x_k+1\mid {\cal
    F}^\epsilon_k)=P(\twe_{k+1}=\twe_k+1\mid {\cal
    F}^\epsilon_k)=P(W(\tau(1,a_k,0,b_k))=1).\] If the walk moves to
the left then we first choose an intermediate index $\ell_k$ according
to probabilities
    \[
    P( \ell_k  = \ell)  = 
    P(S(\tau(1,a_k,0,b_k))\in J_\ell \mid W(\tau(1,a_k,0,b_k))=-1 ), \quad\ell\in\{1,2,\dots,L\}.
\]
Given ${\cal F}^\epsilon_k$ and the index $ \ell_k$ we ``pick''
the value of $\tse_{k+1}$ within
$((J_{\ell_k }\cup J_{\ell_k+1})\cap[0,1)) +x_k
$ arbitrarily, provided that $\tse_{k+1}$ is
${\cal F}^\epsilon_{k+1}$-measurable, i.e.\ the process remains
adapted to the filtration ${\cal F}^\epsilon_k,\ k\ge 0$.
\end{enumerate}

When we define the coupling of $\twe$ with our rescaled ERW we shall construct
a particular version of $\twe$ explicitly. 
  \begin{thm}\label{dfc}
There exists a constant $C>0$ such that for all sufficiently small $\epsilon>0$ the following holds: 
If $\twe$ satisfies (i)-(iii) above then we can couple $\{\twe_k\}_{k\geq 1}$ with a basic BMPE-walk $\{W_k\}_{k\geq 1}$ so that
for any $K>0$ we have 
\begin{equation}\label{twecoup}
 P\left( \twe_k = W_k, \, \forall k \leq K \epsilon^{-2} \right) \geq 1-CK\epsilon. 
\end{equation}
\end{thm}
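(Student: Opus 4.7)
\noindent\textit{Proof plan.} The approach is to build the coupling inductively in $k$, using maximal coupling of the Bernoulli step directions at each time and an exact coupling of the window indices whenever a new extremum is set. Assume by induction that $\twe_j = W_j$ for all $j \le k$. Both conditional step distributions at time $k+1$ are Bernoulli with success probabilities equal to the BMPE exit probability at $\twe_k+1$, starting from $(\tie_k,\twe_k,\tse_k)$ and $(I_k,W_k,S_k)$ respectively. Corollary~\ref{roc1} supplies explicit formulas that are Lipschitz in the extrema on the regions $\wu\le -1,\ \wo\in[0,1)$ and $\wu\in(-1,0],\ \wo\ge 1$; in the corner case $\wu\in(-1,0),\ \wo\in(0,1)$ one conditions on whether the BMPE first hits $\wu$ or $\wo$ before exiting $(-1,1)$ and reduces to the covered cases via the strong Markov property. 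This yields a uniform bound $|p^{\mathrm{mod}}-p^{\mathrm{basic}}|\le C\Delta_k$ with $\Delta_k := |\tie_k-I_k|+|\tse_k-S_k|$ and $C$ depending only on $\theta^+,\theta^-$, so the standard maximal coupling of the two Bernoullis disagrees with probability at most $C\Delta_k$.

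Next, when the step coupling succeeds and a new extremum is set through the random mechanism in (iii) (say, a new maximum, with $b_k<1$ and the walk moving left), couple the modified walk's window index $\ell_k$ to the unique $\ell$ for which $S_{k+1}-W_k\in J_\ell$. This is legitimate because by (iii) the law of $\ell_k$ is precisely the law of this basic-walk window index. Since the modified walk places its new maximum in $(J_{\ell_k}\cup J_{\ell_k+1})\cap[0,1)+W_k$ while $S_{k+1}\in J_{\ell_k}+W_k$, the two new extrema differ by at most $2/L$. In the deterministic case (walk moves right, $b_k<1$) both walks set the new max to the new walk position and agree trivially; analogous statements hold for minimum updates. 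Hence $\Delta_{k+1}\le\Delta_k+2/L$ at each extremum-update step and $\Delta_{k+1}=\Delta_k$ otherwise, so $\Delta_k\le 2N_k/L$ on the event of successful coupling up to time $k$, where $N_k$ is the number of extremum-update steps in $\{1,\dots,k\}$.

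Finally, the total failure probability over $M:=\fl{K\epsilon^{-2}}$ steps is at most $C\sum_{k=0}^{M-1}E[\Delta_k]\le 2CM\,E[N_M]/L$. Standard ladder-variable estimates give $E[N_M]=O(\sqrt{M})=O(\sqrt{K}/\epsilon)$, and with $L\sim\epsilon^{-3(K_0+1)}$ the overall bound becomes $O(K^{3/2}\epsilon^{3K_0})$. This is trivial when $CK\epsilon\ge 1$; for smaller $K$ the bound reads $K\epsilon\cdot K^{1/2}\epsilon^{3K_0-1}\le K\epsilon\cdot\epsilon^{3K_0-3/2}$, which is at most $CK\epsilon$ for small $\epsilon$ by the generous choice of exponent in the definition of $L$. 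The main obstacle is the uniform Lipschitz estimate on the exit probability in the extrema across all regimes, especially the corner case and the kink at the bulk/near-extremum boundary; once this is in place, the inductive coupling and the error accounting are routine.
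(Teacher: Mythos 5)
Your proposal deviates from the paper's proof in a way that introduces a genuine gap, and I don't think it can be repaired without importing the paper's key idea.

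The central difficulty is your treatment of the extrema-update step. You write that you can ``couple the modified walk's window index $\ell_k$ to the unique $\ell$ for which $S_{k+1}-W_k\in J_\ell$'' because the two have the same law. They do not: by (iii) the law of $\ell_k$ is $P(S(\tau(1,a_k,0,b_k))\in J_\ell\mid W(\tau(1,\ldots))=-1)$ with $(a_k,b_k)=(\tie_k-\twe_k,\tse_k-\twe_k)$, whereas the basic walk's window index has the same formula with $(I_k-W_k,S_k-W_k)$. As soon as $\tse_k\neq S_k$ these are different distributions, so exact coupling is not available. The defect in these two laws again depends on $\Delta_k$, and --- crucially --- when the window-index coupling fails, the two new extrema can differ by $O(1)$ rather than by $O(1/L)$, so the inequality $\Delta_{k+1}\le\Delta_k+2/L$ that your accumulation argument rests on is simply not valid without an additional coupling mechanism to re-synchronize the paths.

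This is exactly what the paper's proof supplies. Rather than let the discrepancy accumulate, the paper uses the BMPE coupling result (Lemma~\ref{BMPEcoup} / Corollary~\ref{BMPEcoup1}) to \emph{re-couple} the two one-sided BMPEs after the walk peels away from the near-extremum. In Case II of the paper's argument, if the near-max positions $x$ and $\tilde x$ differ by at most $2L^{-1}$, Corollary~\ref{BMPEcoup1} re-joins the paths (and their running maxima) before leaving $[x-\epsilon^3,x+\epsilon^3]$ with probability $\ge 1-c\epsilon^3$; Case I handles the boundary. The upshot is that $|\tse_k-S_k|$ and $|\tie_k-I_k|$ never exceed $2L^{-1}$ on the good event, so each step fails with probability at most $C\epsilon^3$ \emph{uniformly in $k$}, and summing over $K\epsilon^{-2}$ steps gives $CK\epsilon$ with no delicate budget.

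Two further problems with the accumulation route, even assuming the window-index coupling were exact. First, your Lipschitz bound $|p^{\mathrm{mod}}-p^{\mathrm{basic}}|\le C\Delta_k$ for the exit probability would need the uniformity you flag, but more seriously, the per-step failure $C\Delta_k$ must include the window-index coupling defect, which is not of order $1/L$ but rather of the same order as $\Delta_k$ itself. Second, your final numerology requires $K_0\ge 1/2$: writing $M=K\epsilon^{-2}$, $L\sim\epsilon^{-3(K_0+1)}$ and $E[N_M]=O(\sqrt M)$ gives total failure $O(M^{3/2}/L)=O(K^{3/2}\epsilon^{3K_0})$, and $K^{3/2}\epsilon^{3K_0}\le CK\epsilon$ on the regime $CK\epsilon<1$ forces $\epsilon^{3K_0-3/2}=O(1)$. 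But $K_0$ is determined by the constant $\Cr{0}$ in Lemma~\ref{BMPEcoup} via $1/K_0=\log_4(1/(1-\Cr{0}))$, and there is no a priori lower bound for $K_0$. The paper avoids this by choosing $L=\fl{\epsilon^{-3(K_0+1)}}$ precisely so that Corollary~\ref{BMPEcoup1} gives $\Cr{1}(\epsilon^3 L/2)^{-1/K_0}\le c\epsilon^3$; this calibration is tailored to the re-coupling argument, not to a telescoping error bound. I would recommend abandoning the accumulation framework and instead following the paper's approach of maintaining the invariant $|\tse_k-S_k|\le 2L^{-1}$, $|\tie_k-I_k|\le 2L^{-1}$ at every step via the BMPE re-coupling.
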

Since rescaling implies that $\{ \epsilon W(\epsilon^{-2}t) \}_{t\geq 0} \overset{\text{Law}}{=} \{ W(t) \}_{t\geq 0}$, then Proposition \ref{DiscBMPE} and Theorem \ref{dfc} immediately imply the following corollary. 
\begin{cor}\label{dfccor}
If $\twe$ satisfies (i)-(iii)
    above then for every $\epsilon>0$ there exists a BMPE $W^ \epsilon$ such that for all $\delta,\,T>0$
\[
 \lim_{\epsilon \to 0} P\left( \sup_{0 \leq s \leq T} \left\vert W^{\epsilon}(s)-  \twe_{\fl{\epsilon^{-2}s}} \right\vert > \delta \right) = 0.
\]
\end{cor}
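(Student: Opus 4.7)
The plan is to combine Theorem~\ref{dfc} with Proposition~\ref{DiscBMPE} through the Brownian scaling of BMPE noted at the start of Section~\ref{sec:BMPE}; the sentence immediately preceding the corollary already advertises this strategy. Let $W$ be a standard $(\theta^+,\theta^-)$-BMPE and let $\{(I_k,W_k,S_k)\}_{k\ge 0}$ be the associated basic BMPE-walk of Section~\ref{walk}, i.e.\ $W_k = W(\tau_k)$ for the hitting times $\tau_k$ defined there. Set
\[
W^\epsilon(s) := \epsilon\, W(\epsilon^{-2}s),\qquad s\ge 0,
\]
which by Brownian scaling is again a standard $(\theta^+,\theta^-)$-BMPE and so qualifies as the BMPE whose existence is asserted in the corollary (up to the natural $\epsilon$-prefactor on $\twe_{\fl{\epsilon^{-2}s}}$ needed to put the two processes on the same macroscopic scale, in the spirit of Proposition~\ref{DiscBMPE}).

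Fix $T>0$ and any constant $K>T$. Theorem~\ref{dfc} produces, on the same probability space as $W$, a coupling of $\twe$ with the basic BMPE-walk $\{W_k\}$ such that
\[
P(A_\epsilon)\ge 1-CK\epsilon,\qquad A_\epsilon := \{\twe_k = W_k\text{ for all } k\le K\epsilon^{-2}\}.
\]
On $A_\epsilon$ we have $\epsilon\,\twe_{\fl{\epsilon^{-2}s}} = \epsilon\, W_{\fl{\epsilon^{-2}s}}$ for every $s\in[0,T]$, so
\[
\sup_{0\le s\le T}\bigl|W^\epsilon(s)-\epsilon\,\twe_{\fl{\epsilon^{-2}s}}\bigr|
= \sup_{0\le s\le T}\bigl|\epsilon W(\epsilon^{-2}s)-\epsilon W_{\fl{\epsilon^{-2}s}}\bigr|\quad\text{on }A_\epsilon.
\]
Proposition~\ref{DiscBMPE} ensures that the right-hand side tends to $0$ in probability as $\epsilon\to 0$; combining this with the bound $P(A_\epsilon^c)\le CK\epsilon\to 0$ via a union bound yields the claim for every $\delta>0$.

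I do not expect any substantive obstacle, as the corollary is a direct repackaging of the two cited results through Brownian scaling. The only point requiring care is that the coupling produced by Theorem~\ref{dfc} be realized on the same probability space as the BMPE $W$ used to define $W^\epsilon$; this is automatic, since the basic BMPE-walk in Theorem~\ref{dfc} is by construction $W_k=W(\tau_k)$ with the same underlying $W$.
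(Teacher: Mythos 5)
Your proposal is correct and is exactly the argument the paper intends when it says that Proposition~\ref{DiscBMPE} and Theorem~\ref{dfc} ``immediately imply'' the corollary: couple $\twe$ to a basic BMPE-walk $W_k=W(\tau_k)$ via Theorem~\ref{dfc}, note that on the coupling event $\{\twe_k=W_k,\ k\le K\epsilon^{-2}\}$ (with $K>T$) the quantity of interest reduces to $\sup_{s\le T}|\epsilon W(\epsilon^{-2}s)-\epsilon W_{\fl{\epsilon^{-2}s}}|$, and then apply Proposition~\ref{DiscBMPE} together with $P(A_\epsilon^c)\le CK\epsilon$. You also correctly spotted that the corollary's displayed formula is missing the $\epsilon$ prefactor on $\twe_{\fl{\epsilon^{-2}s}}$, which is needed for the two processes to live on the same scale (and which is indeed present when the corollary is invoked in the proof of Theorem~\ref{main}).
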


\begin{proof}[Proof of Theorem \ref{dfc}]
Our proof will in fact prove the following stronger statement than \eqref{twecoup}. We will show that with probability at least $1-C K \epsilon$ we have for all  $k\le K \epsilon^{-2}$
\begin{equation}\label{WSIcoup}
\twe_k = W_k, , \quad \fl{S_k} = \fl{\tse_k}, \quad \ceil{I_k} = \ceil{\tie_k}, \quad  | \tse_k - S_k | \leq 2L^{-1},\, \quad\text{and}\quad  | \tie_k - I_k | \leq 2 L^{-1}. 
\end{equation}

For the first step of the coupling, we use a single BMPE stopped when
exiting $(-1,1)$ to generate both $(I_1,W_1,S_1)$ and
$(\tie_1,\twe_1,\tse_1)$. If the BMPE exits to the left so that
$I_1=W_1 = -1$ and $S_1 \in J_\ell$ for some $0<\ell \leq L$ then we
let $\tie_1 = \twe_1 = -1$ and choose
$\tse_1 \in \left( J_\ell \cup J_{\ell+1} \right) \cap [0,1)$ in some
way that is $\mathcal{F}^{\epsilon}_1$-measurable.  Thus, we can give
a coupling so that \eqref{WSIcoup} holds for $k=1$ with probability 1.

For later steps, we suppose \eqref{WSIcoup} holds for some $k\geq 1$. If we are ``in the bulk'' (i.e., if $I_k +1 \leq W_k \leq S_k-1$), then in the next step of the walk the minimums and maximums remain unchanged while the walks both move to the right or left with equal probabilities. That is, if \eqref{WSIcoup} holds at time $k$ when the walk is in the bulk then \eqref{WSIcoup} will also hold at time $k+1$. It remains to show how we can couple the walks when \eqref{WSIcoup} holds for some $k\geq 1$ and we are ``at the extrema.'' Without loss of generality we may assume that $I_k+1 \leq W_k$ and $W_k > S_k -1$ so that we are near the maximum. Let $x = S_k-W_k$ and $\tilde{x} = \tse_k - \twe_k$, and note that our assumptions are that $x, \tilde{x} \in [0,1)$ and $|x-\tilde{x}| < 2L^{-1}$, and assume without loss of generality that $x \leq \tilde{x}$. 
We now consider a coupling of two Brownian motions perturbed at the maximum $W$ and $\tilde{W}$ with initial conditions $(W(0),S(0)) =(0,x)$ and $(\tilde{W}(0),\tilde{S}(0)) = (0,\tilde{x})$ until they reach either $-1$ or $1$. 
We let $W(t) = \tilde{W}(t)$ for all $t \leq \tau_{-1,x} =  \inf\{s: \, W(s) \in \{-1,x\} \}$. If $W(\tau_{-1,x}) = -1$ then our coupling is complete. On the other hand, if $W(\tau_{-1,x}) = x$ then we still need to describe the remainder of the coupling. We consider two cases. \\
\noindent\emph{Case I: $x > 1-\epsilon^3$}. Given that the processes $W$ and $\tilde{W}$ reached $x \geq 1-\epsilon^3$ before $-1$, it follows from Corollary \ref{roc1} that the probability they will reach $1$ before $-1$ is at least $1-C \epsilon^3$ for some $C>0$ depending only on $\theta^+$. 
\\
\noindent\emph{Case II: $x < 1-\epsilon^3$}. If $\tilde{x} > x$ then the processes $W$ and $\tilde{W}$ may no longer be exactly coupled after reaching $x$. However, since $|\tilde{x}-x| \leq 2L^{-1}$ it follows from Corollary \ref{BMPEcoup1} that we can couple them so that both processes and the maximums join together again before exiting the interval $[x-\epsilon^3,x+\epsilon^3]$ with probability at least $1-\Cl{1} (\frac{\epsilon^3 L}{2} )^{-\frac{1}{K_0}} \geq 1-c\epsilon^3$. \\
In either case, we have shown that we can create a coupling so that outside of probability $C \epsilon^3$ the processes exit out the same side of the interval $(-1,1)$ and that at this time the maximums are either unchanged or both changed to the same value. We then apply this to the BMPE-walks by using the process $W$ to generate $W_{k+1}$ and $S_{k+1}$ and $\tilde{W}$ (plus additional randomness which is $\mathcal{F}^\epsilon_{k+1}$-measurable) to generate $\twe_{k+1}$ and $\tse_{k+1}$ so that with probability at least $1-C\epsilon^3$ we have $W_{k+1} = \twe_{k+1}$ and $|S_{k+1}-\tse_{k+1}| \leq 2L^{-1}$.  

We have therefore shown that if \eqref{WSIcoup} holds for some $k\geq 1$ then with probability at least $1-C \epsilon^3$ it again holds for $k+1$. This is enough to show that $\eqref{WSIcoup}$ holds for all $k\le K\epsilon^{-2}$ with probability at least $1-CK\epsilon$, and this finishes the proof of the theorem. 
\end{proof}

\section{Toolbox}\label{sec:tb}

We would like to argue that our ERW $X=\{X_n\}_{n\ge 0}$ considered
only at the stopping times $\{T_k^{\epsilon,n}\}_{k\geq 0}$ defined in \eqref{tenk}
and scaled down by $\fl{\epsilon \sqrt{n}}$
behaves essentially as a modified BMPE-walk described in
Section~\ref{mwalk}. The important issue here is that the ERW moves in
a random environment and the environment is modified by the walk.  In
this section we shall collect 
a number of results concerning BLPs in random environments which will
be helpful as long as we know that the cookie environment is ``good''
in some way.
We begin this section with several definitions which will be used to quantify exactly what we mean by ``good.''


Recall the parameters $\mathbf{r}^\pm = (r^\pm(1),\dots, r^\pm(N))$ introduced in Section \ref{sec:parameter}
and that that $R^x_j,\ j\in\N$ is the cookie
Markov chain at site $x$ with values in $\{1,2,\dots,N\}$ so that
$\omega_x(j)=p(R^x_j)\in(0,1)$ is the probability that the ERW jumps to the
right after the $j$-th visit to $x$. 
\begin{defn}
  Let $\alpha\in(0,1)$, $m\in\N$, and $\rho\in\R$. The first cookies
  $(R^z_1)_{z\in\Z}$ are said to be
  $(m^\alpha,\rho)$-good on a discrete interval $I$ if for every
  discrete subinterval $J\subset I$ of length $\fl{m^\alpha}$
  \begin{align*}
    &\left|\frac{1}{m^\alpha}\sum_{z\in
    J}r^+(R^z_1)-\rho\right|\le\frac{1}{\ln{m}}, \quad \text{or, equivalently (by \eqref{vecr}),}\\
    &\left|\frac{1}{m^\alpha}\sum_{z\in
      J}r^-(R^z_1)-\left(\frac{\nu}{2}-1-\rho\right)\right|\le\frac{1}{\ln{m}}.
  \end{align*}
  We shall say that the family of first cookie environments is
  $m^\alpha$-good on some interval $I$ if there is a constant $\rho$ for
  which it is $(m^\alpha,\rho)$-good.
\end{defn}


The relevance of the above definition is that if the first cookie environment in $I$ is (approximately) i.i.d.\ with marginal $\eta'$, then we expect the interval to be $(m^\alpha,\rho)$-good with $\rho = \eta' \cdot \mathbf{r}^+$. 
In particular, we expect intervals in the initial cookie environment (which is i.i.d.\ $\eta$) to be $(m^\alpha, \frac{\nu \theta^+}{2})$-good, whereas if an interval has first cookie environments which are approximately i.i.d. $\pi^+$ or $\pi^-$ then we expect the interval to be $(m^\alpha,0)$-good or $(m^\alpha,\frac{\nu}{2}-1)$-good, respectively.

\begin{defn}\label{lifting}
  Given $x>0$, $\epsilon>0$, and $a,b\in\R$, $a<b$, a first cookie
  environment on the interval
  $\bint{a,b}$ is said to be
  {\em $x$-lifting from the left (resp.\ right)} if for a $V^+$
  (resp.\ $V^-$) process which uses the environment with these first
  cookies for generations $1,2,\dots,\fl{b}-\ceil{a}+1$ (resp.\ $\fl{b}-\ceil{a}+1,\dots,2,1$)
  and starts with $0$ particles in generation $0$
  \[P(\tau^{V^+}_x\le
    b-a)\ge 1-\epsilon^3\ \ (\text{resp. }P(\tau^{V^-}_x\le
    b-a)\ge 1-\epsilon^3).\]
\end{defn}

\begin{defn}\label{grounding}
  Given $x>0$, $\epsilon>0$, and $a,b\in\R$, $a<b$, a first cookie
  environment on the interval $\bint{a,b}$ is said to be {\em
    $x$-grounding from the left (resp.\ right)} if for a $U^+$ (resp.\
  $U^-$) process which uses the environment with these first cookies
  for generations $1,2,\dots,\fl{b}-\ceil{a}+1$ (resp.\
  $\fl{b}-\ceil{a}+1,\dots,2,1$) and starts with $\fl{x}$ particles in
  generation $0$
  \[P(\sigma^{U^+}_0\le b-a)\ge 1-\epsilon^3\ \ (\text{resp. }P(\sigma^{U^-}_0\le b-a)\ge 1-\epsilon^3).\]
\end{defn}

\subsection{The ``full'' diffusion approximation  in product
  environments }\label{daiid}

In this subsection we extend the results of \cite[Lemma 6.1]{kpERWMCS}
to either the convergence on $D([0,\infty))$ (for $V^+$ processes with
positive drifts) or the convergence up to the first hitting time of
$0$. 
The diffusion approximations of the BLPs here and throughout the paper will generally be of the form 
\begin{equation} \label{daa}
 dY(t)= D(t) \, dt+\sqrt{\nu (Y(t))_+}dB(t)
 , 
\end{equation}
where the constant $\nu>0$ is the parameter which was defined earlier
in \eqref{nudef} and the drift $D(t)$
  is a nonrandom piecewise constant function of time depending on the
  particular BLP being considered ($U^\pm$, $V^\pm$, or concatenation
  of those) and the distribution of the first cookies.

We note that if $Y$ is defined as in \eqref{daa} and $D(t)\equiv D$
then the process $\frac{4 Y(t)}{\nu}$ is a BESQ process of generalized
dimension $4D/\nu$.  Weaker versions of diffusion approximation for
BLPs with initial cookie distributions i.i.d.\ $\eta$ were proved
earlier in \cite{kpERWMCS} where the drift
$D(t)\equiv \eta \cdot \br^\pm$ in the case of $U^\pm$ and
$D(t)\equiv 1+\eta\cdot \br^\pm $ in the case of $V^\pm$.

\begin{defn}\label{admit}
  We shall say that a family of stochastic processes
  $Z^m=\{Z_k^m\}_{k\ge 0}$, $m\in\N$,  admits an approximation by a BESQ process of
  generalized dimension $\dim\in\R$ if $\forall \delta>0$ and
  $\forall y>\delta$ the rescaled processes
  $m^{-1}Z^m_{\fl{mt}\wedge \sigma_{n\delta}}$ with $m^{-1}Z^m_0\to y$
  converge weakly in the standard ($J_1$) Skorokhod topology to a
  positive multiple of BESQ$^{\dim}$ process
  $Y(t\wedge \sigma_{\delta})$ with $Y(0)=y$.
\end{defn}
In terms of the above definition, and recalling the relations \eqref{vecr} and \eqref{thedef}, the arguments in \cite{kpERWMCS} show that if the initial cookie distribution is i.i.d.\ $\eta$ then the 
BLPs $U^+$, $U^-$, $V^+$, and $V^-$ admit approximation by a BESQ processes of generalized dimensions
$2\theta^+$, $2\theta^-$, $2(1-\theta^-)$ and
$2(1-\theta^+$), respectively.  

Since we are assuming in this paper
that $\max\{\theta^+,\theta^-\} < 1$, the dimensions of the BESQ
processes associated to $V^\pm$ are strictly positive and the
dimensions of the BESQ processes associated to $U^\pm$ are strictly
less than $2$.

\begin{thm}[Diffusion approximation in i.i.d.\ 
  environments]\label{DA0}
  Assume that the cookie environment is i.i.d.\ with marginal $\eta$.
  \begin{enumerate}
  \item Suppose that $4\nu^{-1}(1+\eta\cdot\br^+)=2(1-\theta^-)> 0$
    and consider a sequence of rescaled BLPs $Y_m(t)\coloneqq m^{-1}V^+_{m,\fl{mt}}$, $t\ge 0$, with
    initial distributions $\kappa_m$, $Y_m(0)\sim\kappa_m$. If
    $\kappa_m\underset{m\to\infty}{\Longrightarrow} \kappa$ then
  \[\{Y_m(t)\}_{t\ge
      0}\overset{J_1}{\underset{m\to\infty}{\Longrightarrow}}
    \{Y(t)\}_{t\ge 0},\] where $(Y(t))_{t\ge 0}$ is the solution of
\eqref{daa} with $D(t)\equiv 1+\eta\cdot \br^+$ and $Y(0)\sim \kappa$.
\item Suppose that $4\nu^{-1}(\eta\cdot\br^+)=2\theta^+<2$ and
  consider a sequence of rescaled BLPs $Y_m(t)\coloneqq m^{-1}U^+_{m,\fl{mt}},\ t\ge 0$, with
  initial distributions $\kappa_m$, $Y_m(0)\sim\kappa_m$. If 
    $\kappa_m\underset{m\to\infty}{\Longrightarrow} \kappa$ then
  \[\{Y_m(t)\}_{t\ge
      0}\overset{J_1}{\underset{m\to\infty}{\Longrightarrow}}
    \{ Y(t\wedge \sigma^Y_0) \}_{t\ge 0},\] where $\{Y(t)\}_{t\ge 0}$ is the solution of 
    \eqref{daa} with $D(t)\equiv \eta\cdot \br^+$ and $Y(0)\sim \kappa$.
      Moreover,       
\begin{equation}
  \label{ht}
  \sigma^{Y_m}_0\ \Rightarrow \ \sigma^Y_0\ \text{ as }m\to \infty. 
\end{equation}
    \end{enumerate}
  \end{thm}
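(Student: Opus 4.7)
The plan is to use the diffusion approximation of \cite[Lemma 6.1]{kpERWMCS} as the starting input, which (in the language of Definition \ref{admit}) gives the weak convergence of $m^{-1}V^+_{m,\fl{m\cdot}}$ and $m^{-1}U^+_{m,\fl{m\cdot}}$ stopped at $\sigma_{m\delta}$ to the corresponding BESQ processes stopped at $\sigma_\delta$, for any fixed $\delta>0$. The two statements of Theorem \ref{DA0} then amount to removing the cutoff $\delta$ in two different regimes: for $V^+$ the limit is a supercritical or at least strictly positive-dimension BESQ which does not get trapped near $0$, so we extend the convergence to all of $[0,\infty)$; for $U^+$ the limit BESQ has dimension $<2$ and so is absorbed at $0$ in finite time, so we extend the convergence up to absorption and show that hitting times converge.

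For part (1), I would first handle $\kappa$ with $\kappa(\{0\})=0$: choose $\delta>0$ small enough that $\kappa([0,\delta])$ is negligible, apply the existing result on $[0,T]\cap[0,\sigma_\delta]$, and use the strong Markov property at $\sigma_\delta$ together with rescaling to restart from below. The key input is that the limiting BESQ$^{2(1-\theta^-)}$ has dimension strictly positive, so by standard properties its occupation time near $0$ is small and it does not get stuck; for the discrete process one establishes the matching statement by coupling $V^+$ from a small state to a $V^+$ with the original $\eta$-product environment and using that the drift $1+\eta\cdot\br^+$ is strictly positive to bound excursion lengths near $0$. Handling $\kappa(\{0\})>0$ reduces by the Markov property to showing continuity of the law of the limit in the initial value, which follows from standard BESQ theory.

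For part (2), the diffusion approximation up to $\sigma_\delta$ is again the starting point; the extra work is to compare the processes on $[\sigma_\delta,\sigma_0]$ and to show $\sigma^{Y_m}_0\Rightarrow\sigma^Y_0$. The strategy is a two-sided sandwich: on the continuous side, since BESQ$^{2\theta^+}$ with $2\theta^+<2$ is almost surely absorbed at $0$ a.s., one has $\sigma^Y_\delta\uparrow\sigma^Y_0$ as $\delta\downarrow 0$ and in fact $\sigma^Y_0-\sigma^Y_\delta\to 0$ in probability; on the discrete side one needs a uniform-in-$m$ tail bound of the form $P(\sigma^{U^+}_0-\sigma^{U^+}_{m\delta}>m\eta\mid U^+_0)\le g(\delta,\eta)$ with $g(\delta,\eta)\to 0$ as $\delta\to 0$. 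This last bound should follow by stochastic domination of $U^+$ (starting from $\fl{m\delta}$ particles) by a branching-with-migration process whose offspring mean is strictly less than or equal to $1$, giving linear-in-initial-state tail bounds on extinction times; the finite second moment of $U^+_1$ conditional on $U^+_0=k$ (of order $\nu k$ by \eqref{nudef}) supplies the required quantitative control.

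The principal obstacle is the endgame for $U^+$ near $0$: at states of size $o(m)$ the diffusive rescaling no longer applies, so one must argue directly with the BLP. A workable route is to use Foster--Lyapunov/supermartingale arguments with a linear test function, exploiting that when $\eta\cdot\br^+<\nu/2$ (equivalent to $2\theta^+<2$), the chain $U^+$ has, after absorption of the positive drift by the positivity of variance, a negative effective drift near $0$ that guarantees exponentially fast absorption on scales $o(m)$. Once these tail bounds are in place, together with the continuity of first-hitting maps at paths that cross the boundary transversally (a.s.\ property of the limiting BESQ with dimension $<2$), the Skorokhod continuous mapping argument delivers both the convergence \eqref{ht} and the full convergence of the stopped processes.
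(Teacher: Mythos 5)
Your part (2) outline matches the paper's approach in spirit: the paper also uses the ``moving limit'' sandwich (Billingsley's Theorem 3.2, stated as Theorem~\ref{Bil} in the appendix), comparing the $\delta$-cutoff process to the full process and letting $\delta\to 0$, and it obtains \eqref{ht} by the continuous mapping argument for hitting times together with the quantitative estimate $P(\sigma_0^{U^+}>\epsilon m\mid U^+_0=\lceil\delta m\rceil)\to 0$ as $\delta\to 0$ (their Lemma~\ref{sm0}). One correction there: your stated mechanism for that estimate -- ``negative effective drift near $0$'' when $\eta\cdot\br^+<\nu/2$ -- is not accurate. The one-step drift of $U^+$ near $0$ is $\eta\cdot\br^+$, which can perfectly well be positive; what forces absorption on scale $o(m)$ is that the limiting BESQ has generalized dimension $2\theta^+<2$ (i.e.\ the noise dominates the drift relative to the boundary), not a sign condition on the drift. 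Also, your proposed coupling ``of $V^+$ from a small state to a $V^+$ with the original $\eta$-product environment'' does not say anything -- both processes already live in that environment; you presumably mean a comparison/monotonicity coupling (as used implicitly in the paper's Lemma~\ref{sm0}).

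For part (1), the paper's route is genuinely different and cleaner than yours. It does no cutoff removal at all: it verifies the conditions of the Ethier--Kurtz semimartingale convergence theorem (\cite[Theorem 4.1]{ekMP}) directly on the whole time interval, using only the one-step moment/variance estimates for $V^+$ from \cite{kpERWMCS} and the occupation-time-near-$0$ bound Lemma~\ref{small2}. Your cutoff-and-restart plan has a real gap: because BESQ$^{2(1-\theta^-)}$ with $0<2(1-\theta^-)<2$ returns to $0$ infinitely often, stopping at $\sigma_\delta$ does not cleanly decouple the problem -- the process restarted from $\delta$ immediately re-enters the region where the cutoff approximation fails, so you would have to handle an unbounded number of excursions below $\delta$ and show the discrete process tracks them uniformly. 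That is precisely the difficulty the martingale-problem approach absorbs into the generator conditions (EKc6)--(EKc7), which are verified using the same occupation-time estimate you gesture at but packaged so that no Markov-restart induction is needed. Your sketch, as written, does not close this loop.
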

  \begin{rem}
    Part (2) also holds for the process $V^+$ if
    $4\nu^{-1}(1+\eta\cdot\br^+)=2(1-\theta^-)<2$ provided that we
    replace the drift of the $Y$ process in that part with
    $1+\eta\cdot\br^+$ and $Y_m(t)$ with
    $Y_m(t\wedge \sigma^{Y_m}_0)$. The proof needs practically no
    changes.
  \end{rem}
  The proof of this theorem is standard and, for convenience of the reader, is given in the Appendix.

\subsection{Lifting from 0 and driving to extinction in 
  i.i.d.\ environments}\label{updown}

The next two lemmas are stated for the $V^+$ process with parameter
$\theta ^-<1$. Similar statements with identical proofs hold for the
$V ^-$ process with parameter $\theta ^+<1$.
  \begin{lemma}\label{liftIID}
    If the environment is i.i.d. $\eta' $ where $\theta^- (\eta') < 1$
    and $V^+$ is the BLP with $V^+_0=0$ then
\[
 \lim_{\delta\to 0} \limsup_{m\to\infty} P\left( V^+_m \leq \delta m \right) = 0. 
\]
\end{lemma}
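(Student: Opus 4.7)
The plan is to obtain the lemma as an immediate consequence of the diffusion approximation of Theorem~\ref{DA0}(1), specialized to the degenerate initial distribution $\kappa_m=\delta_0$ (so that $\kappa=\delta_0$ as well).

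First I would check that the hypothesis of Theorem~\ref{DA0}(1) holds for an i.i.d.\ environment with marginal $\eta'$. Applying \eqref{vecr} and \eqref{thedef} with $\eta$ replaced by $\eta'$, we have $4\nu^{-1}(1+\eta'\cdot\mathbf{r}^+)=2(1-\theta^-(\eta'))>0$ by the assumption $\theta^-(\eta')<1$. Theorem~\ref{DA0}(1) then yields
\[
\left\{m^{-1}V^+_{\fl{mt}}\right\}_{t\ge 0}\overset{J_1}{\underset{m\to\infty}{\Longrightarrow}}\{Y(t)\}_{t\ge 0},
\]
where $\{Y(t)\}_{t\ge 0}$ is the solution of \eqref{daa} with constant drift $D\equiv 1+\eta'\cdot\mathbf{r}^+$ and $Y(0)=0$. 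Up to a deterministic rescaling, $Y$ is a squared Bessel process of strictly positive generalized dimension $2(1-\theta^-(\eta'))$ started at the origin.

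Next I would pass from $J_1$ convergence of paths to convergence of the one-dimensional marginal at $t=1$. Since $\{Y(t)\}_{t\ge 0}$ has continuous sample paths, $t=1$ is almost surely a continuity point, so $m^{-1}V^+_m\Rightarrow Y(1)$. A classical fact (see, e.g., \cite[Chapter~XI]{ryCMBM}) is that a squared Bessel process of strictly positive dimension started at $0$ has at any positive time a Gamma-type law with continuous density on $(0,\infty)$ and no mass at $\{0\}$; in particular $Y(1)>0$ almost surely and the distribution function of $Y(1)$ is continuous on $[0,\infty)$. Therefore
\[
\limsup_{m\to\infty}P\left(V^+_m\le\delta m\right)=\limsup_{m\to\infty}P\left(m^{-1}V^+_m\le\delta\right)\le P(Y(1)\le\delta),
\]
and the right-hand side tends to $0$ as $\delta\to 0^+$, which proves the claim.

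There is no real obstacle here: the lemma is essentially a one-line consequence of the functional diffusion approximation together with the standard fact that a positive-dimensional squared Bessel process started at $0$ charges no neighborhood of $0$ at positive times. The only mildly subtle point is that Theorem~\ref{DA0}(1) must be invoked with a sequence of initial distributions converging to $\delta_0$, which is permitted since the hypothesis "$\kappa_m\Rightarrow\kappa$" allows $\kappa_m=\kappa=\delta_0$, and the limit $Y$ exists and is a well-defined BESQ diffusion started at $0$ because its dimension is positive.
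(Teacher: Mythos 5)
Your proof is correct and takes a genuinely different and considerably simpler route than the paper's. The paper proves this lemma via the Dynkin--Lamperti theorem for renewal processes with infinite-mean interarrival times: it decomposes $V^+$ into its excursions from $0$, shows (via Dynkin--Lamperti and the tail asymptotics of the excursion lengths from \cite[Theorem 2.7]{kpERWMCS}) that at time $m$ the current excursion is unlikely to end soon, and only then invokes the diffusion approximation \emph{conditioned on starting from level $\delta m$}; moreover, it first treats $\theta^-\in(0,1)$ and then handles $\theta^-\le 0$ by a separate coupling with a ``smaller'' BLP. You instead apply Theorem~\ref{DA0}(1) directly with $\kappa_m=\kappa=\delta_0$, obtaining $m^{-1}V^+_m\Rightarrow Y(1)$ where $Y$ is a positive multiple of a $\mathrm{BESQ}^{2(1-\theta^-(\eta'))}$ started at $0$, and then conclude from the standard fact that positive-dimensional squared Bessel processes started at $0$ carry no mass near $0$ at positive times. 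Your version avoids the $\theta^-\le 0$ case split entirely and is shorter; what it spends is the full strength of the path-space convergence in Theorem~\ref{DA0}(1), whereas the paper's argument is more elementary in the sense that it only needs the excursion-length tail and a diffusion approximation started away from $0$. (There is no circularity in your approach: the appendix proof of Theorem~\ref{DA0}(1) uses Lemma~\ref{small2} and the cited results from \cite{kpERWMCS}, none of which depend on the present lemma.) Two small cosmetic remarks: by continuity of the law of $Y(1)$ at every $\delta>0$ your inequality is in fact an equality $\lim_m P(m^{-1}V^+_m\le\delta)=P(Y(1)\le\delta)$, though the one-sided bound from the portmanteau theorem already suffices; and it is worth stating explicitly, as you implicitly do, that Theorem~\ref{DA0}(1) holds verbatim with $\eta$ replaced by an arbitrary i.i.d.\ marginal $\eta'$, since the estimates it relies on (Propositions~4.1, 4.2 and Lemma~A.1 of \cite{kpERWMCS}, Lemma~\ref{small2}) hold uniformly over the initial distribution.
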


\begin{proof}[Proof of Lemma~\ref{liftIID}]
  The proof is based on the Dynkin-Lamperti theorem for renewal
  processes with infinite expectation, \cite[XIV.3,
  p.\,472]{Feller2}. 

  Assume first that $\theta^-\in(0,1)$ so that the diffusion
  approximation $Y=\{Y(t)\}_{t\ge 0}$ is a multiple of a
  BESQ$^{2(1-\theta^-)}$ of dimension strictly between 0 and 2. Let
  $N_m=\sum_{k=1}^m\ind{V^+_k=0}$ and $\sigma_i$ be the
  end of the $i$-th lifetime of $V^+$. Random variables
  $\sigma_1,\sigma_2,\dots$ are i.i.d.\ finite random variables with
  infinite expectation.\footnote{The lifetime $\sigma_1$ has infinite
    expectation for $\theta^-\le 1$. For $\theta^-<0$ also the probability
    that $\sigma_1=\infty$ is positive.} Then Dynkin-Lamperti theorem
  states that $(m-\sigma_{N_m})/m$ and $(\sigma_{N_m+1}-m)/m$ converge
  in distribution to random variables with explicit densities
  supported on $(0,1)$ and $(0,\infty)$ respectively.

  Given $\epsilon>0$, we can find an $s>0$ such that  $P(\sigma_{N_m+1}-m\le sm)<\epsilon$ for all sufficiently large $m$. Then
  \begin{align*}
    P(V^+_m\le \delta m)&\le P(V^+_m\le \delta m, \sigma_{N_m+1}-m> sm)+P(\sigma_{N_m+1}-m\le  sm)\\ &\le P(\sigma_{N_m+1}-m> sm\mid V^+_m\le \delta m)+P(\sigma_{N_m+1}-m\le  sm)\\ &\le P(\sigma_1> sm\mid V^+_0=\delta m)+P(\sigma_{N_m+1}-m\le  sm).
  \end{align*}
  Going from the second line to the third we used monotonicity of the
  BLP in the initial number of particles and Markov property. Taking a
  limit as $m\to\infty$  we see that
  \[\limsup_{m\to\infty}P(V^+_m\le \delta m)\le
    P^Y(\tau_0> s\mid Y(0)=\delta)+\epsilon=P^Y(\tau_0> \delta^{-2}s\mid Y(0)=1)+\epsilon.\]
  Finally, letting $\delta\to 0$ and using the fact that
  $P^Y(\tau_0=\infty\mid Y(0)=1)=0$ for $\theta^-\in(0,1)$ we get
  \[\lim_{\delta\to 0}\limsup_{m\to\infty}P(V^+_m\le \delta m)\le
    \epsilon.\] Now we can let $\epsilon\to 0$ and get the result for the case when $\theta^-\in(0,1)$.

  If $\theta^-\le 0$ then the process $V^+$ can be coupled with a
  ``smaller'' process (corresponding to $\theta^-\in (0,1)$). The
  coupling can be done by adding one or more cookies of strength
  $\min_{i\leq N} (p_i)$ before the first cookie in each stack (or a
  geometric number of these with an appropriate success probability).
  Details of how such a coupling can be constructed can be found in
  \cite[Section 5.1]{kpERWMCS}.  This will complete the proof of the
  lemma by comparison.
\end{proof}

Next we show that BLPs which evolve in environments close to
i.i.d.\ and which admit an approximation by a BESQ process of dimension
less than 2 will become extinct very soon after becoming
macroscopically small.

\begin{lemma}\label{sm0}
  Let $Z$ be a BLP in an i.i.d.\ cookie environment. Assume that it
  admits an approximation by a BESQ process of dimension strictly less
  than 2. Then for all $\delta,\epsilon>0$ there is a $\delta'>0$ such
  that for all sufficiently large $m$
  \[P(\sigma_0>\delta m\mid Z_0\le \delta' m)<\epsilon.\]
\end{lemma}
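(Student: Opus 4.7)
The plan is to reduce the problem to the diffusion approximation via monotonicity and then exploit the self-similarity of BESQ processes. First, by the monotonicity of the BLP in its initial condition (listed among the basic properties in Section~\ref{sec:BLP}), any process started from $Z_0 \leq \fl{\delta' m}$ can be coupled with one started from $Z_0 = \fl{\delta' m}$ so that the former is pointwise dominated by the latter; in particular its extinction time is no larger. Averaging over the conditional law of $Z_0$ given $\{Z_0 \leq \delta' m\}$ reduces matters to showing
\[
\limsup_{m \to \infty} P\bigl(\sigma_0 > \delta m \mid Z_0 = \fl{\delta' m}\bigr) \to 0 \quad \text{as } \delta' \to 0.
\]

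Next, set $Y_m(t) = m^{-1} Z_{\fl{mt}}$, so that $Y_m(0) \to \delta'$. By the assumed diffusion approximation (Theorem~\ref{DA0}(2) together with its Remark cover all the BLPs in i.i.d.\ environments relevant to the hypothesis), $Y_m$ converges in the Skorokhod topology to a process $Y$ which is a positive multiple of a BESQ$^{\dim}$ process with $\dim < 2$, started at $\delta'$ and absorbed at $0$; moreover \eqref{ht} yields $m^{-1} \sigma_0^Z \Rightarrow \sigma_0^Y$. Since the distribution of $\sigma_0^Y$ has no atoms on $(0,\infty)$ (a standard property of BESQ of positive dimension strictly below $2$),
\[
\lim_{m\to\infty} P\bigl(\sigma_0 > \delta m \mid Z_0 = \fl{\delta' m}\bigr) = P^{Y(0) = \delta'}\bigl(\sigma_0^Y > \delta\bigr).
\]

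The final step is BESQ scaling. If $Y$ solves the SDE \eqref{daa} with constant drift $D$ starting at $\delta'$, then $\tilde Y(t) := (\delta')^{-1} Y(\delta' t)$ satisfies the same SDE starting at $1$. Consequently
\[
P^{Y(0) = \delta'}\bigl(\sigma_0^Y > \delta\bigr) = P^{\tilde Y(0) = 1}\bigl(\sigma_0^{\tilde Y} > \delta/\delta'\bigr),
\]
and since $\dim < 2$ the hitting time $\sigma_0^{\tilde Y}$ is almost surely finite, so the right-hand side tends to $0$ as $\delta' \to 0$. Choosing $\delta'$ small enough that this limit is below $\epsilon/2$ and then taking $m$ large enough yields the claim.

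The main obstacle is the appeal to convergence of hitting times (not merely of path distributions) for the BLP. This is precisely the content of \eqref{ht} in Theorem~\ref{DA0}, whose validity relies on the fact that a BESQ process of dimension strictly less than $2$ does not spend positive time at $0$, so that the hitting-time-of-$0$ functional is almost surely continuous on the limit paths; without this one could a priori have an inflated pre-limit probability caused by oscillations of $Z$ near $0$ that are not reflected in the limiting diffusion. Given this, the remainder of the argument is a routine combination of the monotonicity of BLPs in the initial condition and the Brownian/BESQ scaling.
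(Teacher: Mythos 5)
Your proof is correct and supplies the argument that the paper outsources to the cited source [KM11]: monotonicity of the BLP in its starting value reduces the conditional statement to a deterministic starting point $\fl{\delta' m}$, the convergence of hitting times in \eqref{ht} of Theorem~\ref{DA0}(2) (and its Remark, covering all four BLPs in i.i.d.\ environments) identifies the limit as $P(\sigma_0^Y>\delta\mid Y(0)=\delta')$, and the scaling $\tilde Y(t)=(\delta')^{-1}Y(\delta' t)$ together with almost sure finiteness of the extinction time for $\dim<2$ sends that quantity to zero. One small imprecision: the no-atoms and a.s.-finiteness facts that you invoke hold for \emph{all} $\dim<2$ (including $\dim\le 0$, which is the relevant range for $U^\pm$ when $\theta^\pm\le 0$), not just for ``positive dimension strictly below $2$'' as your parenthetical states; this does not affect the validity of the argument.
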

  \begin{proof}
    The proof  is the same as that of (5.5) in
    [KM11]. 
  \end{proof}

\subsection{BLPs in $m^{1/4}$-good environments}\label{0.25}

In this section we extend the diffusion approximation of BLPs (and
some of the resulting consequences) from the case where the first
cookie environments are i.i.d.\ to the weaker condition of
$(m^{1/4},\rho)$-good. The cost of this relaxation is that we will not
be able to get convergence of the hitting time of 0 as in
\eqref{ht}. Nevertheless, we will be able to get enough control on
this hitting time (Lemma \ref{thetruth}) for our applications
later.

A number of the results in this section hold for more than one of the
four different BLPs ($U^\pm$ or $V^\pm$). Thus, if a result holds for
one or more of these BLPs we will state the result in terms of a
generic BLP $Z$ and will state which of the four BLPs $Z$ can be (if
no restrictions are made it is assumed that $Z$ can be any of the four
BLPs).  Also, if the result concerns one of the BLPs using the cookie
environment on the interval $\bint{a,b}$, we will always assume that
if the BLP $Z$ is either $U^+$ or $V^+$ then the cookie stacks are
used to generate successive generations of the BLP from left to right
whereas if the BLP is either $U^-$ or $V^-$ then the cookie stacks are
used from right to left.

\begin{thm}[Diffusion approximation in $m^{1/4}$-good environments]\label{da0.25}
  Suppose that for some $\rho\in\R,\ T>0$ the first cookies are
  $(m^{1/4},\rho)$-good on intervals $\bint{0,mT}$ for all sufficiently
  large $m$.  Fix an arbitrary $\delta>0$ and consider a
  sequence of rescaled BLPs
  $Y_m(t)\coloneqq m^{-1}Z^m_{\fl{mt}\wedge \sigma_{\delta m}},\
  t\in[0,T]$, with initial distributions $Y_m(0)\sim \kappa_m$.  If
  $\kappa_m\underset{m\to\infty}{\Longrightarrow} \kappa$ then
  \[\{Y_m(t)\}_{0\le t\le
      T}\overset{J_1}{\underset{n\to\infty}{\Longrightarrow}}
    \{ Y(t\wedge \sigma_{\delta})\}_{0\le t\le T},\] where
  $\{Y(t)\}_{t\ge 0}$ is the solution of \eqref{daa} with
  $Y(0)\sim \kappa$ and where $D(t)$ is a constant equal to $\rho$ for
  $U^+$, $\rho+1$ for $V^+$, $\nu/2-1-\rho$ for $U^-$, and
  $\nu/2-\rho$ for $V^-$.
 \end{thm}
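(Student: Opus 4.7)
The plan is to extend Theorem \ref{DA0} by replacing the law of large numbers for i.i.d.\ averages with a law of large numbers for block averages that is provided by $m^{1/4}$-goodness. For definiteness I treat the case $Z^m = U^+$; the remaining three BLPs are handled identically (the ``$+1$'' in the drift for the $V$-processes comes from the extra failure in their definition, giving $V^+_i$ the same law as $U^+$ started from $V^+_{i-1}+1$, and the $U^-/V^-$ cases follow after swapping the roles of successes and failures via the identity \eqref{vecr}). Introduce the Doob decomposition $Z^m_k = Z^m_0 + A^m_k + M^m_k$, where $A^m_k = \sum_{j=1}^k E[\Delta^m_j \mid \mathcal{F}_{j-1}]$ with $\Delta^m_j = Z^m_j - Z^m_{j-1}$ and $\mathcal{F}_j$ is the natural filtration. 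Conditioning further on the next cookie $R^{\ell+j}_1 = i$ decouples the environment from the increment mechanism, giving
\[
E[\Delta^m_j \mid \mathcal{F}_{j-1}, R^{\ell+j}_1 = i] = f(Z^m_{j-1}, i), \qquad \mathrm{Var}(\Delta^m_j \mid \mathcal{F}_{j-1}, R^{\ell+j}_1 = i) = g(Z^m_{j-1}, i),
\]
where $f(n,i) \to r^+(i)$ and $g(n,i)/n \to \nu$ as $n \to \infty$, both uniformly in $i$ (quantitative refinements of \eqref{rpmdef} and \eqref{nudef}).

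Next I would show that $A^m_{\fl{mt} \wedge \sigma_{\delta m}}/m$ converges to $\rho\,(t \wedge \sigma^Y_\delta)$ in probability, locally uniformly in $t$. On the event $\{j \le \sigma_{\delta m}\}$ one has $Z^m_{j-1} \ge \delta m$, so uniform convergence lets us replace $E[\Delta^m_j \mid \mathcal{F}_{j-1}]$ by $r^+(R^{\ell+j}_1)$ at total cost $o(m)$ after summing $j \le \fl{mT}$. The resulting deterministic sum $\sum_{j=1}^{k} r^+(R^{\ell+j}_1)$ is evaluated by partitioning $\bint{1, \fl{mT}}$ into $O(m^{3/4})$ blocks of length $\fl{m^{1/4}}$. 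By $(m^{1/4},\rho)$-goodness each block contributes $\rho \fl{m^{1/4}} + O(m^{1/4}/\ln m)$, so summing the blocks (together with the incomplete boundary block) yields total deviation $O(m/\ln m) + O(m^{1/4}) = o(m)$. A parallel computation using $g(n,i)/n \to \nu$, together with continuity of $s \mapsto Y_m(s)$ on compacts, gives
\[
\frac{1}{m^2} \langle M^m \rangle_{\fl{mt} \wedge \sigma_{\delta m}} \longrightarrow \nu \int_0^{t \wedge \sigma^Y_\delta} Y(s)\, ds
\]
in probability.

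Tightness of $\{Y_m\}$ in $D([0,T])$ follows from the Aldous criterion applied to the compensator (Lipschitz in $t$ after normalization, with constant close to $|\rho|$) and to $M^m/m$ (quadratic variation controlled uniformly on $[0, \sigma^{Y_m}_\delta]$). The functional martingale central limit theorem then identifies any subsequential limit $Y$ as a continuous process satisfying
\[
Y(t) = Y(0) + \rho\,(t \wedge \sigma^Y_\delta) + \int_0^{t \wedge \sigma^Y_\delta} \sqrt{\nu Y(s)}\, dB(s),
\]
which is precisely \eqref{daa} with constant drift $D \equiv \rho$, stopped at $\sigma^Y_\delta$. Pathwise uniqueness of Yamada--Watanabe type holds because $Y$ stays above $\delta > 0$ up to the stopping time, making the diffusion coefficient Lipschitz there; this upgrades subsequential convergence to full convergence.

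The main technical obstacle is securing a sufficiently strong uniformity in the approximations $f(n,i) \to r^+(i)$ and $g(n,i)/n \to \nu$: a bound of the form $\sup_i |f(n,i) - r^+(i)| = O(n^{-\alpha})$ for some $\alpha > 0$ is needed so that the sum over $O(m)$ terms of the approximation error is $o(m)$. Such rates should follow from the exponential mixing of the finite-state cookie Markov chain in Assumption \ref{asm:Markov}, and estimates of this flavor underlie the i.i.d.\ case treated in \cite{kpERWMCS}. Once the quantitative rates are in hand, the comparatively coarse $1/\ln m$ error per block allowed by $m^{1/4}$-goodness is more than enough, since $1/\ln m \ll m^{-1/4}$ and the resulting block-averaging error is absorbed easily.
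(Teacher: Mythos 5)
Your proposal is correct in substance, and it relies on the same two enabling ingredients as the paper's proof: the quantitative moment estimates $\bigl|E[Z^m_j-Z^m_{j-1}\mid Z^m_{j-1}]-r^+(R^{\ell+j}_1)\bigr|\le c_1e^{-c_2 Z^m_{j-1}}$ and $\bigl|\mathrm{Var}(Z^m_j\mid Z^m_{j-1})-\nu Z^m_{j-1}\bigr|\le c_3$ (Propositions 4.1 and 4.2 of \cite{kpERWMCS}), together with the observation that $(m^{1/4},\rho)$-goodness makes block averages of $r^+(R^{\ell+j}_1)$ equal $\rho+O(1/\ln m)$. Where you differ structurally is that you run a single-step Doob decomposition and invoke a martingale FCLT directly, whereas the paper first coarse-grains time into blocks of length $m^{1/4}$: it introduces the modified process $\tilde V^+_m$, considers $Z^n_k=m^{-1/4}\tilde V^+_{m,\fl{km^{1/4}}}$ with $n=\fl{m^{3/4}}$, verifies the abstract drift/variance conditions of Lemma~\ref{al1} for that block process (the drift condition (E) then \emph{is} the goodness hypothesis, up to the exponential correction), and finally closes the gap between the block-sampled and original process with Lemma~\ref{ct}. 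The paper's route buys a clean reduction to an existing single-step lemma; your route is more self-contained but must explicitly supply the a priori estimates that the block machinery otherwise absorbs.

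Two such estimates are genuinely nontrivial and your sketch passes over them. First, the Riemann-sum identification $m^{-2}\langle M^m\rangle_{\fl{mt}\wedge\sigma_{\delta m}}\approx\nu\int_0^{t\wedge\cdot}Y_m(s)\,ds$ is fine pointwise, but verifying the Aldous criterion requires bounding $\nu\int_\tau^{\tau+h}Y_m(s)\,ds$ uniformly over stopping times; your parenthetical that the quadratic variation is ``controlled uniformly on $[0,\sigma^{Y_m}_\delta]$'' does not help here, since $\sigma_\delta$ only caps the process from \emph{below}. One needs a uniform-in-$m$ control of $\sup_{s\le T}Y_m(s)$ exactly as in Lemma~\ref{ub}, which the paper proves via a martingale argument. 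Second, to get $C$-tightness (so that the limit solves an SDE with continuous paths) one also needs a maximal-increment bound of the form $\max_{j\le mT}(Z^m_j-Z^m_{j-1})^2=o(m^2)$; this is Step~3 of the paper's proof (condition~(2) of Lemma~\ref{al1}) and is handled there via the $B_k$-events and the moment bounds of \cite[Lemma~A.1, A.3]{kpERWMCS}. Both ingredients are standard and available from the cited estimates, so your approach is viable, but they have to be added for the argument to close.
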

 The proof of this theorem is given in the Appendix.  

 The diffusion approximation in Theorem \ref{da0.25} guarantees the
 convergence as long as the processes
 stay macroscopically away from zero. Nevertheless, when the limiting
 diffusion process is a BESQ$^0$ process, the diffusion approximation
 can be extended to all times (see Corollary~\ref{da0.25dead}).  This
 will follow from Theorem \ref{da0.25} together with the following
 lemma which says that when the BLP becomes ``macroscopically small''
 and then it cannot become ``macroscopically much larger'' during a
 fixed macroscopic time period.

  \begin{lemma}\label{dead}
    Let $Z^m$ be a BLP using the cookies on the interval $\bint{0,m}$,
    and suppose that the first cookies on intervals $\bint{0,m}$ are
    $(m^{1/4},\rho)$-good where the parameter $\rho$ is such that the
    family $Z^m$, $m\in \N$, admits an approximation by a BESQ process
    of dimension $0$. Then $\forall \epsilon>0,\ \forall\delta>0$
    there is a $\delta' \in(0,\delta)$ such that for all sufficiently
    large $m$
  \[P(\max_{j\le m}Z^m_j\ge \delta m\mid Z^m_0\le \delta ' m)<\epsilon.\]
  \end{lemma}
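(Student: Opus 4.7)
The plan is to exploit the fact that the diffusion limit BESQ$^0$ of $Z^m$ is a non-negative martingale, so that by Doob's maximal inequality a BESQ$^0$ process $Y$ with $Y(0) = \delta'$ satisfies $P(\sup_{t \ge 0} Y(t) \ge \delta) \le \delta'/\delta$, which can be made smaller than $\epsilon$ by choosing $\delta'$ small. The goal is to transfer this bound to the discrete BLP $Z^m$.

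First, by the monotonicity of BLPs in the initial condition, I would reduce to the case $Z^m_0 = \fl{\delta' m}$. To implement the martingale argument at the discrete level, I would iterate Theorem~\ref{da0.25} across a geometric sequence of levels: fix an integer $K$ with $2^{-K} < \epsilon/4$, set $L_k = \fl{\delta' m/2^k}$ for $k = 0, 1, \ldots, K$, and introduce stopping times $\tau_0 = 0$ and
\[
  \tau_k = \inf\{j > \tau_{k-1} : Z^m_j \ge \delta m \ \text{or}\ Z^m_j \le L_k\}.
\]
The ``good'' event at epoch $k$ is $\{Z^m_{\tau_k} \le L_k\}$, on which we proceed to epoch $k+1$; the ``bad'' event $\{Z^m_{\tau_k} \ge \delta m\}$ contributes to the failure probability we wish to bound.

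By the strong Markov property and Theorem~\ref{da0.25} applied with stopping level $L_k/m$, the rescaled process on the $k$-th epoch converges (conditionally on its starting value) to a BESQ$^0$ process $Y^{(k)}$ started at some $y_k \in (L_k/m, L_{k-1}/m]$ and stopped at $\tau^{Y^{(k)}}_\delta \wedge \sigma^{Y^{(k)}}_{L_k/m}$. Since this stopped process is a bounded non-negative martingale on $[L_k/m, \delta]$, optional stopping yields
\[
  P(\tau^{Y^{(k)}}_\delta < \sigma^{Y^{(k)}}_{L_k/m} \mid Y^{(k)}(0) = y_k)
  = \frac{y_k - L_k/m}{\delta - L_k/m}
  \le \frac{\delta'}{2^{k-1}\delta}.
\]
Standard continuity arguments (using continuity of BESQ$^0$ sample paths and linearity of the exit probability in the initial condition) then let me pass from this weak-convergence statement to an analogous estimate for $Z^m$. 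Summing over $k = 1, \ldots, K$, the total probability that $Z^m$ hits $\delta m$ within the first $K$ epochs is at most $2\delta'/\delta + o(1)$ as $m\to\infty$.

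After $K$ epochs, $Z^m$ is at level $\le L_K$, and a separate crude bound handles the subsequent microscopic regime. Specifically, the Doob decomposition $Z^m_j = M_j + A_j$ has $|A_j| = o(m)$ uniformly in $j \le m$: the predictable drift at step $j$ is close to $r^+(R^j_1)$ (or the analogous quantity for the other BLPs), and the $m^{1/4}$-goodness with the dimension-$0$ value of $\rho$ forces block sums of these drifts to be $O(m^{1/4}/\log m)$, so the total over $m$ steps is $O(m/\log m)$. Doob's inequality applied to the shifted non-negative martingale $M_j + \sup_i|A_i|$ then yields $P(\max_{j\le m} Z^m_j \ge \delta m \mid Z^m_{\tau_K} \le L_K) \le L_K/(\delta m) + o(1) \le \epsilon/8 + o(1)$. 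Combining everything and choosing $\delta' < \epsilon\delta/4$ gives the required bound. The hardest part will be making the iterative application of Theorem~\ref{da0.25} uniform in the random starting value $y_k$ of each epoch; this I expect to handle via standard subsequential-limit arguments together with the continuity of BESQ$^0$ exit probabilities in the initial condition.
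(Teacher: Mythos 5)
Your approach is genuinely different from the paper's. You work downward from $\delta' m$ through geometric levels $L_k = \delta' m/2^k$ and control each epoch via Doob's maximal inequality applied to the BESQ$^0$ (super)martingale, whereas the paper instead considers a dominating process that resets to $m$ whenever it drops to $m/2$ or rises to $2^k m$, and then combines the law of large numbers for the (geometric) number of resets with an optional-stopping computation using the local martingale $Y(t)\ln Y(t)-t$. Your strategy of descending levels is natural and leverages the non-negative-martingale structure cleanly, and the iterated application of Theorem~\ref{da0.25} (with monotonicity and continuity of BESQ$^0$ exit probabilities in the initial value) can in principle be made rigorous. A key advantage of the paper's reset construction is that it \emph{keeps the process in the macroscopic regime throughout}, which is precisely what your last step is missing.

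The gap is in your ``microscopic regime'' bound. You assert that in the Doob decomposition $Z^m_j = M_j + A_j$ the compensator satisfies $\sup_{j\le m}|A_j| = o(m)$ uniformly, with the justification that the one-step drift is ``close to $r^+(R^j_1)$'' and the $(m^{1/4},\rho)$-goodness with $\rho=0$ controls block sums of $r^+$. But the exact one-step drift is $E[Z^m_j - Z^m_{j-1} \mid Z^m_{j-1}=n] = r^+(R^j_1) + O(e^{-cn})$; the exponential correction is small only when $n$ is macroscopically large, and is $\Theta(1)$ for bounded $n$. After time $\tau_K$ the process is at level $\le L_K m$ but is not absorbed, and absent further control it could spend an order-$m$ number of generations at small values, in which case $\sum_{j\le m} O(e^{-cZ^m_{j-1}})$ is of order $m$ rather than $o(m)$. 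To close this you would need to invoke something like Lemma~\ref{small1}, which shows the time spent below $m^\alpha$ before absorption at $0$ is $\le m^\beta$ with overwhelming probability (giving an accumulated correction of order $m^\beta + m\,e^{-cm^\alpha} = o(m)$), but you never appeal to it. Without this input the claim $|A_j| = o(m)$ is unproved, and the final Doob estimate does not follow. Note also that $M_j + \sup_i |A_i|$ is a martingale only if the shift is deterministic, so you implicitly need a deterministic a priori bound on $\sup_i |A_i|$ anyway — another reason the control has to come from somewhere specific.
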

  
 \begin{rem}
We will only apply Lemma \ref{dead} in the case of the BLP $U^+$ or $U^-$. Due to Theorem \ref{da0.25}, we see that the condition that the approximating BESQ process is of dimension 0 if $\rho=0$ in the case of $U^+$ or $\rho=\frac{\nu}{2}-1$ in the case of $U^-$. 
 \end{rem}
 
  \begin{proof}
  Recall that $\tau^Z_x$ is the first entrance time of the process $Z$
  to the interval $[x,\infty)$. It is notationally convenient to prove
  an equivalent statement, namely, that
  $\forall\epsilon>0,\  \forall L>1$ there is a $k\in\N$ such that for
  all sufficiently large $m$
   \begin{equation*}
     P(\tau^{Z^m}_{2^km}\le 2^kLm\mid Z^m_0=m)<\epsilon.
   \end{equation*}
   The equivalence can be easily seen from the following relabeling
   (from the last expression to the original):
   $m\to \delta' m,\ 2^k\to \delta/\delta',\ L\to 1/\delta$.

   Our proof is based on comparison of $Z^m_j$, $j\ge 0$, with a
   modified process $\bzmk_j$, $j\ge 0$, and a diffusion
   approximation. The process $\bzmk$ coincides with $Z^m$ up until
   $\sigma^{Z^m}_{m/2}\wedge \tau^{Z^m}_{2^km}$ at which it
   resets to $m$. After the reset it continues as a ``fresh copy'' of $Z^m$
   but in the environment shifted by
   $\sigma^{Z^m}_{m/2}\wedge \tau^{Z^m}_{2^km}$, and so on.  Let
   $T_{k,i}^m$, $i\in\N$, be the sequence of waiting times between
   consecutive resets of $\bzmk$ and $N^m_k$ be the total number of resets
   until the first reset from the upper boundary inclusively. Then by
   construction and monotonicity of BLPs $\bzmk_j\ge Z^m_j$ for
   all $j<\sum_{i=1}^{N^m_k}T^m_{k,i}$ and, therefore,
   \[P(\tau^{Z^m}_{2^km}\le 2^kLm\mid Z^m_0=m)\le
     P\left(\sum_{i=1}^{N^m_k}T^m_{k,i}\le 2^kLm\mid
       Z^m_{k,0}=m\right).\] We conclude that it is enough to show
   that $\forall \epsilon>0$ and $\forall L>0$ there is a $k\in\N$
   such that for all sufficiently large $m$
   \begin{equation}
     \label{ref}
     P\left(\sum_{i=1}^{N^m_k}T^m_{k,i}\le 2^kLm\mid
       Z^m_{k,0}=m\right)<\epsilon.
   \end{equation}

By Theorem~\ref{da0.25} the process $m^{-1}Z^m$ admits an approximation
   by the zero dimensional BESQ process $Y$ with the starting point
   $Y(0)=1$ up to the time $\sigma^Y_{1/2}$. Note
   that
   \[P(\sigma^Y_{1/2}<\tau^Y_{2^k}\mid Y(0)=1)=1-(2^{k+1}-1)^{-1},\quad\forall
     k\in\N.\] We shall also consider a right-continuous process
   $\Bar{Y}_k(t),\ t\ge 0$, which coincides with $Y$ up to the time
   $\sigma^Y_{1/2}\wedge \tau^Y_{2^k}$, jumps to $1$ at time
   $\sigma^Y_{1/2}\wedge \tau^Y_{2^k}$ and continues to follow a
   ``fresh copy'' of $Y$ until it again hits the boundary of $[1/2,2^k]$ at
   which time $\Bar{Y}_k$ resets to $1$, and so on.  Let
   $T_{k,i},\ i\in\N$, be a sequence of waiting times between
   consecutive jumps of $\Bar{Y}_k$. Random variables
   $T_{k,i},\ i\in\N$, are i.i.d.\ and have the same distribution as
   $\sigma^Y_{1/2}\wedge \tau^Y_{2^k}$. Denote by $N_k$ be the number
   of jumps of $\Bar{Y}$ until the first jump down from $2^k$ to $1$
   inclusively. By construction, $N_k$ has a geometric distribution on
   $\N$ with parameter $(2^{k+1}-1)^{-1}$. Given an arbitrary
   $\epsilon>0$ and $L>0$ we shall first show that there is a $k\in\N$
   such that
   \begin{equation}
     \label{ref1}
     P\left(\sum_{i=1}^{N_k}T_{k,i}\le 2^kL\right)\le
    \epsilon/2
   \end{equation}
   and then argue that \eqref{ref} holds by Theorem~\ref{da0.25}.

   For any fixed $\alpha\in(0,\epsilon/4)$, $k_0\in\N$, and
   $k\ge k_0\vee \log_2\frac1\alpha$ we have
   \begin{align}
     P\left(\sum_{i=1}^{N_k}T_{k,i}\le 2^kL\right)&\le P\left(\sum_{i=1}^{\fl{\alpha 2^k}}T_{k,i}\le 2^kL,\ N_k\ge\fl{\alpha 2^k}\right)+P(N_k< \fl{\alpha 2^k})\label{fin}\\ &\le P\left(\sum_{i=1}^{\fl{\alpha 2^k}}T_{k_0,i}\le 2^kL\right)+1-(1-(2^{k+1}-1)^{-1})^{\fl{\alpha 2^k}}.\nonumber
   \end{align}
Centering, we get that
\begin{align*}
  P\left(\sum_{i=1}^{N_k}T_{k,i}\le 2^kL\right) &\le P\left(\frac{1}{\fl{\alpha 2^k}}\sum_{i=1}^{\fl{\alpha 2^k}}(T_{k_0,i}-E(T_{k_0,i}))\le -\left(E(T_{k_0,1})-\frac{L2^k}{\fl{\alpha 2^k}}\right)\right)+\frac{\alpha 2^k}{2^{k+1}-1}\\ &\le P\left(\frac{1}{\fl{\alpha 2^k}}\left|\sum_{i=1}^{\fl{\alpha 2^k}}(T_{k_0,i}-E(T_{k_0,i}))\right|\ge \left(E(T_{k_0,1})-\frac{2L}{\alpha}\right)\right)+\frac{\alpha 2^k}{2^{k+1}-1}.
\end{align*}
   Applying the optional stopping theorem to the local martingale
   $Y(t)\ln Y(t)-t$ we obtain
   \[E(T_{k_0,1})=\frac{\ln 2(2^{k_0}k_0-2^{k_0}+1)}{2^{k_0+1}-1}\ \
     \text{and}\ \ \lim_{k_0\to\infty}\frac{E(T_{k_0,1})}{(2^{-1}\ln
       2) k_0}=1.\] Thus, we can choose $k_0$ so that
   $2L/\alpha<E(T_{k_0,1})/2$ and conclude by the weak law of large
   numbers that \eqref{ref1} holds for all sufficiently large $k$.

   Return now to the process $\bzmk$. By Theorem~\ref{da0.25} and the
   continuous mapping
   theorem
   \[\frac{T^m_{k,i}}{m}\underset{m\to\infty}{\Rightarrow} T_{k,i},\
     \forall i,\,k\in\N.\] Since the cookie stacks, given the first
   cookies, are independent and $\Bar{Z}^m$ has the strong Markov property,
   $\{ T^m_{k,i} \}_{i\in\N}$ is a sequence of independent random
   variables while $\{ T_{k,i}\}_{i\in\N}$ is an i.i.d.\
   sequence. Therefore, for each fixed $n\in\N$ we also have that
   \begin{equation}
     \label{wc1}
     \sum_{i=1}^n\frac{T^m_{k,i}}{m}\underset{m\to\infty}{\Rightarrow}\sum_{i=1}^nT_{k,i}.
   \end{equation}
  Next, we claim that $N^m_k\underset{m\to\infty}{\Rightarrow}N_k$.
   Indeed, denoting by $p^m_{k,i}$ the probability that the $i$-th
   reset of $\bzmk$ is from the upper boundary we have, again by
   Theorem~\ref{da0.25}, that $\forall k,n\in\N$
   \begin{equation}
     \label{wc2}
     P(N^m_k>n)=\prod_{i=1}^n(1-p^m_{k,i})\underset{m\to\infty}\to
     \left(1-\frac{1}{2^{k+1}-1}\right)^n=P(N_k>n).
   \end{equation}
   Repeating the same steps for $T^m_{k,i}$ and $N^m_k$ as in \eqref{fin}
   and using the weak convergence results \eqref{wc1} and \eqref{wc2}
   we conclude that with the same choice of $k_0$ and $k$ as above the
   inequality \eqref{ref} holds for all sufficiently large $m$.
 \end{proof}

\begin{cor}\label{da0.25dead}
  Assume the conditions of Lemma~\ref{dead} and consider a
    sequence of rescaled BLPs
    $Y_m(t)\coloneqq m^{-1}U^+_{m,\fl{mt}},\ t\in[0,T]$, with $Y_m(0)\sim \kappa_m$. If $\kappa_m\underset{m\to\infty}{\Longrightarrow}\kappa$ then
  \[\{Y_m(t)\}_{0\le t\le T}\overset{J_1}{\underset{m\to\infty}{\Longrightarrow}}
    \{Y(t)\}_{0\le t\le T},\] where $\{Y(t)\}_{t\ge 0}$ solves \eqref{daa} with $D(t)\equiv 0$ and $Y(0)\sim \kappa$.
\end{cor}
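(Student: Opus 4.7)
The plan is to combine Theorem \ref{da0.25} (which gives convergence of the rescaled BLP stopped when it first drops to level $\delta m$) with Lemma \ref{dead} (which controls the process once it becomes small). Since the drift in \eqref{daa} is $D(t) \equiv 0$, the limit $Y$ is a positive multiple of a standard BESQ$^0$ process, hence a nonnegative continuous martingale absorbed at $0$ in finite time almost surely. The classical gambler's ruin identity gives $P(\sup_{s \ge \sigma^Y_{\delta'}} Y(s) \ge \eta) = \delta'/\eta$ for any $0 < \delta' < \eta$, which is the crucial ingredient on the limit side.

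Fix a small target accuracy $\eta > 0$. By the gambler's ruin bound, choose $\delta > 0$ small enough that $P(\sup_{s \ge \sigma^Y_{\delta}} Y(s) \ge \eta) < \eta$; call this estimate (L). Next, iterate Lemma \ref{dead} a finite number of times (roughly $\ceil{T}$) to construct $\delta' \in (0, \delta)$ with the property that, starting the BLP from at most $\delta' m$, the maximum over $\bint{0, \ceil{T} m}$ remains below $\delta m$ with probability $\ge 1 - \eta$; call this estimate (D). Concretely, I build a decreasing chain $\delta = \delta_0 > \delta_1 > \cdots > \delta_{\ceil{T}} = \delta'$, applying Lemma \ref{dead} with error $\eta/\ceil{T}$ on each chunk of length $m$ and chaining the estimates by the Markov property of the BLP (the cookie stacks on successive chunks are independent given the environment, and the assumption that the environment is $m^{1/4}$-good on the relevant interval carries over to each chunk).

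With this $\delta'$, apply Theorem \ref{da0.25} to obtain $\{Y_m(t \wedge \sigma^{Y_m}_{\delta'})\}_{0 \le t \le T} \overset{J_1}{\Longrightarrow} \{Y(t \wedge \sigma^Y_{\delta'})\}_{0 \le t \le T}$. Since the limit is continuous, $J_1$ convergence is equivalent to uniform convergence, and by Skorokhod representation one may work on a single probability space where these stopped processes converge uniformly on $[0, T]$ almost surely. On this coupling, for $t \ge \sigma^{Y_m}_{\delta'}$ the discrete process $Y_m$ stays below $\delta$ by (D), while for $t \ge \sigma^Y_{\delta'}$ the limit $Y$ stays below $\eta$ by (L). Combining, $\sup_{0 \le t \le T} |Y_m(t) - Y(t)|$ is bounded by the uniform distance of the stopped processes plus a term of order $\delta + \eta$, which can be made arbitrarily small with probability $\to 1$. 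This yields the claimed $J_1$ convergence.

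The main obstacle will be the iteration of Lemma \ref{dead}: one must carefully track how the allowed starting level shrinks with each iteration (so that the process always remains ``low enough'' for the next application) and accumulate the error correctly across $\ceil{T}$ applications. A minor additional subtlety in the coupling step is that one wants $\sigma^{Y_m}_{\delta'}$ to approximate $\sigma^Y_{\delta'}$ well; this is automatic since $Y$ is a nondegenerate diffusion crossing the level $\delta'$ transversally before absorption, so hitting times are continuous functionals at the limit path with probability one.
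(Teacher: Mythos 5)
Your proposal has the correct ingredients---Theorem \ref{da0.25} for the stopped processes, Lemma \ref{dead} for the excursion after hitting level $\delta'$, and a gambler's-ruin estimate for the BESQ$^0$ limit---but the mechanism for combining them is genuinely different from the paper's. The paper re-runs the proof of part (2) of Theorem \ref{DA0} with Lemma \ref{dead} in place of Lemma \ref{sm0}, which means invoking the two-index weak-convergence criterion (Theorem \ref{Bil}, i.e.\ Billingsley's Theorem 3.2): one sets $Y_{m,\ell}$ to be the process stopped at level $1/\ell$ and controls $d(Y_{m,\ell},Y_m)$ directly via Lemma \ref{dead}. That criterion handles the $(m,\delta')$-two-index combining step abstractly, with no need to pass to an a.s.\ coupling. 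Your route replaces this with a hands-on construction.

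There is one genuine gap in the Skorokhod step as you have written it. The representation theorem puts the \emph{stopped} processes $Y_m(\cdot\wedge\sigma^{Y_m}_{\delta'})$ and $Y(\cdot\wedge\sigma^{Y}_{\delta'})$ on a common space with a.s.\ convergence, but the \emph{unstopped} process $Y_m$ is not a priori defined on that new space, so the sentence ``on this coupling, for $t\ge \sigma^{Y_m}_{\delta'}$ the discrete process $Y_m$ stays below $\delta$ by (D)'' is not available in the form written. The fix is to enlarge the representation space by attaching, for each $m$, a version of $Y_m$ drawn from the conditional law of $Y_m$ given its stopped path; then (D) and (L) do hold with the correct joint distributions and the rest of your argument goes through. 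Alternatively---and this is the cleaner route---skip Skorokhod entirely and run a direct test-function estimate: for bounded uniformly continuous $f$ on $D([0,T])$, split $|E[f(Y_m)]-E[f(Y)]|$ across the stopped approximants, use Theorem \ref{da0.25} for the middle term and (D), (L) plus uniform continuity for the outer terms. This is exactly what Theorem \ref{Bil} encapsulates.

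Two small remarks on the other steps. First, the iteration of Lemma \ref{dead} over $\ceil{T}$ chunks is correct as you set up the chain (you must apply the lemma ``backwards'' to build $\delta_{\ceil{T}}<\cdots<\delta_0$), but it is unnecessary if you inspect the lemma's proof: the argument there establishes $P(\tau^{Z^m}_{2^km}\le 2^kLm)<\epsilon$ for \emph{any} $L$, and taking $L=T/\delta$ in the relabeling gives the $Tm$-version in one step. Second, the ``transversality'' remark about continuity of $\sigma^Y_{\delta'}$ is fine, but note that one does not actually need to compare the two hitting times once one estimates the sup-distance of the unstopped processes through the stopped ones as above.
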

For the proof of Corollary~\ref{da0.25dead} we refer to  Remark~\ref{corproof}.

 Note that
 Corollary~\ref{da0.25dead} does not imply that the stopping times
 $\sigma^{Y_m} _0$ converge in distribution to $\sigma^Y_0$.  However,
 our environments have additional properties which will allow us to
 get more information.  
 
\begin{asm}\label{asmG}
  Let $c_m\in\Z_+/m$ and $c_m\to c\in[0,1]$. Assume that the first
  cookie environments are $(m^{1/4}, 0)$-good on $\bint{0, c_m m}$ and
  are i.i.d.\ with marginal $\eta$ on $\bint{c_m m, m}$.  Suppose also that if
  $c = 1$, then the interval $\bint{(1-\delta_1) m,m}$ is
  $\delta_2 m$-grounding for some $\delta_1,\delta_2>0$.
\end{asm}

\begin{lemma} \label{thetruth} Fix $ \epsilon > 0$ and
    an arbitrary sequence $y_m\in\Z_+/m$, $m\in\N$.  Let
    $\{Z^m_k\}_{k\le m}$, $m\in\N$, be $U^+$ processes starting at
    $y_m m$ in first cookie environments on $\bint{0,m}$ satisfying
    Assumption \ref{asmG}. Let $Y_m(t),\ t \in [0,1]$ be a solution
  of \eqref{daa} with $D(t)=(\eta\cdot \br^+)\ind{t>c}$ and $Y_m(0)=y_m$.
 If $\delta_1,\delta_2>0$ are sufficiently small then there is an
 $m_0=m_0(\epsilon,c,\delta_1,\delta_2)$ which is independent of the
 choice of sequence $y_m$, $m\in\N$, such that for all $m\ge m_0$
\begin{equation}\label{thetruth-bound}
  \left| P\left(\sigma^{Z^m}_0 < m \right) - P\left(\sigma^{Y_m}_0<1\right)
  \right| < 4 \epsilon^3.
\end{equation}
\end{lemma}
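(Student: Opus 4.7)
The strategy is to use the diffusion approximations in Corollary~\ref{da0.25dead} and Theorem~\ref{DA0}(2) (together with the hitting-time convergence \eqref{ht}) to match the trajectory of $\{m^{-1}Z^m_{\fl{mt}}\}_{t\in[0,1]}$ with that of $Y_m$, and then translate this into the hitting-time comparison. Since the estimate must hold uniformly in the starting value $y_m$, I would first reduce to a bounded range: for $y_m\ge M$ large, standard tail estimates for $U^+$ and BESQ give both $P(\sigma^{Z^m}_0<m)<\epsilon^3$ and $P(\sigma^{Y_m}_0<1)<\epsilon^3$, so the bound is trivial; for $y_m\le \delta''$ small, Lemma~\ref{sm0} (applicable to $U^+$ since $\theta^+<1$) and the analogous fact for BESQ$^{2\theta^+}$ and BESQ$^0$ give both probabilities $>1-\epsilon^3$. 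Thus it suffices to establish the bound on a compact range $y_m\in[\delta'',M]$.

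On this compact set I would argue by contradiction: if the bound fails on a subsequence, extract $m_k$ with $y_{m_k}\to y\in[\delta'',M]$ and show $P(\sigma^{Z^{m_k}}_0<m_k)\to P(\sigma^{Y}_0<1)$, where $Y$ is the SDE solution from $y$. For Case A ($c<1$), Corollary~\ref{da0.25dead} supplies convergence $\{m_k^{-1}Z^{m_k}_{\fl{m_k t}}\}_{t\in[0,c]}\Rightarrow Y|_{[0,c]}$ (a BESQ$^0$ killed at $0$). The strong Markov property at $c_{m_k}m_k$ then reduces the remaining time window $[c_{m_k}m_k,m_k]$ to a $U^+$ in an i.i.d.\ $\eta$ environment started from $m_k^{-1}Z^{m_k}_{c_{m_k}m_k}$; Theorem~\ref{DA0}(2) and \eqref{ht} give hitting-time convergence for this continuation to a BESQ$^{2\theta^+}$ killed at $0$. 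Concatenating at $t=c$ yields $\sigma^{Z^{m_k}}_0/m_k\Rightarrow \sigma^Y_0$; since $\sigma^Y_0$ has no atoms, this delivers $P(\sigma^{Z^{m_k}}_0<m_k)\to P(\sigma^Y_0<1)$, contradicting the failure of the bound.

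Case B ($c=1$) is handled by splitting at time $(1-\delta_1)m$. Apply Corollary~\ref{da0.25dead} on $[0,(1-\delta_1)m]$ to get $m^{-1}Z^m_{\fl{m\cdot}}\Rightarrow Y$ on $[0,1-\delta_1]$ with $Y$ a BESQ$^0$ from $y$. Classify the event $\{\sigma^{Z^m}_0<m\}$ by whether $Z^m_{(1-\delta_1)m}$ equals $0$, lies in $(0,\delta_2 m]$, or exceeds $\delta_2 m$. The first case contributes directly; the second is controlled by the grounding assumption and monotonicity of $U^+$ in its initial value (death probability $\ge 1-\epsilon^3$); the third is shown to rarely lead to death in the remaining $\delta_1 m$ steps via Lemma~\ref{dead}. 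The matching trichotomy for $Y(1-\delta_1)$ determines the limit: choose $\delta_1,\delta_2$ small so that the middle regime $\{Y(1-\delta_1)\in(0,\delta_2]\}$ and the boundary contribution $P(\sigma^Y_0\in[1-\delta_1,1])$ are each at most $\epsilon^3$, and then choose $m$ large.

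The main obstacle is Case B, where the absence of an i.i.d.\ bulk at the right end rules out a clean application of \eqref{ht}. The grounding hypothesis is the required substitute but delivers only one-sided control (death when $Z^m$ is already small); pairing this with the Corollary~\ref{da0.25dead} approximation and with the small-value behavior of BESQ$^0$ near $t=1$ is the delicate step, and dictates the smallness constraints on $\delta_1$ and $\delta_2$ in the hypothesis. By contrast, the uniformity in $y_m$ is not an intrinsic difficulty: the compactness reduction in the opening step converts any would-be uniformity issue into the subsequential argument used above.
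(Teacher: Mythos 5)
Your overall architecture is genuinely different from the paper's: you handle uniformity in $y_m$ via a compactness reduction followed by a subsequential contradiction argument, whereas the paper discretizes the initial values into a finite grid $y^1<\dots<y^R$ with $g(y^i)-g(y^{i+1})<\epsilon^3$ and then uses monotonicity of both the BLP and of $g$ to interpolate. Your Case~A ($c<1$) is in the right spirit and, once one is careful (using the fact that $\{\sigma^{Z^m}_0<c_m m\}=\{Z^m_{c_m m}=0\}$ with $0$ absorbing, then the Markov property at $c_m m$ combined with Theorem~\ref{DA0}(2) and \eqref{ht}), it closes the gap that Corollary~\ref{da0.25dead} by itself does not give hitting-time convergence.

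Case~B, however, has a real gap. You split on whether $Z^m_{(1-\delta_1)m}$ equals $0$, lies in $(0,\delta_2 m]$, or exceeds $\delta_2 m$, and you claim the third event ``rarely leads to death in the remaining $\delta_1 m$ steps via Lemma~\ref{dead}.'' Two things are wrong here. First, Lemma~\ref{dead} says a $U^+$-type BLP \emph{started small} ($\le\delta'm$) is unlikely to \emph{rise} above $\delta m$; it says nothing about a process started at $>\delta_2m$ being unlikely to \emph{die}. Second, the claim itself is false: monotonicity gives $P(\sigma^{Z^m}_0<m\mid Z^m_{(1-\delta_1)m}=j)$ non-increasing in $j$, and grounding says this probability is $\ge 1-\epsilon^3$ at $j=\fl{\delta_2 m}$, so for $j$ just above $\delta_2 m$ the death probability is still close to $1$, not close to $0$. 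There is no sharp cutoff at $\delta_2 m$; values in a grey zone roughly of width comparable to $\delta_1 m$ above $\delta_2 m$ still lead to extinction, and their mass under $Y(1-\delta_1)$ need not be $O(\epsilon^3)$ for the $\delta_1,\delta_2$ that Assumption~\ref{asmG} provides. The paper avoids this by introducing a third, strictly smaller, threshold $\delta_3<\delta_2$ and decomposing on the stopping time $\sigma^{Z^m}_{\delta_3 m}$: Lemma~\ref{dead} is used with $(\delta',\delta)=(\delta_3,\delta_2)$ to argue the process, once below $\delta_3 m$ by time $(1-\delta_1)m$, stays below $\delta_2 m$ and hence (by grounding) dies; and the boundary contribution $P(\sigma^Y_{\delta_3}\in(1-\delta_1,1))$ is controlled by the scaling estimate \eqref{half}. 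To repair your Case~B you would need to replicate this three-threshold structure (or some equivalent), rather than deciding the outcome from a single observation of $Z^m_{(1-\delta_1)m}$ with the two values $0$ and $\delta_2 m$.
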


\begin{proof}
  To reduce notation we write
  $g(x) = P(\sigma^Y_0 < 1 \mid Y(0) = x)$ where $Y$ solves the same equation as all the $Y_m$.  The function $g$ is
  decreasing and it is easily seen that $g(x)$ is continuous and tends
  to zero as $x\to\infty$.  Thus we can set $g(\infty)=0$ and choose
  $0 < y^1 < y^2 < \ldots< y^R<y^{R+1}=\infty $ so that
  $g(y^i) - g(y^{i+1}) < \epsilon^3$ for all
  $i\in\{1,2,\dots,R\}$. For each $y^i$, $i\in\{1,2,\dots,R\}$, we
  define a sequence
  $y^i_m = \frac{\ceil{my^i}}{m}\in\left[y^i,y^i+\frac1m\right)$,
  $m\in\N$.  We claim that it is sufficient to show that there is an
  $m_0=m_0(\epsilon,c,\delta_1,\delta_2)$ such that for all $m \ge m_0$
  \begin{equation} \label{monman} \left\vert P(\sigma^{Z^m}
    _0 <m \mid Z^m_0 = y^i _m m )-g(y^i) \right\vert < 3 \epsilon^3 \quad
    \text{for all }i\in\{1,2,\dots,R\}.
\end{equation}
Indeed, suppose \eqref{monman} holds and $y_m \in \Z_+/m$. Then
$y^i_m\le y_m< y^{i+1}_m$ for some $i\in\{1,2,\dots,R\}$. Since for
our choice of the sequence $y^i_m$ we have
\[y^i\le y^i _m \le y_m \le y^{i+1}_m-\frac1m<y^{i+1}\le
  y^{i+1}_m,\] by monotonicity we get 
\begin{align*}
  P(\sigma^{Z^m} _0 < m\mid Z^m_0 = y_m m) &\le P(\sigma^{Z^m} _0 < m \mid  Z^m_0 = y^i_m  m ) < g(y^i) +3\epsilon ^3 
< g(y_m) +4 \epsilon ^3, \\
P(\sigma^{Z^m} _0 < m \mid  Z^m_0 = y_m m) &\ge P(\sigma^{Z^m} _0 < m \mid Z^m_0 = y^{i+1}_m m) > g(y^{i+1}) -3 \epsilon ^3 
> g(y_m) -4 \epsilon ^3, 
\end{align*}
and
\eqref{thetruth-bound} follows. Thus, we need only to prove \eqref{monman}.

Fix an $i\in\{1,2,\dots,R\}$ and let $Y$ be a solution of \eqref{daa}
with $Y(0) = y^i$ and $Z^m $ be the $U^+ $ process with
$Z^m_0 = y^i_m m$ evolving in the cookie environment on $\bint{0,m}$ .

{\it Case $c<1$}.  
For any $\delta_3>0$ we have
\begin{align}
  \left| P\left(\sigma^{Z^m}_0 < m \right) - P\left(\sigma^Y_0<1\right) \right|
   &=\left| P\left(\sigma^{Z^m}_0 \ge m \right) - P\left(\sigma^Y_0>1\right) \right| \nonumber \\
   &\le P\left(\sigma^{Z^m}_0 \ge m,\, \sigma^{Z^m}_{\delta_3 m}\le c_mm\right)+P\left(\sigma_0^Y>1,\ \sigma_{\delta_3}^Y<c\right) \label{spl} \\ 
   &\qquad    +\left| P\left(\sigma^{Z^m}_0 \ge m,\ \sigma^{Z^m}_{\delta_3 m}> c_mm \right) - P\left(\sigma^Y_0>1,\ \sigma_{\delta_3}^Y>c\right) \right|. \nonumber
\end{align}
We claim that we can choose $\delta_3$ small enough so that all three
terms in the right hand side of \eqref{spl} are small when $m$ is
large. We shall treat the first and the last terms since the second
term can be dealt with like the first but is simpler.

That the third term tends to zero is an immediate consequence of
Theorem \ref{da0.25} and then Theorem \ref{DA0} and the fact that the
boundary of set $\left\{ \sigma_0^Y>1,\ \sigma_{\delta_3}^Y<c\right\}$
has probability zero.

For the first term in \eqref{spl}, we choose $\delta_4$ so that
$P\left( \sigma_0^Y \ge 1\mid Y(c)=\delta_4 \right) <\epsilon^3/4$.
Then we fix $ \delta_3$ so that
$( \delta_3, \delta_4, \epsilon ^3/4 )$ are as
$(\delta ', \delta, \epsilon )$ for Lemma \ref{dead}.  Next we note
that
\[\{  \sigma^{Z^m}_{0} \ge m,\ \sigma^{Z^m}_{\delta_3 m} \le c_m m
  \}\subseteq \{ \sigma^{Z^m}_{\delta_3 m} \le c_m m, \, Z^m_{c_m m} \ge \delta_4 m\}\cup\{Z^m_{c_m m} \le
  \delta_4 m,\ \sigma^{Z^m}_{0} \ge m \}.\] By Lemma \ref{dead} the
first event has probability less than $\epsilon^3/4$ by our choice of
$\delta_3$ and $\delta_4$ for $m$ large, while the probability of the
second event is similarly bounded by Theorem \ref{DA0} and our choice
of $\delta_4$. 

{\it Case $c = 1$}.  For any $\delta_3 > 0$ 
 \[
   \left| P\left( \sigma^Y_0 < 1 \right) - P\left(
       \sigma^Y_{\delta_3} < 1- \delta_1\right) \right| \le P\left(
     \sigma^Y_0 \ge 1, \ \sigma^Y_{\delta_3} < 1- \delta_1 \right) +
   P\left( \sigma^Y_{\delta_3} \in (1 - \delta_1,1) \right)
 \]
 (and similarly with $Y$ replaced by $m^{-1}Z^m$).  Since by Theorem
 \ref{da0.25}
 \[\lim_{m\to\infty}P\left( \sigma^{Z^m}_{m\delta_3}< m(1- \delta_1)\right)=
   P\left( \sigma^Y_{\delta_3} < 1- \delta_1\right),\] it will suffice
 to show that the two terms on the right are bounded appropriately for
 process $Y$ and for $Z^m$ for $m$ large provided $\delta_1 $ and then
 $\delta_3 $ are well chosen.  For the second term for $Y$ we first
 choose $\delta_1 $ so small that
 $P\left(\sigma^Y_0 \in (1 - \delta_1,1+ \delta_1) \right)<
 \epsilon^3/ 10$ and then choose $\delta_3 $ so small that
 $P(\sigma^Y_0 < \delta_1 \mid Y(0) = \delta_3) \ge 1/2 $. The strong
 Markov property then gives the bound
 \begin{multline}
   \label{half}
   P\left(\sigma^Y_0 \in (1 - \delta_1,1+ \delta_1) \right)\ge 
   P\left(\sigma^Y_0 \in (1 - \delta_1,1+
     \delta_1)\mid\sigma^Y_{\delta_3} \in(1- \delta_1,1) \right)
   P\left( \sigma^Y_{\delta_3} \in (1 - \delta_1,1) \right)\\\ge
   P\left(\sigma^Y_0<\delta_1\mid Y(0) = \delta_3\right) P\left(
     \sigma^Y_{\delta_3} \in (1 - \delta_1,1) \right)\ge \frac12
   P\left( \sigma^Y_{\delta_3} \in (1 - \delta_1,1)\right),
 \end{multline}
 from which we conclude that
$P\left( \sigma^Y_{\delta_3} \in (1 - \delta_1,1)
\right)<\epsilon^3/5$. This bound applies for $Z^m$ when $m$ is large
by Theorem \ref{da0.25}.

For the first term we treat $Z^m $ as the argument for $Y$ is similar
but simpler.  Decreasing $ \delta_3$ if necessary we ensure 
that $( \delta_3, \delta_2, \epsilon ^3/5 )$ are as
$(\delta ', \delta, \epsilon )$ for Lemma \ref{dead}. Then
 \begin{multline*}
   \{  \sigma^{Z^m}_{0} \ge m,\ \sigma^{Z^m}_{\delta_3 m} \le (1- \delta_1)m
  \}\\\subseteq \{ \sigma^{Z^m}_{\delta_3 m} \le (1- \delta_1)m, \, Z^m_{\ceil{(1- \delta_1)m}} \ge \delta_2 m\}\cup\{Z^m_{\ceil{(1- \delta_1)m}}  \le
  \delta_2 m,\ \sigma^{Z^m}_{0} \ge m \}.
 \end{multline*}
 The probability of the first set is less than $\epsilon ^3 / 5$
 for $m$ large by Theorem \ref{dead} while of the last is less than $\epsilon ^3$ by our grounding hypothesis
 for the cookie environments.  This latter bound can be reduced
 arbitrarily for $Y$.
\end{proof}
The following lemma expresses a simple coupling result which
leads to Corollary~\ref{isBESQ2} below.
\begin{lemma} \label{cupBESQ} For every $\epsilon > 0$ there exists a
  $\delta ' > 0 $ such that a BESQ$^2$ process $Y$ beginning at
  space-time point $(y,t) \in [0, \delta']^2$ and a BESQ$^2$ process
  $Z$ beginning at $(0,0)$ con be coupled
  together so that with probability at least
  $1 - \epsilon^ 3 /5$ there exists a $\sigma \in(t,\epsilon^5)$
  such that
  \begin{enumerate}[(i)]
  \item $Y(s) = Z(s)$ for all $s\in[\sigma,\infty)$;
  \item $(\sup _{t\le s \le \sigma} Y(s))\vee(\sup _{s \le \sigma} Z(s)) \le \epsilon^5$.
  \end{enumerate}

\end{lemma}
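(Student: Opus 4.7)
The plan rests on the additivity property of squared Bessel processes: if $X \sim \mathrm{BESQ}^{d_1}(x_1)$ and $Y \sim \mathrm{BESQ}^{d_2}(x_2)$ are independent, then $X + Y \sim \mathrm{BESQ}^{d_1+d_2}(x_1+x_2)$. In particular, any $\mathrm{BESQ}^2(y)$ can be written as $\mathrm{BESQ}^2(z) + \mathrm{BESQ}^0(y-z)$ for $z \le y$ with independent pieces, and $\mathrm{BESQ}^0$ is a non-negative continuous martingale that is absorbed at $0$ in an a.s.-finite time. This will be our coupling mechanism: once the $\mathrm{BESQ}^0$ summand dies, $Y$ and $Z$ coincide.

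\textbf{Construction.} Run $Z$ on $[0,\infty)$ as a $\mathrm{BESQ}^2(0)$ with its own driving Brownian motion and set $z = Z(t)$. On the event $\{y \ge z\}$, introduce an independent $\mathrm{BESQ}^0(y-z)$ process $D$ on $[t,\infty)$ and define $Y(s) = Z(s) + D(s)$ for $s \ge t$; by additivity, $Y$ is a $\mathrm{BESQ}^2(y)$ starting at time $t$ as required. On the complementary event $\{z > y\}$, generate $Y$ on $[t,\infty)$ as an independent $\mathrm{BESQ}^2(y)$ and \emph{redefine} $Z$ on $[t,\infty)$ by $Z = Y + D'$ with $D'$ an independent $\mathrm{BESQ}^0(z-y)$; by additivity and the Markov property of $\mathrm{BESQ}^2(0)$, the stitched $Z$ is still a $\mathrm{BESQ}^2(0)$ on $[0,\infty)$. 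In both cases set $\sigma := \inf\{s \ge t : |Y(s) - Z(s)| = 0\}$; the difference $|Y - Z|$ is $\mathrm{BESQ}^0(|y-z|)$, absorbed at $0$, so $Y(s) = Z(s)$ for all $s \ge \sigma$, verifying (i).

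\textbf{Controlling $\sigma$ and the suprema.} By BESQ scaling, $\sigma - t$ has the same law as $|y-z|\,\tau$, where $\tau$ is the a.s.-finite hitting time of $0$ by a $\mathrm{BESQ}^0(1)$. Pick $M = M(\epsilon)$ with $P(\tau > M) < \epsilon^3/20$. Since $\mathbb{E}[z] = 2t \le 2\delta'$, Markov's inequality forces $|y - z| \le y + z \le 2\delta'^{1/2}$ outside probability $\epsilon^3/20$, provided $\delta'$ is small, so $\sigma - t \le 2M\delta'^{1/2}$ can be made smaller than any prescribed $T^{*}(\epsilon)$. Choosing $T^{*} = \epsilon^5/(4\log(80/\epsilon^3))$, the reflection principle applied to $Z(s) = \|B_s\|^2$ (for a planar Brownian motion $B$) gives
\[
P\!\left(\sup_{s \le T^{*}} Z(s) > \tfrac{\epsilon^5}{2}\right) \le 4 e^{-\epsilon^5/(4 T^{*})} < \epsilon^3/20.
\]
For the additional contribution coming from $D$ (or $D'$), Doob's $L^1$-maximal inequality applied to the non-negative martingale $D$ yields $P(\sup D > \epsilon^5/2) \le 2|y-z|/\epsilon^5$, again bounded by $\epsilon^3/20$ once $|y-z|$ is small compared with $\epsilon^5$. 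Since $Y = Z + D$ (or $Z = Y + D'$) on $[t,\sigma]$, these three bounds together give (ii) after a union bound. A single uniform choice $\delta' = \delta'(\epsilon) < c\,\epsilon^{10}/M(\epsilon)$ with a small absolute constant $c$ is enough to enforce all of the above inequalities.

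\textbf{Main obstacle.} The technical nuisance is purely bookkeeping: the absorption-time tail $M(\epsilon)$ and the logarithmic factor inside $T^{*}(\epsilon)$ both enter the smallness requirement on $\delta'$, and the two sides of the dichotomy $y \ge z$ vs.\ $z > y$ must each be handled while keeping $Z$ a genuine $\mathrm{BESQ}^2(0)$ on all of $[0,\infty)$. None of this is deep, but the constants must be chosen in the correct order (first $M$, then $T^{*}$, then $\delta'$) so that a union bound over the four bad events --- $\{|y-z| > 2\delta'^{1/2}\}$, $\{\tau > M\}$, $\{\sup Z > \epsilon^5/2\}$, $\{\sup D > \epsilon^5/2\}$ --- contributes at most $\epsilon^3/5$.
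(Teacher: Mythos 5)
Your argument is correct, but it takes a genuinely different route from the paper. The paper first proves Lemma~\ref{cupBESQ1}: run $Y$ and $Z$ independently until they first meet (or one exceeds level $2$, or time exceeds $2$), then glue; a uniform lower bound on the meeting probability before exit is established by exhibiting an explicit event. Lemma~\ref{cupBESQ} is then deduced by the same geometric-scale iteration used to pass from Lemma~\ref{BMPEcoup} to Corollary~\ref{BMPEcoup1} (scale by $\delta'$, attempt the coupling, rescale by $2\delta'$ on failure, and so on), which simultaneously keeps the time and the sup small because after $k$ failed rounds both are of order $2^k\delta'$ while the failure probability decays like $(1-c)^k$. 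By contrast, you use the Shiga--Watanabe additivity $\mathrm{BESQ}^{2} = \mathrm{BESQ}^{2}\oplus\mathrm{BESQ}^{0}$ to write the process started from the larger initial value as the other process plus an independent $\mathrm{BESQ}^0(|y-z|)$; the coupling time is then exactly the (a.s.\ finite, linearly scaling) absorption time of the $\mathrm{BESQ}^0$, so no intermediate lemma and no iteration are needed. This is cleaner structurally, though it costs a dichotomy on the sign of $y-z$: in the case $z>y$ you must stitch a new $Z$ on $[t,\infty)$ as $Y+D'$ and check (as you do, via the Markov property) that the stitched process is still a $\mathrm{BESQ}^2(0)$ on all of $[0,\infty)$. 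The estimates are then standard: the Markov bound on $z=Z(t)$, the tail of the $\mathrm{BESQ}^0(1)$ absorption time (which has an explicit $1/(2e)$ law with $e\sim\mathrm{Exp}(1)$, hence a polynomial tail — so $M(\epsilon)\sim\epsilon^{-3}$ and $\mathbb{E}[\tau]=\infty$, which you correctly avoid relying on), the reflection/Gaussian bound on $\sup_{s\le T^*}Z$, and Doob's $L^1$-maximal inequality for the nonnegative martingale $D$. One small bookkeeping remark: in the case $z>y$, the bound on $\sup Y$ is automatic since $Y\le Z$ pointwise, so only the reflection bound on $Z$ is needed there; in the case $y\ge z$ the roles reverse. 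This does not affect the union-bound count, and the final requirement $\delta'\lesssim \epsilon^{16}/\log^2(1/\epsilon)$ (the binding constraint coming from $2M\delta'^{1/2}<T^*$) is stronger than the $\epsilon^{10}/M$ you quote at the end, but both are of the correct form — a constant depending only on $\epsilon$ — so the conclusion stands.
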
 
The next lemma provides a basic coupling of two BESQ$^2$
processes. Lemma~\ref{cupBESQ} follows from it by a simple scaling
argument in the same way that Corollary \ref{BMPEcoup1} follows from
Lemma \ref{BMPEcoup}.  For the coupling we will make the processes
independent until the first time that they meet and then equal
thereafter.
\begin{lemma} \label{cupBESQ1} Let $Y$ and $Z$ be independent
    BESQ$^2$ processes beginning at space-time points
    $(y_0,t_0)\in [0,1]^2$ and $(z_0,s_0)\in [0,1]^2$ respectively and
    $\sigma = \inf \{s > s_0 \vee t_0: Z(s) = Y(s) \mbox{ or } Z(s) \vee
    Y(s) = 2\} \wedge
    2$. Then
    \[c:=\inf_{(y_0,t_0,z_0,s_0)\in[0,1]^4}P(\sigma=\inf \{ s > s_0
      \vee t_0: Z(s) = Y(s) \})>0.\] 
\end{lemma}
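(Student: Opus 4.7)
The plan is to combine a compactness argument on $[0,1]^4$ with pointwise positivity of the meeting probability. Without loss of generality assume $t_0\le s_0$, and write $s^\ast=s_0$ for the common starting time after which both processes are running. First I would handle the ``warm-up'' phase $[t_0,s_0]$ during which only $Y$ is running: standard path-continuity estimates for BESQ$^2$ give a universal constant $c_1>0$ such that, uniformly over $y_0\in[0,1]$ and $s_0-t_0\in[0,1]$,
\[
  P\!\left(\sup_{t_0\le s\le s_0}Y(s)\le 3/2\right)\ge c_1.
\]
On this event, $Y(s_0)\in[0,3/2]$ and $Y$ has not yet reached $2$, so by the strong Markov property it suffices to prove the analogous statement for two independent BESQ$^2$ processes starting at the same time $s^\ast\in[0,1]$ from a pair $(y,z)\in[0,3/2]^2$.

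For this reduced problem let $g(y,z,s^\ast)$ denote the probability of meeting before either process reaches $2$ and before time $2$. I would argue $\inf g>0$ on the compact set $[0,3/2]^2\times[0,1]$ via continuity plus pointwise positivity. Continuity of $g$ follows from the Feller property of BESQ$^2$ together with the fact that, for each fixed starting data, the first hitting time of $2$ has a continuous density, so the ``boundary'' event that the processes meet exactly at the moment one of them reaches $2$ has probability zero. Pointwise positivity is the heart of the argument: if $y=z$ then $g\equiv 1$, and otherwise, say $y>z$, I invoke the support theorem for BESQ$^2$ (justified by the fact that the diffusion coefficient $2\sqrt{Y}$ is non-degenerate on $(0,\infty)$ and that BESQ$^2$ almost surely does not hit $0$). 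With positive probability $Y$ approximately follows a smooth descending trajectory from $y$ down to $y-\epsilon$ on $[s^\ast,s^\ast+1/2]$ while staying in $(0,7/4)$, and, independently, $Z$ approximately follows an ascending trajectory from $z$ up to $y+\epsilon$ on the same interval while staying in $(0,7/4)$, where $\epsilon=(y-z)/4$. On this joint event $Y(s^\ast)-Z(s^\ast)=y-z>0$ but $Y(s^\ast+1/2)-Z(s^\ast+1/2)=-2\epsilon<0$, so by continuity $Y$ and $Z$ must coincide at some intermediate time, giving $g(y,z,s^\ast)>0$. Compactness then upgrades pointwise positivity to a uniform lower bound $c_2>0$, and the lemma follows with $c\ge c_1 c_2$.

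The main obstacle is the continuity of $g$, since $\sigma$ is defined through a race between the first meeting time and the first exit time of the pair; it is not immediate that this event is a continuity set for the joint law as the initial data varies. The cleanest workaround is to verify directly, using explicit BESQ$^2$ densities or an SDE coupling of nearby initial conditions, that (a) the first time $Y=Z$ has no atom at the first exit time of $\max(Y,Z)$ from $[0,2)$, and (b) the joint law of $(Y,Z)$ under a Skorokhod-type embedding depends continuously in total variation on the starting triple $(y,z,s^\ast)$ over the compact parameter set. Once continuity is secured, the remainder of the argument — support-theorem construction of an explicit crossing event plus compactness — is routine.
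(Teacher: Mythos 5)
Your strategy is a recognizable one, but it is genuinely different from the paper's and leaves a real gap that the paper's method avoids outright. The paper does not reduce to a same-start-time problem or invoke compactness/continuity at all; instead it constructs a single explicit ``crossing'' event whose probability is bounded below uniformly over $(y_0,t_0,z_0,s_0)\in[0,1]^4$ by direct Gaussian/BESQ$^2$ estimates: roughly, force $Z$ to stay below $3/2$ on $[s_0\vee t_0,2]$ but to be above $4/3$ at time $3/2$, and force $Y$ to stay below $4/3$ on $[t_0,3/2]$ yet exceed $5/3$ at time $2$ while never reaching $2$. On this joint event $Y(3/2)<Z(3/2)$ and $Y(2)>Z(2)$, with both paths strictly below $2$ throughout, so they must meet before exiting; the constituent events are open sets in path space whose probabilities are manifestly uniformly positive by elementary considerations, and no continuity lemma is needed.

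The gap in your argument is precisely the one you flag as ``the main obstacle'': continuity of $g(y,z,s^\ast)$ in the initial data. This is not routine. The event $\{\sigma=\text{first meeting time}\}$ is the outcome of a race between a meeting time and the exit time of $\max(Y,Z)$ from $[0,2)$, and these stopping times are not continuous functionals of the paths; passing from convergence of initial data to convergence of $g$ requires ruling out boundary atoms and establishing a coupling of nearby initial conditions. You sketch a workaround via BESQ$^2$ densities and Skorokhod embedding, but none of it is carried out. Also note that lower semicontinuity alone would not suffice --- a lower semicontinuous function on a compact set can still have infimum zero unless it is strictly positive and attains its infimum --- so you really do need the full continuity (or at least lower semicontinuity combined with an argument that the infimum is attained), making the unresolved step load-bearing. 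The support-theorem construction of a crossing for fixed $(y,z,s^\ast)$ is sound, and the compactness idea would close the argument if continuity were in hand, but as written the proof is incomplete. Adopting the paper's direct explicit-event approach is simpler and closes the gap immediately.
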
  
\begin{proof}
  We suppose without loss of generality that $s_0 < t_0$ and let
    \begin{align*}
      A_1&=\left\{\max_{s_0\le u\le 3/2}Z(u)<3/2,\,Z(3/2)>4/3\right\};\\
      A_2&=\left\{\max_{t_0\le u\le 2}Z(u)<3/2,\, \max_{t_0\le u\le
          3/2}Y(u)<4/3,\,\max_{3/2\le u\le
          2}Y(u)<2,\,Y(2)>5/3\right\}.
    \end{align*}
    It is clear that $P(A_1)$ and $P(A_2\mid A_1)$ are bounded away
    from zero uniformly over $(y_0,t_0,z_0,s_0)\in[0,1]^4$.  Noticing
    that
    $A_1\cap A_2\subseteq\{\sigma=\inf \{ s > s_0 \vee t_0: Z(s) =
    Y(s) \}\} $ completes the proof.
  \end{proof}

The next statement is a consequence of \cite[Lemma 6.3]{kpERWMCS}.
\begin{lemma} \label{overshoot} Let $V^+_0=0$. Then for every fixed
    $\delta>0$ uniformly over first cookie environments
\[
\lim_{m\to\infty}P(\tau^{V^+}_{\delta m} \le \delta m,\ V_{\tau^{V^+}_{\delta m} } \ge \delta m + m^{2/3}) =0.
\]
\end{lemma}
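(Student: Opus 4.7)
The plan is to bound the overshoot by the size of the single jump that carries $V^+$ across level $\delta m$, and then control each such possible jump by a union bound. Write $\tau=\tau^{V^+}_{\delta m}$. By definition of $\tau$, on $\{\tau<\infty\}$ we have $V^+_{\tau-1}<\delta m\le V^+_\tau$, so the overshoot satisfies
\[
V^+_\tau-\delta m\;\le\;V^+_\tau-V^+_{\tau-1}.
\]
Consequently
\[
\{\tau\le\delta m,\ V^+_\tau\ge\delta m+m^{2/3}\}\;\subseteq\;\bigcup_{i=1}^{\lfloor\delta m\rfloor}\bigl\{V^+_{i-1}<\delta m,\ V^+_i-V^+_{i-1}\ge m^{2/3}\bigr\},
\]
and the union bound reduces the problem to a uniform tail estimate for a single increment of the form $P(V^+_i-V^+_{i-1}\ge m^{2/3}\mid V^+_{i-1}=n)$ for $n<\delta m$.

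The next step is to invoke \cite[Lemma 6.3]{kpERWMCS} for this single-step tail bound. Conditionally on $V^+_{i-1}=n$, the increment $V^+_i-V^+_{i-1}$ is a negative-binomial-type sum (the number of successes before the $(n+1)$-th failure in the Bernoulli trials $\{\xi^{\ell+i}_j\}_{j\ge1}$), whose conditional mean is $O(1)$ and whose conditional variance is of order $n$, uniformly in the first-cookie environment since $p(\cdot)$ takes values in a compact subset of $(0,1)$. The cited lemma provides a tail bound of the form
\[
P\bigl(V^+_i-V^+_{i-1}\ge m^{2/3}\,\bigm|\,V^+_{i-1}=n\bigr)\;\le\;C\exp\!\bigl(-c\,m^{4/3}/n\bigr)
\]
uniformly over $n\le\delta m$ and over the first cookies, which for $n\le\delta m$ gives $\le C\exp(-c'm^{1/3}/\delta)$. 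Summing over $i\le\lfloor\delta m\rfloor$ yields at most $\delta m\cdot C\exp(-c'm^{1/3}/\delta)$, which tends to $0$ as $m\to\infty$, completing the proof.

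The main step is the single-jump tail estimate; everything else is an elementary union bound and the observation that overshoot is dominated by jump size. The potential obstacle is that \cite[Lemma 6.3]{kpERWMCS} may be phrased in terms of moments rather than exponential tails, but this does not affect the conclusion: even a crude polynomial moment bound, say $E[(V^+_i-V^+_{i-1}-E)^p\mid V^+_{i-1}=n]\le C_p n^{p/2}$ for some fixed $p\ge 8$, gives by Markov's inequality $P(V^+_i-V^+_{i-1}\ge m^{2/3}\mid V^+_{i-1}=n)\le C(\delta m)^{p/2}/m^{2p/3}=O(m^{-p/6})$, and summing over $i\le\delta m$ still produces a quantity $O(m^{1-p/6})\to 0$.
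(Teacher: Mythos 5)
Your proof is correct. The paper offers no argument beyond the remark that the lemma is ``a consequence of \cite[Lemma 6.3]{kpERWMCS},'' and your proposal is the natural spelling-out of that remark: you observe that the overshoot above $\delta m$ is at most the size of the single step that first crosses $\delta m$, reduce to a uniform tail bound for a single increment via a union bound over the at most $\fl{\delta m}$ possible crossing times, and control that tail by the increment estimate from \cite{kpERWMCS}. The reduction is sound: on the event in question, at $i=\tau^{V^+}_{\delta m}\le\fl{\delta m}$ we have $V^+_{i-1}<\delta m$ and $V^+_i-V^+_{i-1}>V^+_i-\delta m\ge m^{2/3}$, so the union-bound inclusion holds. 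Your fallback via polynomial moments is also correct: the increment $V^+_i-n-1=\sum_{j=1}^{n+1}(G^i_j-1)$ is a sum of exponentially decaying random variables governed by the (finite-state, exponentially mixing) cookie Markov chain, so a Rosenthal-type bound $E[(V^+_i-V^+_{i-1}-\mu_n)^p\mid V^+_{i-1}=n]\le C_p\,n^{p/2}$ with $|\mu_n|\le\gamma$ holds uniformly in $n$ and in the first cookie, and with any $p>6$ the Markov-inequality estimate $O(m^{p/2-2p/3})$ summed over $i\le\delta m$ still vanishes. In short, the structure of the argument matches what the authors must have had in mind.
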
 
The following corollary is immediate given Theorem \ref{da0.25},
Lemma~\ref{cupBESQ}, Lemma \ref{overshoot} and the fact that BESQ$^2$ processes do not return
to zero.
    
\begin{cor} \label{isBESQ2}
Given $ \epsilon > 0$, parameters $\delta_1,\delta_2 > 0$ can be chosen so small that for any sequence of first cookie environments on $\bint{0,m}$ which are 
\begin{enumerate}[(i)]
\item $(m^{1/4},\nu/2 - 1)$-good,
\item $\delta_2 m $-lifting from the left on $\bint{0, \delta_1 m}$
\end{enumerate}
and any fixed $s \in [\epsilon ^ 5, 1]$, every distributional limit
point of $\{m^{-1}V^+_{\fl{sm}}\}_{m\ge 1}$  with $V^+_0=0$ evolving in this
environment must be within $3\epsilon^3/2 $ (in total
variation distance) of the law of $Y(s)$ where $Y$ is a solution of \eqref{daa} with $D\equiv \nu/2$ and $Y(0)=0$, i.e.\ a $\frac{\nu}{4}$BESQ$^2$ process.
\end{cor}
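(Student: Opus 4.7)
The plan is to identify any distributional limit point $\nu$ of $\{\mathcal L(m^{-1}V^+_{\fl{sm}})\}$ as a mixture of marginals of a $\tfrac{\nu}{4}$BESQ$^2$-diffusion with space-time initial data in $[0,\delta']^2$, and then apply Lemma~\ref{cupBESQ} uniformly in that initial data to convert the identification into a TV bound. Three ingredients drive the proof: the lifting assumption, Theorem~\ref{da0.25}, and Lemma~\ref{cupBESQ}. The last applies verbatim to $\tfrac{\nu}{4}$BESQ$^2$ processes after the trivial rescaling $\tilde Z\mapsto \tfrac{\nu}{4}\tilde Z$ (analogous to the passage from Lemma~\ref{BMPEcoup} to Corollary~\ref{BMPEcoup1}), and I use it in this form. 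Fix $\delta'>0$ from the rescaled Lemma~\ref{cupBESQ} and choose $\delta_1,\delta_2$ so that $\delta_1<\delta'\wedge\epsilon^5$ and $\delta_2<\delta'$.

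First I set $T_m:=\tau^{V^+}_{\delta_2 m}$. The $\delta_2 m$-lifting hypothesis on $\bint{0,\delta_1 m}$ gives $P(T_m \le \delta_1 m) \ge 1 - \epsilon^3$, and Lemma~\ref{overshoot} then yields an event $G_m$ of probability at least $1 - \epsilon^3 - o(1)$ on which
\[
(t_m,y_m):=(T_m/m,V^+_{T_m}/m)\in [0,\delta_1]\times[\delta_2,\delta_2+m^{-1/3}]\subseteq [0,\delta']^2.
\]

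Second, by the strong Markov property at $T_m$ and Theorem~\ref{da0.25} applied to the post-$T_m$ process (which evolves in the $(m^{1/4},\tfrac{\nu}{2}-1)$-good environment on $\bint{T_m,m}$ starting from $V^+_{T_m}$), the rescaled post-$T_m$ process converges in Skorokhod topology to the diffusion $\tilde Y$ solving \eqref{daa} with $D\equiv \tfrac{\nu}{2}$. Extracting a further subsequence along which $(t_m,y_m)$ converges weakly (possible by tightness in $[0,\delta']^2$), I identify any subsequential weak limit $\nu$ of $\mathcal L(m^{-1}V^+_{\fl{sm}})$ as $(1-p)\,\nu_G + p\,\nu_B$ with $p\le \epsilon^3+o(1)$, where $\nu_G$ is a mixture over the limiting joint law of $(t_m,y_m)$ of the marginals $\mathcal L(\tilde Y(s)\mid \tilde Y(t^\infty)=y^\infty)$, with mixing measure supported in $[0,\delta']^2$. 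The fact that a two-dimensional BESQ starting at $y^\infty>0$ almost surely does not return to $0$ makes the stopping at $\sigma_{\delta m}$ in Theorem~\ref{da0.25} negligible in the limit as $\delta\to 0$.

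Third, the rescaled Lemma~\ref{cupBESQ} gives, for every $(y,t)\in[0,\delta']^2$, a coupling of $\tilde Y$ starting at $(y,t)$ with the target $Y$ starting at $(0,0)$ such that with probability at least $1-\epsilon^3/5$ there is $\sigma\in(t,\epsilon^5)\subseteq [0,s]$ with $\tilde Y(u)=Y(u)$ for all $u\ge\sigma$, in particular $\tilde Y(s)=Y(s)$. Hence $d_{TV}(\mathcal L(\tilde Y(s)\mid \tilde Y(t)=y),\,\mathcal L(Y(s)))\le \epsilon^3/5$ uniformly in $(y,t)\in[0,\delta']^2$, and this uniform bound transfers to any mixture, giving $d_{TV}(\nu_G,\mathcal L(Y(s)))\le \epsilon^3/5$. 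Combining via the convex-combination bound,
\begin{equation*}
d_{TV}(\nu,\mathcal L(Y(s)))\le p + d_{TV}(\nu_G,\mathcal L(Y(s)))\le \epsilon^3 + \frac{\epsilon^3}{5} = \frac{6\epsilon^3}{5} < \frac{3\epsilon^3}{2}.
\end{equation*}
The main obstacle is the clean execution of the second step: identifying the subsequential weak limit as a mixture of diffusion marginals despite the randomness of the starting data $(t_m,y_m)$ and the fact that Theorem~\ref{da0.25} provides convergence only up to $\sigma_{\delta m}$. Both issues are resolved by conditioning on $(T_m,V^+_{T_m})$, extracting a further subsequence along which $(t_m,y_m)$ converges weakly in the compact set $[0,\delta']^2$, and taking $\delta\to 0$ using the non-return-to-$0$ property of dimension-$2$ BESQ.
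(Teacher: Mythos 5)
Your proof is correct and uses exactly the ingredients the paper cites when it declares the corollary ``immediate'': Theorem~\ref{da0.25}, Lemma~\ref{cupBESQ}, Lemma~\ref{overshoot}, and the non-return-to-zero property of BESQ$^2$. The decomposition into a good event (where the lifting assumption forces the rescaled process into a compact space-time box $[0,\delta']^2$), the strong Markov step followed by the goodness-based diffusion approximation, and the uniform coupling to the zero-started $\frac{\nu}{4}$BESQ$^2$ via the rescaled Lemma~\ref{cupBESQ}, together with the convex-combination TV bound, is exactly the intended argument; your final constant $\epsilon^3+\epsilon^3/5 = 6\epsilon^3/5$ even lands comfortably inside the stated $3\epsilon^3/2$.

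One detail you handle a bit casually and should tighten if this were written up: Lemma~\ref{overshoot} as stated bounds the overshoot on the event $\{\tau^{V^+}_{\delta m}\le \delta m\}$ with the \emph{same} $\delta$ in both slots, whereas your event is $\{\tau^{V^+}_{\delta_2 m}\le \delta_1 m\}$ with $\delta_1$ typically larger than $\delta_2$. The overshoot estimate really controls the size of the final jump at the hitting time and does not depend on the time cap, so this is a harmless relaxation, but it deserves a sentence acknowledging that the cited statement is being invoked with the (larger) time window afforded by the lifting hypothesis rather than the one in the lemma's display.
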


\begin{prop} \label{BESQ2int} Denote by $\{Z^m_k\}_{0 \le k \le 2m} $
  a concatenation of a $V^+$ process on $\bint{0,m}$ starting at $0$
  and a $U^+$ process on $\bint{m,2m}$.  Given
  $\epsilon\in(0,\epsilon_0)$ for some $\epsilon_0$ fixed small,
  parameters $ \delta_1,\delta_2 > 0$ can be chosen so small
  that for any sequence of first cookie environments on $\bint{0,2m}$
  which are
\begin{enumerate}[(i)]
\item  $(m^{1/4},\nu/2 - 1)$-good on $\bint{0,m} $ and $(m^{1/4},0)$-good on $\bint{m,2m}$;
\item $\delta_2 m $-lifting from the left on $\bint{0, \delta_1 m}$ and
  $\delta_2 m $-grounding from the left on $\bint{2m- \delta_1 m,2m}$,
\end{enumerate}
every limit point of
$ m^{-2} \sum_{k=0}^{2m-1} Z^m_k \ind{Z^m_{2m-1}= 0}$ must be of the form
\[
\int K(z,\cdot) \lambda_1 (dz)
\]
where $K(\cdot,\cdot)$ is a probability kernel satisfying
$K(z, [z- \epsilon ^8, z+\epsilon ^ 8]^c ) =0$ for all $z$ and
$\lambda_1$ is a probability measure within $ 8\epsilon^3$ (in total
variation distance) from the law  that is one half
$\delta_0$ plus one half the law of $\frac{\nu}{4}$ times the time
for the standard Brownian motion to exit $(-1,1)$.
\end{prop}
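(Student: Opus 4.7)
The plan is to show that $m^{-1}Z^m_{\fl{m\cdot}}$ converges in distribution to the diffusion $Y$ on $[0,2]$ which solves \eqref{daa} with $Y(0)=0$ and drift $D(t)=(\nu/2)\ind{t\le 1}$; equivalently, $Y$ is a $\frac{\nu}{4}$BESQ$^2$ starting at $0$ on $[0,1]$ continued as a $\frac{\nu}{4}$BESQ$^0$ absorbed at $0$ on $[1,\infty)$. I then identify the weak limit of the Riemann sum on the extinction event via the first Ray--Knight theorem for Brownian motion.

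For the diffusion approximation of the concatenated BLP, Corollary~\ref{isBESQ2} gives $\tfrac{3}{2}\epsilon^3$ total-variation control on the marginals $m^{-1}V^+_{\fl{sm}}$ at each fixed $s\in[\epsilon^5,1]$, while Theorem~\ref{da0.25} and Corollary~\ref{da0.25dead} handle the $U^+$ part on $\bint{m,2m}$ (the latter being needed since the limit is a BESQ$^0$ that becomes extinct in finite time). The two pieces are glued at $t=1$ using Lemma~\ref{cupBESQ}: given $m^{-1}Z^m_m$ is near some $y$, the continuation couples to a continuum BESQ$^0$ from $y$ up to $\epsilon^5$-spatial error outside probability $\epsilon^3/5$. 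The grounding hypothesis on $\bint{2m-\delta_1 m,2m}$ combined with Lemma~\ref{thetruth} yields that $\{Z^m_{2m-1}=0\}$ converges in probability to $\{\sigma^Y_0\le 2\}$, so on its complement the indicator (and hence the random variable) vanishes.

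For the identification, set $\tilde Y:=(4/\nu)Y$, which is a standard BESQ$^2$ on $[0,1]$ continued by BESQ$^0$ absorbed at $0$. By the first Ray--Knight theorem, $\tilde Y$ has the distribution of the shifted local-time profile $\{L^B_{t-1}(\tau^B_{-1})\}_{t\ge 0}$ of a standard Brownian motion $B$ started at $0$ up to its hitting time $\tau^B_{-1}$ of $-1$. The occupation-times formula gives $\int_0^{\sigma^{\tilde Y}_0}\tilde Y(t)\,dt=\tau^B_{-1}$, and the event $\{\sigma^{\tilde Y}_0\le 2\}$ corresponds to $\{\max_{t\le\tau^B_{-1}}B_t\le 1\}=\{\tau^B_{-1}<\tau^B_1\}$, which has probability $\tfrac12$ and on which $\tau^B_{-1}$ coincides with $\tau^B_{\{-1,1\}}$, the exit time from $(-1,1)$. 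Reflection symmetry ($B\mapsto -B$) gives that the conditional law of $\tau^B_{\{-1,1\}}$ given exit at $-1$ equals its unconditional law. Hence the idealized weak limit of $m^{-2}\sum_{k=0}^{2m-1}Z^m_k\,\ind{Z^m_{2m-1}=0}$ is $\tfrac12\delta_0+\tfrac12\mu$, where $\mu$ is the law of $\tfrac{\nu}{4}\tau^B_{\{-1,1\}}$.

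The main obstacle is making these approximations quantitative. The diffusion approximations are only quantitative away from extinction and only at fixed time slices, so several small error sources must be controlled: the Riemann-sum discretization, the $\epsilon^5 m$-interval near the start of $\bint{0,m}$ not covered by Corollary~\ref{isBESQ2}, the $O(m^{2/3})$ overshoot at the lifting time (Lemma~\ref{overshoot}), the coupling error at $t=1$ (Lemma~\ref{cupBESQ}), and the transient grounding phase near $2m$ (Lemma~\ref{thetruth}). Each contributes at most an $\epsilon^8$-scale displacement in the value of the integral (absorbed in the probability kernel $K$ with $\epsilon^8$-support) and at most an $O(\epsilon^3)$ additive piece of total-variation distance (so that the total is within $8\epsilon^3$ of the idealized mixture, realized as $\lambda_1$), completing the proof.
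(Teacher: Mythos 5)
Your overall strategy matches the paper's: split the integral according to time intervals, use diffusion approximations for the bulk, control the small contributions near $t=0$ and near extinction, and identify the idealized law via the first Ray--Knight theorem; the Ray--Knight identification in your third paragraph is correct.

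There are two genuine gaps. First, Lemma~\ref{cupBESQ} couples a BESQ$^2$ started near $(0,0)$ with one started at $(0,0)$; it is a tool for the \emph{start} of the $V^+$ phase (it enters Corollary~\ref{isBESQ2}) and does not apply at $t=1$, where $m^{-1}Z^m_m$ is macroscopically positive. The gluing at $t=1$ must instead use the TV control on the marginal $Z^m_m/m$ supplied by Corollary~\ref{isBESQ2} together with the Markov property and the process-level convergence of Theorem~\ref{da0.25} and Corollary~\ref{da0.25dead}. Relatedly, $\{Z^m_{2m-1}=0\}$ ``converging in probability'' to $\{\sigma^Y_0\le 2\}$ is not well-posed (the two are events on different probability spaces); what is actually needed is the quantitative comparison of Lemma~\ref{thetruth}. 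Second, and more fundamentally, the final paragraph asserts but does not construct the pair $(K,\lambda_1)$ demanded by the statement. You treat the $\epsilon^8$-displacement budget and the $O(\epsilon^3)$-TV budget as if they were independently maintained, but the $\epsilon^8$ bounds on the small contributions from $[0,\epsilon^5 m]$ and from after the process drops to level $\delta_3 m$ hold only outside an $O(\epsilon^3)$-probability event, on which the displacement has no useful bound. To get the required representation $\int K(z,\cdot)\lambda_1(dz)$ one must adjoin a suitable random variable $U'$ (distributed like the small contributions but jointly coupled to the bulk limit $V$ so that $U'+V$ is close in TV to the idealized law), write the total limit as $(V+U')+(U-U')$, and split the conditional kernel of $U-U'$ given $V+U'$ according to whether $|U-U'|\le\epsilon^8$: the good piece supplies the kernel $K$, and the bad piece receives the identity kernel and is charged to the TV error of $\lambda_1$. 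Without this explicit decomposition, your stated error bounds do not yield the proposition's conclusion.
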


\begin{proof} 
  Throughout the proof $Y$ will be a solution of \eqref{daa} with
  $D(t) = \frac{\nu}{2} \ind{t<1}$ with initial condition $Y(0) = 0$.
  The previous results in this section will allow us to approximate
  certain probabilities for the process $Z^m$ in terms of
  corresponding probabilities involving the process $Y$. Also, since
  $Y$ is $\frac{\nu}{4}$ times a concatenation of a standard BESQ$^2$
  process and BESQ$^0$ process, it follows easily from the Ray-Knight
  Theorems that $\int_0^2 Y(s) \, ds \ind{\sigma_{1,0} < 2}$ has the
  law that is one half $\delta_0$ plus one half the law of
  $\frac{\nu}{4}$ times the time for the standard Brownian motion to
  exit $(-1,1)$.

  We fix a small $\delta_3>0$ and write
  $\Sigma^m:=\left( \sum_{k=0}^{2m-1} Z^m_k \right) \ind{Z^m_{2m-1}=0}
  $ as $\Sigma^m_a + \Sigma^m_b + \Sigma^m_c$ where
  \[\Sigma^m_a = \sum_{k=0}^{\epsilon^5 m} Z^m_k \ind{
    \sigma_{m,0} < 2m},\ \ 
  \Sigma^m_b = \sum_{k=\epsilon^5 m + 1}^{\sigma_{m, m \delta_3 }}
    Z^m_k \ind{ \sigma_{m,0} < 2m},\ \ 
  \Sigma^m_c = \sum_{k=\sigma_{m, m \delta_3 }+1}^{2m-1} Z^m_k
  \ind{ \sigma_{m,0} < 2m}
  .\]  

{\em Step 1.} We claim that if 
    $\delta_3 , \delta_1$ are fixed sufficiently
    small, then outside of probability $ \epsilon^3/4
    $, \begin{enumerate}[(i)]
    \item $\Sigma^m_a/m^2$ and
      $\Sigma^m_c/m^2$ are less than $\epsilon^8/4$ for all $m$
      large.
    \item
      $ \int_0^{\epsilon^5} Y(s) ds \ind{\sigma_{1,0} < 2}$ and
      $ \int_{\sigma_{1, \delta_3}}^{2} Y(s) ds \ind{\sigma_{1,0} <
        2}$ are less than $\epsilon^8/4$ for all $m$ large.
 \end{enumerate} We first consider (i).  The bound for
    $\Sigma^m_a $ is easily seen, since by monotonicity in the initial number of particles we have for all sufficiently small $\epsilon$ that
\[
 P(\Sigma^m_a > (\epsilon^8/4) m^2) \leq P( \tau^{Z^m}_{\epsilon^4 m} \le \epsilon^5 m\mid Z^m_0=0) \leq P( \tau^{Z^m}_{\epsilon ^ 4 m} \le \epsilon ^5 m\mid Z^m_0=\fl{\epsilon^5m}), 
\]
and it follows from Theorem \ref{da0.25} that the last probability is
at most $\epsilon^3/4$ for $m$ large enough.  Similarly, if the the
event $\{\Sigma^m_c\ge (\epsilon^8/4)m^2 \}$ occurs then after time
$\sigma_{m,\delta_3 m}$ the process $Z^m$ goes above $(\epsilon^8/4)m$
before time $2m$, and by Lemma \ref{dead} the probability of this is
less than $\epsilon^3/4$ if $\delta_3$ is chosen sufficiently small
(depending on $\epsilon$).  This finishes the proof of (i). The proof
of the bounds for (ii) is similar (but simpler since we do not need to
go through the diffusion approximation steps).

{\em Step 2.} We claim that the total variation distance between any
limit point of $\Sigma^m_b/m^2$ and
$ \int_{\epsilon^5}^{\sigma_{1, \delta_3}} Y(s) ds \ind{\sigma_{1,0} <
  2}$ is less than $7\epsilon^3/2$.  We begin by introducing some
notation. Let $\Sigma_a, \Sigma_b$, and $\Sigma_c$ be the analogs of
$\Sigma^m_a$, $\Sigma^m_b$, and $\Sigma^m_c$, respectively for the
process $Y$ in place of $Z^m$, that is
\[
 \Sigma_a = \int_0^{\epsilon^5} Y(s) \, ds \ind{\sigma_{1,0}^Y < 2}, 
\quad 
\Sigma_b = \int_{\epsilon^5}^{\sigma_{1,\delta_3}^Y} Y(s) \, ds \ind{\sigma_{1,0}^Y < 2}, 
\quad\text{and}\quad
\Sigma_c = \int_{\sigma_{1,\delta_3}^Y}^2 Y(s) \, ds \ind{\sigma_{1,0}^Y < 2}. 
\]
Also, let
$\tilde{\Sigma}_b^m = \left( \sum_{k=\epsilon^5 m + 1}^{\sigma_{m, m
      \delta_3 }} Z^m_k \right) \ind{ \sigma_{m,\delta_3 m} < 2m}$ and
$\tilde{\Sigma}_b = 
\int_{\epsilon^5}^{\sigma_{1,\delta_3}^Y} Y(s) \, ds
\ind{\sigma_{1,\delta_3}^Y < 2}$.

Let $V$ be a weak subsequential limit of $\Sigma^m_b/m^2$. We can then
take a further subsequence (which, for ease of notation, we will keep denoting by $m\in\N$)  along which
\begin{itemize}
 \item $(\Sigma^m_b/m^2, \tilde{\Sigma}^m_b/m^2)$ converges in distribution to a random vector $(V,\tilde{V})$,
 \item $Z_{\epsilon^5 m}^m/m$ converges in distribution to a random
   variable $\zeta_1$,
 \item and $Z_m^m/m$ converges in distribution to a random
   variable $\zeta_2$.
\end{itemize}
Note that it follows from Corollary \ref{isBESQ2} that
$d_{TV}(\zeta_1,\, Y(\epsilon^5)) < 3\epsilon^3/2$ and
$d_{TV}(\zeta_2, \, Y(1)) < 3 \epsilon^3/2$. 
Then, using Theorem \ref{da0.25} and the fact that
$Z_{\epsilon^5 m}^m/m \Rightarrow \zeta_1$ 
we can conclude that the distribution of $\tilde{V}$ is given by
\[
 P( \tilde{V} \in A ) = \int P\left( \tilde{\Sigma}_b \in A \mid Y(\epsilon^5) = z \right) P(\zeta_1 \in dz).
\]
From this it follows that
$d_{TV}\left( \tilde{V},\, \tilde{\Sigma}_b \right) \leq d_{TV}(
\zeta_1,\, Y(\epsilon^5) ) \leq 3 \epsilon^3/2$.  Next,
  note that
  \[d_{TV}\left( \Sigma_b, \, \tilde{\Sigma}_b \right) \leq P(\Sigma_b
    \neq \tilde{\Sigma}_b) \le P(\sigma^Y_{1,\delta_3} < 2-\delta_1,
    \, \sigma_{1,0}^Y > 2) + P(\sigma^Y_{1,\delta_3} \in
    (2-\delta_1,2) ).\] We first choose $\delta_1$ so small that
    $\sup_{y>0} P( \sigma_{1,0}^Y \in (2-\delta_1,2+\delta_1) \mid
    Y(1) = y) < \epsilon^3/10$.\footnote{Indeed, the function
        $f(y,\delta):=P( \sigma_{1,0}^Y \in (2-\delta,2+\delta) \mid
        Y(1) = y)$ is continuous on $[0,\infty)\times[0,1/2]$ and
        $f(y,\delta)\le f(y,1/2)\to 0$ as $y\to \infty$. This implies
        that there is an $L>0$ such that
        $\sup_{y>0}f(y,\delta)\le(\epsilon^3/15)\vee
        \sup_{y\in[0,L]}f(y,\delta)$ for all $\delta\in[0,1/2]$. The
        claimed bound now follows from the uniform continuity of $f$
        on a compact set $[0,L]\times[0,1/2]$ and the fact that
        $f(y,0)\equiv 0$.}  Next we choose $\delta_3$ depending on
    $\delta_1$ so that
    \[P(\sigma^Y_{1,\delta_3} < 2-\delta_1,
    \, \sigma_{1,0}^Y > 2)\le P(\sigma^Y_{1,0}>\delta_1\mid
    Y(1)=\delta_3)<\epsilon^3/10.\] Then by the same
    calculation as in \eqref{half} for this choice of
    $\delta_1,\delta_3$ we get that
\begin{equation}\label{unif-sd3}
 \sup_{y>0} P(\sigma^Y_{1,\delta_3} \in (2-\delta_1,2) \mid Y(1) = y) < \frac{\epsilon^3}{5}, 
\end{equation}
and, therefore, $d_{TV}\left( \Sigma_b, \, \tilde{\Sigma}_b \right) < 2\epsilon^3/5$. 
In a similar manner, 
\begin{align*}
 P( \Sigma^m_b \neq \tilde{\Sigma}^m_b ) &= P\left( \sigma_{m,\delta_3 m}^{Z^m} < 2m \leq \sigma_{m,0}^{Z^m} \right)\\
&\leq P\left( \sigma_{m,\delta_3 m}^{Z^m} \leq (2-\delta_1)m, \, \sigma_{m,0}^{Z^m}  \geq 2m \right) + P\left( \sigma_{m,\delta_3 m}^{Z^m} \in ((2-\delta_1)m,2m) \right).
\end{align*}
By the argument at the end of the proof of Lemma \ref{thetruth} we can
bound the first probability above by $6\epsilon^3/5$ for $\delta_3$
small and $m$ large enough, while Theorem \ref{da0.25} together with
\eqref{unif-sd3} imply that the second probability can also be bounded
above by $\epsilon^3/5$ as $m\to \infty$.  Then using the joint
convergence
$(\Sigma^m_b/m^2, \tilde{\Sigma}^m_b/m^2) \Rightarrow
(V,\tilde{V})$, we get that
\begin{equation}\label{dTV-VV1} 
 d_{TV}(V,\tilde{V}) \leq P(V \neq \tilde{V}) \leq \liminf_{m\to\infty} P( \Sigma^m_b \neq \tilde{\Sigma}^m_b ) \leq \frac{7\epsilon^3}{5}. 
\end{equation}
Combining the above estimates we conclude that 
\[
 d_{TV}( V,\, \Sigma_b) \leq d_{TV}(V,\, \tilde{V}) + d_{TV}(\tilde{V}, \tilde{\Sigma}_b )+ d_{TV}(\tilde{\Sigma}_b,\, \Sigma_b ) < \frac{7 \epsilon^3}{2}.
\]

{\em Step 3.}  Now any weak limit of $\{\Sigma^m\}_{m\ge 1}$ can be
written as $U+V$ where $(U,V)$ is a weak limit (possibly along a
further subsequence) of
$\{((\Sigma^m_a + \Sigma^m_c)/m^2, \Sigma^m _b/m^2)\}_{m\ge 1}$.
Moreover, as we have shown in Steps 1 and 2,
\begin{itemize}
\item $V$ has law within
  $7\epsilon^3/2 $ of the law of
  $\int_{\epsilon^5}^{\sigma_{1, \delta_3}} Y(s) ds
  \ind{\sigma_{1,0} < 2}$ in total variation distance.
\item $U\le \epsilon^8/2$ outside of probability $\epsilon^3/2$.
\end{itemize}
By Step 1(ii) and Step 2, we can adjoin a
positive variable $U^\prime $ so that $U^\prime + V $ has law within
$7\epsilon^3/2$ in total variation distance of that of
$ \int_{0}^{2} Y(s) ds \ind{\sigma_{1,0} < 2}$ and
$U' \le \epsilon^8/2$ outside of probability
$ 4\epsilon^3$.  Indeed, let $(U',V)$ have joint
  distribution defined by the regular conditional probability
\[
 P(U' \in A \mid V = x) = 
P\left(\Sigma_a + \Sigma_c \in A \mid \Sigma_b = x \right). 
\]
This implies first of all that
\[
 d_{TV}\left( U'+V, \int_0^2 Y(s)\, ds\ind{\sigma_{1,0} < 2} \right) 
\leq d_{TV}\left( V, \Sigma_b \right) \leq \frac{7}{2}\,\epsilon^3, 
\]
and secondly that 
\[
 \left| P\left( U'> \frac{\epsilon^8}{2} \right) - P\left( \Sigma_a + \Sigma_c > \frac{\epsilon^8}{2} \right) \right| \leq \frac{7}{2}\,\epsilon^3
\]
and then Step 1(ii) implies that
$P(U'> \epsilon^8/2 ) \leq 7\epsilon^3/2 + \epsilon^3/2 =
4\epsilon^3$.

From this we see that we can write the limiting distribution of
$\{\Sigma^m\}_{m\ge 1}$ along this subsequence as that of $(V +U')+U - U'$.
We now introduce the following kernels or sub-kernels
\begin{itemize} 
\item
$L$ denotes the regular conditional probability of $U-U^\prime $ given $V+U^\prime $;
\item $L^\epsilon $ denotes the sub-kernel
  $L^\epsilon (x, D) = L (x, D\cap [-\epsilon ^8, \epsilon
  ^8])$;
\item $H(x,D) = L(x,D-x)$, and $H^\epsilon (x,D) = L^\epsilon(x,D-x)$.
\end{itemize} 
Thus, if $\gamma$ denotes the law of $V+U'$, then
$\int H(x,\cdot)\gamma (dx)$ is the law of $U+V$ and
\[
P(U+V\in A)=\int H^\epsilon(x,A)\gamma (dx) +  \int (H-H^\epsilon)(x,A)\gamma (dx).
\]
The second term is a measure which we denote by $\gamma_b$. Note that
$\gamma_b(\R) = P(|U-U'| > \epsilon^8) < 9\epsilon^3/2$.  We then take
$\lambda_1 $ to be the measure $\gamma_a + \gamma_b$, where
$ \gamma_a $ is the measure that is absolutely continuous with respect
to $\gamma $ with
$\frac{d \gamma_a}{d \gamma}(x) = H^\epsilon(x,\R)$.  Direct
  calculation then yields that
\[
  \lambda_1(A) = P\left( U'+V \in A, \, |U-U'| \leq \epsilon^8 \right)
  + P\left( U+V \in A, \, |U-U'| > \epsilon^8 \right),
\]
from which one obtains both that $\lambda_1$ is a probability measure
and that the total variation distance between $\lambda_1$ and $\gamma$
is at most $P(|U-U'| > \epsilon^8) \leq 9\epsilon^3/2$.  Since the
total variation distance between $\gamma$ and
$\int_{0}^{2} Y(s) ds \ind{\sigma_{1,0} < 2}$ is at most
$7\epsilon^3/2$ we then get that the total variation distance between
$\lambda_1$ and $\int_{0}^{2} Y(s) ds \ind{\sigma_{1,0} < 2}$ is at
most $8\epsilon^3$.  Finally, the proof is completed by letting $K$ be
the kernel
\[K(x,A) = \frac{H^\epsilon(x,A)}{H^\epsilon(x,\R)}
\frac{d\gamma_a}{d\lambda_1}(x) + \ind{x \in A}
\frac{d\gamma_b}{d\lambda_1}(x),\] since one can check easily that
$\int H(x,\cdot)\gamma (dx ) = \int K(x,\cdot) \lambda_1(dx)$ and 
$K(x,[x-\epsilon^8,x+\epsilon^8]^c) = 0$.  
\end{proof}

\subsection{Concatenation lemma}

The walk can go from the bulk to a previously untouched territory and
then back to the bulk. For this reason we need to consider BLPs in an
environment, which is possibly a concatenation of the one our walk
created in the bulk and the original i.i.d.\ environment. 
Lemma
\ref{hitpr} below says, loosely speaking, that if the environment
  is ``good'' then 
our mesoscopic walk moves right or left and
  ``chooses'' its extrema with probabilities close of those of the
  basic BMPE-walk as given in Corollary \ref{exitc}. This lemma addresses all situations: the very
first step of the mesoscopic walk, transitions between the bulk and
the boundary, and steps in the bulk.

Let $0\le \wu\le w = 1\le \wo\le b = 2 $ as well as small values
$0<\delta < \delta _ 1 < \epsilon ^ 3$ be fixed and
$\wu_m,w_m,\wo_m,b_m \in \Z / m $ converge to $\wu,w,\wo,b$
respectively as $m\to\infty$. We shall consider a BLP
$\{ Z^m_k \}_{0\le k\le b_mm}$ which evolves as a $V^+$ process
on $\bint{0,w_mm}$ and as a $U^+ $ process afterwards. In
  particular, $Z^m$ has an immigrant in each generation up to
  $\fl{w_mm}$ and, thus, $0$ becomes an absorbing state only after
  $\fl{w_mm}$.

Next we define which environments are considered ``good'' by
  imposing three conditions. In Section \ref{sec:ei} we shall show that
  they are satisfied with high probability.
\begin{asm}\label{asm:concat}
The following properties hold for all sufficiently large
  $m$.
 \begin{enumerate}[(i)]
 \item Either $\wu\ge\epsilon^3$ and the first cookies on
   $\bint{0,\wu_mm}$ are i.i.d.\ with marginal $\eta $ or
   $\wu= \wu_m=0$ and the first cookie environment on
   $\bint{0,\delta_1m}$ is $ \delta_2 m$-lifting from the left.
 \item The first cookie environment on $\bint{\wu_mm,w_mm}$ is $(m^{1/4},\nu/2-1)$-good and the first cookie environment on $\bint{w_mm,\wo_mm}$ is  $(m^{1/4},0)$-good.
 \item Either $\wo \leq 2 - \epsilon ^ 3 $ and the first cookies on
   $\bint{\wo_mm,b_mm}$ are i.i.d.\ with marginal $\eta $ or
   $\wo = 2 $ and the first cookie environment on
   $\bint{(b_m- \delta_1) m,b_mm}$ is $\delta_2 m$-grounding from the
   left.
 \end{enumerate} 
\end{asm}

To clarify the meaning of the above conditions, let us mention that the
first step of the mesoscopic walk corresponds to
$\wu=w=\wo=1$.  The case when $\wu =0$ and $\wo = 2 $
corresponds to steps in the bulk. The other cases are when the step is
at the boundary, and then the interval $(\wu,\wo)$ represents the bulk
region on the macroscopic scale.

\begin{lemma}[Concatenation lemma]\label{hitpr}
  There is a constant $K$ such that for every $\epsilon>0$ and all
  first cookie environments satisfying Assumption \ref{asm:concat} with
  sufficiently small $0< \delta_2 <\delta_1<\epsilon^3$, the following
  statements hold.
  \begin{enumerate}[(i)]
    \item For all $m$ sufficiently large, $|P(\sigma_{w_mm,0}\le b_mm)- P(\tau_0(\wu ,w,\wo  )\le \tau_b(\wu ,w,\wo ))|\le K\epsilon ^ 3$.
    \item Let $\wo\le 2-\epsilon^3$, intervals
        $J_\ell, \ 1\le \ell \le L$, be as in \eqref{Jell}, and choose
        $\ell_0$ (which might depend on $m$) so that
        $\wo_m \in J_{\ell_0}+1$.  Then for all sufficiently large
        $m$
   \[
   \sum_{\ell > \ell_0 + 1} \vert P(m^{-1}\sigma_{w_mm,0} \in  J_\ell+1)- P(S(\tau_0(\wu,w,\wo)) \in  J_\ell+1)  \vert < K \epsilon ^ 3. 
   \] 
  \end{enumerate}
\end{lemma}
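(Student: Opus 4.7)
The plan is to compare the rescaled BLP $\{m^{-1} Z^m_{\fl{ms}}\}_{s \in [0, b_m]}$ with the concatenated squared-Bessel process $\{y(s)\}_{s \in [0, b]}$ from Proposition~\ref{RNbmpe}, and then invoke Corollary~\ref{exitc} to translate statements about $\sigma^y_{w,0}$ into the required statements about $\tau_0(\wu,w,\wo)$ and $S(\tau_0(\wu,w,\wo))$. On the $V^+$ portion $\bint{0, w_m m}$, the $(m^{1/4}, \nu/2 - 1)$-good hypothesis on $\bint{\wu_m m, w_m m}$ and Theorem~\ref{da0.25} give the BESQ$^2$ limit matching $y$ on $[\wu, w]$, while on $\bint{0, \wu_m m}$ either Theorem~\ref{DA0}(1) (if $\wu \ge \epsilon^3$) gives the BESQ$^{2(1-\theta^-)}$ limit, or the $\delta_2 m$-lifting hypothesis (if $\wu = 0$) forces the process to become macroscopic before coordinate $\delta_1 m \ll \epsilon^3 m$ outside probability $\epsilon^3$, after which Theorem~\ref{da0.25} again applies. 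In particular, this yields $m^{-1}Z^m_{w_m m} \Rightarrow y(w)$ up to total variation error $O(\epsilon^3)$.

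For part (i), after shifting indices, the $U^+$ portion on $\bint{w_m m, b_m m}$ fits exactly the framework of Lemma~\ref{thetruth} with $c = \wo - w$, and with the two branches $\wo \le 2 - \epsilon^3$ and $\wo = 2$ of Assumption~\ref{asm:concat}(iii) matching the two branches of Assumption~\ref{asmG}. Lemma~\ref{thetruth} produces, uniformly in the initial value $Z^m_{w_m m}$,
\[
  \left|P\bigl(\sigma^{Z^m}_{w_m m, 0} \le b_m m \,\big|\, Z^m_{w_m m} = xm\bigr) - P\bigl(\sigma^y_{w, 0} \le b \,\big|\, y(w) = x\bigr)\right| \le 4\epsilon^3
\]
for all large $m$, so integrating against the law of $m^{-1}Z^m_{w_m m}$ (close to that of $y(w)$ by the first paragraph) and applying Corollary~\ref{exitc}(i) closes (i).

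For part (ii), Corollary~\ref{exitc}(ii) already gives $P(S(\tau_0(\wu,w,\wo)) \in J_\ell + 1) = P(\sigma^y_{w, 0} \in J_\ell + 1)$ whenever $J_\ell + 1 \subset (\wo, b)$, i.e.\ for $\ell > \ell_0$, so it suffices to compare the law of $m^{-1}\sigma^{Z^m}_{w_m m, 0}$ with that of $\sigma^y_{w, 0}$ in total variation on $\bigcup_{\ell > \ell_0 + 1}(J_\ell + 1)$. For this, I would fix $\delta_3$ small (depending on $\epsilon$) and decompose the extinction event as ``first drop to $\delta_3$'' followed by ``rapid extinction''. Lemma~\ref{dead} (and its continuous counterpart) guarantees that once either $m^{-1}Z^m$ or $y$ dips to $\delta_3$, outside probability $\epsilon^3$ it becomes extinct within an additional $O(L^{-1})$ macroscopic time without ever climbing back above $L^{-1}$. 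Theorem~\ref{da0.25} then matches the distribution of the $\delta_3$-hitting times of $m^{-1}Z^m$ and $y$, and the exclusion of $\ell = \ell_0, \ell_0 + 1$ from the sum absorbs the $O(L^{-1})$ smearing around the interface $\wo$.

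The main obstacle will be part (ii): one must carry the diffusion approximation past the boundary $\wo_m m$ between the $(m^{1/4}, 0)$-good interior and the i.i.d.\ $\eta$ outer region, and one must control the precise location of extinction, not just its probability, in total variation rather than merely weakly, uniformly in cookie environments satisfying Assumption~\ref{asm:concat}. The key simplification is that BESQ$^0$ is strongly attracted to $0$, so Lemma~\ref{dead} controls both the ``no rebound'' and the ``quick extinction'' parts of the argument, and the dropping of the two terms around $\ell_0$ provides the slack required to absorb the remaining $O(L^{-1})$ errors.
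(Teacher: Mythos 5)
Your treatment of part (i) is essentially the paper's argument: reduce via Corollary~\ref{exitc}(i) to comparing $E[g(Y(w))]$ with $E[g(m^{-1}Z^m_{w_m m})]$, apply Lemma~\ref{thetruth} (noting the match between the two branches of Assumption~\ref{asm:concat}(iii) and Assumption~\ref{asmG}), and control the law of $Z^m_{w_m m}/m$ by Theorem~\ref{DA0}/Theorem~\ref{da0.25} when $\wu\ge\epsilon^3$ or by Corollary~\ref{isBESQ2} when $\wu=0$. That part is sound.

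Part (ii) has a genuine gap. You propose to bound the total-variation-type sum $\sum_{\ell>\ell_0+1}|\cdot|$ by coupling the $\delta_3$-hitting times and then invoking Lemma~\ref{dead} for ``rapid extinction within $O(L^{-1})$ macroscopic time without climbing back above $L^{-1}$.'' Neither half of that claim is supported by the cited results. Lemma~\ref{dead} only rules out climbing back up to a fixed macroscopic level $\delta$ given a start below $\delta'$; it says nothing about the speed of extinction. Lemma~\ref{sm0} does give extinction within time $\delta m$ for some $\delta$, but $\delta$ is determined by $\epsilon$ and the required $\delta'$; you cannot force $\delta\le L^{-1}$ (note $L=\fl{\epsilon^{-3(K_0+1)}}$ is enormous compared to any fixed $\delta$) without introducing a new, smaller threshold $\delta'$, whose hitting time you would then again need to control—a circularity. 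Moreover, the extinction you care about occurs in $(\wo,b)$, where the cookie environment is i.i.d.\ $\eta$ and the diffusion limit is a BESQ$^{2\theta^+}$, not BESQ$^0$, so the ``strong attraction to $0$'' heuristic you invoke does not apply there. Finally, even if you did have a coupling of $m^{-1}\sigma^{Z^m}_{w_m m,0}$ and $\sigma^y_{w,0}$ to within $L^{-1}$, that alone does not bound $\sum_\ell|P(T^m\in J_\ell+1)-P(T\in J_\ell+1)|$: a shift of size $L^{-1}$ can move mass across every interval boundary, contributing to every term.

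What makes part (ii) work in the paper is a different mechanism, which your proposal misses: (a) Corollary~\ref{da0.25dead} and Corollary~\ref{isBESQ2} give a limit process $\tilde Y$ solving the same SDE as $Y$ on $[1,2]$ with $d_{TV}(\tilde Y(1),Y(1))\le 3\epsilon^3/2$, so the Markov property transfers the TV bound to the hitting times of $\tilde Y$ vs.\ $Y$; (b) the convergence $m^{-1}\sigma_{w_m m,0}\vee\wo_m\Rightarrow\sigma^{\tilde Y}_{1,0}\vee\wo$ holds because the relevant extinction occurs in the i.i.d.\ $\eta$ region where the \emph{stronger} Theorem~\ref{DA0} (which does yield hitting-time convergence, eq.~\eqref{ht}) applies; (c) since the limit law has no atoms in $(\wo,2)$ and $L$ is fixed for fixed $\epsilon$, this weak convergence yields the finite sum of TV-like differences being small for $m$ large. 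Your argument has no analogue of steps (a)--(c); in particular, matching $\delta_3$-hitting times weakly (Theorem~\ref{da0.25}) does not translate into the required sum bound on extinction-location probabilities.
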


\begin{rem}\label{concrem}
Since
$\{\sigma_{w_mm,0}\le b_mm\}=\{\exists
\ell\in\bint{\ell_0,L}:\,m^{-1}\sigma_{w_mm,0} ) \vee \wo_m\in
J_\ell+1\}$
, (i)
and (ii) imply that for some (possibly different) constant $K$ and
all sufficiently large $m$
\begin{equation*}
\vert P( (m^{-1}\sigma_{w_mm,0} ) \vee \wo_m \in (J_{\ell_0} \cup J_{\ell_0+1})+1)- P( S(\tau_0(\wu,w,\wo)) \in (J_{\ell_0} \cup J_{\ell_0+1})
 +1 ) \vert
\ \leq \ K \epsilon ^3 .
\end{equation*}
Part (ii) deals with the maximum $S$ for a BMPE when
$\wo \le 2 - \epsilon ^ 3$. The argument given applies equally to
the minimum $I$ when $\wu \ge \epsilon ^ 3 $.
\end{rem}

\begin{proof}
We begin with part (i) and let $\{ Y(t) \}_{0 \le t \le 2}$ be
a solution of \eqref{daa} with
\[D(t)=\frac{\nu}{2}((1-\theta^-)\ind{0\le t<\wu}+\ind{\wu\le
    t<1}+\theta^+\ind{\wo\le t\le 2})\ \text{ and $Y(0)=0$.}\] The
process $\{ Y(t) \}_{0 \le t \le 2}$ is a constant multiple of
$\{ y(t) \}_{0\le t\le 2}$ from Proposition~\ref{RNbmpe}. Therefore,
by part (i) of Corollary~\ref{exitc} we have that
$P( \tau_0 (\wu,w,\wo ) < \tau_b (\wu,w,\wo))=P(\sigma^Y_{w,0}< b)$.
Let $g(x) := P(\sigma_{w,0}^Y < b \mid Y(w) = x) $. It is clear that
$g$ is continuous and
\begin{align*}
P( \tau_0 (\wu ,w,\wo ) < \tau_b (\wu ,w,\wo )) = E[g( Y(w))],
\\\shortintertext{while by Lemma~\ref{thetruth}}
\left| P( \sigma_{w_mm,0} \le b_m m \mid Z^m_{w_m m }) - g\left(m^{-1}Z^m_{w_m m }\right) \right| < 4 \epsilon^3 .
\end{align*}
So our proof for (i) comes down to showing that for $m$ large
\begin{equation}
\label{boils}
\left| E\left[g( Y(w) )\right] - E\left[ g(m^{-1}Z^m_{w_m m })\right] \right| < K \epsilon^3
\end{equation}
for a universal $K$. If $\wu =0$ then \eqref{boils} is an immediate consequence of Corollary~\ref{isBESQ2}. If $\wu \ge \epsilon^3$ then by Theorem~\ref{DA0}, 
\begin{equation*}
  \left\{ m^{-1}Z^m_{\fl{mt}}\right\}_{0 \leq t \leq \wu_m} \stackrel { J_1 } {\Longrightarrow}\quad \{Y(t)\}_{0\le t\le \wu}.
\end{equation*}
Moreover, by Theorem~\ref{da0.25} and the fact that a BESQ$^2$ process a.s.\ does not hit $0$,
\begin{equation*}
\left\{ m^{-1}Z^m_{\fl{mt}} \right\}_{\wu_m \leq t \leq w_m} \stackrel { J_1 } {\Longrightarrow} \quad \{Y(t)\}_{\wu\le t\le 1}
\end{equation*}
In particular, $m^{-1}Z^m_{w_m m }\Longrightarrow Y(1)$ and
  \eqref{boils} holds. Part (i) of the lemma is
  proven.

The proof of part (ii) splits into two cases according to whether
    $\wu\ge \epsilon^3$ or $\wu=0$. The first case is easier (though
  essentially the same) so we content ourselves with the second case.

  It follows from Corollary \ref{da0.25dead} and Corollary
  \ref{isBESQ2} that any limit point $\tilde{Y}$ of
  $ \{ m^{-1}Z^{m} _{\fl{mt}}\}_{w_m\le t\le 2}$ solves the same
  equation \eqref{daa} as $Y$ on time domain $[1, 2 ]$ and that the
  law of $\tilde{Y}(1)$ is within $3 \epsilon^3/2 $ of the law of
  $Y(1)$ in total variation norm. The closeness of the laws of
    $Y(1) $ and $\tilde Y (1) $ and the Markov property imply that
\begin{equation*}
\sum_{\ell > \ell_0 + 1} \big| P( \sigma^{\tilde Y } _{1, 0} \in J_\ell +1) - P(\sigma^Y_{1,0} \in J_\ell+1 ) \big| \leq \frac{3}{2}\,\epsilon^3.
\end{equation*}
A slight subtlety, arising from the weakness of the conclusion of
Corollary~\ref{da0.25dead} compared to Theorem \ref{DA0} is that we
cannot claim that
  $ m^{-1}\sigma _{w_mm, 0}\Rightarrow
  \sigma^{\tilde{Y}}_{1,0}$. However, given the power of Theorem
\ref{DA0} we can assert that
$ m^{-1}\sigma_{w_mm, 0} \vee \wo_m$ converges to
$\sigma^{\tilde{Y}}_{1,0}\vee \wo$. This and the fact that
  the law of $\sigma^{\tilde{Y}}_{1,0}\vee \wo$ has no atoms in
  $(\wo,2)$ permits us to conclude that for $ \delta_1 $ fixed
sufficiently small and all $m$ sufficiently large
\begin{align*}
\sum_{\ell\in\bint{\ell_0+2,L}} \big| P( m^{-1}\sigma _{w_mm, 0}\vee \wo_m \in J_\ell +1) - P(\sigma^Y_{1,0}\vee\wo &\in J_\ell+1) \big| \\
= \sum_{\ell\in\bint{\ell_0+2,L}}  \big| P( m^{-1}\sigma _{w_mm, 0} \in J_\ell +1) - P(\sigma^Y_{1,0} &\in J_\ell+1) \big| \leq 2\epsilon^3.
\end{align*}
Noting that for every
$\ell\in\bint{\ell_0 +1,L},\ P(\sigma^Y_{1,0} \in J_\ell+1) =
P(S(\tau_0(\wu,w,\wo)) \in J_\ell +1 )$ completes the proof.  
\end{proof}

\section{Environmental issues}\label{sec:ei}

The applicability of our tools from previous sections depends on
whether the environment is ``good'' in some way. Maintaining the
desired properties of the environment as the walk moves from one
mesoscopic site to another is crucial for our arguments. In this
section we shall prove some important properties of the cookie
environment modified by the walk. This will allow us to couple our
rescaled ``mesoscopic'' ERW with a modified BMPE-walk $\twe$ and
establish the desired functional limit theorem.

We note that for a fixed $\epsilon>0$ the scaling parameter $m$
  in Section~\ref{sec:tb} is roughly of order $\epsilon\sqrt{n}$. This is
  why $m^{1/4}$-goodness of the first cookie environment becomes
  $n^{1/8}$-goodness in this section.

\subsection{$n^{1/8}$-goodness of the environment.} 
\begin{lemma}\label{18good}
For $n\geq 1$ and $K<\infty$, let $A_{n,K}$ be the event that at every time $k$ until exiting the interval $\bint{-K\sqrt{n},K\sqrt{n}}$ the remaining first cookie environment on the interval $\bint{X_k,S_k}$ is $(n^{1/8},0)$-good 
and the remaining first cookie environment on the interval $\bint{I_k,X_k}$ is $(n^{1/8},\frac{\nu}{2}-1)$-good. 
If $\max\{\theta^+,\theta^-\}<1$, then $\lim_{n\to\infty} P(A_{n,K}) = 1$ for any $K<\infty$. 
\end{lemma}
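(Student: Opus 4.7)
\emph{Overview.} The plan is to bound $P(A_{n,K}^c)$ by a union bound over pairs $(k,J)$, where $k$ is a time $\le \tau_{\rm exit}:=\sigma^X_{-\fl{K\sqrt n}}\wedge \tau^X_{\fl{K\sqrt n}}$ and $J\subset \bint{-K\sqrt n,K\sqrt n}$ is a subinterval of length $\fl{n^{1/8}}$. First I would truncate the exit time: a crude application of Theorem~\ref{da0.25} between the coarse stopping times $T^{1,n}_j$ of \eqref{tenk} with $\epsilon=1$ shows that $\tau_{\rm exit}\le T_n:=n^{10}$ with probability tending to $1$. It then suffices to show that for each deterministic pair $(k,J)$ with $k\le T_n$,
\[
  P\!\left(J\subseteq\bint{X_k,S_k}\ \text{and}\ \Bigl|{\textstyle\sum}_{z\in J}r^+\bigl(R^z_{\mathcal{L}(k,z)+1}\bigr)\Bigr|>\tfrac{|J|}{\log n}\right)\le e^{-c(\log n)^2},
\]
with the analogous bound when $J\subseteq\bint{I_k,X_k}$ with target mean $(\nu/2-1)|J|$ (equivalence given by \eqref{vecr}). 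Since there are at most $O(n^{10.5})$ deterministic pairs $(k,J)$, this super-polynomial tail bound closes the union bound.

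\emph{Fixed-time estimate.} Let $\mathcal{G}_k=\sigma(X_0,\ldots,X_k)$. Conditionally on $\mathcal{G}_k$ the remaining first cookies $\{R^z_{\mathcal{L}(k,z)+1}\}_z$ are independent, and for each $z\in\bint{I_k,S_k}\setminus\{X_k\}$ the direction of the last exit from $z$ is $\mathcal{G}_k$-measurable (it equals $\sgn(X_k-z)$). By uniform geometric ergodicity of the finite irreducible cookie Markov chain and the definitions of $\pi^\pm$ (see \cite[Section~3]{kpERWMCS}), for every $z$ with $\mathcal{L}(k,z)\ge L_0:=\fl{(\log n)^3}$ the conditional law of $R^z_{\mathcal{L}(k,z)+1}$ lies within $C\lambda^{L_0}$ in total variation of $\pi^+$ (if $z>X_k$) or $\pi^-$ (if $z<X_k$), for constants $C<\infty$ and $\lambda\in(0,1)$ depending only on $K$ and $p(\cdot)$. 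Since $\pi^+\cdot\mathbf{r}^+=0$ and $\pi^-\cdot\mathbf{r}^+=\nu/2-1$ (by \cite[Corollary~3.5]{kpERWMCS} combined with \eqref{vecr}), the conditional mean of $\sum_{z\in J}r^+(R^z_{\mathcal{L}(k,z)+1})$ differs from the target mean by at most $C|J|\lambda^{L_0}+C\,N_{k,J}$, where $N_{k,J}:=\#\{z\in J:\mathcal{L}(k,z)<L_0\}$ and $C=2\max_i|r^+(i)|$. A conditional Hoeffding bound applied to the independent bounded summands then controls the deviation from the conditional mean by $|J|/(2\log n)$ outside probability $2\exp(-c|J|/(\log n)^2)=o(n^{-100})$. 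So it remains to show that with probability $1-o(1)$ one has $N_{k,J}\le(\log n)^2$ uniformly over $(k,J)$.

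\emph{Local-time lower bound (main obstacle).} The principal technical obstacle is this uniform-in-time control of $N_{k,J}$, since Ray-Knight descriptions apply at hitting times rather than general times. I would handle it by sandwiching: if $T^{1,n}_{j(k)}\le k<T^{1,n}_{j(k)+1}$, then by monotonicity $\mathcal{L}(k,z)\ge\mathcal{L}(T^{1,n}_{j(k)},z)$ for $z\in\bint{I_{T^{1,n}_{j(k)}},S_{T^{1,n}_{j(k)}}}$, while for $z$ in the newly-explored region $\bint{I_k,S_k}\setminus\bint{I_{T^{1,n}_{j(k)}},S_{T^{1,n}_{j(k)}}}$ the incremental local-time profile is generated by a fresh $U^\pm$ BLP started from the (macroscopically large) entering local time of the walk and hence again exceeds $L_0$ except in a thin boundary layer. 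At the stopping times themselves the local-time profile is a concatenation of $V^\pm$ and $U^\pm$ BLPs whose diffusive limits are BESQ processes by Theorem~\ref{da0.25}; quantitative small-ball estimates for these BESQ limits (uniform over $j\le T_n/\fl{\sqrt n}$) guarantee that the set of sites with local time below $(\log n)^3$ is contained in a boundary layer of width $O(\log^{3/2}n)$ about each of $I_{T^{1,n}_j},S_{T^{1,n}_j}$ with probability at least $1-e^{-c\log n}$. Consequently $N_{k,J}=O(\log^{3/2}n)\ll(\log n)^2$ uniformly in $(k,J)$, and combining with the fixed-time estimate and the union bound completes the proof.
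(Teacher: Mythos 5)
The argument fails at the fixed-time estimate because of the choice of conditioning $\sigma$-field. You condition on $\mathcal{G}_k=\sigma(X_0,\dots,X_k)$ and assert that for $z$ with $\mathcal{L}(k,z)\ge L_0$ the conditional law of $R^z_{\mathcal{L}(k,z)+1}$ is within $C\lambda^{L_0}$ of $\pi^\pm$. This is not what \cite[Section~3]{kpERWMCS} gives and it is not true. Given $\mathcal{G}_k$, the path reveals the \emph{entire ordered sequence} of Bernoulli outcomes $\xi^z_1,\dots,\xi^z_{\mathcal{L}(k,z)}$ at each visited site $z$ (not merely the count of failures or the direction of the last exit). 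The conditional law of $R^z_{\mathcal{L}(k,z)+1}$ is then the one-step propagation of the filtering distribution of the hidden cookie Markov chain given those observations, and this depends on the whole observed sequence and does not converge to a fixed law as $\mathcal{L}(k,z)\to\infty$: an atypical observed pattern (e.g.\ a very long run of left steps, or a strictly alternating tail) pushes the filter to a law far from $\pi^+$, no matter how large $\mathcal{L}(k,z)$ is. What does converge exponentially fast to $\pi^+$ is the law of the cookie immediately after the $\ell$-th failure \emph{averaged over} Bernoulli realizations with $\ell$ failures, i.e.\ conditioned only on the failure count. This is precisely why the paper centers $r^+\!\bigl(R^y_{\mathcal{L}(\tau_{x,m},y)+1}\bigr)$ by $r^{x,m}_y$, the conditional expectation with respect to the \emph{spatial} filtration $\mathcal{G}^{x,m}_{y-1}$: in that $\sigma$-field the down-crossing count $\mathcal{D}^{(x,m)}_y$ is measurable but the individual outcomes at $y$ are not, so the marginal-after-$\ell$-failures bound applies, and at the same time the centered sum is a bona fide martingale difference sequence along $y$, so Azuma applies. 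With $\mathcal{G}_k$ the analogous decomposition does not hold; replacing your Hoeffding step by an Azuma step would not repair this, because the problematic term is the conditional mean, not the fluctuation around it.

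\textbf{Secondary observations.} Because you work at every deterministic time $k\le n^{10}$, your union bound spans $O(n^{10.5})$ pairs and forces a super-polynomial per-pair tail. The paper sidesteps this entirely: every time $k$ before exit equals $\tau_{X_k,\,\mathcal{L}(k,X_k)-1}$, so after truncating by $\max_x\mathcal{L}(\cdot,x)\le K'\sqrt n$ (Lemma~\ref{maxL}) the union runs over $O(n)$ pairs $(x,m)$, and a stretched-exponential bound in $n$ (from Lemmas~\ref{small1} and \ref{small2} via Azuma) already suffices. Also, your ``boundary layer'' picture for the sites with local time below $(\log n)^3$ is not correct: the $V^\pm$ and $U^\pm$ profiles admit BESQ$^{\dim}$ scaling limits with $\dim<2$, which hit zero, so small-local-time sites can lie deep inside the bulk, not just near the extrema. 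The right control is a count, not a geometric layer, which is exactly the content of Lemmas~\ref{small1} and \ref{small2}.
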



Before giving the proof of Lemma \ref{18good} we state the following simple corollary which follows from the fact that the walk doesn't exit the interval $[-k\epsilon\sqrt{n}, k\epsilon\sqrt{n}]$ before the stopping time $\tenk$. 

\begin{cor}\label{18good-cor}
 Let $\max\{ \theta^+,\theta^-\} < 1$. 
 For any fixed $\epsilon>0$ and $k,n\geq 1$ let $A^{\epsilon,n}_k$ be the event that at time $\tenk$ the remaining first cookie environment on the interval $\bint{X_{\tenk},S_{\tenk}}$ is $(n^{1/8},0)$-good 
and the remaining first cookie environment on the interval $\bint{I_{\tenk},X_{\tenk}}$ is $(n^{1/8},\frac{\nu}{2}-1)$-good. 
Then, $\lim_{n\to\infty} P(A^{\epsilon,n}_k) = 1$, for any $k\geq 1$. 
\end{cor}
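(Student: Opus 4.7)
The plan is to derive this corollary as an immediate consequence of Lemma~\ref{18good}, by exhibiting a containment $A_{n,K}\subseteq A^{\epsilon,n}_k$ for a suitable $K=K(k,\epsilon)$. The crucial observation is built into the definition \eqref{tenk} of the stopping times $\tenk$: between any two consecutive $T^{\epsilon,n}_{j-1}$ and $T^{\epsilon,n}_j$, the walk displaces by exactly $\fl{\epsilon\sqrt n}$ from its previous stopping-time position, and therefore stays in $\bint{X_{T^{\epsilon,n}_{j-1}}-\fl{\epsilon\sqrt n},\,X_{T^{\epsilon,n}_{j-1}}+\fl{\epsilon\sqrt n}}$ during this excursion. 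Iterating over $j=1,\dots,k$ starting from $X_0=0$, we obtain $|X_i|\le k\fl{\epsilon\sqrt n}$ for every $i\le \tenk$.

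Fix any $K>k\epsilon$. Then for all $n$ sufficiently large one has $k\fl{\epsilon\sqrt n}<K\sqrt n$, so the entire trajectory $X_0,X_1,\ldots,X_{\tenk}$ is contained in $\bint{-K\sqrt n,K\sqrt n}$; in particular the walk has not yet exited this interval by time $\tenk$. Consequently, on the event $A_{n,K}$ from Lemma~\ref{18good}, the required $(n^{1/8},0)$- and $(n^{1/8},\nu/2-1)$-goodness of the remaining first cookie environment holds at \emph{every} time up to and including $\tenk$, which in particular gives the event $A^{\epsilon,n}_k$. Thus $A_{n,K}\subseteq A^{\epsilon,n}_k$ for all large $n$, and Lemma~\ref{18good} yields $P(A^{\epsilon,n}_k)\ge P(A_{n,K})\to 1$ as $n\to\infty$.

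There is no serious obstacle in this step: all the probabilistic content---namely that with high probability the $\pi^{\pm}$-induced regularity of the environment built up by the walk is preserved throughout the spatial window $\bint{-K\sqrt n,K\sqrt n}$---is packaged into Lemma~\ref{18good}. The corollary merely evaluates that event-indicator at the particular time $\tenk$, using the elementary deterministic bound on $|X_{\tenk}|$ inherent in the construction \eqref{tenk}, together with the almost-sure finiteness of $\tenk$ (a consequence of recurrence under the standing hypothesis $\max\{\theta^+,\theta^-\}<1$).
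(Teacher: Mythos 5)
Your proof is correct and is exactly the argument the paper has in mind: the authors state that the corollary ``follows from the fact that the walk doesn't exit the interval $[-k\epsilon\sqrt{n}, k\epsilon\sqrt{n}]$ before the stopping time $\tenk$,'' which is precisely your deterministic containment $A_{n,K}\subseteq A^{\epsilon,n}_k$ for any fixed $K>k\epsilon$ once $n$ is large. Your additional remark about the almost-sure finiteness of $\tenk$ under the recurrence hypothesis is a legitimate (if routine) point that the paper leaves implicit.
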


\begin{rem}
 The intuition behind Lemma \ref{18good} is that after the walk has taken a large number of steps, the remaining first cookie environment of the sites to the right (resp.\ left) of the present location up to the running maximum (resp.\ running minimum) are approximately independent and distributed according to $\pi^+$ (resp.\ $\pi^-$). 
\end{rem}

The proof of Lemma~\ref{18good} will rely on some
preliminary estimates regarding the BLP.

%

\begin{lemma}[Lemma 3.6 in \cite{kpERWPCS}]\label{small1}
 If $\theta^+<1$, then for any $0<\alpha<\beta$ there exist constants $C,c>0$ such that 
\[
 \sup_{j\geq 0} P\left( \sum_{i=0}^{\sigma_0^{U^+}-1} \ind{U^+_i < m^\alpha} > m^\beta \, \biggl| \, U_0^+ = j \right) \leq C e^{-c m^{\beta-\alpha}}, \quad m\geq 1. 
\]
Similar statements hold for the BLPs $U^-$, $V^+$, and $V^-$ if the assumption $\theta^+<1$ is replaced by $\theta^-<1$, $\theta^->0$, and $\theta^+>0$, respectively. 
\end{lemma}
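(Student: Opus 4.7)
The statement is cited from \cite{kpERWPCS}, but here is how I would attack it from scratch. The natural strategy is an excursion decomposition relative to the threshold $\fl{m^\alpha}$. Set $T^\uparrow_0 = 0$ and define recursively
\[
T^\downarrow_k = \inf\{i > T^\uparrow_{k-1} : U^+_i < m^\alpha\}, \qquad T^\uparrow_k = \inf\{i > T^\downarrow_k : U^+_i \ge m^\alpha \text{ or } U^+_i = 0\},
\]
and let $N = \inf\{k \ge 1 : U^+_{T^\uparrow_k} = 0\}$ be the number of low excursions before absorption. Then $\sum_{i=0}^{\sigma_0^{U^+} -1}\ind{U^+_i < m^\alpha} \le \sum_{k=1}^{N}(T^\uparrow_k - T^\downarrow_k)$, so it suffices to get the exponential bound for this sum.

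The two ingredients I would establish, both uniformly in the initial BLP state $j$ and in the current cookie Markov chain state, are:
\begin{enumerate}[(a)]
\item There exists $p_0 > 0$ such that each excursion ends by absorption at $0$ with probability at least $p_0$. By Brownian scaling (and Theorem~\ref{DA0}, applied at scale $m^\alpha$ after rescaling), the rescaled excursion converges in law to a segment of a BESQ$^{2\theta^+}$ process, and since $2\theta^+ < 2$ this process hits $0$ before any fixed level $M$ with positive probability. Thus $N$ is stochastically dominated by a Geometric$(p_0)$ random variable.
\item The excursion duration satisfies a sub-exponential tail $P(T^\uparrow_k - T^\downarrow_k > s\,m^\alpha \mid \mathcal F_{T^\downarrow_k}) \le C' e^{-c's}$ for $s \ge 1$. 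This again follows from the BESQ$^{2\theta^+}$ approximation together with the Markov property: restart the process $\lceil s/M\rceil$ times and use that in each length-$M m^\alpha$ window the process exits $(0, 2 m^\alpha)$ with probability bounded away from zero.
\end{enumerate}

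Combining (a) and (b), the total time below threshold is dominated by $\sum_{k=1}^{G} E_k$, where $G$ is geometric with parameter $p_0$ and $E_k$ are i.i.d.\ with exponential tails of scale $m^\alpha$. Standard concentration then gives
\[
P\!\left(\sum_{k=1}^{G} E_k > m^\beta\right) \le P(G > m^{(\beta-\alpha)/2}) + P\!\left(\sum_{k=1}^{\fl{m^{(\beta-\alpha)/2}}} E_k > m^\beta\right) \le C e^{-c m^{\beta-\alpha}},
\]
after absorbing polynomial factors. The remaining cases $U^-, V^\pm$ are handled identically once one notes the BESQ dimensions: the condition $2(\text{dim})<2$ for the $U$-processes, and for the $V$-processes one must instead look at behavior just above level $m^\alpha$ (since $0$ is not absorbing for $V^\pm$) where the drift $D \equiv 1 + \eta \cdot \mathbf{r}^\pm$ gives a positive drift when $\theta^\mp > 0$, making the time spent below $m^\alpha$ geometrically controlled by a similar excursion argument.

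The main obstacle is the uniformity of (a) and (b) over the cookie Markov chain state at the start of each excursion. Since the chain has only finitely many states and $p(\cdot) \in (0,1)$, one-step transition densities of $U^+$ are uniformly bounded below on bounded intervals; combined with a short coupling (say, of length depending only on the mixing time of the cookie chain) this lets us reduce the estimates to the i.i.d.\ marginal-$\mu$ case already covered by Theorem~\ref{DA0}. Apart from this technical uniformity, everything is an application of the diffusion approximation already in the toolbox.
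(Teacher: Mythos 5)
The paper itself does not prove this lemma; it is imported verbatim from \cite{kpERWPCS}, so there is no internal proof to compare against. Your excursion strategy is the natural one and is roughly what such proofs do, but there are two concrete errors in the implementation, the first of which is a genuine gap.

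The main problem is step (a). With your definitions the $k$-th excursion runs from $T^\downarrow_k$ (first time the process drops strictly below $m^\alpha$) until $T^\uparrow_k$, the first subsequent time the process hits $0$ or \emph{returns above $m^\alpha$}. The entry point $U^+_{T^\downarrow_k}$ is just barely below $m^\alpha$: the previous value was $\geq m^\alpha$ and the one-step overshoot is typically only of order $\sqrt{m^\alpha}$ (the standard deviation of a step at level $\approx m^\alpha$). After rescaling by $m^\alpha$, this means the excursion starts at level $1-O(m^{-\alpha/2})$. For BESQ$^{2\theta^+}$ with $\theta^+<1$, the scale function is $s(x)=x^{1-\theta^+}$, so the probability of reaching $0$ before returning to $1$ from $1-\delta$ is $1-(1-\delta)^{1-\theta^+}\approx(1-\theta^+)\delta$. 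With $\delta=O(m^{-\alpha/2})$ this vanishes as $m\to\infty$, so there is \emph{no} $p_0>0$ uniform in $m$ for which $N$ is dominated by $\mathrm{Geom}(p_0)$. The fix is to put the upper boundary of the excursion at $M m^\alpha$ for some fixed $M>1$ (e.g.\ $M=2$): define $T^\uparrow_k = \inf\{i>T^\downarrow_k:\ U^+_i \geq M m^\alpha\ \text{or}\ U^+_i=0\}$. Then, since the excursion starts at $x<m^\alpha$ and $m^\alpha/(M m^\alpha)=1/M$ is a fixed constant $<1$, the absorption probability per excursion is at least roughly $1-M^{-(1-\theta^+)}>0$ uniformly in $m$, the geometric domination holds, and the time not below $m^\alpha$ inside an excursion is simply an overcount, harmless for an upper bound.

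The second issue is the splitting exponent. Your bound
$P(G > m^{(\beta-\alpha)/2}) \leq (1-p_0)^{m^{(\beta-\alpha)/2}} = e^{-c\,m^{(\beta-\alpha)/2}}$
is strictly weaker than the claimed $e^{-c\,m^{\beta-\alpha}}$, so that split cannot deliver the stated rate. Instead split at $N_0 = \lfloor m^{\beta-\alpha}/(2\mu) \rfloor$ where $\mu = E[E_1/m^\alpha]$: then $P(G>N_0)\leq e^{-c N_0}=e^{-c'm^{\beta-\alpha}}$, and
$P\bigl(\sum_{k\leq N_0}E_k > m^\beta\bigr) = P\bigl(\sum_{k\leq N_0}(E_k/m^\alpha) > 2\mu N_0\bigr)$, which by a standard Chernoff bound for i.i.d.\ subexponential variables is $\leq e^{-c''N_0}= e^{-c'''m^{\beta-\alpha}}$. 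With these two corrections your architecture goes through; the remaining points you flag — uniformity over the starting state $j$, and the handling of the non-absorbing $V^\pm$ via the return time $\sigma_0^{V^\pm}$ when $\theta^\mp>0$ (so the associated BESQ dimension is in $(0,2)$) — are real but minor.
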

\begin{rem}
 Note that while \cite{kpERWPCS} was written for excited random walks in periodic cookie stacks, the proof of the above lemma in this paper relied only on some facts concerning the BLPs that were also proved for excited random walks with markovian cookie stacks in \cite{kpERWMCS}.
\end{rem}

Lemma \ref{small1} controls the time spent by a BLP below a certain level before reaching
level zero. While zero is an absorbing point for
  $U^{\pm}$, it is not absorbing for
  $V^{\pm}$, and we will at times need
to control the time spent by  these
  processes below some level on a fixed time
interval.  The following lemma accomplishes
  this. It is similar to Lemma 3.8 in \cite{kpERWPCS}, but the
statement here is more flexible for the applications we
need. Moreover, the proof below corrects an error in the proof of
Lemma 3.8 in \cite{kpERWPCS}.
\begin{lemma}\label{small2}
 If $\theta^- <1$, $\alpha \in (0,1-(\theta^-\vee 0))$ and $\beta \in ( (\theta^-\vee 0)+\alpha, 1 )$, then there exist constants $C,c,r>0$ such that 
\[
 \sup_{j\geq 0} P\left( \sum_{i\leq m} \ind{V^+_i < m^\alpha} > m^\beta \, \biggl| \, V_0^+ = j \right)
 \leq C e^{-c m^r}. 
\]
A similar statement holds for the process $V^-$ if $\theta^-$ is replaced everywhere above by $\theta^+$. 
\end{lemma}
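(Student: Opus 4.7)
My plan proceeds in three stages: a monotonicity reduction, excursion control via the diffusion approximation, and an exponential Chernoff-type assembly.

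First, the coupling of BLPs from Section~\ref{sec:BLP} via shared Bernoulli variables $\{\xi^x_j\}$ makes $V^+_i$ monotone nondecreasing in the initial value $V^+_0$. Hence $\ind{V^+_i < m^\alpha}$ is pointwise nonincreasing in $V^+_0$ under this coupling, so the supremum over $j \geq 0$ in the statement is attained at $j = 0$, and it suffices to treat $V^+_0 = 0$.

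Second, define alternating stopping times $\rho_0 = 0$, $\tau_k = \inf\{i > \rho_{k-1} : V^+_i \geq 2 m^\alpha\}$, and $\rho_k = \inf\{i > \tau_k : V^+_i < m^\alpha\}$. The count $\sum_{i \leq m}\ind{V^+_i < m^\alpha}$ is then bounded above by the total duration of the low segments $[\rho_{k-1}, \tau_k) \cap [0,m]$. By Theorem~\ref{DA0} applied at spatial scale $m^\alpha$, the rescaled process approximates a BESQ of dimension $2(1-\theta^-) > 0$ with strictly positive drift; this yields that each low excursion $\tau_k - \rho_{k-1}$ is of order at most $m^\alpha$ with exponential tails beyond that, and Lemma~\ref{overshoot} controls the overshoot $V^+_{\tau_k} - 2m^\alpha$. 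A symmetric application of the approximation at level $2m^\alpha$ shows that each high excursion $\rho_k - \tau_k$ is of order at least $c m^\alpha$ except on an exponentially rare event (a BESQ starting at level $\sim 1$ does not drop to level $\sim 1/2$ in small constant time). Iterating via the strong Markov property and a Chernoff bound gives that the number of cycles $K$ intersecting $[0,m]$ satisfies $K \leq c^{-1} m^{1-\alpha}$ outside an event of probability $\leq Ce^{-cm^r}$.

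Third, the naive product $K \cdot O(m^\alpha) = O(m)$ is far too weak when $\alpha + \beta < 1$, so one must exploit that the total occupation time below $m^\alpha$ concentrates near its expectation, which by the diffusion limit is of order $m^{\alpha + (\theta^-\vee 0)(1-\alpha)}$---strictly less than $m^\beta$ by the hypothesis $\beta > \alpha + (\theta^- \vee 0)$. This is accomplished by refining the excursion analysis across multiple dyadic scales within $[0, m^\alpha)$: a typical excursion entering $[0, m^{\alpha'}]$ for $\alpha' < \alpha$ has length proportionally shorter, and the scale-by-scale contributions form a convergent geometric sum.

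The principal obstacle I anticipate is upgrading the in-probability bounds from the diffusion approximation into the exponential tail $Ce^{-cm^r}$ required by the statement. For the tail of an individual excursion I would use a Chernoff-type argument based on the exponential supermartingales $\exp(-\lambda V^+_i / m^\alpha)$, whose one-step increments have explicit Laplace transforms via the negative-binomial-type conditional distribution of $V^+_{i+1}$ given $V^+_i$. The boundary case $\theta^- \leq 0$, where the limiting BESQ is transient and $0$ is not recurrent for $V^+$, requires a separate coupling step: construct an auxiliary BLP $\tilde V^+$ satisfying $\tilde V^+_i \leq V^+_i$ pointwise and having parameter $\tilde\theta^- \in (0,1)$ by inserting extra cookies of minimal strength $\min_i p(i)$ into each stack, as in the proof of Lemma~\ref{liftIID}; then $\sum_i \ind{V^+_i < m^\alpha} \leq \sum_i \ind{\tilde V^+_i < m^\alpha}$ and the bound for the recurrent case transfers.
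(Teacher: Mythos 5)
Your monotonicity reduction to $j=0$ and your coupling step for $\theta^-\le 0$ both match the paper. The central estimate, however, is organized differently and contains a genuine gap. You decompose into cycles crossing the level $m^\alpha$ and observe (correctly) that the naive bound $K\cdot O(m^\alpha)=O(m)$ is insufficient, since $\beta<1$. Your fix is to invoke a ``multiscale/dyadic'' concentration argument, but this is never carried out, and it is exactly the crux of the problem: obtaining a stretched-exponential tail for $\sum_k O_k$ when the summands themselves have only stretched-exponential tails requires simultaneously controlling the \emph{number} of cycles that actually dip below $m^\alpha$ to be of the true order $m^{\theta^-(1-\alpha)+o(1)}$ (not the trivial $m^{1-\alpha}$ you derive from the high-excursion lower bound) \emph{and} the occupation time within each. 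The first piece essentially requires knowing the excursion-length tail $P(\sigma_0^{V^+}>t\mid V_0^+=0)\gtrsim t^{-\theta^-}$, which does not appear anywhere in your proposal; your exponential-supermartingale idea addresses the tail of a single excursion, not the count of excursions, and a sum of heavy-tailed terms does not concentrate unless the number of terms is pinned down.

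The paper avoids all of this by decomposing along excursions from $0$ rather than crossings of $m^\alpha$. Fixing $\gamma\in(\theta^-,\beta-\alpha)$ (possible precisely because $\beta>\alpha+\theta^-$), the event $\bigl\{\sum_{i\le m}\ind{V^+_i<m^\alpha}>m^\beta\bigr\}$ forces either at least $\ceil{m^\gamma}$ returns to $0$ in $[0,m]$, whose probability is at most $(1-cm^{-\theta^-})^{\ceil{m^\gamma}}\le e^{-cm^{\gamma-\theta^-}}$ by the excursion-length tail asymptotics in \cite[Theorem~2.7]{kpERWMCS}, or one of the first $\ceil{m^\gamma}$ excursions from $0$ to spend more than $m^{\beta-\gamma}$ time below $m^\alpha$, whose probability is at most $Cm^\gamma e^{-cm^{\beta-\alpha-\gamma}}$ by a direct application of Lemma~\ref{small1}. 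Both decay stretched-exponentially with no further concentration machinery. If you wanted to salvage your crossing-based decomposition, you would ultimately need to import exactly this excursion-length tail to control $K$, at which point the excursion-from-$0$ decomposition is strictly shorter.
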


\begin{proof}
  Since the probability in the statement of the lemma is
  non-decreasing in $j$, we need only to prove the inequality when
  $j=0$.  Also, we will assume that $\theta^- \in (0,1)$ since if
  $\theta^-\leq 0$ we can couple it to another BLP
  which has parameter $\theta^- \in (0,1)$ and which is always less
  than or equal to $V^+$ (see Lemma 5.1 in \cite{kpERWMCS}).

Now, fix some $\gamma \in (\theta^-, \beta-\alpha)$ (note that this is
possible by the assumptions on $\alpha$ and $\beta$).  Then, if the
event $\{ \sum_{i\leq m} \ind{V^+_i < m^\alpha} > m^\beta \}$ occurs,
either
\begin{enumerate}
 \item the process $V^+$ returns to $0$ at least $\ceil{m^\gamma}$ times in the first $m$ steps of the Markov chain, 
 \item or in one of the first $\ceil{m^\gamma}$ excursions from 0 of
   the process $V^+$ it stays below $m^\alpha$ for at least
   $m^{\beta-\gamma}$ steps.
\end{enumerate}
The first of these events implies that each of the first
$\ceil{m^\gamma}$ excursions from 0 lasts
at most $m$ steps and is thus its probability is bounded above
by
\[
  \left( 1- P(\sigma_0^{V^+} > m \, | \, V_0^+ = 0)
  \right)^{\ceil{m^\gamma} } \leq (1-c m^{-\theta^-} )^{m^\gamma} \leq
  e^{-c m^{\gamma-\theta^-}},
\]
where the first inequality follows from known tail asymptotics for
$\sigma_0^{V^+}$ when $\theta^- > 0$; see \cite[Theorem
2.7]{kpERWMCS}.  On the other hand, by Lemma
  \ref{small1}, the probability of the second event does not exceed
\[
 m^\gamma P\left( \sum_{i<\sigma_0^{V^+}} \ind{V^+_i < m^\alpha} > m^{\beta-\gamma} \, \biggl| \, V_0^+ = 0 \right) \leq C m^{\gamma} e^{-c m^{\beta-\alpha-\gamma}}. 
\]
Choosing $r\in(0,(\beta-\alpha-\gamma)\wedge(\gamma-\theta^-))$
 we have that both events considered above have at least a stretched exponential decay in $m$. 
\end{proof}

We will also need the following lemma which gives control on the number of times any site can be visited before exiting a fixed interval. 

\begin{lemma}\label{maxL}
 If $\max\{ \theta^+,\theta^- \} < 1$, then 
\[
 \lim_{r\to \infty} \limsup_{m\to \infty} P\left( \max_{|x|\leq m} \mathcal{L}(\tau_m^X \wedge \sigma_{-m}^X, x) > r m \right) = 0. 
\]
\end{lemma}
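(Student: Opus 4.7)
My plan is to dominate the site local-time maximum by twice the running maximum of a single concatenated BLP and then apply the diffusion approximation of Theorem \ref{DA0} to obtain tightness of the rescaled maximum. The two key ingredients are (a) recurrence of the walk (a consequence of $\max\{\theta^+,\theta^-\}<1$ by Theorem \ref{th:rtcon}), which lets me replace $\tau_m^X\wedge\sigma_{-m}^X$ by $\sigma_{-m}^X$ in a pointwise inequality, and (b) the BLP representation of the directed-edge local times of the walk stopped at $\sigma_{-m}^X$.

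First I will reduce to edge local times. Since the walk is recurrent, $\sigma_{-m}^X<\infty$ a.s., so $\mathcal{L}(\tau_m^X\wedge\sigma_{-m}^X,x)\le \mathcal{L}(\sigma_{-m}^X,x)$ for every $x$. Counting directed edge crossings (the walk starts at $0$ and ends at $-m$, so the numbers of up- and down-crossings of any edge differ by at most $1$) gives $\mathcal{L}(\sigma_{-m}^X,x)\le \mathcal{E}_x^{(-m)}+\mathcal{E}_{x-1}^{(-m)}+1$ for $x\ge -m+1$, while $\mathcal{L}(\sigma_{-m}^X,-m)=0$. Next, let $\{Z_i\}_{i\ge 0}$ be the concatenated BLP given by a $V^+$ process on $\bint{0,m}$ started from $V^+_0=0$ using the first cookies on $\bint{-m,0}$, followed by a $U^+$ process started from $V^+_m$ using the first cookies on $\llbracket 1,\infty)$. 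By the BLP representation recalled in Section \ref{sec:BLP}, $(Z_{x+m})_{x\ge -m}\overset{\text{Law}}{=}(\mathcal{E}_x^{(-m)})_{x\ge -m}$, so
\begin{equation*}
\max_{|x|\le m}\mathcal{L}(\sigma_{-m}^X,x)\le 2\max_{0\le i\le 2m}Z_i+1.
\end{equation*}

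Finally, I will apply the diffusion approximation. Since the initial cookie environment is i.i.d.\ with marginal $\eta$ and $\theta^-<1$, Theorem \ref{DA0}(1) gives $\{m^{-1}Z_{\fl{ms}}\}_{0\le s\le 1}\overset{J_1}{\Longrightarrow}\{Y(s)\}_{0\le s\le 1}$, where $Y$ is a multiple of a $\text{BESQ}^{2(1-\theta^-)}$ process started at $0$; in particular $m^{-1}Z_m\Rightarrow Y(1)$. Conditioning on the value of $Z_m$ and applying Theorem \ref{DA0}(2) to the $U^+$ piece (using $\theta^+<1$) extends the convergence to $s\in[0,2]$, with the limit on $[1,2]$ a multiple of a $\text{BESQ}^{2\theta^+}$ process started at $Y(1)$ and killed at $0$. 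Because $Y$ has continuous sample paths a.s., the running-maximum functional is continuous at $Y$, so the continuous mapping theorem yields $m^{-1}\max_{0\le i\le 2m}Z_i\Rightarrow \max_{0\le s\le 2}Y(s)<\infty$ a.s. Together with the two preceding steps this provides the required tightness $\lim_{r\to\infty}\limsup_{m\to\infty}P(\max_{0\le i\le 2m}Z_i>rm)=0$ and hence the lemma. The main obstacle I anticipate is splicing the two separate diffusion approximations in Theorem \ref{DA0} into a single $J_1$-convergence of $Z$ on $[0,2]$; this is handled by a conditioning argument on $m^{-1}Z_m$, together with the fact that, conditional on $Z_m$, the $U^+$ piece is independent of the $V^+$ piece because the two pieces use disjoint fresh cookie stacks. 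Everything else is routine.
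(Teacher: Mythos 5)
Your proof is correct, and the core idea (reduce site local times to directed-edge local times, recognize these as a BLP, and invoke the diffusion approximation of Theorem \ref{DA0} to get tightness of the rescaled running maximum) is the same as the paper's. However, the execution is genuinely different. You bound $\mathcal{L}(\tau_m^X\wedge\sigma_{-m}^X,\cdot)$ by $\mathcal{L}(\sigma_{-m}^X,\cdot)$ over the \emph{entire} interval $\bint{-m,m}$, which forces you into the full concatenated $V^+$/$U^+$ representation of $\{\mathcal{E}^{(-m)}_x\}_{x\ge -m}$ and requires both parts of Theorem \ref{DA0} plus a conditioning argument to splice the two diffusion limits on $[0,1]$ and $[1,2]$. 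The paper instead exploits the symmetry of the statement: it bounds $\mathcal{L}(\tau_m^X\wedge\sigma_{-m}^X,x)\le\mathcal{L}(\sigma_{-m}^X,x)$ only for $x\in\bint{-m,0}$, where the relevant edge local times $(\mathcal{E}^{m}_{-m},\ldots,\mathcal{E}^{m}_{0})$ form a single $V^+$ process on $m+1$ generations, and remarks that the symmetric bound $\mathcal{L}(\tau_m^X\wedge\sigma_{-m}^X,x)\le\mathcal{L}(\tau_m^X,x)$ on $x\in\bint{0,m}$ is handled identically by a $V^-$ process. This uses only part (1) of Theorem \ref{DA0} and avoids any concatenation. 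As a side remark on your own route: you do not actually need the joint $J_1$-convergence of $m^{-1}Z_{\fl{m\cdot}}$ on $D[0,2]$ that you flag as ``the main obstacle.'' The bound
\[
P\Big(\max_{0\le i\le 2m}Z_i>rm\Big)\le P\Big(\max_{0\le i\le m}Z_i>rm\Big)+P\Big(\max_{m\le i\le 2m}Z_i>rm\Big)
\]
plus tightness of each piece separately (the first from Theorem \ref{DA0}(1), the second from Theorem \ref{DA0}(2) with $\kappa_m$ taken to be the law of $m^{-1}Z_m$, which you have already shown converges) finishes the argument without any splicing. So your proof is sound, just a bit heavier than needed; the paper's symmetrization is the simpler route.
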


\begin{proof}
Clearly it is enough to prove an upper bound on 
\[
 P\left( \max_{x \in [-m,0]} \mathcal{L}(\tau_m^X \wedge \sigma_{-m}^X, x) > r m \right) 
 \leq P\left( \max_{x \in [-m,0]} \mathcal{L}( \sigma_{-m}^X, x) > r m \right),
\]
as a similar argument will control the local time to the right of the origin. 
To this end, 
recall from \eqref{downsteps} that $\mathcal{E}^m_x$ is the number of steps right from $x$ before time $ \sigma_{-m}^X$
and note that
\[
 \mathcal{L}( \sigma_{-m}^X,x) = \mathcal{E}^{m}_x + \mathcal{E}^{m}_{x-1} + 1, \qquad \text{for all } -m<x\leq 0. 
\]
Since $(\mathcal{E}^{m}_{-m},\mathcal{E}^{m}_{-m+1},\ldots,\mathcal{E}^{m}_{-1},\mathcal{E}^{m}_{0} )$ has the same distribution as the BLP $(V^+_0,V^+_1,\ldots,V^+_{m-1},V^+_{m})$ started with $V^+_0=0$,
then 
\[
 P\left( \max_{x \in [-m,0]} \mathcal{L}( \sigma_{-m}^X, x) > r m \right)
 \leq P\left( \max_{i\leq m} V^+_i > rm/3 \mid V^{+}_0 = 0 \right). 
\]
Finally, it follows from the diffusion approximation in Lemma \ref{DA0} that the probability on the right converges to $0$ as first $m\to \infty$ and then $r \to \infty$. 
\end{proof}

\begin{proof}[Proof of Lemma \ref{18good}]
We begin by introducing some new notation that will be used in this proof. 
For $x\in \Z$ and $m\geq 0$ let $\tau_{x,m}$ be the stopping time of the $(m+1)$-st visit of the ERW $X$ to location $x$. That is, $\tau_{x,m} = \inf\{k\geq 0: \sum_{i\leq k} \ind{X_i=x} = m+1\}$.  
Also, $y \in \Z$ let $\mathcal{E}^{(x,m)}_y$ and $\mathcal{D}^{(x,m)}_y$ be the number of steps right and left from $y$, respectively, prior to time $\tau_{x,m}$. That is, 
\[
 \mathcal{E}^{(x,m)}_y = \sum_{n=0}^{\tau_{x,m}-1} \ind{X_n = y, \, X_{n+1} = y+1}
\quad \text{and}\quad
\mathcal{D}^{(x,m)}_y = \sum_{n=0}^{\tau_{x,m}-1} \ind{X_n = y, \, X_{n+1} = y-1}. 
\]
In the proof below we will use the following facts concerning these the directed edge local times. 
First of all, we note that 
\begin{equation}\label{DyEy}
 \mathcal{D}_y^{(x,m)} = \mathcal{E}_{y-1}^{(x,m)} + \ind{x<y\leq 0}
\quad \text{for } x < y. 
\end{equation}
Secondly, the process $\{\mathcal{E}^{(x,m)}_y \}_{y\geq x}$ has the same distribution as a BLP or concatenation of BLPs. If $x\geq 0$ then this is a $U^+$ process using the cookie environment on $\llbracket x,\infty)$ but if $x < 0$ then it is a concatenation of a $V^+$ process using the cookie environment on $\bint{x,0}$ with a $U^+$ process using the cookie environment on $\llbracket 0,\infty)$ (see Section 2.2 of \cite{kpERWPCS} for more details on this connection with BLPs).

Using the above notation, for any $n\geq 1$ and $K,K'<\infty$ let  $\tilde{A}_{K,K',n}$ be the event that at every (random) time $\tau_{x,m} $ with $|x|\leq K\sqrt{n}$ and $m\leq K'\sqrt{n}$ the remaining first cookie environment is $(n^{1/8},0)$-good on $\bint{X_{\tau_{x,m}},S_{\tau_{x,m}}\wedge K\sqrt{n}}$ and and $(n^{1/8},\frac{\nu}{2}-1)$-good on $\bint{I_{\tau_{x,m}}\vee -K\sqrt{n},X_{\tau_{x,m}}}$. 
Since 
\[
 P(A_{n,K}^c) \leq P(\tilde{A}_{K,K',n}^c) + P\left( \max_{|x|\leq K\sqrt{n}} \mathcal{L}( \tau_{K\sqrt{n}}^X \wedge \sigma_{-K\sqrt{n}}^X, x) > K' \sqrt{n} \right), 
\]
and since Lemma \ref{maxL} implies that the second term on the right can be made arbitrarily small for $n$ large by taking $K'$ large enough, it is enough to show that $\lim_{n\to\infty} P(\tilde{A}_{K,K',n}^c) = 0$ for all $K,K'<\infty$. 

Now, for the remainder of the proof, we'll only prove that the
remaining first cookie environments are $(n^{1/8},0)$-good to the
right of the current location using the directed edge local times
$\mathcal{E}^{(x,m)}_y$ and the corresponding BLPs $U^+$ and $V^+$.
The proof that the first cookie environments are
$(n^{1/8},\frac{\nu}{2}-1)$-good to the left of the current location
is similar using $\mathcal{D}^{(x,m)}_y$ and the BLPs $U^-$ and $V^-$.
For $K<\infty$, $|x|\leq K\sqrt{n}$, and $m,n\geq 1$ define
\[
  B^{(x,m)}_{K,n}
= \left\{ \left| \sum_{y \in J} r^+\left(R^y_{\mathcal{L}(\tau_{x,m},y)+1}  \right) \ind{y\leq S_{\tau_{x,m}} }  \right| > \frac{n^{1/8}}{\ln n}, \, \text{for some } J \subset \bint{x,K\sqrt{n}} \text{ with } |J| =  \fl{n^{1/8}} \right\}. 
\]
Then, 
\begin{equation}\label{notgood}
 P(\tilde{A}_{K,K',n}^c)
 \leq \sum_{\substack{|x|\leq K\sqrt{n}\\m\leq K'\sqrt{n}}} P\left(B^{(x,m)}_{K,n}\right).
\end{equation}
Thus it remains only to bound the probabilities $P( B^{(x,m)}_{K,n} )$. 
To this end, first let for $y\ge x$
\[
 r_y^{x,m} = \E\left[ r^+\left(R^y_{\mathcal{L}(\tau_{x,m},y)+1}\right) \, | \, \mathcal{G}_{y-1}^{x,m} \right], 
 \quad \text{where} \quad \mathcal{G}_{z}^{x,m} = \sigma(\mathcal{E}_u^{(x,m)}, \, R_j^u, \,  x \leq u \leq z \text{ and } j\geq 1 ).
\]
That is, $\mathcal{G}_{x-1}^{x,m}$ is the trivial $\sigma$-field and 
  $\mathcal{G}_z^{x,m}$, $z\ge x$, contains all the information about
the number of steps right from sites in $\bint{x,z}$
and  all of the cookies in the stacks in
$\bint{x,z}$.  With this notation we have
\begin{align}
 P(B^{(x,m)}_{K,n}) 
&\leq \sum_{\substack{J\subset\bint{x,K\sqrt{n}}\\ |J|= \fl{n^{1/8}}}} P\left(  \left| \sum_{y \in J} \left\{ r^+\left(R^y_{\mathcal{L}(\tau_{x,m},y)+1}  \right) - r^{x,m}_y \right\} \ind{y\leq S_{\tau_{x,m}} }  \right| > \frac{n^{1/8}}{ 2 \ln n} \right) \label{martpart} \\
&\qquad + P\left(  \exists J\subset\bint{x,K\sqrt{n}}, \, |J|=\fl{n^{1/8}}: \,\left| \sum_{y \in J} r^{x,m}_y \ind{y\leq S_{\tau_{x,m}} }  \right| > \frac{n^{1/8}}{2 \ln n} \right) \label{rxypart}
\end{align}
For the first term on the right, first note that \eqref{DyEy} implies
that
$\mathcal{L}(\tau_{x,m},y) = \mathcal{E}_{y-1}^{(x,m)} +
\mathcal{E}^{(x,m)}_y+ \ind{x<y\leq 0}$ so that the terms in braces
are $\mathcal{G}^{x,m}_y$-measurable.  Secondly, note that if
$y \leq 0$ then $\ind{y\leq S_{\tau_{x,m}} } = 1$ whereas if $y>0$
then
$\{y\leq S_{\tau_{x,m}} \} = \{\mathcal{E}^{(x,m)}_{y-1} \geq 1\}$. In
either case we have that $\ind{y\leq S_{\tau_{x,m}} }$ is
$\mathcal{G}^{x,m}_{y-1}$-measurable, and thus the sums inside the
first probability on the right are martingale
difference sums with bounded increments.  Therefore, it follows from
Azuma's inequality that
\[
\text{the sum in \eqref{martpart}}
\leq \sum_{\substack{J\subset\bint{x,K\sqrt{n}}\\ |J|=\fl{n^{1/8}}}} e^{-c \frac{n^{1/4}}{(\ln n)^2|J|} } 
\leq C K n^{1/2} e^{-c \frac{n^{1/8}}{(\ln n)^2}}. 
\]
To bound the probability in \eqref{rxypart}, note first of all that
$R^y_{\mathcal{L}(\tau_{x,m},y)+1}$ represents the next cookie to be
used at $y$ after time $\tau_{x,m}$. If $y>x$ then the last visit to
$y$ prior to $\tau_{x,m}$ resulted in a step to the left. Since
$\mathcal{D}^{(x,m)}_y$ is the number of steps left from $y$ prior to
$\tau_{x,m}$, we have that the distribution of
$R^y_{\mathcal{L}(\tau_{x,m},y)+1}$ conditioned on
$\{ \mathcal{D}^{(x,m)}_y = \ell \}$ is equal to the distribution of
the next cookie in a stack after the $\ell$-th step left, and this
distribution is known to converge to $\pi^+$ exponentially fast in
$\ell$ (see \cite[Section 3]{kpERWMCS}). Since
$\mathcal{D}^{(x,m)}_y = \mathcal{E}^{(x,m)}_{y-1} + \ind{x<y\leq 0}$
is $\mathcal{G}^{x,m}_{y-1}$-measurable and
$\pi^+ \cdot \mathbf{r}^+ = 0$, this implies that there are constants
$C,c>0$ such that
\[
 |r^{x,m}_y| = |r^{x,m}_y - \pi^+ \cdot \mathbf{r}^+| \leq C e^{-c \mathcal{E}^{(x,m)}_{y-1} }, \qquad \text{for all } y > x. 
\]
Therefore, we have that for any $\alpha>0$
\begin{align*}
\left| \sum_{y \in J} r^{x,m}_y \ind{y\leq S_{\tau_{x,m}} } \right| 
&\leq C + \sum_{y \in J \backslash \{x\}} C e^{-c \mathcal{E}^{(x,m)}_{y-1}}\ind{y\leq S_{\tau_{x,m}} } \\
&\leq C\left(1 + |J|e^{-c n^\alpha} + \sum_{y \in J \backslash \{x\}} \ind{\mathcal{E}^{(x,m)}_{y-1} < n^\alpha} \ind{y\leq S_{\tau_{x,m}} } \right).
\end{align*}
Using this we obtain that for $\alpha>0$, $\beta<\frac{1}{8}$ and $n$ sufficiently large
\begin{align*}
  \eqref{rxypart}
  &\leq  P\left(  \exists J\subset \bint{x+1,K\sqrt{n}}, \, |J|= \fl{n^{1/8}}: \, C \sum_{y \in J} \ind{\mathcal{E}_{y-1}^{(x,m)}< n^\alpha} \ind{y \leq S_{\tau_{x,m}}} > \frac{n^{1/8}}{4 \ln n} \right)  \\
  &\leq \sum_{x<z\leq 0} P\left( \sum_{y=z}^{(z+\lfloor n^{1/8} \rfloor) \wedge 0} \ind{\mathcal{E}_{y-1}^{(x,m)}< n^\alpha} > n^\beta \right) 
    + P\left( \sum_{x\vee 0 < y } \ind{\mathcal{E}^{(x,m)}_{y-1} < n^\alpha} \ind{y\leq S_{\tau_{x,m}} } > n^{\beta} \right) \\
  & \leq |x| \sup_{j\geq 1} P\left( \sum_{i=0}^{\fl{n^{1/8}}} \ind{V_i^+ < n^\alpha} > n^\beta \mid V_0^+ = j \right) 
    + \sup_{j\geq 0} P\left( \sum_{i = 0}^{\sigma_0^{U^+}-1} \ind{U_i^+ < n^\alpha} > n^\beta \mid U_0^+ = j \right),
\end{align*}
where the last inequality follows from connection of $\{\mathcal{E}^{(x,m)}_y \}_{y\geq x}$ with the BLPs $U^+$ and $V^+$ noted at the beginning of the proof. 
If we then choose $\alpha \in (0, \frac{1-(\theta^-\vee 0)}{8})$ and $\beta \in (\frac{(\theta^- \vee 0)}{8} + \alpha, \frac{1}{8})$ we can apply
Lemmas \ref{small1} and \ref{small2} to bound the last line above by $C (|x|+1) e^{-c n^r}$ for some constants $C,c,r>0$. 
Applying this, together with the bound on the sum in
\eqref{martpart} we obtain that
\[
 \eqref{notgood} \leq C (2K\sqrt{n}+1)K'\sqrt{n} \left( K n^{1/2} e^{-c \frac{n^{1/8}}{(\ln n)^2}} + (1+K\sqrt{n}) e^{-c n^r} \right),
\]
for $n$ large enough. Since the right side vanishes as $n\to \infty$ for any $K,K'<\infty$, this completes the proof of Lemma \ref{18good}. 
\end{proof}

\subsection{Lifting and grounding properties of the environment.}
We shall show that in a small neighborhood of every mesoscopic site
except for the site occupied by the walk, the environment is locally
close to i.i.d.\ in an appropriate equilibrium. This property is
preserved with probability close to 1 as the walk moves from one
mesoscopic site to another for any fixed (possibly very large) number
of steps (order $\epsilon^{-2}$). The important consequence of this is
that the environment around every mesoscopic site in the bulk will
 have lifting and grounding properties (see
Definitions \ref{lifting} and \ref{grounding}) which together with
$n^{1/8}$-goodness will allow us to use our diffusion approximations
(i.e.\ versions of generalized Ray-Knight theorems). We start with
several definitions.
  \begin{defn}\label{bulkreg}
    Given $\epsilon>0,\delta_1\in(0,\epsilon^3),\delta_2>0$, and
    $k\in\Z_+$, for the ERW $X$ stopped at time $\tenk$, the
    first cookie environment on $\bint{\xenk-\esrn,\xenk+\esrn}$ is
    said to be {\em bulk regular for $(\delta_1,\delta_2)$} if
    \begin{enumerate}[\tiny$\bullet$]
    \item $\ienk\le \xenk-\esrn
      $ and
      $\xenk+\esrn
      \le \senk$;
    \item on $\bint{\xenk,\xenk+\esrn}$ the first cookie environment is $(n^{1/8},0)$-good and \\ on $\bint{\xenk-\esrn,\xenk}$ the first cookie environment is $(n^{1/8},\nu/2-1)$-good;
    \item on $\bint{\xenk+\esrn-\edsrn,\xenk+\esrn}$ the first cookie
      environment is $\delta_2 \epsilon\sqrt{n}$-lifting from the
      right and $\delta_2 \epsilon\sqrt{n}$-grounding from the left;
      on $\bint{\xenk-\esrn,\xenk-\esrn+\edsrn}$ the first cookie
      environment is $ \delta_2 \epsilon\sqrt{n}$-lifting from the
      left and $\delta_2 \epsilon\sqrt{n}$-grounding from the right.
    \end{enumerate}
\end{defn}
\begin{defn}\label{exreg}
  Given $\epsilon>0,\delta_1\in(0,\epsilon^3),\delta_2>0$, and
  $k\in\Z_+$, for the ERW $X$ stopped at time $\tenk$, the
  first cookie environment on $\bint{\xenk-\esrn,\xenk+\esrn}$ is said
  to be {\em S-regular for $(\delta_1,\delta_2)$} if
    \begin{enumerate}[\tiny$\bullet$]
    \item 
      $\senk\in \bint{\xenk,\xenk+\esrn-\fl{\epsilon^4\sqrt{n}}}$;
    \item on $\bint{\xenk,\senk}$ the first cookie environment is $(n^{1/8},0)$-good and on $\bint{\xenk-\esrn,\xenk}$ the first cookie environment is $(n^{1/8},\nu/2-1)$-good;
    \item on $\bint{\xenk-\esrn,\xenk-\esrn+\edsrn}$ the first cookie
      environment is $\delta_2 \epsilon\sqrt{n}$-lifting from the left and
      $\delta_2 \epsilon\sqrt{n}$-grounding from the right.
    \end{enumerate}
    The notion of {\em I-regular for $(\delta_1,\delta_2)$} on
    $\bint{\xenk-\esrn,\xenk+\esrn}$ first cookie environment is defined
    in a symmetric manner. 
  \end{defn}
  We shall say that the first cookie environment on
  $\bint{\xenk-\esrn, \xenk+\esrn}$ is {\em regular for
    $(\delta_1,\delta_2)$} if it is either bulk regular, or $S$-regular,
  or $I$-regular.  

\begin{defn}\label{deltarel}
  Given $\epsilon>0,\delta_1\in(0,\epsilon^3),\delta_2>0$, we shall
  write $\delta_1\overset{\epsilon}{\sim}\delta_2$ if for all
  sufficiently large $n$, 
\begin{align*}
 &P_{\pi^+}\left( \tau^{V^-}_{ \delta_2 \epsilon\sqrt{n} } \leq \delta_1 \epsilon \sqrt{n} \right) \geq 1-\epsilon^6, 
\quad &P_{\pi^+}\left( \sigma_0^{U^+} \leq \delta_1 \epsilon \sqrt{n} \mid U^+_0 =  \fl{\delta_2 \epsilon\sqrt{n} } \right) \geq 1-\epsilon^6, \\
 &P_{\pi^-}\left( \tau^{V^+}_{ \delta_2 \epsilon\sqrt{n} } \leq \delta_1 \epsilon \sqrt{n} \right) \geq 1-\epsilon^6, 
\quad\text{and}
&P_{\pi^-}\left( \sigma_0^{U^-} \leq \delta_1 \epsilon \sqrt{n} \mid U^-_0 =  \fl{\delta_2 \epsilon\sqrt{n} } \right) \geq 1-\epsilon^6. 
\end{align*}
\end{defn}

\begin{rem}
  In the proof below, we will need the fact that one can always find
  parameters $\delta_1$ and $\delta_2$ that are sufficiently small and
  in the relation $\delta_1\overset{\epsilon}{\sim}\delta_2$.  To see
  this, recall that $\theta^+(\pi^+) = 0$ and $\theta^-(\pi^-) = 0$. 
Then it follows from
  Lemmas \ref{liftIID} and \ref{sm0} (and the discussion at the
  beginning of Section \ref{daiid}) that the conditions in Definition
  \ref{deltarel} hold if $\delta_2$ is sufficiently small compared to
  $\delta_1$.
\end{rem}

\begin{rem}\label{rem:drel}
 Note that the events in Definition \ref{deltarel} are closely related to the definitions of lifting and grounding first cookie environments in Definitions \ref{lifting} and \ref{grounding}. In particular, if $\delta_1\overset{\epsilon}{\sim}\delta_2$ then a first cookie environment on  $\bint{0,\delta_1\epsilon\sqrt{n}}$ with $\pi^+$-product measure will be $ \delta_2\epsilon\sqrt{n}$-lifting from the right and $\delta_2 \epsilon\sqrt{n}$-grounding from the left with probability at least $1-2\epsilon^6/\epsilon^3 = 1-2\epsilon^3$. 
\end{rem}

\begin{lemma}\label{e1a}
    Given an $\epsilon>0$ let
    $\delta_1\overset{\epsilon}{\sim}\delta_2$ and $\delta_1$ be
    sufficiently small. Suppose that for some $k\in\Z_+$ we have
    $\xenk=x$ and the environment on $\bint{x-\esrn,x+\esrn}$ is
    regular for $(\delta_1,\delta_2)$. Then there is a $C>0$ not
    depending on $\epsilon$ such that for $n$ large, outside of
    probability $C\epsilon^3$,
\begin{enumerate}[(1)]
\item on the event $\{T^{\epsilon,n}_{k+1}=T^{\epsilon,n,-}_{k+1}\}$
  the first cookie environment on $\bint{x-\edsrn,x}$ is
  $\delta_2 \epsilon\sqrt{n}$-lifting from the right and
  $\delta_2 \epsilon\sqrt{n}$-grounding from the left;
    \item on the event $\{T^{\epsilon,n}_{k+1}=T^{\epsilon,n}_{k+1,+}\}$ the first cookie environment on $\bint{x,x+\edsrn}$ is $\delta_2 \epsilon\sqrt{n}$-lifting from the left and $ \delta_2 \epsilon\sqrt{n}$-grounding from the right.
    \end{enumerate}
  \end{lemma}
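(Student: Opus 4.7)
By symmetry it suffices to prove (1). Fix $\epsilon>0$ and parameters $\delta_1\overset{\epsilon}{\sim}\delta_2$ sufficiently small. We work throughout on the event $\{T^{\epsilon,n}_{k+1}=T^{\epsilon,n}_{k+1,-}\}$, so the walker travels from $x$ to $x-\esrn$ during the $(k+1)$-st mesoscopic step. The overall strategy is to show that at time $T^{\epsilon,n}_{k+1}$ the joint distribution of the remaining first cookies at the sites $y\in\bint{x-\edsrn,x}$ is within $o(\epsilon^3)$ in total variation of an i.i.d.\ $\pi^+$-product measure, and then apply Remark~\ref{rem:drel} to convert this into the lifting from the right and grounding from the left.

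\textbf{Step 1 (macroscopic lower bound on local times).} By the BLP correspondence recalled in Section~\ref{sec:BLP}, the directed edge local times of the excursion of $X$ from $x$ until first reaching $x-\esrn$ have the joint law of a $V^+$ process $(V^+_0,V^+_1,\dots,V^+_\esrn)$ started at $V^+_0=0$ using the cookies present at time $T^{\epsilon,n}_k$. The regularity hypothesis on the environment supplies exactly the assumptions of Corollary~\ref{isBESQ2}: namely $(n^{1/8},\nu/2-1)$-goodness on $\bint{x-\esrn,x}$ and $\delta_2\esrn$-lifting from the left on $\bint{x-\esrn,x-\esrn+\edsrn}$. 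Applying Corollary~\ref{isBESQ2} at $s=1-\delta_1$ gives, outside probability $O(\epsilon^3)$, that $V^+_{\fl{(1-\delta_1)\esrn}}\ge c_1\esrn$ for some $c_1>0$ depending only on $\delta_1$. Restarting from this macroscopic value and invoking the $J_1$-convergence of Theorem~\ref{da0.25} on $j\in\bint{\esrn-\edsrn,\esrn}$, we use the path continuity of the limiting $\tfrac{\nu}{4}\text{BESQ}^2$ process (which a.s.\ stays bounded below by a positive constant on any compact time interval once positive) to conclude that, outside a further $O(\epsilon^3)$ probability, $V^+_j\ge c\esrn$ for every $j\in\bint{\esrn-\edsrn,\esrn}$, for some absolute $c>0$. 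Since the local time at $y$ satisfies $\mathcal L(T^{\epsilon,n}_{k+1},y)\ge 2\mathcal E_y$, this yields that every $y\in\bint{x-\edsrn,x}$ has been visited $\Omega(\esrn)$ times by the walker.

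\textbf{Step 2 ($\pi^+$-closeness and conclusion).} Because the walker is at $x-\esrn$ at time $T^{\epsilon,n}_{k+1}$, every site $y\in\bint{x-\edsrn,x}$ lies strictly to the right of the walker, so the last step from $y$ before this time must have been to the left, i.e.\ a ``failure'' in the Bernoulli sequence $\{\xi^y_j\}_j$. By the exponential convergence, after many failures, of the cookie Markov chain to its limit $\pi^+$ (a consequence of the spectral gap of $K$; see Section~\ref{sec:parameter} and \cite[Section~3]{kpERWMCS}) combined with the $\Omega(\esrn)$ lower bound from Step~1, each marginal of $R^y_{\mathcal L(T^{\epsilon,n}_{k+1},y)+1}$ is within total variation $e^{-c'\esrn}$ of $\pi^+$. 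Since the cookie Markov chains at distinct sites are independent,
\[
\bigl\|\mathrm{Law}\bigl(\{R^y_{\mathcal L(T^{\epsilon,n}_{k+1},y)+1}\}_{y\in\bint{x-\edsrn,x}}\bigr)-(\pi^+)^{\otimes\bint{x-\edsrn,x}}\bigr\|_{TV}\le \edsrn\,e^{-c'\esrn}=o(\epsilon^3).
\]
Since $\delta_1\overset{\epsilon}{\sim}\delta_2$, Remark~\ref{rem:drel} tells us that a $\pi^+$-product first cookie environment on $\bint{x-\edsrn,x}$ is $\delta_2\esrn$-lifting from the right and $\delta_2\esrn$-grounding from the left outside probability at most $2\epsilon^3$. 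Summing the error probabilities from Steps~1 and 2 then gives the claim with total error $O(\epsilon^3)$.

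\textbf{Main obstacle.} The delicate technical point is the uniform-in-$y$ lower bound in Step~1: Corollary~\ref{isBESQ2} is a one-time-point distributional statement whose total-variation error $3\epsilon^3/2$ cannot be iterated blindly across a macroscopic number of sites, so uniformity must be obtained through a restart-and-path-continuity argument combining Corollary~\ref{isBESQ2} at $s=1-\delta_1$ with Theorem~\ref{da0.25} on $[1-\delta_1,1]$. A secondary subtlety is that conditioning on $\{T^{\epsilon,n}_{k+1}=T^{\epsilon,n}_{k+1,-}\}$ biases the joint law of the concatenated BLP through the constraint that the $U^+$ portion on $\bint{x,x+\esrn}$ die out before generation $\esrn$; this bias is absorbed by noting (via Lemma~\ref{hitpr}) that $P(T^{\epsilon,n}_{k+1}=T^{\epsilon,n}_{k+1,-})$ is bounded below by an absolute constant, so unconditional bad-event probabilities inflate by at most that factor upon conditioning.
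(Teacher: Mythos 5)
Your overall strategy matches the paper's: show the walk makes order $\sqrt{n}$ visits to each site in $\bint{x-\edsrn,x}$ before time $T^{\epsilon,n}_{k+1,-}$, deduce that the next cookies there are within $o(\epsilon^3)$ of a $\pi^+$-product measure, and then use Remark~\ref{rem:drel} together with the uniform lower bound on $P(T^{\epsilon,n}_{k+1}=T^{\epsilon,n}_{k+1,-})$ from Lemma~\ref{unif}. However, there are two genuine gaps.

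First, your Step~1 asserts that ``the regularity hypothesis on the environment supplies exactly the assumptions of Corollary~\ref{isBESQ2}.'' That is true for bulk-regular and $S$-regular environments, but not for $I$-regular ones. In the $I$-regular case the running minimum $I_{\tenk}$ lies in $\bint{x-\esrn+\fl{\epsilon^4\sqrt{n}},x}$, and the stretch $\bint{x-\esrn,I_{\tenk}}$ has not been visited, so its first cookies are still i.i.d.\ $\eta$ (hence $(n^{1/8},\eta\cdot\mathbf r^+)$-good, not $(n^{1/8},\nu/2-1)$-good), and no lifting assumption is imposed near $x-\esrn$. Applying Corollary~\ref{isBESQ2} there is not justified. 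The paper isolates your Step~1 as Lemma~\ref{lifting1} and handles the $I$-regular case separately: run Theorem~\ref{DA0} on $\bint{x-\esrn,I_{\tenk}}$ to obtain a $\frac{\nu}{4}$BESQ$^{2(1-\theta^-)}$ approximation (a different generalized dimension), deduce via its scaling property that $V^+_{I_{\tenk}-1}$ is macroscopic outside probability $\epsilon^3$, and only then switch to Theorem~\ref{da0.25} on the modified stretch $\bint{I_{\tenk},x}$. Without that case analysis, your argument breaks.

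Second, the total-variation estimate in Step~2 is not justified by the independence of the cookie Markov chains alone. The random variables $R^y_{\mathcal L(T^{\epsilon,n}_{k+1},y)+1}$ for different $y$ are \emph{not} independent, since the index $\mathcal L(T^{\epsilon,n}_{k+1},y)$ is a functional of the walk's trajectory, which correlates them. What is true is that conditionally on the directed-edge local times and the previously determined next-cookies, the next cookie at $y$ is within $ce^{-c'V^+_{y-1}}$ in total variation of $\pi^+$. The paper makes this precise by conditioning on $\psi_{j,k}$, $V^+_{j-1}$, and $\psi^-_{\ell,k+1}$ for $\ell<j$, applying the geometric mixing bound from \cite[Section~3]{kpERWMCS}, and then summing over $j$ by induction. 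Your chain of inequalities would need this inductive/chain-rule decomposition of the TV distance rather than an appeal to independence.

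Provided both of these are patched, the remaining pieces — the appeal to Remark~\ref{rem:drel} and the absorption of the conditioning bias via Lemma~\ref{unif} — agree with the paper and close the argument.
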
 

The proof relies on the following two lemmas.
  \begin{lemma}\label{unif}
    There is a constant $\Cl{unif}$ such
    that uniformly over all first cookie environments environments
    satisfying the conditions of Lemma~\ref{e1a} for all
    sufficiently large $n$
    \[\Cr{unif}\le P(\tenk=T^{\epsilon,n}_{k,-}\mid
      \xenk=x,\ R^y_1,\ y\in\bint{x-\esrn,x+\esrn}) \le 1-\Cr{unif}.\]
  \end{lemma}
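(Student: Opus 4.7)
The plan is to apply the concatenation lemma (Lemma~\ref{hitpr}(i)) to identify the conditional probability in question with the corresponding exit probability of a $(\theta^+,\theta^-)$-BMPE up to error $O(\epsilon^3)$, and then read off the BMPE probability from Corollary~\ref{roc1} and verify that it lies in a subinterval of $(0,1)$ whose endpoints depend only on $\theta^+,\theta^-$.

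Setting $m = \esrn$, the standard edge-local-time encoding (Section~\ref{sec:BLP}) identifies $\{\tenk = T^{\epsilon,n}_{k,-}\}$ with extinction in $(m,2m]$ of the concatenated $V^+\cup U^+$ process $\{Z^m_j\}_{0\le j\le 2m}$ built from the first cookies on $\bint{x-\esrn, x+\esrn}$. I would next identify the scaled extrema $\wu_n := 1-(x-\ienk)/m \in [0,1]$ and $\wo_n := 1+(\senk - x)/m \in [1,2]$, and check case by case that the regular environment satisfies Assumption~\ref{asm:concat}: bulk regularity gives $(\wu_n,\wo_n)=(0,2)$; S-regularity gives $\wu_n=0$ and $\wo_n \le 2-\epsilon^3 + o(1)$ (using $\fl{\epsilon^4\sqrt{n}}/\esrn \to \epsilon^3$ as $n\to\infty$); I-regularity is symmetric. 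The $n^{1/8}$-goodness clauses of Definitions~\ref{bulkreg}-\ref{exreg}, with parameter $\rho=0$ to the right of $x$ and $\rho=\nu/2-1$ to the left (these are the values one obtains from $\theta^\pm(\pi^\pm)=0$ and the identity \eqref{vecr}), supply hypothesis (ii) of the assumption; the explicit lifting/grounding requirements on $\bint{x-\esrn, x-\esrn+\edsrn}$ and $\bint{x+\esrn-\edsrn, x+\esrn}$ combined with $\delta_1\overset{\epsilon}{\sim}\delta_2$ handle the boundary cases $\wu=0$ and $\wo=2$ in (i) and (iii); and segments the walker has never touched, which remain i.i.d.\ $\eta$, supply the non-boundary cases (cf.\ Remark~\ref{rem:drel}).

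Once Assumption~\ref{asm:concat} is verified, Lemma~\ref{hitpr}(i) yields, uniformly over regular environments and for all large $n$,
\[\bigl|P(\tenk = T^{\epsilon,n}_{k,-}\mid\xenk=x,\ R^y_1,\,y\in\bint{x-\esrn,x+\esrn}) - P(\tau_0(\wu_n,1,\wo_n) < \tau_2(\wu_n,1,\wo_n))\bigr|\le K\epsilon^3.\]
Shifting the BMPE to start at $0$ with initial triple $(\wu_n-1,0,\wo_n-1)$ and exit $(-1,1)$ places this in the form handled by Corollary~\ref{roc1}: bulk regularity yields the symmetric value $1/2$; S-regularity yields $P(\tau_1<\tau_{-1}) = \tfrac{1}{2}(\tfrac{2}{\wo_n})^{\theta^+}$, which for $\wo_n\in[1,2-\epsilon^3+o(1)]$ and $\theta^+<1$ is confined to a subinterval of $(0,1)$ with endpoints depending only on $\theta^+$; and I-regularity gives the analogous formula with parameter $\theta^-$. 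Choosing $c_\ast>0$ strictly below the minimum of these explicit bounds across the three regimes and then taking $\epsilon$ small enough that $K\epsilon^3<c_\ast/2$ gives $\Cr{unif} = c_\ast/2$.

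The genuinely nontrivial step is the second paragraph: confirming that Definitions~\ref{bulkreg} and~\ref{exreg}, tailored to the walk-modified cookie environment (approximately $\pi^-$-product to the left of the walker and approximately $\pi^+$-product to the right), really do produce the four-segment structure demanded by Assumption~\ref{asm:concat}, with the correct goodness parameters on each segment and with lifting/grounding at the interfaces. Once this bookkeeping is in place, the remainder of the argument is a one-line invocation of Lemma~\ref{hitpr}(i) and Corollary~\ref{roc1}, and uniformity is automatic because the BMPE probability depends continuously on $(\wu_n,\wo_n)$ over the relevant compact parameter range.
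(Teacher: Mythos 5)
Your proof is correct and takes essentially the same approach as the paper: both reduce to a $(\theta^+,\theta^-)$-BMPE exit probability via the concatenation lemma (Lemma~\ref{hitpr}(i)) and then bound that probability away from $\{0,1\}$ uniformly over the admissible $(\wu,\wo)$. The only divergence is the final step --- the paper passes to the BESQ concatenation picture of Proposition~\ref{RNbmpe} and invokes a short monotonicity argument to locate the extremal $(\wu,\wo)$ at a corner of $[0,1]\times[1,2]$, whereas you read off the explicit exit formulas from Corollary~\ref{roc1} and check them case by case, which is slightly more computational but also more self-contained.
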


  \begin{proof}
    By Lemma \ref{hitpr}, it is enough to show that uniformly in
    $\wu \in (0,1) $ and $\wo \in (1,2) $ the probability
    $P(\tau_0(\wu,1,\wo) < \tau_2(\wu,1,\wo))$ is bounded away from
    $0$ and $1$.  By symmetry of the problem, we only have to argue that
    this probability is uniformly bounded away from $0$.  By
    Proposition \ref{RNbmpe} we must simply show that for the process
    $y(\cdot)$ and $w = 1$ the probability of $y(\cdot)$ hitting zero
    in $(1,2) $ is bounded away from zero as $\wo $ and $\wu$ vary.
    But this follows easily by noting that the minimum is achieved
    with $\wu$ equal to $0$ or $1$ and $\wo$ equal to $1$ or $2$.
  \end{proof}
  
  \begin{lemma}\label{lifting1}
    Assume the conditions of Lemma~\ref{e1a}. There is a
    $\delta_3=\delta_3(\epsilon,\theta^-)>0$ such that with
    probability $1-2\epsilon^3$ for $n$ large enough, the process $V^+$
    (resp.\ $V^-$) starting with $0$ particles in generation $0$ and
    using the first cookie environment on $\bint{x-\esrn+1,x}$ for
    generations $1,2,\dots,\esrn$ (resp.\ $\bint{x,x+\esrn-1}$
    for generations $\esrn,\esrn-1,\dots,1$) 
    satisfies
    \[V^+_j (\text{resp.\ }V^-_j)\ge \delta_3\sqrt{n},\quad \forall
      j\in \bint{\esrn-\edsrn,\esrn}. \] 
  \end{lemma}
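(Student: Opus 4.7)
The argument is identical for $V^+$ and $V^-$ by the obvious left--right symmetry of the problem, so I treat only the $V^+$ case. The plan is to decompose the evolution of the BLP into two phases: (i) a short \emph{lifting phase} in which $V^+$ climbs from $0$ to the intermediate level $\delta_2\epsilon\sqrt{n}$ within the first $\edsrn$ generations, and (ii) a longer \emph{diffusion phase} during which the process, well approximated by a positive multiple of a squared Bessel process of dimension $2$, stays uniformly bounded below for the remaining $\esrn - \tau$ generations.

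\emph{Phase (i): lifting.} By Definition \ref{bulkreg} (or \ref{exreg} in the $I$- or $S$-regular cases), the first cookie environment on the leftmost $\edsrn$ sites of $\bint{x-\esrn+1,x}$ is $\delta_2\epsilon\sqrt{n}$-lifting from the left. Writing $\tau := \tau^{V^+}_{\delta_2\epsilon\sqrt{n}}$, this immediately gives $P(\tau \le \edsrn)\ge 1-\epsilon^3$. Combined with Lemma \ref{overshoot}, outside an additional event of probability $o(1)$ we also have $V^+_\tau \le \delta_2\epsilon\sqrt{n}+(\epsilon\sqrt{n})^{2/3}$, so $V^+_\tau / \esrn \to \delta_2$ in probability on this good event.

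\emph{Phase (ii): diffusion.} Condition on $\mathcal{F}_\tau$ and on the above good event. The strong Markov property of the $V^+$ process (following from the independence of cookie stacks at distinct sites) identifies $\{V^+_{\tau+j}\}_{0\le j\le \esrn-\tau}$ with a $V^+$ process in the residual cookie environment on $\bint{x-\esrn+\tau+1,x}$ started from $V^+_\tau$. The residual environment is still $(n^{1/8},\nu/2-1)$-good since goodness is inherited by subintervals, and repeating the Azuma-type concentration step from the proof of Lemma \ref{18good} at the finer scale $\fl{m^{1/4}}$ with $m := \esrn$ (admissible because $m^{1/4} \gg (\ln n)^3$ for $n$ large and fixed $\epsilon$) upgrades this to $(m^{1/4},\nu/2-1)$-goodness. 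Applying Theorem \ref{da0.25} at scale $m$ with stopping threshold $cm$ for small $c>0$, the rescaled restarted process converges in law to the solution $Y$ of \eqref{daa} with $D\equiv \nu/2$ and $Y(0)=\delta_2$, run up to $\sigma^Y_c$. Since $4Y/\nu$ is a squared Bessel process of dimension $2$ and therefore almost surely never reaches $0$, $\lim_{c\downarrow 0}P(\sigma^Y_c\le 1\mid Y(0)=\delta_2)=0$. Choosing $c$ so small that this limit is below $\epsilon^3/2$ and invoking the continuous mapping theorem gives
\[
P\bigl(\min_{0\le j\le \esrn-\tau} V^+_{\tau+j}\le cm \,\big|\, \mathcal{F}_\tau\bigr) \le \epsilon^3
\]
on the good event, for all sufficiently large $n$. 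Setting $\delta_3 := c\epsilon$ so that $cm \approx \delta_3 \sqrt{n}$ and using $\tau \le \edsrn < \esrn - \edsrn$ (valid whenever $\delta_1<1/2$, which is automatic from $\delta_1<\epsilon^3$), the two phases combine to yield $V^+_j \ge \delta_3 \sqrt{n}$ for every $j\in\bint{\esrn-\edsrn,\esrn}$ outside total probability $2\epsilon^3$.

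The principal subtlety is the passage from pointwise-in-time convergence of the restarted BLP to uniform control of its minimum on a macroscopic time interval. This is precisely what Theorem \ref{da0.25} furnishes, because it is stated up to the stopping time $\sigma_{\delta m}$ and not only at fixed times. Two auxiliary verifications are required: (a) that the residual environment retains its goodness hypothesis after truncation by the random initial portion of length $\tau$, handled by the fact that subintervals of good intervals remain good at the same scale; and (b) that the random initial value $V^+_\tau/m$ is tight with degenerate limit $\delta_2$, which is delivered by Lemma \ref{overshoot} together with the monotonicity of the $V^+$ process in its starting value.
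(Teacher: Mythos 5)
Your Phase (i)--Phase (ii) decomposition matches the paper's treatment of the bulk-regular and $S$-regular cases (Step~1 of the paper's proof), but you have a genuine gap in the $I$-regular case.

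You claim that in all three regularity regimes the leftmost $\edsrn$ sites of $\bint{x-\esrn+1,x}$ carry a deterministic $\delta_2\epsilon\sqrt n$-lifting-from-the-left property. That is true for bulk-regular and $S$-regular (Definitions \ref{bulkreg} and \ref{exreg} explicitly impose lifting on $\bint{\xenk-\esrn,\xenk-\esrn+\edsrn}$), but the $I$-regular definition is the mirror image: the lifting assumption there is imposed on the \emph{right} block $\bint{\xenk+\esrn-\edsrn,\xenk+\esrn}$, not the left. The left block $\bint{\xenk-\esrn,\xenk-\esrn+\edsrn}$ has not been visited by the walk at time $\tenk$ (since $\ienk\ge \xenk-\esrn+\fl{\epsilon^4\sqrt{n}}$ and $\edsrn<\fl{\epsilon^4\sqrt{n}}$ because $\delta_1<\epsilon^3$), so its first cookies are still i.i.d.\ with marginal $\eta$ under the conditional law, and no pathwise lifting property can be asserted. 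Your Phase (i) therefore does not get off the ground in that case. Moreover, even Phase (ii) is not correct there: on the unvisited portion $\bint{x-\esrn+1,\ienk-1}$ the environment is i.i.d.\ $\eta$, which is not $(n^{1/8},\nu/2-1)$-good but rather $(n^{1/8},\nu\theta^+/2)$-good, so the limiting drift on that stretch is $1+\eta\cdot\br^+$ rather than $\nu/2$, and the diffusion approximation is a BESQ$^{2(1-\theta^-)}$ started at $0$, not a BESQ$^2$. The paper handles this in its Step~2 by invoking Theorem~\ref{DA0} on the $\eta$-product stretch (which has length at least $\fl{\epsilon^4\sqrt n}$), using the scaling property of BESQ$^{2(1-\theta^-)}$ to lift the process off zero, and then continuing with Theorem~\ref{da0.25} on the visited stretch; it also needs the requirement that $\delta_1$ be small to control the case where $\ienk$ falls inside the terminal window $\bint{\esrn-\edsrn,\esrn}$. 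None of this is addressed in your proposal.

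One smaller point: the ``upgrade'' from $(n^{1/8},\rho)$-goodness to $(m^{1/4},\rho)$-goodness that you propose by rerunning an Azuma argument is not needed (and as written is not an argument; the Azuma step in Lemma~\ref{18good} concerns the ERW's occupation structure, not a deterministic fixed environment). Since $m\approx\epsilon\sqrt n$ for fixed $\epsilon$, the two goodness scales differ only by a fixed multiplicative constant $\epsilon^{1/4}$, and the paper treats them as interchangeable for fixed $\epsilon$ (see the remark at the start of Section~\ref{sec:ei}); attempting to re-derive the finer goodness from scratch introduces complications without benefit.
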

\begin{proof}
  We shall only consider the process $V^+$, the other case follows by
  a symmetric reasoning. 

{\em Step 1.} We start with the case when the first cookie
    environment on $\bint{x-\esrn,x+\esrn}$ is either bulk regular or
    S-regular for $(\delta_1,\delta_2)$. To simplify the notation and
    without loss of generality we shall assume that $x=\esrn$ and
    consider the process $V^+$ on $\bint{0,\esrn}$. Given our
  assumptions on the first cookie environment, we have
  $\tau^{V^+}_{\delta_2 \epsilon\sqrt{n}}\le\edsrn$ outside of probability
  $\epsilon^3$. By the strong Markov property and monotonicity of BLPs
  with respect to the initial number of particles we have that,
  conditional on
  ${\cal G}_{\tau^{V^+}_{\delta_2 \epsilon\sqrt{n}}}\coloneqq
  \sigma(V^+_j,\,j\le \tau^{V^+}_{\delta_2 \epsilon\sqrt{n}})$,
  $V^+_{\tau^{V^+}_{\delta_2 \epsilon\sqrt{n}}+\ell}$ will be stochastically
  larger than the BLP $Z^+_\ell$,
  $\ell=0,1,\dots,\esrn-\tau^{V^+}_{\delta_2 \epsilon\sqrt{n}}$, which starts with
  $\fl{\delta_2 \epsilon\sqrt{n}}$ particles in generation $0$ and
  evolves in the environment on
  $\bint{\tau^{V^+}_{\delta_2 \epsilon\sqrt{n}}+1,\esrn}$ for generations
  $1,2,\dots,\esrn-\tau^{V^+}_{\delta_2 \epsilon\sqrt{n}}$. Without loss of
  generality we can extend the process $Z^+$ to the full interval
  $\bint{0,\esrn}$ by choosing the environment on
  $\bint{ \esrn-\tau^{V^+}_{\delta_2 \epsilon\sqrt{n}}+1,\esrn }$ to be in
  $\pi^+$ product measure. By Theorem~\ref{da0.25} and the fact that
  the environment is assumed to be either bulk- or S-regular, for each
  $\delta\in(0,\delta_2/2)$ the processes
  \[\frac{Z^+_{\fl{t\epsilon\sqrt{n}}\wedge
        \sigma^{Z^+}_{\delta\epsilon\sqrt{n}}}}{\fl{\epsilon \sqrt{n}}},\quad t\in[0,1],\]
  converge weakly as $n\to\infty$ to a constant multiple of a
  BESQ$^{2}$, $Y(t\wedge \sigma_{\delta}),\ t\in[0,1]$, with
  $Y(0)=\delta_2>0$. We can choose
  $\delta_3=\delta_3(\epsilon,\delta_2)>0$ so that
  \[P\left(\min_{t\in[0,1]}Y(t)>\delta_3\right)>1-\frac{\epsilon^3}{2}.\]
  Then outside of probability $\epsilon^3$ for $n$ large
  $Z^+_\ell\ge \delta_3\sqrt{n}$ for all $\ell\in \bint{0,\esrn}$. By
  stochastic domination we conclude that for the same $\delta_3$ and
  all $n$ large $V^+_j\ge \delta_3\sqrt{n}$ for all
  $j\in \bint{\esrn-\edsrn, \esrn}$ as claimed.

  {\em Step 2.} Suppose now that the first cookie environment on
    $\bint{x-\esrn,x+\esrn}$ is I-regular for $(\delta_1,\delta_2)$
    and $\delta_1$ is sufficiently small. Then $x\le 0$. However,
    after an appropriate shift we may again assume that $x=\esrn$ so
    that $I_{\tenk}\in\bint{\fl{\epsilon^4\sqrt{n}},\esrn}$. The
    process $V^+_j$ will be evolving in the product environment with
    marginal $\eta$ for $j\in\bint{0, I_{\tenk}-1}$ and then for
    $j\in\bint{I_{\tenk},\esrn}$ will use the environment modified by
    the walk. Recall that the first cookies on the latter interval are
    a part of the information known at time $\tenk$. By the regularity
    assumption, $I_{\tenk}\ge \fl{\epsilon^4\sqrt{n}}$ so that we can
    use Theorem~\ref{DA0} at least on the time interval
    $[0,\epsilon^4]$. The diffusion approximation of Theorem~\ref{DA0}
    is a $\frac\nu{4}$BESQ$^{2(1-\theta^-)}$ process $Y$ with
    $Y(0)=0$. Since $2(1-\theta^-)>0$, by scaling properties of BESQ
    processes we get that
  \[\inf_{t\in[\epsilon^4,1]}P(Y(t)>\delta)=\inf_{t\in[\epsilon^4,1]}P(tY(1)>\delta)=P(Y(1)>\delta\epsilon^{-4})\to 1\quad\text{as}\quad \delta\to
    0.\] Therefore, given $\epsilon>0$, we can find
  $\delta=\delta(\epsilon,\theta^-)>0$ such that
  $\inf_{t\in[\epsilon^4,1]}P(Y(t)>\delta)>1-\epsilon^3/2$.  Since the
  ``switch point'' from the original product environment to the
  environment modified by the walk, $I_{\tenk}$, is a part of the
  information given at time $\tenk$ and since
  $Y(n^{-1/2}(I_{\tenk}-1))>\delta$ with probability at least
  $1-\epsilon^3/2$, we get by Theorem~\ref{DA0}
  that
  \begin{equation}
    \label{joint}
    P\left(V^+_{I_{\tenk}-1}\ge \fl{\delta\sqrt{n}}\right)\ge 1-\epsilon^3\quad\text{for all sufficiently large $n$.}
  \end{equation}
  
  Next we shall choose $\delta_3$. Let
  $s_n:=n^{-1/2}(I_{\tenk}-1)$. Using the fact that BESQ$^2$ process
  $\tilde{Y}$ with $\tilde{Y}(0)=\delta$ a.s.\ does not hit zero we
  can find a $\delta_3=\delta_3(\epsilon,\delta)\in(0,\delta)$ such
  that
  $P(\sigma^{\tilde{Y}}_{\delta_3}>1\mid\tilde{Y}(0)=\delta)\ge
  1-\epsilon^3/4$. The requirement for
  $\delta_1=\delta_1(\epsilon,\theta^-)$ to be sufficiently small
  comes from the fact that we do not have any control on how close
  $I_{\tenk}$ is to $\esrn$. It could happen that
  $I_{\tenk}\in\bint{\esrn-\edsrn,\esrn}$. We know that \eqref{joint}
  holds and we need to show that
  \begin{equation}
    \label{want}
    P\left(V^+_j\ge \fl{\delta_3\sqrt{n}} \ \forall
    j\in\bint{\esrn-\edsrn,\esrn}\right)\ge 1-2\epsilon^3\quad\text{for all sufficiently large $n$.}
  \end{equation}
  By our choice of $\delta_3$ the process $\tilde{Y}$ with
  $\tilde{Y}(s_n)>\delta$ stays above $\delta_3$ on
  $[s_n, s_n+1]$ with probability at least
  $1-\epsilon^3/4$. We shall choose
  $\delta_1=\delta_1(\epsilon,\delta_3,\delta)$ so that on the event
  $\{Y(s_n)\ge \delta\}$ the process $Y$ stays above
  $\delta_3$ on
  $[s_n-\delta_1\epsilon,s_n]$ with
  probability at least $1-\epsilon^3/4$. Thus, we let
  $\delta_1=\delta_1(\epsilon,\delta_3)>0$ be so small that
  \[\max_{y\ge 0}P\left(\min_{t\in[0,\delta_1\epsilon]}Y(t)\le
      \delta_3, Y(\delta_1\epsilon)\ge \delta\mid Y(0)=y\right)\le
    P\left(\tau^Y_\delta<\delta_1\epsilon\mid
      Y(0)=\delta_3\right)<\epsilon^3/4.\] Note that
  $\delta,\delta_3,\delta_1$ depend only on $\epsilon$ and
  $\theta^-$. Theorem~\ref{DA0}, Theorem~\ref{da0.25} and our choice
  of $\delta,\delta_3,\delta_1$ give \eqref{want}.
\end{proof}

\begin{proof}[Proof of Lemma~\ref{e1a}]
  It is enough to show (1).  Apart from Lemmas~\ref{unif} and
  \ref{lifting1} we shall use the fact that the auxiliary
  Markov chain 
which keeps track of the next cookie in the stack after each successive ``failure'' in the corresponding sequence Bernoulli trials
converges to its equilibrium distribution $\pi^+$ exponentially fast (see (18) on p.\,1472 of \cite{kpERWMCS}).

  Without loss of generality we shall assume
  that $x=\esrn$. Note that at sites visited by the walk by
  time $\tenk$, the first cookies are non-random while on any
  unvisited interval they are in the initial product measure
  with marginal $\eta$.  In all cases the first cookie distribution on
  $\bint{0,\esrn}$ is a (possibly degenerate) product measure. Given the
  conditions imposed on the environment and the ERW at time $\tenk$,
  consider the event
  \begin{align*}
    A=&\{\text{at time $T^{\epsilon,n}_{k+1,-}$ the first cookie
        environment on $\bint{\esrn - \fl{\epsilon \delta_1\sqrt{n}, \esrn} }$ is}
    \\ &\text{either not $\delta_2 \epsilon\sqrt{n}$-lifting
         from the right or not $\delta_2 \epsilon\sqrt{n}$-grounding from the left.}\}
\end{align*}
We can estimate the probability of $A$ by considering a BLP $V^+$ from
Lemma~\ref{lifting1} which uses the cookie environment created by the
walk on $\bint{1,\esrn}$ up to time $\tenk$ for generations
$1,2,\dots,\esrn$.

{\em Step 1.} By Lemma~\ref{lifting1}, if
$\sigma=\inf\{j\ge \esrn-\edsrn:\,V^+_j<\delta_3\sqrt{n}\}$ then
$P(\sigma \leq \esrn) \leq 2\epsilon^3$ 
for all
sufficiently large $n$.

  {\em Step 2.} For $j\in\bint{\esrn-\edsrn,\esrn}$, let
  $\psi_{j,k}$ be the state of the cookie Markov chain at site
  $j$ at time $\tenk$,
  \[\psi_{j,k}=R^j_{{\cal L}(\tenk,j)+1},\] and
  $\psi^-_{j,k+1}$ be the state of the cookie Markov chain at site
  $j$ at time $T^{\epsilon,n}_{k+1,-}$,
  \[\psi^-_{j,k+1}=R^j_{{\cal
      L}(T^{\epsilon,n}_{k+1,-},j)+1}.\] 
   Then by (18) on p.\,16 of
  \cite{kpERWMCS}, for every $i\in{\cal R}$
  \[\left|P\left(\psi^-_{j,k+1}=i \,\Big|\,
        \psi_{j,k},\,V^+_{j-1}=k,\,\psi^-_{\ell,k+1},\ 0\le
        \ell<j\right) -\pi^+(i)\right|\le c_7e^{-c_8k}, \quad \forall
    k\in\N.\] Summing up over $i\in{\cal R}$ and using induction over 
  $j\in \bint{\esrn-\edsrn+1,\esrn}$ we conclude that on the event
  $\{\sigma> \esrn\}$ for all sufficiently large $n$ 
the total variation distance between the joint distribution of $\{\psi^-_{j,k+1} \}_{\esrn-\edsrn < j\le \esrn}$ and a $\pi^+$-product measure is at most 
 $N(\edsrn+1) c_7e^{-c_8\delta_3 \sqrt{n}}\le \epsilon^3$. 

  {\em Step 3.} 
By remark \ref{rem:drel} and the assumption that $\delta_1\overset{\epsilon}{\sim}\delta_2$,
 the probability that a
  first cookie environment on the interval $\bint{\esrn-\edsrn,\esrn}$
  sampled from the $\pi^+$-product measure will be either not
  $\delta_2 \epsilon\sqrt{n}$-lifting from the right or not
  $\delta_2 \epsilon\sqrt{n}$-grounding from the left is at most
  $2\epsilon^6/\epsilon^3=2\epsilon^3$ for all $n$ large.

  Adding up the probabilities from {\em Steps 1--3} we conclude that
  given that the first cookie environment at time $\tenk$ satisfies
  all conditions of the lemma, the probability of event $A$ does not
  exceed $5\epsilon^3$ for all sufficiently large $n$.

  {\em Step 4.} Finally, we have to also condition on the event
  $\{T^{\epsilon,n}_{k+1}=T^{\epsilon,n}_{k+1,-}\}$. We know by
  Lemma~\ref{unif} that the probability of this event is at least
  $\Cr{unif}>0$ uniformly overall environments satisfying the
  conditions of Lemma~\ref{e1a}. Therefore, conditioning on
  $\{T^{\epsilon,n}_{k+1}=T^{\epsilon,n}_{k+1,-}\}$ we get that the
  probability of $A$ is less than or equal to $5\epsilon^3/\Cr{unif}$.
  \end{proof}

\begin{lemma}\label{Away}
  Under the assumptions of Lemma~\ref{e1a}, there is a constant $C>0$
  such that for all sufficiently small $\epsilon>0$ and all
  $n\ge n_0(\epsilon)$ outside of probability $C\epsilon^{2.9}$
 \[\max_{\tenk\le i\le T^{\epsilon,n}_{k+1}}X_i-\min_{\tenk\le i\le T^{\epsilon,n}_{k+1}}X_i\le 2\fl{\epsilon\sqrt{n}}-\fl{\epsilon^4\sqrt{n}}.\]
\end{lemma}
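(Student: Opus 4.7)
The plan is to exploit symmetry and the strong Markov property to reduce the statement to an asymmetric exit estimate for the ERW starting near the left boundary of its current mesoscopic interval. Write $x = \xenk$. Since the walk is stopped at one of $x \pm \esrn$ at time $T^{\epsilon,n}_{k+1}$, elementary bookkeeping shows that the displayed range inequality forces either the step minimum to lie within $\fl{\epsilon^4\sqrt{n}}$ of $x-\esrn$ while the walk exits to the right, or the mirror image event. By the symmetry between the two directions it suffices to bound
\begin{equation*}
A := \bigl\{X_{T^{\epsilon,n}_{k+1}} = x + \esrn\bigr\} \cap \Bigl\{ \min_{\tenk \le i \le T^{\epsilon,n}_{k+1}} X_i \le x - \esrn + \fl{\epsilon^4\sqrt{n}}\Bigr\}
\end{equation*}
by $C\epsilon^3$, which implies the stated $C\epsilon^{2.9}$ bound for all small $\epsilon$.

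The next step is the strong Markov property applied at $\tau^\star := \inf\{i \ge \tenk : X_i = y\}$ with $y := x - \esrn + \fl{\epsilon^4\sqrt{n}}$. On $A$ we have $\tau^\star < T^{\epsilon,n}_{k+1}$, and conditionally on $\mathcal{F}_{\tau^\star}$ the walk must still reach $x + \esrn$ (at distance $2\esrn - \fl{\epsilon^4\sqrt{n}}$) before returning to $x - \esrn$ (at distance only $\fl{\epsilon^4\sqrt{n}}$). In the BMPE scaling limit this conditional event corresponds to a $(\theta^+,\theta^-)$-BMPE started at position $-1 + \epsilon^3$ in $[-1,1]$ with suitable shifted running extrema hitting $+1$ before $-1$; Corollary~\ref{roc1} bounds this probability by $C(\theta^\pm)\,\epsilon^3$ uniformly in the allowed configurations of $(\wu,\wo)$ coming from the three regularity types (bulk, $S$-regular, $I$-regular) permitted by Lemma~\ref{e1a}. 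Since the first factor $P(\tau^\star < T^{\epsilon,n}_{k+1})$ is trivially at most $1$, combining gives $P(A) \le C\epsilon^3$ provided the BMPE estimate can be transferred to the ERW.

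The main obstacle is precisely this transfer through Lemma~\ref{hitpr}, and it requires two verifications. First, the first-cookie environment on $\bint{x-\esrn,x+\esrn}$ at time $\tau^\star$ has been further modified by the walk's activity on $[\tenk,\tau^\star]$, so Assumption~\ref{asm:concat} for the sub-excursion from $y$ must be re-established; this is done by repeating the $\pi^\pm$-product-measure arguments used in the proof of Lemma~\ref{e1a} (namely Lemma~\ref{lifting1} and the exponential convergence of the cookie Markov chain to $\pi^\pm$ from \cite[(18)]{kpERWMCS}), giving back the required $n^{1/8}$-goodness and lifting/grounding properties outside an event of probability $O(\epsilon^3)$. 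Second, Lemma~\ref{hitpr} is stated for a walk at the center of its mesoscopic interval, but after $\tau^\star$ the walk sits at distance $\fl{\epsilon^4\sqrt{n}}$ from the left boundary. This asymmetry can be handled either by re-centering at the smaller mesoscopic scale $m' := \fl{\epsilon^4\sqrt{n}}$ and iterating Lemma~\ref{hitpr}(i) on a nested sequence of sub-excursions (tracking the accumulated $K\epsilon^3$ error from each application), or by observing that the proof of Lemma~\ref{hitpr}(i) depends on the BLP diffusion approximations of Section~\ref{sec:tb} and thus extends to the asymmetric setup $w_m \ne 1$, $b_m \ne 2$ with no essential change. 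Either route yields the desired bound $P(A) \le C\epsilon^3 \le C'\epsilon^{2.9}$.
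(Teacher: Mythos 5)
Your strategy — stop at $\tau^\star$, the first hitting of the near-boundary point $y=\xenk-\esrn+\fl{\epsilon^4\sqrt n}$, and then show the conditional probability of exiting on the far side is $O(\epsilon^3)$ — captures the correct heuristic, but the implementation you sketch does not follow from the paper's lemmas and the gaps you acknowledge are not mere verifications. The paper's actual proof is structurally different: it never conditions at $\tau^\star$ and never invokes Lemma~\ref{hitpr} at an off-center position. Instead, it runs a single BLP $V^+$ in the environment recorded at $\tenk$ (where the regularity is already established by the hypotheses of Lemma~\ref{e1a}), decomposes it as $V^+_j=\tilde V^+_{j-\fl{\epsilon^4\sqrt n}}+\tilde U^+_{j-\fl{\epsilon^4\sqrt n}}$, and shows the ``extra'' upcrossings $\tilde U^+$ die out quickly. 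This works because $\tilde U^+$ starts from the controlled value $V^+_{\fl{\epsilon^4\sqrt n}}\le\fl{\epsilon^{3.9}\sqrt n}$ and evolves in the environment created by the walk at time $\tau$ on $\bint{y+1,\xenk}$, which is $(n^{1/8},0)$-good by Lemma~\ref{18good} and hence approximates a BESQ$^0$ process. No lifting/grounding at $\tau$ needs to be reconstructed, and the walk is shown to return to $x-\esrn$ before even reaching $\xenk$, a stronger conclusion than the range bound.

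The two holes in your proposal are as follows. First, Assumption~\ref{asm:concat} at time $\tau^\star$ is not ``re-established by repeating the arguments of Lemma~\ref{e1a}.'' Lemma~\ref{e1a} establishes lifting/grounding at the mesoscopic lattice points $\fl{\epsilon\sqrt n}\Z$ at the stopping times $\tenk$, where the walk has traversed an entire mesoscopic interval and Lemma~\ref{lifting1} applies over a full interval of length $\esrn$. At $\tau^\star$ the walk has only moved $\esrn-\fl{\epsilon^4\sqrt n}$ in one direction, possibly with an arbitrary excursion pattern on $\bint{y,x+\esrn}$; the grounding property at the far endpoint $x+\esrn$ may or may not have been degraded depending on how far right the walk wandered in $[\tenk,\tau^\star]$, and your argument gives no control on this. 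Second, your ``asymmetric Lemma~\ref{hitpr} with $w_m\approx\epsilon^3$'' is not an innocuous extension: the proof of Lemma~\ref{hitpr}(i) for $\wu=0$ rests on Corollary~\ref{isBESQ2}, which asserts closeness in total variation of $m^{-1}V^+_{\fl{sm}}$ to a BESQ$^2$ law only for $s\in[\epsilon^5,1]$ and over a $V^+$ interval of length $\Theta(m)$; applied with the $V^+$ portion of the concatenation cut down to length $\epsilon^3 m$, the error $3\epsilon^3/2$ in that corollary is already of the order of the answer you are trying to produce, and nothing in Section~\ref{sec:tb} says the error improves proportionally. Your ``nested sub-excursion'' alternative at scale $m'=\fl{\epsilon^4\sqrt n}$ has an analogous difficulty plus an accumulation-of-errors issue, since each application of Lemma~\ref{hitpr} still costs $K\epsilon^3$.
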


\begin{proof}
Without loss of generality we shall assume that
$(I_{\tenk},X_{\tenk},S_{\tenk})=(m^{\epsilon,n}_k,0,M^{\epsilon,n}_k)$
for some integers $m^{\epsilon,n}_k\le 0$ and $M^{\epsilon,n}_k\ge 0$. Let
\[ \tau=\inf \{n\ge \tenk:\
|X_i|=\fl{\epsilon\sqrt{n}}-\fl{\epsilon^4\sqrt{n}}\}.\] Assume
for definiteness that
$X_\tau=-\fl{\epsilon\sqrt{n}}+\fl{\epsilon^4\sqrt{n}}$.
Heuristically, if subsequent to $\tau$ the walk $X$ were a simple
  symmetric random walk on spatial interval
  $\bint{-\fl{\epsilon\sqrt{n}},0}$ starting from
  $\fl{\epsilon^4\sqrt{n}}-\fl{\epsilon\sqrt{n}}$ then by gambler's
  ruin considerations $X$ would (outside of
probability of order $\epsilon^3$) hit $-\fl{\epsilon\sqrt{n}} $
before $0$. Given the nature of our problem, we recast this in terms
of upcrossings: outside of this order of probability we do not expect
an upcrossing to $0$ between time $\tau $ and
$T^{- \epsilon,n}_{k+1}$. Since the cookie environment
  equilibrates very fast, these simple heuristics happen to be almost
  correct.

  We will consider, as usual, upcrossings from
  $- \fl{\epsilon\sqrt{n}} +i$ made between times $T^{\epsilon,n}_{k}$
  and $T^{ -\epsilon,n}_{k+1}$. We will decompose these as the sum of
  upcrossings between $T^{\epsilon,n}_{k}$ and $\tau$ and
  ``additional'' upcrossings made afterwards. We will show that the
  number of these additional upcrossings (outside probability of order
  $\epsilon^{2.9}$) becomes small and stays small until it becomes $0$
  before $i= \fl{\epsilon\sqrt{n}}$. Below we denote by $C$ possibly
  different positive constants. 

{\em Step 1.}  Consider the BLP $V^+$ which starts with $0$ particles
in generation $0$ and uses the first cookie environment recorded at
time $\tenk$ on $\bint{-\fl{\epsilon \sqrt{n}}+1,0}$ for
generations $\bint{1,\fl{\epsilon\sqrt{n}}}$.  Denote by $\tilde{V}^+$
the same type of process, but let it instead use the first cookie
environment on
$\bint{-\fl{\epsilon\sqrt{n}}+\fl{\epsilon^4\sqrt{n}}+1,0}$ for
generations
$\bint{1,\fl{\epsilon\sqrt{n}}-\fl{\epsilon^4\sqrt{n}}}$. This process
gives the number of upcrossings from
$-\fl{\epsilon\sqrt{n}}+\fl{\epsilon^4\sqrt{n}}+i$ by time $\tau$.

We embed $\tilde{V}^+$ into $V^+$  and denote by $\tilde{U}^+$
  the number of ``additional'' upcrossings. Namely, we observe that
the number of particles in generation $j\ge \fl{\epsilon^4\sqrt{n}}$
of the process $V^+$ is equal to the number of particles of
$\tilde{V}^+$ in generation $j-\fl{\epsilon^4\sqrt{n}}$ plus the
number of particles of the process $\tilde{U}^+$ in generation
$j-\fl{\epsilon^4\sqrt{n}}$, where $\tilde{U}^+$ uses the environment
created by the ERW at time $\tau$ on
$\bint{[-\fl{\epsilon\sqrt{n}}+\fl{\epsilon^4\sqrt{n}}+1,0}$ for
generations $\bint{1,\fl{\epsilon\sqrt{n}}-\fl{\epsilon^4\sqrt{n}}}$.
In short,
  \begin{equation}
    \label{short}
    V^+_j=\tilde{V}^+_{j-\fl{\epsilon^4\sqrt{n}}}+\tilde{U}^+_{j-\fl{\epsilon^4\sqrt{n}}},\quad
    j\ge \fl{\epsilon^4\sqrt{n}}.
  \end{equation}
  Note that $\tilde{V}^+_0=0$ and $\tilde{U}^+$ starts with
  $V^+_{\fl{\epsilon^4\sqrt{n}}}$ particles in generation $0$.
Our goal is to show that with large probability the process $\tilde{U}^+$ dies out before $j=\esrn$.

{\em Step 2.}  We consider process $V^+$.  According to
Lemma~\ref{lifting1}, there is a $\delta_3>0$ such that with
probability at least $1-2\epsilon^3$
\begin{equation}
    \label{claim}
    V^+_j\ge \delta_3\sqrt{n},\quad\forall j\in
    \bint{\fl{\epsilon\sqrt{n}}-\fl{ \epsilon \delta_1\sqrt{n}},\fl{\epsilon\sqrt{n}}}.
  \end{equation}
The process $V^+$ is dominated by the process which starts with
$\fl{\epsilon^4\sqrt{n}}$ particles in generation $0$ and uses the
same environment and the same coin tosses. Using an appropriate
diffusion approximation (depending on whether
  $m^{\epsilon,n}_k\le -\fl{\epsilon\sqrt{n}}$ or
  $m^{\epsilon,n}_k\ge
  -\fl{\epsilon\sqrt{n}}+\fl{\epsilon^4\sqrt{n}}$) we can say that
with probability at least $1-C\epsilon^3$ for all sufficiently large
$n$ the process $V^+$ will have no more than
$\fl{\epsilon^{3.9}\sqrt{n}}$ particles in generation
$\fl{\epsilon^4\sqrt{n}}$, provided that $\epsilon$ was fixed
sufficiently small. That is outside of probability $C \epsilon ^3$
we have that
$V^+_{\fl{\epsilon^4\sqrt{n}}} = \tilde{U}^+_0 \le
\fl{\epsilon^{3.9}\sqrt{n}}.$

{\em Step 3.} We consider $\tilde{U}^+$. Our aim is to show that
outside of probability $C\epsilon ^{2.9}$ it
becomes small in time $\fl{\epsilon\sqrt{n}} /2$. By Step 2 and
  monotonicity of BLPs in the initial number of particles, it will be
sufficient to analyze the same process but starting
from a larger value $ \fl{\epsilon^{3.9}\sqrt{n}}$. We shall denote this process by
  $\hat{U}^+$. By Theorem \ref{da0.25}, for any fixed
  $\delta'>0$, processes
  ${\fl{\epsilon\sqrt{n}}}^{-1}\hat{U}^+_{\fl{\epsilon\sqrt{n}s}\wedge
    \sigma_{\fl{ \epsilon \delta ' \sqrt{n}}}},\ s \in [0, \frac12 ]$,
  $n\in\N$, converge in distribution as $n\to\infty$ to a
$\frac{\nu}{4}$BESQ$^0$ process $Y$ starting at $\epsilon^{2.9}$ and
stopped on hitting $\delta'$. By scaling and tail decay of
  extinction probabilities (see, for example, \cite[Lemma
  3.3]{kmLLCRW}),
  \[P(\sigma^Y_{\delta'}>1/2\mid Y(0)=\epsilon^{2.9})\le
    P(\sigma^Y_0>1/2\mid
    Y(0)=\epsilon^{2.9})=P(\sigma^Y_0>1/(2\epsilon^{2.9})\mid Y(0)=1)<
    C\epsilon^{2.9}.\] 
So for
any $\delta'>0$ and $n$ large enough,
$P( \sigma^{\tilde{U}^+}_{\fl{\delta'\sqrt{n}}} >
\fl{\epsilon\sqrt{n}}/2 )< C \epsilon^{2.9}$. We now fix $\delta' $ so
that $(\delta', \delta_3 / 2) $ are as $(\delta', \delta )$ for Lemma
\ref{dead}. We conclude that outside of probability $C\epsilon^{2.9}$
\[
\tilde{U}^+_k \le \hat{U}^+_k\le\delta_3 \sqrt{n}/2 \quad\mbox{ for } \quad \fl{\epsilon\sqrt{n}}/2 \le k \le \fl{\epsilon\sqrt{n}}-\fl{\epsilon^4\sqrt{n}}.
\]

{\em Step 4.} The last inequality, \eqref{claim} and \eqref{short}
imply that outside of probability $C\epsilon^{2.9}$, for $n$ large
\[\tilde{V}^+_k \ge \delta_3 \sqrt {n}/3\quad\text{for }\quad
  k \in \bint{\fl{\epsilon\sqrt{n}}- \fl{\delta_1 \epsilon \sqrt {n}}-
    \fl{\epsilon^4\sqrt{n}}, \fl{\epsilon\sqrt{n}}-
    \fl{\epsilon^4\sqrt{n}}}.\] This is enough to argue exactly
  as in the proof of Lemma \ref{e1a} that outside of probability
$C \epsilon ^{2.9} $ for $n$ large the first cookie environment on
this interval is $\delta_2 \epsilon \sqrt {n} $-grounding from the right
 at time $\tau$. Therefore, we can conclude
that with probability $1-C\epsilon^{2.9}$ the process $\tilde{U}^+$
will die out by generation
$\fl{\epsilon\sqrt{n}}-\fl{\epsilon^4\sqrt{n}}$ (assuming as we may
that $\delta_3/2 < \delta_2 \epsilon $) . In other words, after
hitting $-\fl{\epsilon\sqrt{n}}+\fl{\epsilon^4\sqrt{n}}$ the ERW will
hit $\fl{-\epsilon\sqrt{n}}$ before
$\fl{\epsilon\sqrt{n}}-\fl{\epsilon^4\sqrt{n}}$ (even before $0$) with
probability $1-C\epsilon^{2.9}$.
\end{proof}

\section{Coupling of the rescaled mesoscopic walk and 
  BMPE-walk}\label{sec:coup}

Let
$W^{\epsilon,n}_k=\frac{X_{\tenk}}{\fl{\epsilon\sqrt{n}}}$,
$k\in\Z_+$, be a re-scaled mesoscopic walk taking values in $\Z$ and
\[I^{\epsilon,n}_k=\min_{j\le
      \tenk}\frac{X^{\epsilon,n}_j}{\fl{\epsilon\sqrt{n}}},
    \quad S^{\epsilon,n}_k=\max_{j\le
      \tenk}\frac{X^{\epsilon,n}_j}{\fl{\epsilon\sqrt{n}}},\quad
    k\in\Z_+,\] be its running minimum and maximum respectively. The
walk $\{ (I^{\epsilon,n}_k,W^{\epsilon,n}_k,S^{\epsilon,n}_k)\}_{k\ge 0}$
is non-markovian. It depends on the ERW path in a random cookie
environment. To make it into a Markov process we have to retain some
information about the environment at each mesoscopic
step. Let \[\mu^{\epsilon,n,x}_k=
  \begin{cases}
    \eta,&\text{if }k=0\ \text{or }x\not\in\bint{I^{\epsilon,n}_k,S^{\epsilon,n}_k};\\
    \delta_i,&\text{where } i=R^x_{{\cal L}(\tenk,x)+1}\ \text{otherwise}.
  \end{cases}
\] In words, for $k=0$ or if a site has not been visited before time
$\tenk$ we set the distribution of the first cookie at that
site to $\eta$. For each site that has been visited before time
$\tenk$ we record the next state of the cookie Markov chain
(and, thus, fix the first cookie in the stack) at this site at
time $\tenk$.  Now the process
\[\{({\cal X}^{\epsilon,n}_k,\otimes_{x\in\Z}\,\mu^{\epsilon,n,x}_k)\}_{k\ge0}\coloneqq \{ ((I^{\epsilon,n}_k,W^{\epsilon,n}_k,S^{\epsilon,n}_k),\otimes_{x\in\Z}\,\mu^{\epsilon,n,x}_k)\}_{k\ge
  0}\] is a Markov process, since the information collected at each
step is sufficient to generate the next.

We now describe a coupling between ${\cal X}^{\epsilon,n}$ and a
{modified BMPE} $\txe\coloneqq (\tie,\twe,\tse)$ defined in
Section~\ref{mwalk}. For each $n$ we have to use a different version of
BMPE-walk, $\txen$, which is indicated by an additional superscript
$n$. In this coupling we will, in particular, address the filtration
$\cal{F}^{\epsilon,n}_k $ associated to our discrete time process.  We
take ${\cal X}^{\epsilon,n}$ as a primary object and use its randomness
(plus auxiliary, independent randomness) to define the coupling.  Our
description will detail how to construct $\txen$ in full but we will
talk of the coupling being ``broken'' for certain time indices.  This
term will signify that from this point the two processes are no longer
close (or that we do not expect them to be close).

Our goal is to couple ${\cal X}^{\epsilon,n}$, with the process
$\txen$ so that if the coupling is not broken by step $k$, then
${\cal X}^{\epsilon,n}_j=\txen_j$ for each $0 \leq j \leq k $.  We
repeat that in describing $\txen$, we must also describe the
filtration $\cal{F}^{\epsilon,n}_k $ to which it is adapted.  We will
certainly have that for each $k$ the filtration
$\cal{F}^{\epsilon,n}_k $ contains the $\sigma$-algebra generated by
our ERW up to time $\tenk$.  We will also suppose (after
enlarging the probability space if need be) that for each $k$,
$\cal{F}^{\epsilon,n}_k $ contains a number of i.i.d.\ uniform random
variables independent of the ERW $X$ and its cookie environment. These
uniform random variables will be used to generate the evolution of
$\txen$ once the coupling is broken: if the coupling is broken at step
$k$, then $\txen_{k+1}$ is generated corresponding to a BMPE with
initial conditions $\txen_k$ using these additional uniform random
variables. 

We begin with $k=0$ by setting $
{\cal X}^{\epsilon,n}_0=\txen_0=(0,0,0) 
$ and saying that  at step $k=0$ the coupling is unbroken.

\subsection{First step}

This step is for the coupling between ${\cal X}^{\epsilon,n}_1$ and
$\txen_1$ but it introduces ideas that will be used later in coupling
near extrema.  This step is special as it is the only step when both
extrema will change.

Recall that intervals $J_\ell$ were defined in \eqref{Jell}. We
compute for each $\ell$ the probabilities $p^n_{1,\ell}$ where
\begin{align*}
p^n_{1,\ell} &= P(A^n_\ell)\coloneqq P( W^{\epsilon,n}_1 = 1,  \ I^{\epsilon,n}_1 \in  J_\ell )\ \ \text{for }\ -\ell=1,2,\dots,L;\\
p^n_{1,\ell} &= P(A^n_\ell)\coloneqq P(  W^{\epsilon,n}_1 = -  1, \ S^{\epsilon,n}_1\in J_\ell )\ \ \text{for }\ \ell=1,2,\dots,L.
\end{align*}

We also compute the corresponding probabilities for a BMPE, $(I,W,S)$,
starting from $(0,0,0)$:
\begin{align*}
  q_{1,\ell}  &=  P( W(\tau(1,0,0,0))  = 1 ,\ I(\tau(1))  \in  J_\ell )\ \ \text{for }\  -\ell=1,2,\dots,L; \\
  q_{1,\ell}  &=  P( W(\tau(1,0,0,0))  = - 1 ,\ S(\tau(1))  \in  J_\ell )\ \ \text{for }\ \ell=1,2,\dots,L.
\end{align*}
The triple $(\tien_1,\twen_1,\tsen_1)$ is obtained by utilizing the maximal coupling of probability measures $\{ p^n_{1,\ell} \}_{0<|\ell|\le L}$ and $\{ q_{1,\ell} \}_{0<|\ell|\le L}$.
More precisely, if the event $A^n_\ell$ occurs for the ERW then with probability $1 \wedge \frac{q_{1,\ell}}{p^n_{1,\ell}} $ we let 
\[
 (\tien_1,\twen_1,\tsen_1) = 
(I^{\epsilon,n}_1,W^{\epsilon,n}_1,S^{\epsilon,n}_1) = 
\begin{cases}
 (I^{\epsilon,n}_1, 1,  1) & \text{if } -\ell = 1,2,\dots,L \\
 (-1,-1,S^{\epsilon,n}_1) & \text{if } \ell = 1,2,\dots,L, 
\end{cases}
\]
and if the above has not yet determined $(\tien_1,\twen_1,\tsen_1)$ (which is true with probability $\sum_{0<|\ell|\leq L} (q_{1,\ell}-p_{1,\ell}^n)_+$) then we use auxiliary independent randomness to determine $(\tien_1,\twen_1,\tsen_1)$ so that 
\[
 (\tien_1,\twen_1,\tsen_1)
= \begin{cases}
   (\frac{\ell+\frac{1}{2}}{L},1,1) & \text{if } -\ell=1,2,\dots,L \\
   (-1,-1,\frac{\ell-\frac{1}{2}}{L}) & \text{if } \ell =1,2,\dots,L, 
  \end{cases}
\quad \text{with probability } \frac{(q_{1,\ell} - p_{1,\ell}^n)_+}{\sum_{0<|\ell'|\leq L} (q_{1,\ell'}-p_{1,\ell'}^n)_+}. 
\]
Note that this coupling is such that
\begin{align*}
  P(\twen_1 = 1,\, \tsen_1 = 1, \, \tien \in J_\ell ) &= q_{1,\ell} && \text{for } -\ell = 1,2,\dots,L; \\
 \text{and }  P(\twen_1 = -1, \, \tien_1=-1, \, \tsen \in J_\ell ) &= q_{1,\ell} && \text{for } \ell = 1,2,\dots,L, 
\end{align*}
and such that ${\cal X}^{\epsilon,n}_1 = \txen_1$ 
with probability at least 
\[
 1-\sum_{0<|\ell|\le L}(p^n_{1,\ell}-q_{1,\ell})_+=1 - \frac12 \sum _{0<|\ell|\le L} | p^n_{1,\ell}-q_{1,\ell} |.
\]

\begin{defn}\label{broken}
  We say that the coupling is broken after step $j$ if
\begin{align}
  \label{broken1}
  &\txen_j\ne {\cal X}^{\epsilon,n}_j\ \ \text{or }\\ 
\label{broken2} 
&W^{\epsilon,n}_j-W^{\epsilon,n}_{j-1}=-1\ \ \text{and }\ \max_{T^{\epsilon,n}_{j-1}\le i<T^{\epsilon,n}_j} \left( X_i-X_{T^{\epsilon,n}_{j-1}} \right)  > \left( 1 - \epsilon ^ 3 \right) \fl{\epsilon\sqrt{n}} \ \ \text{or }
\\ 
\label{broken3} 
&W^{\epsilon,n} _j-W^{\epsilon,n}_{j-1}=1\ \ \text{and }\ \min_{T^{\epsilon,n}_{j-1}\le i<T^{\epsilon,n}_j} \left( X_i-X_{T^{\epsilon,n}_{j-1}} \right)  < \left( -1 + \epsilon ^ 3 \right) \fl{\epsilon\sqrt{n}}.
\end{align}
Once the coupling is broken it remains so subsequently.
\end{defn}
Note that conditions \eqref{broken2} and \eqref{broken3} ensure that the coupling is broken on the $j$-th step if the walk goes very close to the right (or left) endpoint of $\bint{X_{T^{\epsilon,n}_{j-1}}-\fl{\epsilon\sqrt{n}}, X_{T^{\epsilon,n}_{j-1}}+\fl{\epsilon\sqrt{n}}}$ but then ultimately reaches the left (or right) endpoint first. 
Therefore, if the coupling is unbroken after step $j$ then the environment in an
$\epsilon^3$-neighborhood of each integer point in the range of
$W^{\epsilon,n}$ up to time $j$ except for $W^{\epsilon,n}_j$ and
$W^{\epsilon,n}_{j-1}$ remains unchanged by the $j$-th step of the
walk $W^{\epsilon,n}$ and thus preserves any lifting or grounding
properties. Because of this, Lemma \ref{e1a} will allow us to get, with high probability, lifting and grounding properties at all sites of $\fl{\epsilon\sqrt{n}} \Z$ other than the position of the walk at time $\tenk$ (see Lemma \ref{nextbeta} below).

\subsection{Steps after the first}

We now pass to the coupling for the $k+1$-th step given that the
$k$-th step has been completed.  As already stated, if the coupling is
broken before or at step $k$, then
$(\tien_{k+1},\twen_{k+1},\tsen_{k+1})$ is chosen independently using
auxiliary uniform random variables independent of the cookie process.
So in the following we assume that the coupling is unbroken.  We note
that (unlike in the first step) 
 for $\epsilon $ fixed
$\{I^{\epsilon,n} _k ,S^{\epsilon,n} _k \}\not\subset (W^{\epsilon,n}_k
- 1, W^{\epsilon,n}_k+ 1)$.

\emph{Steps in the bulk}: We first give the coupling in the case
$\{I^{\epsilon,n} _k ,S^{\epsilon,n} _k\}\cap(W^{\epsilon,n}_k - 1,
W^{\epsilon,n}_k+ 1)=\emptyset$.  We let
\[
p^n_k  = P( W^{\epsilon,n}_{k+1} = W^{\epsilon,n}_k + 1 \mid \cal{F}^{\epsilon,n}_k ),
\]
and note that the corresponding  probability for the BMPE is exactly $\frac12$ since we are away from the extremes.  
If $W^{\epsilon,n}_{k+1} = W^{\epsilon,n}_k- 1$, then we
take
 \[(\tien_{k+1},\twen_{k+1},\tsen_{k+1}) =(I^{\epsilon,n}_k, W^{\epsilon,n}_k- 1 , S^{\epsilon,n}_k)\ \ \text{with
 probability}\ \ 1 \wedge \frac{1/2}{1-p^n_k}.\] 
If $W^{\epsilon,n}_{k+1} = W^{\epsilon,n}_k+ 1$, then we take 
\[(\tien_{k+1},\twen_{k+1},\tsen_{k+1}) =(I^{\epsilon,n}_k,W^{\epsilon,n}_k+1,S^{\epsilon,n}_k) \
  \ \text{ with probability} \ \ 1 \wedge \frac{1/2}{p^n_k}.\]
 If $(\tien_{k+1},\twen_{k+1},\tsen_{k+1})$ is undefined we use an auxiliary uniform random variable to define it so that it satisfies properties (i)-(iii) of Section~\ref{mwalk}.  Then we check if the coupling is broken (see Definition~\ref{broken}).

\emph{Steps at the boundary}: It remains to detail the coupling if
$\{I^{\epsilon,n}_k,S^{\epsilon,n}_k \}\cap (W^{\epsilon,n}_k -1,
W^{\epsilon,n}_k+1)\ne \emptyset$.  We suppose that
\[\{I^{\epsilon,n}_k,S^{\epsilon,n}_k \}\cap [W^{\epsilon,n}_k -1,
  W^{\epsilon,n}_k+1]=\{S^{\epsilon,n}_k\}\] and omit details for the
other case. For notational clarity and to emphasize the congruence
with the first step, we translate the space so that
$W^{\epsilon,n} _k=\twen_k=0$ and
$S^{\epsilon,n}_k=\tsen_k\in [0,1-\epsilon^3)$. 
 
We divide up $(-1,1 )$ into the same intervals
  \eqref{Jell} as in the first step and find $\ell_k$ such that
  $S^{\epsilon,n}_k\in J_{\ell_k}$. Note that since 
  $S^{\epsilon,n}_k<1-\epsilon^3$, we know that
  $J_{\ell_k+1}\subset(0,1)$. We shall join $J_{\ell_k}$ and
  $J_{\ell_k+1}$ to form a single interval which we shall again call
  $J_{\ell_k+1}$. Then we compute for $\ell\ge \ell_k+1$
\begin{align*}
  p^n _{k,\ell} &= P( W^{\epsilon,n}_{k+1}= -  1, \ S^{\epsilon,n}_{k+1}\in J_\ell  \mid \cal{F}^{\epsilon,n}_k  );\\
  q_{k,\ell}  &= P( W(\tau(1,-1,0,S^{\epsilon,n}_k))  = - 1,\ S(\tau(1,-1,0,S^{\epsilon,n}_k)) \in J_\ell ),
\end{align*}
where $(I,W,S)$ is a  BMPE with initial condition $(I_0,W_0,S_0) = (-1, 0,S^{\epsilon,n}_k)$. We also compute
\[
p^n_k = P(W^{\epsilon,n}_{k+1}=1 \mid \cal{F}^{\epsilon,n}_k)
\quad\text{and}\quad
q_k = P( W(\tau (1, -1,0,S^{\epsilon,n}_k))  = 1 ).
\]
If $W^{\epsilon,n}_{k+1}=W^{\epsilon,n}_k+1$, then we take 
$\twen_{k+1}=\twen_k+1$ with probability $ 1 \wedge \frac{q_k}{p^n_k}$, and in this case
\[
  (\tien_{k+1}, \twen_{k+1}, \tsen_{k+1})=(I^{\epsilon,n}_k,
  W^{\epsilon,n}_k+1, W^{\epsilon,n}_k+1).
\]
If $W^{\epsilon,n}_{k+1}=W^{\epsilon,n}_k-1$ and
$S^{\epsilon,n}_{k+1}\in J_\ell $, $\ell\in\bint{\ell_k+1,L}$, then
with probability $1 \wedge \frac{q_{k,\ell}}{p^n_{k,\ell}}$, we take
\[
 (\tien_{k+1},\twen_{k+1},\tsen_{k+1}) = (I^{\epsilon,n}_k , W^{\epsilon,n}_k-1, S^{\epsilon,n}_{k+1}).
\]
If at this point $(\tien_{k+1},\twen_{k+1},\tsen_{k+1})$ is undefined, we use the auxiliary independent randomness in a similar manner as on the first step. 
That is, we let $ (\tien_{k+1},\twen_{k+1},\tsen_{k+1})$ equal
\[
 (I^{\epsilon,n}_k, W^{\epsilon,n}_k - 1,\frac{\ell-\frac{1}{2}}{L}),
\quad \text{with probability } \frac{(q_{k,\ell} - p_{k,\ell}^n)_+}{(q_k-p_k^n)_+ + \sum_{0<|\ell'|\leq L} (q_{1,\ell'}-p_{1,\ell'}^n)_+}. 
\]
 for $\ell\in\bint{\ell_k+1,L}$ and 
\[
 (I^{\epsilon,n}_k, W^{\epsilon,n}_k + 1, W^{\epsilon,n}_k + 1 ), \quad \text{with probability } \frac{(q_k - p_k^n)_+}{(q_k-p_k^n)_+ + \sum_{0<|\ell'|\leq L} (q_{1,\ell'}-p_{1,\ell'}^n)_+}. 
\]

\subsection{The Coupling Theorem} Having constructed the coupling, we
can now state the main result of this section.
\begin{thm}\label{close}
  For every $K>0$ 
  \[ \lim_{\epsilon\to 0} \liminf_{n\to \infty}P\left(\inf\{k\ge 0:\ {\cal X}^{\epsilon,n}_k\ne\txen_k\}\ge\frac{K}{\epsilon^2}\right)=1.\]
\end{thm}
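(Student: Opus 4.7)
The plan is to prove the statement by an inductive argument along the $\epsilon^{-2}$ mesoscopic steps. For each $k\ge 0$, I would introduce a ``good event'' $\mathcal{G}_k$ requiring both that (a) the coupling is unbroken through step $k$, and (b) the first cookie environment on the mesoscopic interval $\bint{X_{\tenk}-\esrn,\, X_{\tenk}+\esrn}$ is regular for $(\delta_1,\delta_2)$ (in one of the three senses of Definitions \ref{bulkreg}--\ref{exreg}) with parameters $\delta_1,\delta_2$ chosen small enough that $\delta_1 \overset{\epsilon}{\sim}\delta_2$. I would fix these parameters once and for all depending only on $\epsilon$ (and $\theta^\pm$). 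With this setup, the theorem reduces to showing $P(\mathcal{G}_{\fl{K/\epsilon^2}})\to 1$ first as $n\to\infty$ and then as $\epsilon\to 0$.

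The heart of the argument is the one-step estimate
\[
   P\bigl(\mathcal{G}_{k+1}^c \cap \mathcal{G}_k\bigr)\le C\epsilon^{2.9},
\]
uniform in $k$ and in all cookie configurations consistent with $\mathcal{G}_k$, for $n$ sufficiently large. This bound decomposes into three contributions, each handled by a result from Sections~\ref{sec:tb}--\ref{sec:ei}. First, the maximal coupling used to construct $\txen_{k+1}$ from ${\cal X}^{\epsilon,n}_{k+1}$ fails with probability at most
\[
 \tfrac12\sum_{0<|\ell|\le L}\bigl|p^n_{k,\ell}-q_{k,\ell}\bigr| \;+\;\tfrac12\,|p^n_k-q_k|,
\]
which is at most $K\epsilon^3$ by the concatenation lemma (Lemma~\ref{hitpr}) together with Remark~\ref{concrem}: the regularity condition (b) is exactly what allows the hypotheses of that lemma to be verified, with the $\wu,\wo$ there playing the role of $I^{\epsilon,n}_k-W^{\epsilon,n}_k$ and $S^{\epsilon,n}_k-W^{\epsilon,n}_k$. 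Second, the ``environmental failure'' event that condition (b) does not propagate to step $k+1$ has probability at most $C\epsilon^3$ by Lemma~\ref{e1a}, since $\mathcal{G}_k$ provides precisely the regularity hypothesis needed to invoke that lemma. Third, the events \eqref{broken2} and \eqref{broken3} of Definition~\ref{broken} together have probability at most $C\epsilon^{2.9}$ by Lemma~\ref{Away}. (The initial $n^{1/8}$-goodness required to build (b) is supplied globally by Corollary~\ref{18good-cor}, together with $P(\mathcal{G}_0)\to 1$, which follows from the LLN for the i.i.d.\ $\eta$-environment and the analysis of the first coupling step.)

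A union bound then gives
\[
 P\bigl(\mathcal{G}_{\fl{K/\epsilon^2}}^c\bigr)\le P(\mathcal{G}_0^c)+\sum_{k=0}^{\fl{K/\epsilon^2}-1}P\bigl(\mathcal{G}_{k+1}^c\cap \mathcal{G}_k\bigr)\le P(\mathcal{G}_0^c)+CK\epsilon^{0.9},
\]
so $\limsup_{n\to\infty}P(\mathcal{G}_{\fl{K/\epsilon^2}}^c)\le CK\epsilon^{0.9}\to 0$ as $\epsilon\to 0$, which is stronger than the conclusion of the theorem. The main obstacle here is not any single estimate but the bookkeeping required to verify that condition (b) is maintained across the three geometric regimes (bulk, S-regular, I-regular), and in particular that after the coupling's $\epsilon^3$-avoidance of the opposite endpoint (built into Definition~\ref{broken}), the sites previously visited retain their lifting and grounding properties so that Lemma~\ref{e1a} can be applied anew at the next step. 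Because the walk only modifies the environment in an $\epsilon^3\fl{\epsilon\sqrt n}$-neighborhood of the endpoints it actually reaches, the unbroken-coupling clauses \eqref{broken2}--\eqref{broken3} are designed precisely so that regularity at previously untouched boundary zones is preserved; verifying this carefully is what makes the induction close.
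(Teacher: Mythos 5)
Your proposal is correct and follows essentially the same approach as the paper: the paper defines $\tau^{\epsilon,n}$ (coupling break time) and $\beta^{\epsilon,n}$ (environmental regularity break time), so your event $\mathcal{G}_k$ is precisely $\{\tau^{\epsilon,n}>k,\ \beta^{\epsilon,n}>k\}$, and your three-part decomposition of the one-step failure probability (maximal-coupling mismatch controlled by Lemma~\ref{hitpr} and Remark~\ref{concrem}; failure of lifting/grounding propagation controlled by Lemma~\ref{e1a}; near-miss events \eqref{broken2}--\eqref{broken3} controlled by Lemma~\ref{Away}, plus the $n^{1/8}$-goodness supplied by Corollary~\ref{18good-cor}) matches the paper's Lemmas~\ref{nextbeta} and \ref{nexttau} exactly, including the dominant $\epsilon^{2.9}$ rate from Lemma~\ref{Away}. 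The only superficial difference is bookkeeping: you sum $P(\mathcal{G}_{k+1}^c\cap\mathcal{G}_k)$ directly and apply a union bound, while the paper telescopes conditional probabilities to get the lower bound $(1-C\epsilon^{2.9})^{k+1}$; these are interchangeable, and your unconditional form has the minor advantage of not needing to divide by $P(\mathcal{G}_{k-1})$ when absorbing the non-uniform-in-$k$ rate at which $P((A^{\epsilon,n}_k)^c)\to 0$ from Corollary~\ref{18good-cor}. Both arguments require $n\ge n_0(k,\epsilon)$ with $n_0$ depending on $k$, which is harmless since only $\fl{K/\epsilon^2}$ steps are taken, and you correctly allow for this in your phrasing.
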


\begin{proof}
  Fix an arbitrary $K>0$. We shall make a list of conditions on the
  ERW path and on the first cookie environments at each mesoscopic
  step which will ensure that the coupling is preserved with high
  probability. These conditions involve two additional parameters
  $\delta_1,\delta_2>0$ which will depend only on $\epsilon$ and which
  we shall choose later. For now it is enough to say that 
    $\delta_1$ and $\delta_2$ are chosen so that
    $\delta_1 \overset{\epsilon}{\sim} \delta_2$ and are small enough
    so that Lemmas \ref{hitpr}, \ref{e1a} and \ref{Away} can be
    applied. 
 The conditions to
  be satisfied at each step $j$ are as follows.
  \begin{enumerate}[(Ei)]
  \item The interval
    $\bint{ \fl{\epsilon\sqrt{n}}(W^{\epsilon,n}_j-1),\fl{\epsilon\sqrt{n}}(W^{\epsilon,n}_j-1)+\fl{\epsilon \delta_1 \sqrt{n}}}$
    is $\delta_2 \epsilon\sqrt{n}$-lifting from the left and $\delta_2 \epsilon\sqrt{n}$-grounding from
    the right.
  \item The interval
    $\bint{\fl{\epsilon\sqrt{n}} (W^{\epsilon,n}_j+1)-  \fl{\epsilon \delta_1 \sqrt{n}},\fl{\epsilon\sqrt{n}} (W^{\epsilon,n}_j+1)}$
    is $\delta_2 \epsilon\sqrt{n}$-lifting from the right and $\delta_2 \epsilon\sqrt{n}$-grounding from
    the left.
 \item The first cookie environment is $(n^{1/8},\nu/2-1)$-good on
$\bint{\fl{\epsilon\sqrt{n}}(I^{\epsilon,n}_j\vee (W^{\epsilon,n}_j-1)) ,\fl{\epsilon\sqrt{n}}W^{\epsilon,n}_j}$
 and is $(n^{1/8},0)$-good on $\bint{\fl{\epsilon\sqrt{n}}W^{\epsilon,n}_j, \fl{\epsilon\sqrt{n}}( (W^{\epsilon,n}_j+1)\wedge S^{\epsilon,n}_j) }$. 
   \end{enumerate}
   We remark that for $j=0$ the condition (Eiii) is vacuous, and we
   shall agree that it automatically holds.  Let
  \[\beta^{\epsilon,n}\coloneqq \inf\{j\ge 0:\
    \text{at least one of conditions (Ei)-(Eiii) above 
      does not hold for }j\}\]
  and $\tau^{\epsilon,n}$ be the step at which the coupling breaks down, i.e.\ 
  \[\tau^{\epsilon,n}\coloneqq \inf\{j\ge 1:\ \text{at least one of
      \eqref{broken1}-\eqref{broken3} does not hold for }j\}.\] We
  have also agreed that at time $0$ the coupling is unbroken.  
  This
  implies that $\tau^{\epsilon,n}>0$. Using this notation we can say
  that
  \begin{equation}\label{lb1}
    P\left(\inf\{k\ge 0:\ {\cal X}^{\epsilon,n}_k\ne\txen_k\}>\frac{K}{\epsilon^2} \right)\ge P\left( \tau^{\epsilon,n}>\frac{K}{\epsilon^2},\,
      \beta^{\epsilon,n}>\frac{K}{\epsilon^2}\right).
  \end{equation}
  
  As a first step toward controlling the probability on the right, we
  need the following lemmas
\begin{lemma}\label{nextbeta}
  There exists a constant $\Cl{ze}>0$ such that for every $\epsilon>0$ and $n$ large enough that 
  \[
   P\left(\text{Conditions (Ei) and (Eii) hold for $j=k$} \mid \tau^{\epsilon,n} > k-1, \beta^{\epsilon,n} > k-1\right) 
   \geq 1 - \Cr{ze} \epsilon^3, 
  \]
 for all $k\geq 1$. 
\end{lemma}
\begin{proof}
If $\tau^{\epsilon,n}>k-1$ and $\beta^{\epsilon,n} > k-1$, then the remaining first cookie environment on $\llbracket X_{T^{\epsilon,n}_{k-1}}-\fl{\epsilon \sqrt{n}}, X_{T^{\epsilon,n}_{k-1}}-\fl{\epsilon \sqrt{n}} \rrbracket$ is ``regular'' as defined in the Definitions \ref{bulkreg} and \ref{exreg}. Thus the conclusion of Lemma \ref{nextbeta} follows directly from Lemma \ref{e1a}.
\end{proof}

 \begin{lemma}\label{nexttau}
 There exists a constant $\Cl{on}>0$ such that for every $\epsilon>0$ there is an $n_0=n_0(\epsilon,\delta_1(\epsilon),\delta_2(\epsilon))$ such that for all  $n\geq n_0$ 
and all $k\in\N$
  \begin{equation}
   P\left(\tau^{\epsilon,n}>k\mid \beta^{\epsilon,n}>k-1,\,\tau^{\epsilon,n}>k-1 \right)\ge 1-\Cr{on} \epsilon^{2.9}. \label{c}
  \end{equation}
 \end{lemma}

\begin{proof}
 The validity of inequality \eqref{c} has to be checked for three
  different cases:
  \begin{enumerate}[(1)]
\item the first step,
  i.e.\ $k=1$; 
\item  $k\ge 2$ and
  $\{I^{\epsilon,n}_{k-1},S^{\epsilon,n}_{k-1}\}\cap[W^{\epsilon,n}_{k-1}-1,W^{\epsilon,n}_{k-1}+1]=\emptyset$;
\item  $k\ge 2$ and
  $\{I^{\epsilon,n}_{k-1},S^{\epsilon,n}_{k-1}\}\cap[W^{\epsilon,n}_{k-1}-1,W^{\epsilon,n}_{k-1}+1]\ne
  \emptyset$.
\end{enumerate}

\noindent{\em Case (1).} Let $k=1$. Recall that $\tau^{\epsilon,n}>0$
and we start with ${\cal X}^{\epsilon,n}_0=\txen_0=(0,0,0)$ and an
i.i.d.\ cookie environment with the marginal distribution $\eta$. The
probability that the coupling breaks down at the first step is bounded
above by
\begin{equation*}
\frac12\max_{0<|\ell|\le L}|q_{1,\ell}-p^n_{1,\ell}|+P(W^{\epsilon,n}_1=1,\, I^{\epsilon,n}_1<-1+\epsilon^3)+P(W^{\epsilon,n}_1=-1,\, S^{\epsilon,n}_1 > 1- \epsilon^3).
\end{equation*}
We started with a product measure, so all conditions of the
concatenation lemma (Lemma~\ref{hitpr}) and Lemma \ref{Away}
hold. Therefore, there exists an
$n_1 = n_1(\epsilon,\delta_1(\epsilon),\delta_2(\epsilon))$ such that
all the terms above are bounded by a constant multiple of
$\epsilon^{2.9}$ for $n\geq n_1$.  Thus, \eqref{c} is satisfied in
this case with $n_0=n_1$ for some $\Cr{on}=C_{6,1}$.

\medskip

\noindent{\em Case (2).} Let $k\ge 2$, $\beta^{\epsilon,n}>k-1$,
$\tau^{\epsilon,n}>k-1$, and
$\{I^{\epsilon,n}_{k-1},S^{\epsilon,n}_{k-1}\}\cap[W^{\epsilon,n}_{k-1}-1,W^{\epsilon,n}_{k-1}+1]=\emptyset$. Then the probability that the coupling breaks down at step $k$ does not exceed 
\[
\left|p^n_k-\frac12\right|
+P(W^{\epsilon,n}_k-W^{\epsilon,n}_{k-1}=1,\, I^{\epsilon,n}_k - W^{\epsilon,n}_{k-1} < - 1 + \epsilon^3)
+P(W^{\epsilon,n}_k-W^{\epsilon,n}_{k-1}=-1,\, S^{\epsilon,n}_k - W^{\epsilon,n}_{k-1} > 1 - \epsilon^3).
\]
Since the coupling hasn't broken by the $(k-1)$-th step, the remaining first cookie environment in $\bint{X_{T^{\epsilon,n}_{k-1}} - \fl{\epsilon\sqrt{n}}, X_{T^{\epsilon,n}_{k-1}} + \fl{\epsilon\sqrt{n}} }$ satisfies the conditions of the Lemma~\ref{hitpr} and \ref{Away}. Thus, the above sum does not exceed $C_{6,2}\epsilon^{2.9}$ for all $n\ge n_2(\epsilon,\delta_1(\epsilon),\delta_2(\epsilon))$, where $C_{6,2}$ does not depend on either $\epsilon$ or $k\ge 2$. 

\medskip

\noindent{\em Case (3).} Let $k\ge 2$, $\beta^{\epsilon,n}>k-1$,
$\tau^{\epsilon,n}>k-1$, and
$\{I^{\epsilon,n}_{k-1},S^{\epsilon,n}_{k-1}\}\cap[W^{\epsilon,n}_{k-1}-1,W^{\epsilon,n}_{k-1}+1]=\{S^{\epsilon,n}_{k-1}\}$. The other case is symmetric and we shall not give details.

Under the above assumptions, the probability that the coupling breaks
down at step $k$ does not exceed
\begin{multline*}
  \frac12|p^n_k-q_k|+\frac12\max_{\ell_k+1\le \ell\le L}|q_{k,\ell}-p^n_{k,\ell}|\\ 
+P(W^{\epsilon,n}_k-W^{\epsilon,n}_{k-1}=1,\, I^{\epsilon,n}_k - W^{\epsilon,n}_{k-1} < - 1 + \epsilon^3)
+P(W^{\epsilon,n}_k-W^{\epsilon,n}_{k-1}=-1,\, S^{\epsilon,n}_k - W^{\epsilon,n}_{k-1} > 1 - \epsilon^3).
\end{multline*}
Again, since the coupling has not yet been broken we can apply Lemmas \ref{hitpr} and \ref{Away} to conclude that this
sum does not exceed $C_{6,3}\epsilon^{2.9}$
 for all
$n\ge n_3(\epsilon,\delta_1(\epsilon),\delta_2(\epsilon))$, where
$C_{6,3}$ does not depend on either $\epsilon$ or $k\ge 2$.

This completes the proof of \eqref{c} with $\Cr{on} = \max\{C_{6,1},C_{6,2},C_{6,3}\}$
and $n_0 = \max\{n_2,n_3\}$ for $k\geq 2$. 
\end{proof}

We will next use Lemmas \ref{nextbeta} and \ref{nexttau} obtain a lower bound on \eqref{lb1}. In particular, we will show that for every $\epsilon >0$ and every $n\geq n_0' = n_0'(\epsilon,\delta_1(\epsilon),\delta_2(\epsilon),k)$ we have 
\begin{equation}\label{geolb}
 P\left( \tau^{\epsilon,n}>k, \, \beta^{\epsilon,n}>k \right) 
 \geq (1-\Cl{two} \epsilon^{2.9})^{k+1}, \qquad \text{where } \Cr{two} = \Cr{ze}+\Cr{on}+1. 
\end{equation}
We will prove \eqref{geolb} by induction. 

\noindent{\em Base case: $k=0$}. Since $\tau^{\epsilon,n}>0$ by definition and since (Eiii) is vacuous at step 0, we need only check that conditions (Ei) and (Eii) hold. Using Lemmas \ref{liftIID} and \ref{sm0}, and choosing $\delta_1$ and $\delta_2$ appropriately (depending on $\epsilon$) we have that $P\left( \tau^{\epsilon,n}>0, \, \beta^{\epsilon,n}>0 \right) > 1-\epsilon^3$ for all $n$ large. 

\noindent{\em Induction step: $k\geq 1$}. We will assume that \eqref{geolb} holds for $k-1$. 
Next, first of all that 
\begin{align}
 &P\left( \tau^{\epsilon,n}>k, \, \beta^{\epsilon,n}>k \right) \nonumber \\
 &= P\left( \tau^{\epsilon,n}>k-1, \, \beta^{\epsilon,n}>k-1 \right) P\left( \tau^{\epsilon,n}>k, \, \beta^{\epsilon,n} > k \mid  \tau^{\epsilon,n}>k-1, \, \beta^{\epsilon,n}>k-1 \right) \nonumber \\
 &\geq P\left( \tau^{\epsilon,n}>k-1, \, \beta^{\epsilon,n}>k-1 \right) \nonumber \\
  &\quad \times \left\{ P\left( \tau^{\epsilon,n} > k \mid \tau^{\epsilon,n}>k-1, \, \beta^{\epsilon,n}>k-1 \right) + P\left(\beta^{\epsilon,n}>k \mid \beta^{\epsilon,n}>k-1, \tau^{\epsilon,n} > k-1 \right) - 1 \right\} \nonumber \\
  &\geq (1-\Cr{two} \epsilon^{2.9})^{k} \left\{ - \Cr{on} \epsilon^{2.9} + P\left(\beta^{\epsilon,n}>k \mid \beta^{\epsilon,n}>k-1, \tau^{\epsilon,n} > k-1 \right) \right\}, \label{induc1}
\end{align}
where the last inequality holds by the induction assumption and Lemma
\ref{nexttau} for $n$ large enough (depending on $\epsilon$ and $k$). 
  For the last probability in the braces on the
right, Lemma \ref{nextbeta} controls the conditional probability that
conditions (Ei) and (Eii) hold and Corollary \ref{18good-cor} controls
the (unconditional) probability that condition (Eiii) holds. More
precisely, since the event $A^{\epsilon,n}_k$ in the statement of
Corollary \ref{18good-cor} implies that condition (Eiii) holds then
for $n$ large enough
\begin{align*}
 P\left(\beta^{\epsilon,n}>k \mid \beta^{\epsilon,n}>k-1, \tau^{\epsilon,n} > k-1 \right) 
 &\geq 1 - \Cr{ze} \epsilon^3 - P\left( (A^{\epsilon,n}_k)^c \mid \beta^{\epsilon,n}>k-1, \tau^{\epsilon,n} > k-1 \right) \\ 
 &\geq 1 - \Cr{ze} \epsilon^3 - \frac{ P\left( (A^{\epsilon,n}_k)^c \right) }{ P\left( \beta^{\epsilon,n}>k-1, \tau^{\epsilon,n} > k-1 \right)  } \\
 &\geq 1 - \Cr{ze} \epsilon^3 - \frac{ P\left( (A^{\epsilon,n}_k)^c \right) }{ (1-\Cr{two}\epsilon^3)^{k}  }, 
\end{align*}
and since $  P\left( (A^{\epsilon,n}_k)^c \right) \to 0$ as $n\to \infty$ (by Corollary \ref{18good-cor}) it follows that the right side is larger than 
$(1-(\Cr{ze}+1)\epsilon^{3})\geq (1-(\Cr{ze}+1)\epsilon^{2.9})$ for $n$ large enough (again depending on $k$ and $\epsilon$). 
Applying this to \eqref{induc1} finishes the proof of \eqref{geolb}. 

Finally, applying \eqref{geolb} to \eqref{lb1} we obtain that 
\[
 \lim_{\epsilon\to 0}\liminf_{n\to\infty}  P\left(\inf\{k\ge 0:\ {\cal X}^{\epsilon,n}_k\ne\txen_k\}>\frac{K}{\epsilon^2} \right)
 \geq \lim_{\epsilon\to 0} (1-\Cr{two}\epsilon^{2.9})^{\fl{K/\epsilon^2}+1} = 1. 
\]
This completes the proof of Theorem \ref{close}.
\end{proof}

\section{Time control and the proof of Theorem~\ref{main}}\label{sec:fin}

The previous section established that the embedded process
$\{ W^{\epsilon,n}_k \}_{k \geq 0}$ is close to a modified
BMPE walk. From Section~\ref{sec:disc} we know that modified walks
converge to BMPE. To complete the proof of Theorem \ref{main} we just
have to show a law of large numbers for the variables
$T^{\epsilon , n} _k,\ k\ge 0$.  
\begin{lemma}
  \label{time}
  For each $K,h > 0$ there is an $\epsilon_0 > 0 $ such that for all $\epsilon < \epsilon_0 $ and all $ n \geq n_0( \epsilon) $
\[
P \left( \sup_{k<\epsilon^{-2}K}  \left|T^{\epsilon, n} _k -  \frac{\nu}{2}\,k n\epsilon^2 \right|    >  h n  \right)  <  h.
\]
\end{lemma}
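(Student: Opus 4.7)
The plan is to decompose $\tenk=\sum_{j=1}^k\Delta_j$ with $\Delta_j:=T^{\epsilon,n}_j-T^{\epsilon,n}_{j-1}$, let $\mathcal{F}^{\epsilon,n}_{j-1}$ denote the $\sigma$-field generated by the ERW up to $T^{\epsilon,n}_{j-1}$, and perform the standard martingale decomposition
\[
\tenk-\tfrac{\nu}{2}kn\epsilon^2=\underbrace{\sum_{j=1}^k\bigl(E[\Delta_j\mid\mathcal{F}^{\epsilon,n}_{j-1}]-\tfrac{\nu}{2}n\epsilon^2\bigr)}_{B_k}\ +\ \underbrace{\sum_{j=1}^k\bigl(\Delta_j-E[\Delta_j\mid\mathcal{F}^{\epsilon,n}_{j-1}]\bigr)}_{M_k},
\]
and bound $\sup_{k\le K/\epsilon^2}|B_k|$ and $\sup_{k\le K/\epsilon^2}|M_k|$ separately by $hn/3$ with probability at least $1-h/3$ each. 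All estimates are carried out on the ``good'' event $G$ on which, at every step $j\le K/\epsilon^2$, the cookie environment at time $T^{\epsilon,n}_{j-1}$ is regular in the sense of Definitions~\ref{bulkreg}--\ref{exreg} with the parameters chosen in Section~\ref{sec:coup} and the coupling of Theorem~\ref{close} has not broken. By the results of Section~\ref{sec:ei} and Theorem~\ref{close}, $P(G^c)<h/3$ once $\epsilon$ is sufficiently small and $n\ge n_0(\epsilon)$.

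The crux is the asymptotic evaluation of $E[\Delta_j\mid\mathcal{F}^{\epsilon,n}_{j-1}]$. On $G$, using the BLP representation of directed edge local times together with the diffusion approximations of Section~\ref{sec:tb} (Theorem~\ref{da0.25}, Corollary~\ref{da0.25dead}, and the concatenation Lemma~\ref{hitpr}), I will show that for \emph{bulk} steps---those with $(W^{\epsilon,n}_{j-1}-1,W^{\epsilon,n}_{j-1}+1)\subseteq(I^{\epsilon,n}_{j-1},S^{\epsilon,n}_{j-1})$---the conditional law of $\Delta_j/(n\epsilon^2)$ converges as $n\to\infty$ to the exit time of a standard Brownian motion from $(-1/a,1/a)$, where $a^2=1-\theta^+-\theta^-=2/\nu$ (here BMPE coincides with Brownian motion in the interior of its range, and the ERW converges after rescaling to $a$ times a BMPE). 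This Brownian exit time has expectation $1/a^2=\nu/2$, so upgrading the weak convergence to convergence of first moments gives
\[
E[\Delta_j\mid\mathcal{F}^{\epsilon,n}_{j-1}]\,\mathbf{1}_G = \bigl(1+o(1)\bigr)\,\tfrac{\nu}{2}\,n\epsilon^2
\]
uniformly over bulk steps. For \emph{boundary} steps (where $W^{\epsilon,n}_{j-1}$ is within distance $1$ of a new extremum), $E[\Delta_j\mid\mathcal{F}^{\epsilon,n}_{j-1}]\,\mathbf{1}_G$ may differ but remains $O(n\epsilon^2)$ by the same arguments. The number of boundary steps among the first $K/\epsilon^2$ is at most the range of $W^{\epsilon,n}$, which by the coupling with the BMPE walk is $O(\sqrt{K}/\epsilon)$ on $G$, for a total contribution $O(\sqrt{K}\,n\epsilon)<hn/6$ once $\epsilon$ is small. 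Hence $\sup_{k\le K/\epsilon^2}|B_k|\,\mathbf{1}_G<hn/3$ for $n$ large.

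For the martingale part, the same uniform second-moment bound $E[(\Delta_j/(n\epsilon^2))^2\mid\mathcal{F}^{\epsilon,n}_{j-1}]\,\mathbf{1}_G\le C$ yields $\Var[\Delta_j\mid\mathcal{F}^{\epsilon,n}_{j-1}]\,\mathbf{1}_G\le C(n\epsilon^2)^2$, so Doob's $L^2$-maximal inequality gives
\[
E\Bigl[\sup_{k\le K/\epsilon^2}M_k^2\,\mathbf{1}_G\Bigr]\ \le\ 4\sum_{j\le K/\epsilon^2}E\bigl[\Var[\Delta_j\mid\mathcal{F}^{\epsilon,n}_{j-1}]\,\mathbf{1}_G\bigr]\ \le\ 4CK\,n^2\epsilon^2,
\]
and Markov's inequality then yields $P(\sup_k|M_k|>hn/3,\,G)\le 36CK\epsilon^2/h^2<h/3$ for $\epsilon$ small. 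Combining with the bias bound and $P(G^c)<h/3$ completes the argument.

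The hard part will be the uniform $L^2$ bound on $\Delta_j/(n\epsilon^2)$, which is used both to upgrade weak convergence to convergence of means and to bound the conditional variance. The diffusion approximations of Section~\ref{sec:tb} deliver only convergence in distribution, and here I need uniformity in the admissible starting triple $(I^{\epsilon,n}_{j-1},W^{\epsilon,n}_{j-1},S^{\epsilon,n}_{j-1})$ (which lies in a region of diameter $O(1/\epsilon)$) and in the regular cookie environments appearing on $G$. I expect to obtain this uniform integrability by a truncation argument, showing $P(\Delta_j>Mn\epsilon^2\mid\mathcal{F}^{\epsilon,n}_{j-1})\,\mathbf{1}_G\le Ce^{-cM}$ for $M$ large, via the BLP tail estimates of Lemmas~\ref{small1}, \ref{maxL}, and~\ref{dead} together with the spatial control of a single mesoscopic step from Lemma~\ref{Away}.
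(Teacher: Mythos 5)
Your martingale decomposition $\tenk - \tfrac{\nu}{2}kn\epsilon^2 = B_k + M_k$ is a genuinely different route from the paper's. The paper splits the time sum by bulk vs.\ extremal indices and handles the bulk part by coupling the (nearly i.i.d.) bulk increments with an honest i.i.d.\ sequence via the $\mathcal{H}_{\delta,\epsilon}$-chain machinery (Definition~\ref{Hde}, Lemma~\ref{lemH}, Corollary~\ref{corH}), then applies a maximal inequality for the i.i.d.\ proxy. Your plan instead puts everything into predictable plus martingale parts and uses Doob's inequality. This is a cleaner template, but it shifts all the difficulty into exactly the place you identify — controlling $E[\Delta_j \mid \mathcal{F}^{\epsilon,n}_{j-1}]$ and $\mathrm{Var}[\Delta_j \mid \mathcal{F}^{\epsilon,n}_{j-1}]$ \emph{uniformly} over the regular environments on $G$, and there are two substantive points to fix.

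First, the claim that $E[\Delta_j \mid \mathcal{F}^{\epsilon,n}_{j-1}]\mathbf 1_G = (1+o(1))\tfrac{\nu}{2} n\epsilon^2$ ``uniformly over bulk steps'' and ``for $n$ large'' is not quite right. The paper's characterization of the conditional law (Proposition~\ref{BESQ2int}, which you would ultimately need but do not cite) says only that every subsequential limit of $\Sigma^n_i$ has law within $8\epsilon^3$ in total variation of $\lambda_0$, after allowing an $\epsilon^8$ perturbation. This $O(\epsilon^3)$ TV error does \emph{not} vanish as $n\to\infty$ for fixed $\epsilon$; it comes from the lifting/grounding/$(n^{1/8},\cdot)$-good regularity conditions, which only control the environment up to errors of that order. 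Combining the TV bound with the exponential tail (Corollary~\ref{C0} and Lemma~\ref{lem0n}, which are the precise tools for your uniform integrability step), the per-step bias is of order $\epsilon^3 \log(1/\epsilon)\cdot n\epsilon^2$, so the total bulk bias over $K/\epsilon^2$ steps is $O(K n \epsilon^3 \log(1/\epsilon))$, which is indeed $<hn/3$ once $\epsilon$ is small — but the decay is in $\epsilon$, not in $n$. Your sketch suggests you would look for the error to vanish with $n$, which would lead you in circles.

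Second, a caution on the boundary steps: to conclude $|\mathcal{S}\cup\mathcal{I}| = O(\sqrt K/\epsilon)$ on $G$ you are implicitly invoking both the coupling of Theorem~\ref{close} and the almost-sure statement for BMPE walks (equation~\eqref{rext} in the Appendix, proved via the geometric occupation counts $\chi_m$); you should be careful that the range of the BMPE walk being $O(\sqrt K/\epsilon)$ gives a bound on the \emph{expected} number of boundary steps, and a probability bound via Markov, which is what you actually need. Also, putting $\mathbf 1_G$ inside a Doob $L^2$ maximal inequality as written is not quite legitimate unless you stop the martingale at the first step $G$ fails; this is standard but should be stated. With these repairs your argument closes, and it has the virtue of being a more textbook-shaped deduction; what the paper's $\mathcal{H}$-chain approach buys in exchange is explicit, propagated bookkeeping of the TV error (rather than re-deriving it through a truncation/uniform-integrability argument), which is why the paper's error constants are clean multiples of $\epsilon^3$.
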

Let us assume for the moment that this lemma holds and give a proof of Theorem~\ref{main}.

\begin{proof}[Proof of Theorem~\ref{main}]
From Theorem~\ref{close} we know that for all $K,\delta>0$ there is an $\epsilon_0>0$ such that for all $\epsilon<\epsilon_0$ and all sufficiently large $n$, with probability at least $1-\delta$ we have
\[
(I^ {\epsilon,n}_k  ,W^ {\epsilon,n} _k, S^  {\epsilon,n}_k)=(\tien_k  ,\twen_k, \tsen_k),\quad 0\le k< \epsilon^{-2}K,
\]
where $(\tien_k  ,\twen_k, \tsen_k)$ is a modified BMPE-walk.  This and Corollary \ref{dfccor} imply that there
exists a family of BMPEs $\{ W^{n , \epsilon } (t) \}_{t \geq 0}$ such
that for all $T,\delta > 0$ there exists $\epsilon_0 > 0 $ such that
\begin{equation}
  \label{cp}
  \forall \epsilon\in(0,\epsilon_0)\ \  \exists n_0(\epsilon) \ \ \text{such that }\quad P \left( \sup_{t \leq T }  \big| \epsilon W^{\epsilon ,n}_{ \fl{\epsilon^{-2}t}}  -  W^{n ,\epsilon }(t)  \big|   >  \delta  \right)  <  \delta\quad\text{for all }n\ge n_0(\epsilon).
\end{equation}
To complete the proof it is enough to replace
$\epsilon W^{\epsilon ,n}_{ \fl{\epsilon^{-2}t}}$ with
$\hat{W}^{\epsilon , n}(t):= \epsilon W^{\epsilon , n}_{k_t}$ where $k_t=k_t(n,\epsilon)$ is such that
$ T^{\epsilon, n} _{k_t}\leq \frac{\nu}{2} tn < T^{\epsilon,
  n} _{k_t+1} $.  Indeed, for all large $n$ the process $X_{\fl{tn\nu/2}}/\sqrt{n}$ always stays within $\epsilon$ of $\epsilon X_{T^{\epsilon,n}_{k_t}}/\fl{\epsilon\sqrt{n}}=\epsilon W^{\epsilon,n}_{k_t}$, and if we know that
\begin{equation}
  \label{hat}
  P \left( \sup_{t \leq T }  \big| \hat{W}^{\epsilon , n}(t)  -  W^{n ,\epsilon }(t)  \big|   >  \delta  \right)  <  \delta\quad\text{for all }n\ge n_0(\epsilon),
\end{equation}
then we have the convergence claimed in Theorem~\ref{main}.\footnote{Note that we are also using here the fact that $2/\nu = 1-\theta^+-\theta^-$ to get the scaling constant as in the statement of Theorem \ref{main}.} To see
that \eqref{hat} holds we simply note that
\[ \big| \hat{W}^{\epsilon , n}(t) - W^{n ,\epsilon }(t)
  \big|=|\epsilon W^{\epsilon,n}_{k_t}-W^{n,\epsilon}(t)|\le
  \big|\epsilon W^{\epsilon,n}_{k_t} - W^{n ,\epsilon }(k_t\epsilon^2)
  \big|+\big| W^{n,\epsilon}(k_t\epsilon^2)- W^{n ,\epsilon }(t)
  \big|,\] where both terms in the right hand side are controlled by \eqref{cp},
Lemma~\ref{time}, and path continuity of
BMPE.
\end{proof}

\begin{proof}[Proof of Lemma~\ref{time}]
  Just as in the proof of Proposition \ref{DiscBMPE} we argue that the
  increments in the bulk are dominant and increments at extremes are
  negligible.  Thus in analyzing the bulk increments we must be more
  precise, whereas a reasonable bound on increments at the extremes
  will meet our purpose.

  To improve legibility, we drop $\epsilon $ and $n$ from the notation
  and write
  $H_i = T^{\epsilon, n} _i - T^{\epsilon, n} _{i-1} , i \in \N$.  We
  wish to use the law of large numbers for i.i.d. random variables but
  the $\{ H_i / n \}_{i \geq 1 } $ are neither identically distributed
  nor independent (even in the limit as $n$ tends to infinity).  As a
  first step to address this, we separate out the $H_i $ according to
  whether the walk is in the bulk or at an extreme at time
  $ T^{\epsilon, n}_{i-1}$.  Accordingly, we set ${\cal B}$ as the set
  of indices $i< K/\epsilon^2 $ such that
  $I_{T^{\epsilon, n}_{i-1}} + \fl{ \epsilon \sqrt{n} } \leq
  X_{T^{\epsilon, n}_{i-1}} \leq S_{T^{\epsilon, n}_{i-1}} - \fl{
    \epsilon \sqrt{n} }$, set ${\cal S}$ to be those
  $i < K/\epsilon^2 $ for which
  $ S_{T^{\epsilon, n}_{i-1}} < X_{T^{\epsilon, n}_{i-1}} + \fl{
    \epsilon \sqrt{n} }$, and ${\cal I}$ for the remainder, that is
  those $i<K/\epsilon^2 $ for which
  $I_{T^{\epsilon, n}_{i-1}}> X_{T^{\epsilon, n}_{i-1}} -\fl{ \epsilon
    \sqrt{n} } $ 
  .

  The random variables $\{ H_i/(n \epsilon^2)\}_{i \in {\cal B}}$ are
  still not proven to be i.i.d., even in a limit as $n$ tends to
  infinity.  But they are ``close'' to i.i.d. random variables whose
  law is that of the time for a variance $2/\nu$ Brownian motion,
  starting at $0$, to leave $(-1,1)$.

  To show our convergence it will be enough to show that
  $ \forall K, h\in(0,\infty)$ there exists $\epsilon_0 > 0 $ such
  that for all $\epsilon\in(0, \epsilon _0)$ and
  $n\geq n_0(\epsilon)$
 \begin{align}
   \label{num_ex}
     P &\left(   | {\cal{S} } \cup {\cal{I}} | > \frac{2  h}{\nu \epsilon^2}  \right) <  h \\
   \label{time_ex}
   P &\left( \sum_{i \in {\cal{S} } \cup {\cal{I}} } H_{i}   >   h n \right)  <   h,\quad\text{and}\\
   \label{time_b}
   P &\left(  \sup_{k<  \epsilon^{-2}K}  \bigg\vert \sum_{j\in \bint{1,k} \cap {\cal B} } \left( H_{j}  - \frac12\nu \epsilon ^ 2 n\right) \bigg\vert >   h n \right)  <   h.
 \end{align}
 We expect \eqref{num_ex} to hold since it should be the case that a
 negligible fraction of steps for the embedded process
 $W^{\epsilon,n}_k$ are at the extremes. To make this precise, note
 that with high probability using Theorems \ref{dfc} and \ref{close}
 we can couple the embedded process with a BMPE walk.  Then
 \eqref{num_ex} follows by showing that almost surely a BMPE walk
 spends a negligible fraction of time at its extremes. This fact about
 BMPE walks is proved in \eqref{rext} in the Appendix. It remains now
 to prove \eqref{time_ex} and \eqref{time_b}.

{\em Step 1.} We begin with \eqref{time_ex}.  We will show the inequality
 \[
 P \bigg( \sum_{i\in\cal{S} }  H_{i}   >   h  n\bigg)  <   h.
 \]
 The analogous inequality with $\cal{S} $ replaced by $\cal{I}$
   is proved similarly and so is not explicitly treated.  
 
 As before we introduce BLP $\{ Z^i_k\}_{k \geq 0}$ where $Z^i_k$ is
 the number of jumps from
 $X_{T^{\epsilon, n}_{i-1}} - \fl{\epsilon \sqrt{n} } + k $ to
 $X_{T^{\epsilon, n}_{i -1}} - \fl{\epsilon \sqrt{n} } + k +1$ in time
 interval $(T^{\epsilon, n}_{i -1}, T^{\epsilon,n}_{i,-}]$ where
$T^{\epsilon,n}_{i,-}$ is defined in \eqref{tenk}.
  (So with
 reasonable probability $T^{\epsilon,n}_{i,-} = T^{\epsilon, n}_{i } $
 and with reasonable probability it is definitely larger.)  If
 $T^{\epsilon,n}_{i,-} = T^{\epsilon, n}_{i } $ then
\begin{equation} \label{EQ2}
H_{i}  =   T^{\epsilon, n}_i  -   T^{\epsilon, n}_{i-1}  =  2 \sum _{k=1}^{2  \fl{\epsilon \sqrt{n}  }} Z^i_k  + \fl{\epsilon \sqrt{n}  },
 \end{equation}
 otherwise it is less than the right-hand side.
So it is enough to show that for $\epsilon < \epsilon _ 0 $ and $n > n_0(\epsilon) $
\begin{equation} \label{EQ3} P \bigg( \sum_{i\in \cal{S} } \sum _{k=1}
  ^{2 \fl{\epsilon \sqrt{n} }} Z^{i}_k >  h n \bigg) <  h.
 \end{equation}
 We write
 $\Sigma_i:= \sum _{k=1}^{2 \fl{\epsilon \sqrt{n} }} Z^i_k$,
 and we will use the trivial inequality
  \[
 \Sigma_i \le   2\fl{\epsilon\sqrt{n}}\max_{1\le k\le 2  \fl{\epsilon \sqrt{n}  }} Z^i_k,
 \]
 together with Corollary \ref{C0} to bound $\Sigma_i $ above.
For any $K'\in(0,\infty) $ and $
 \epsilon > 0 $, Corollary \ref{C0} implies that for $i \in \cal{S}$ (and $n$ sufficiently large), given $\cal{F}_{T^{\epsilon, n}_{i -1}} $, on the set that the coupling has not been broken,
 \[
 \Sigma_i \ind{\Sigma_i \le 2 K' \fl { \epsilon \sqrt{n} } ^2 } 
 \ \ \text{ 
 is stochastically dominated by }\ 
 2 \fl { \epsilon \sqrt{n} } ^2 \zeta\] where
 $ P(\zeta\ge x ) = 2\Cr{d} e ^{-\Cr{sd} x /2} \wedge 1$.  It is
 important to note that the law of $\zeta$ does not depend on
 $ \epsilon $ or $K'$.  Given $h > 0$, we fix $K'$ so that
 \[
 2\Cr{d} e ^{-\Cr{sd} K' /2} < \frac{\epsilon ^ 2  h}{ 4K}.
 \]
 Then for $n \ge n_0(K')$ (by Corollary \ref{C0} ) we have
 \begin{equation} \label{EQA}
   P \left( \forall i \in \cal{S}: \ \Sigma_i = \Sigma_i \ind{\Sigma_i \le 2 K' \fl { \epsilon \sqrt{n} } ^2}  \right) \ge 1 -  \frac{h}{4}.
\end{equation}
We choose $ \alpha > 0$ so that $4\alpha K E[\zeta]<  h$.  By the
weak law of large numbers there exists $\epsilon _ 1 > 0 $ so that for
all $N\geq \alpha K/ \epsilon_1^2 $,
 \begin{equation} \label{EQB}
 P \left(  \frac{1}{N}  \sum_{j=1}^N \zeta_j  > 2 E[\zeta]\right) <  \frac{h}{4},\ \ \text{ where the $\zeta_j $ are i.i.d. copies of $\zeta$.}
 \end{equation}
 As noted in the proof of \eqref{num_ex} above, it follows from
 Theorems \ref{dfc}, \ref{close} and \eqref{rext} in the Appendix,
 that there exists $\epsilon_2 > 0$ so that for
 $\epsilon < \epsilon _ 2 $ and $n \ge n_0(\epsilon ) $,
 \begin{equation} \label{EQC}
 P(|\cal{S}| > \alpha K/ \epsilon ^ 2) <  \frac{h}{4}.
\end{equation}
 Finally, let $\epsilon_3  > 0 $ be such that for $\epsilon  < \epsilon _ 3 $, the probability that the coupling breaks down before time $K/ \epsilon ^ 2 $ is less than $  h / 4 $ for $n$ sufficiently large.
 
We are now ready to prove inequality \eqref{time_ex}.  Choose
 $ \epsilon _0 < \epsilon_1 \wedge \epsilon _ 2 \wedge \epsilon _ 3 $.
 Given $ \epsilon < \epsilon _ 0 $ we have $n_0 = n_0( \epsilon ) $ so
 that for $n \ge n_0$
 \begin{enumerate}[(i)]
\item $P( \mbox{coupling breaks down before } K/ \epsilon ^ 2 ) <  h / 4 $;
\item $P \left( \exists i \in \cal{S}: \Sigma_i > 2K'  \fl{ \epsilon \sqrt{n} }^2 \right) <  h / 4 $;
\item $P( |\cal{S}| > \alpha K / \epsilon ^ 2 ) <  h / 4$.
\end{enumerate}
Then for $n \ge n_0$ outside probability $ h/4$ by (ii) we have 
\[
\sum _{i \in \cal{S}} \Sigma_i = \sum _{i \in \cal{S}} \Sigma_i \ind{\Sigma_i \le 2K'  \fl{ \epsilon \sqrt{n} }^2},
\]
which (if the coupling has not broken down) is stochastically
dominated by
$2 \fl { \epsilon \sqrt{n} } ^2 \sum_{j=1} ^ {|\cal{S}|} \zeta_j $
where $\{\zeta_j\}_{j\ge 1}$ is a sequence of independent copies of
$\zeta$.  By (iii), outside of a further set of probability
$ h / 4 $, the last expression is bounded stochastically by
$2 \fl { \epsilon \sqrt{n} } ^2 \sum_{j=1} ^ {\alpha K / \epsilon ^ 2}
\zeta_j $.  Finally by \eqref{EQB}, excluding a final set of
probability $ h / 4 $, we have that this sum is less than
\[4 \alpha K \epsilon ^{-2} E\zeta\fl{ \epsilon \sqrt{n} }^2 \le  h n
\] by our choice of $\alpha $.  This completes Step 1.

{\em Step 2.} We now turn to the inequality \eqref{time_b}.  
We write $H_i=H_{i,-}+H_{i,+}$ where
$H_{i,-}= H_i\ind{T^{\epsilon, n}_i=T^{\epsilon,n}_{i,-}} $.  It is
enough to show that for all $ K, h > 0 ,\ \exists \epsilon _ 0$
so that for each $ \epsilon < \epsilon_0 ,\ \exists n_0=n_0( \epsilon )$
so that 
\begin{equation*} P \bigg( \sup _{k< K/ \epsilon ^ 2}
  \Big\vert \sum_{i \in \bint{1,k} \cap \cal{B}} \left(H_{i,\pm}-
  \nu \epsilon^2n/4 \right) \Big\vert >  h n \bigg) <  h\quad\text{for all $n \geq n_0$}.
\end{equation*}
As the proofs are identical, we just treat the sum of $H_{i,-}$.

As in Step 1, for $k\in\bint{0,2\fl{\epsilon\sqrt{n}}}$
we define  $ Z^i_k$ as the number of jumps from
$X_{T^{\epsilon, n}_{i -1}} - \fl{ \sqrt{n} \epsilon} +k $ to
$X_{T^{\epsilon, n}_{i -1}} - \fl{ \sqrt{n} \epsilon} +k +1$ in time
interval $(T^{\epsilon, n}_{i -1} , T^{\epsilon,n}_{i,-} ]$.  
We note that if $Z^i_k = 0$ for some
$k \in \bint{\fl{ \sqrt{n} \epsilon}, 2 \fl{ \sqrt{n} \epsilon} -1}$,
then $T^{\epsilon,n}_{i,-} = T^{\epsilon, n}_{i } $ and we have
\[
H_i  =  H_{i,-} =  T^{\epsilon, n}_i - T^{\epsilon, n}_{i -1} = 2 \sum_{k=0} ^ {2 \fl{ \epsilon \sqrt{n}} -1} Z^i_k  +  \fl{\epsilon \sqrt{n}}.
\]
Or, restating,
$H_{i,-} = \left( 2 \sum_{k=0} ^ {2 \fl{\epsilon \sqrt{n} } -1} Z^i_k
  + \fl{\epsilon \sqrt{n}} \right) \ind {\sigma^{Z^i} _{\fl{\epsilon \sqrt{n}
      } ,0} < 2 \fl{\epsilon \sqrt{n}} }
$.  
Therefore, it suffices to show that 
  \begin{equation}
    \label{Wwlln}
    P \bigg( \sup _{k< K/ \epsilon ^ 2}
    \Big\vert \sum_{i \in \bint{1,k} \cap \cal{B}} \left(\Sigma^n_i-
      \nu/8 \right) \Big\vert >  \epsilon^{-2}h \bigg) <  h\quad\text{for all $n \geq n_0$},
  \end{equation}
where \[\Sigma^n_i:=  \frac{1}{\fl{\epsilon \sqrt{n}}^2}\sum_{k=0} ^ {2 \fl{ \epsilon \sqrt{n}} -1} Z^i_k   \ind{\sigma^{Z^i} _{\fl{ \epsilon} \sqrt{n} ,0} < 2 \fl{ \epsilon \sqrt{n}} }, \]
  and this we will do. We will exploit
  Proposition \ref{BESQ2int}. 
\begin{defn}\label{Hde}
  Given a law $\lambda_0$ on $\R $ with
  $\ell = \int x \lambda_0 (dx ) $ well defined and finite, we define
  $\cal{H}_{\delta, \epsilon} $ to be the collection of laws on
  $\R, \lambda$, such that $\lambda $ can be written as
  $\lambda = \int K(z,\cdot) \lambda_1 (dz) $ where
 \begin{enumerate}[(i)]
 \item $K(\cdot,\cdot) $ is a probability kernel satisfying $K(z, [z- \delta, z +\delta ]^c ) = 0 $ for all $z$ and 
 \item  $\| \lambda_0 - \lambda_1 \|_{TV} < 8 \epsilon^3$. 
 \end{enumerate}
 We say that a sequence of random variables
 $\Xi_1, \Xi_2, \dots, \Xi_N $ is an
 $\cal{H}_{\delta, \epsilon} $-chain if the law of $\Xi_1 $ is in
 $\cal{H}_{\delta, \epsilon} $ and for $1< j \le N$, the conditional
 law of $\Xi_j$ given $\Xi_1, \Xi_2, \dots, \Xi_{j-1} $ is in
 $\cal{H}_{\delta, \epsilon}
 $.
\end{defn}
We will also need following lemma and corollary.
\begin{lemma} \label{lemH} For an $\cal{H}_{\delta, \epsilon} $-chain
  $\Xi_i,\ 1\le i\le N $, taking only finitely many values and
  all $c > 0 $
 \[
 P \left( \sup_{j \le N}  \Big\vert \sum_{i=1} ^j \Xi_i  - j \ell     \Big\vert \ge c \right) \le P \left( \sup_{j \le N}  \Big\vert \sum_{i=1} ^j \zeta_i  - j \ell    \Big\vert \ge c -N \delta \right) + 8N \epsilon ^ 3.
 \]
 where $\zeta_i,\ 1\le i \le N$, are i.i.d.\ random variables with law $\lambda_0$.
 \end{lemma}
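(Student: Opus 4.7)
\medskip

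\noindent\textbf{Proof proposal for Lemma \ref{lemH}.}
The plan is to construct a coupling of $(\Xi_1,\dots,\Xi_N)$ with an i.i.d.\ $\lambda_0$ sequence $(\zeta_1,\dots,\zeta_N)$ on a common probability space so that, except on an event of probability at most $8N\epsilon^3$, the partial sums differ by at most $N\delta$. The inequality then follows from a direct triangle inequality together with a union bound.

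First I would unpack the definition of $\mathcal{H}_{\delta,\epsilon}$ to decompose each step. Let $\mathcal{G}_{i-1}=\sigma(\Xi_1,\dots,\Xi_{i-1})$. By assumption, the conditional law of $\Xi_i$ given $\mathcal{G}_{i-1}$ is of the form $\int K^{(i)}(z,\cdot)\,\lambda_1^{(i)}(dz)$ with $\|\lambda_1^{(i)}-\lambda_0\|_{TV}<8\epsilon^3$ and $K^{(i)}(z,[z-\delta,z+\delta]^c)=0$. Thus one can realize $\Xi_i$ in two stages: first sample $Z_i\sim\lambda_1^{(i)}$ (conditional on $\mathcal{G}_{i-1}$), then sample $\Xi_i\sim K^{(i)}(Z_i,\cdot)$. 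Setting $D_i=\Xi_i-Z_i$ yields the decomposition $\Xi_i=Z_i+D_i$ with $|D_i|\le \delta$ a.s., and $Z_i$ adapted to the enlarged filtration containing $\mathcal{G}_{i-1}$ and the randomness in stage one.

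Next I would build $(\zeta_i)$ sequentially by a standard maximal-coupling recipe. Enlarging the probability space if necessary, given the past $\mathcal{F}_{i-1}=\sigma(Z_1,\dots,Z_{i-1},\zeta_1,\dots,\zeta_{i-1},D_1,\dots,D_{i-1})$ I draw $(Z_i,\zeta_i)$ as follows: with probability $p_i=1-\|\lambda_1^{(i)}-\lambda_0\|_{TV}$ set $Z_i=\zeta_i$ with common law $(\lambda_1^{(i)}\wedge\lambda_0)/p_i$; otherwise draw $Z_i$ and $\zeta_i$ independently from the normalized positive parts of $\lambda_1^{(i)}-\lambda_0$ and $\lambda_0-\lambda_1^{(i)}$, respectively. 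A direct check shows that the conditional law of $\zeta_i$ given $\mathcal{F}_{i-1}$ is $\lambda_0$; hence $(\zeta_1,\dots,\zeta_N)$ is i.i.d.\ with law $\lambda_0$. Moreover
\[
P(Z_i\ne\zeta_i\mid\mathcal{F}_{i-1})=\|\lambda_1^{(i)}-\lambda_0\|_{TV}<8\epsilon^3,
\]
so, letting $A=\bigcup_{i=1}^{N}\{Z_i\ne\zeta_i\}$, a union bound gives $P(A)\le 8N\epsilon^3$.

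Finally, on $A^c$ we have $\sum_{i=1}^{j}Z_i=\sum_{i=1}^{j}\zeta_i$ for every $j\le N$, so
\[
\Big|\sum_{i=1}^{j}\Xi_i-\sum_{i=1}^{j}\zeta_i\Big|=\Big|\sum_{i=1}^{j}D_i\Big|\le j\delta\le N\delta.
\]
Hence on $A^c$ the event $\{\sup_{j\le N}|\sum_{i=1}^j\Xi_i-j\ell|\ge c\}$ is contained in $\{\sup_{j\le N}|\sum_{i=1}^j\zeta_i-j\ell|\ge c-N\delta\}$, and splitting according to whether $A$ occurs yields the claimed bound. The only mildly subtle point — and what I would flag as the main thing to verify carefully — is that the sequential maximal coupling actually produces a genuine i.i.d.\ $\lambda_0$-sequence (not merely a sequence with the right marginals); this is precisely what the computation of the conditional law of $\zeta_i$ given $\mathcal{F}_{i-1}$ ensures. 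The finiteness-of-values hypothesis is only used to make the kernels $K^{(i)}$ and the maximal couplings measurable in an elementary way.
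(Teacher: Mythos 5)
Your proof is correct and follows essentially the same strategy as the paper's: couple the chain with an i.i.d.\ $\lambda_0$-sequence on a common space so that each $|\Xi_i - \zeta_i|$ exceeds $\delta$ only on an event of probability at most $8\epsilon^3$, then union-bound and use the triangle inequality on partial sums. The paper packages the one-step coupling into a separate statement (Lemma~\ref{triple}), which constructs a triple $(\zeta,\zeta^{(0)},\zeta^{(1)})$ with $\zeta\sim\lambda$, $\zeta^{(0)}\sim\lambda_0$, $\zeta^{(1)}\sim\lambda_1$, $\zeta^{(0)}$ and $\zeta^{(1)}$ maximally coupled, and $\zeta\mid\zeta^{(1)}\sim K(\zeta^{(1)},\cdot)$ --- this $\zeta^{(1)}$ is exactly your $Z_i$, so the underlying joint law is the same. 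The one genuine (if minor) structural difference is the order of construction: the paper takes $\Xi_j$ as given and then draws $\zeta_j$ from the regular conditional kernel $L_j(\Xi_j,\cdot)$ of the coupled law, whereas you draw $(Z_i,\zeta_i)$ jointly by maximal coupling first and then generate $\Xi_i\sim K^{(i)}(Z_i,\cdot)$. Both produce valid realizations of the chain together with an i.i.d.\ sequence; your version makes the decomposition $\Xi_i=Z_i+D_i$ with $|D_i|\le\delta$ explicit, while the paper's version avoids having to re-realize the chain. Your flagged subtlety --- verifying that the sequential construction yields a genuinely i.i.d.\ $\lambda_0$-sequence by checking the conditional law of $\zeta_i$ given the enlarged past --- is indeed the key measure-theoretic point, and your computation handles it correctly.
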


 \begin{rem}
   The assumption that the random variables take only finitely many
   values is simply an artificial condition that suits our purposes
   and avoids measurability issues.
\end{rem}
  \begin{proof}
    We claim that an $\cal{H}_{\delta, \epsilon} $-chain
    $\Xi_i, \ 1\le i\le N $, can be coupled with i.i.d.\
    $\zeta_i, \, 1 \le i \le N $, with law $\lambda_0$ so that
    $ P(\vert \zeta_i - \Xi_i \vert \ge \delta) \le 8 \epsilon ^3$ for all
    $1\le i \le N$.  We then note that
\[
\bigg\{ \sup_{j \le N}  \Big\vert \sum_{i=1} ^j \Xi_i  - j \ell     \Big\vert \ge c \bigg\} \subset \bigg\{  \sup_{j \le N}  \Big\vert \sum_{i=1} ^j \zeta_i  - j \ell    \Big\vert \ge c -N \delta \bigg\} \bigcup  \bigg\{\bigcup\limits_{j=1}^N \{    \vert  \zeta_j  -\Xi_j   \vert \ge \delta \}\bigg\} 
\]
The conclusion is now simply an application of the union bound.  So it
remains to establish the claim.  The coupling is based on finding,
given some $\lambda \in \cal{H}_{\delta, \epsilon}$, a measure $\nu$
on $\mathbb{R}^2$ having respective marginals $\lambda $ and
$\lambda _0$ and such that
$\nu(\{(x,y): \vert x-y \vert > \delta \})< 8 \epsilon ^3$.  The
existence of such a measure is shown by Lemma \ref{triple}. Given
$\lambda_j $, the conditional law of $\Xi_j$ given
$\Xi_1,\Xi_2, \cdots \Xi_{j-1}$, we take $\nu_j$ to be the
corresponding coupled law on $\mathbb{R}^2$.  Then for $L_j$, the
regular conditional kernel for $y$ (the second coordinate) given $x$
under law $\nu_j$, we choose $\zeta_j$ according to probability
$L_j (\Xi_j,\cdot)$ using an auxiliary uniform random variable in the
usual manner.
\end{proof}

The following corollary is a direct consequence of Lemma \ref{lemH}
 and convergence in distribution (see \cite{ekMP}, Section 3, Theorem
 1.2).
 \begin{cor} \label{corH} For a fixed positive integer $N$ let
   $(\Xi^n_1, \Xi^n_2, \dots, \Xi^n_N)_{n \ge 1}$ be a sequence of
   finite valued random vectors in $\R^N$ such that every
     distributional limit point of $\Xi^n_1$ and of the conditional
     probability of $\Xi^n_j$ given
     $\Xi^n_1, \Xi^n_2, \cdots \Xi^n_{j-1}$, $1<j\le N$, as
     $n\to \infty$ (considered as a probability on ${ \bar \R} $) is
     in $\cal{H}_{\delta, \epsilon} $.  Then for all $c>0$
 \[
   \limsup_{n \rightarrow \infty} P \left( \sup_{j \le N} \Big\vert
     \sum_{i=1} ^j \Xi^n_i - j \ell \Big\vert \ge c \right) \le P
   \left( \sup_{j \le N} \Big\vert \sum_{i=1} ^j \zeta_i - j \ell
     \Big\vert \ge c -N \delta \right) + 8N \epsilon ^ 3
 \]
 where $\zeta_i,\ i \ge 1 $, are i.i.d.\ random variables with law $\lambda_0$.
 \end{cor}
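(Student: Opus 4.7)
The plan is to deduce the corollary from Lemma~\ref{lemH} by a subsequential weak convergence argument. Fix $c>0$ and first select a subsequence $n_k$ along which the $\limsup$ on the left is attained as a genuine limit. Since $\bar{\R}^N$ is compact, by passing to a further subsequence (still denoted $n_k$) we may assume that the joint laws of $(\Xi^{n_k}_1,\dots,\Xi^{n_k}_N)$ converge weakly on $\bar{\R}^N$ to some law $\mu$. Let $(\Xi_1,\dots,\Xi_N)$ denote a random vector realized with law $\mu$.

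Next I would check that $(\Xi_1,\dots,\Xi_N)$ is an $\mathcal{H}_{\delta,\epsilon}$-chain in the sense of Definition~\ref{Hde}. The marginal of $\Xi_1$ is a distributional limit point of the laws of $\Xi^{n_k}_1$, hence lies in $\mathcal{H}_{\delta,\epsilon}$ directly by hypothesis. For $1<j\le N$, one extracts from the joint weak convergence, together with the Ethier--Kurtz result cited in the statement, that for $\mu$-almost every value of $(\Xi_1,\dots,\Xi_{j-1})$ the regular conditional distribution of $\Xi_j$ under $\mu$ is a distributional limit point of the conditional laws of $\Xi^{n_k}_j$, and hence again lies in $\mathcal{H}_{\delta,\epsilon}$ by the hypothesis. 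In addition, since any member of $\mathcal{H}_{\delta,\epsilon}$ is concentrated within distance $\delta$ of a probability $\lambda_1$ that differs from $\lambda_0$ by at most $8\epsilon^3$ in total variation, and $\lambda_0$ has finite mean $\ell$, each $\Xi_i$ takes values in $\R$ almost surely, so $\mu$ is supported in $\R^N$.

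Once $(\Xi_1,\dots,\Xi_N)$ is an $\mathcal{H}_{\delta,\epsilon}$-chain, I would apply Lemma~\ref{lemH} to it; strictly speaking the lemma is stated for finite-valued chains, but its proof goes through verbatim in our Polish setting (the finite-valued hypothesis was imposed there only to avoid measurability subtleties in choosing regular conditional kernels), or alternatively one may discretize each $\Xi_i$ onto a grid of mesh $\delta'\ll\delta$ and pay an $N\delta'$ enlargement in the constant, letting $\delta'\to 0$ at the end. This yields
\[
P\!\left(\sup_{j\le N}\left|\sum_{i=1}^{j}\Xi_i - j\ell\right|\ge c\right)
\;\le\; P\!\left(\sup_{j\le N}\left|\sum_{i=1}^{j}\zeta_i - j\ell\right|\ge c-N\delta\right) + 8N\epsilon^3.
\]
Since the event $\bigl\{\sup_{j\le N}\bigl|\sum_{i\le j}x_i - j\ell\bigr|\ge c\bigr\}$ is closed in $\R^N$ and $\mu$ is supported in $\R^N$, the Portmanteau theorem gives
\[
\limsup_{k\to\infty} P\!\left(\sup_{j\le N}\left|\sum_{i=1}^{j}\Xi^{n_k}_i - j\ell\right|\ge c\right)
\;\le\; P\!\left(\sup_{j\le N}\left|\sum_{i=1}^{j}\Xi_i - j\ell\right|\ge c\right),
\]
and combining with the previous display completes the argument.

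The main obstacle is the middle step, namely the identification of $\mu$ as an $\mathcal{H}_{\delta,\epsilon}$-chain: joint weak convergence in $\bar{\R}^N$ does not automatically preserve regular conditional distributions, so one must carefully extract from weak convergence, along a countable dense set of conditioning values (or via the convergence-determining criterion from Ethier--Kurtz invoked in the statement), that the conditional laws under $\mu$ arise as distributional limits of the corresponding conditional laws of the $\Xi^{n_k}$, so that the hypothesis on limit points can actually be applied.
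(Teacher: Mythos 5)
Your high-level strategy — pass to a subsequential joint weak limit $\mu$ on $\bar\R^N$, apply Lemma~\ref{lemH} to the limiting chain, and conclude by the Portmanteau theorem (the relevant event is closed in $\R^N$) — is a natural reading of the paper's one-line remark. But the step you yourself single out as delicate is a genuine gap, and the assertion you rely on to close it is false: joint weak convergence does \emph{not} imply that the regular conditional distribution of $\Xi_j$ under $\mu$ is a distributional limit point of the prelimit conditional laws. For instance, if $\Xi^n_1$ is uniform on $\{0,1/n,\dots,(n-1)/n\}$ and $\Xi^n_2=\ind{n\Xi^n_1\text{ even}}$, then every prelimit conditional law is $\delta_0$ or $\delta_1$, while under the joint limit $\Xi_2$ is an independent Bernoulli$(1/2)$, which is not a limit point of $\{\delta_0,\delta_1\}$. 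What one \emph{can} show, with real work, is that the conditional law of $\Xi_j$ under $\mu$ lies $\mu$-a.s.\ in the \emph{closed convex hull} of the set of limit points; to exploit this you also need that $\mathcal{H}_{\delta,\epsilon}$ is convex (it is — mix the base measures $\lambda_1$ and blend the kernels via Radon--Nikodym derivatives) and, because the strict inequality $\|\lambda_0-\lambda_1\|_{TV}<8\epsilon^3$ makes $\mathcal{H}_{\delta,\epsilon}$ non-closed, a $\gamma$-relaxed version of the coupling of Lemma~\ref{triple} valid on the weak closure, with $\gamma\downarrow 0$ at the end.

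A cleaner route, which avoids identifying any limiting chain and is closer in spirit to ``Lemma~\ref{lemH} plus convergence in distribution'', is to run the step-by-step coupling at finite $n$. Fix $\gamma>0$. By compactness of $\bar\R$, the hypothesis on limit points, Lemma~\ref{triple}, and the fact that a coupling of $\lambda$ with $\lambda_0$ can be transported through a Prokhorov-small plan to any law weakly close to $\lambda$, one shows by contradiction that for all $n$ large \emph{every} conditional law of $\Xi^n_j$ given $\Xi^n_1,\dots,\Xi^n_{j-1}$ admits a coupling with $\lambda_0$ putting mass $<8\epsilon^3+\gamma$ on $\{|x-y|>\delta+\gamma\}$. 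The union-bound argument from the proof of Lemma~\ref{lemH} then yields
\[
P\!\left(\sup_{j\le N}\Big|\sum_{i=1}^{j}\Xi^n_i-j\ell\Big|\ge c\right)\le P\!\left(\sup_{j\le N}\Big|\sum_{i=1}^{j}\zeta_i-j\ell\Big|\ge c-N(\delta+\gamma)\right)+N(8\epsilon^3+\gamma)
\]
for $n$ large, and letting $\gamma\downarrow 0$ after $n\to\infty$, using that $a\mapsto P(\sup_{j\le N}|\sum_{i\le j}\zeta_i-j\ell|\ge a)$ is left-continuous, recovers the corollary with the stated constants.
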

 To apply this corollary we restrict our attention to the event
   that the coupling does not break down before $\epsilon^{-2}K$. This
   event has probability at least $1-h/4$ for all sufficiently small
   $\epsilon$. We fix such an $\epsilon$ and enumerate the points in
   $\bint{1,\epsilon^{-2}K}\cap{\cal B}$ by $j\in\bint{1,N}$ so that
   $N=|{\cal B}|\le \epsilon^{-2}K$. Next we let $\Xi^n_j$,
   $j\in\bint{1,N}$, be equal to the corresponding $\Sigma^n_i$,
   $i\in\bint{1,\epsilon^{-2}K}\cap{\cal B}$. Then by
   Proposition~\ref{BESQ2int} the sequence
   $(\Xi^n_1, \Xi^n_2, \dots, \Xi^n_N)_{n \ge 1}$ satisfies the
   conditions of Corollary~\ref{corH} with $\delta=\epsilon^8$ and
   $\lambda_0$ equal to one half $\delta_0$ plus one half
   the law of $\nu/4$ times the time for the standard Brownian motion
   to exit $(-1,1)$ (note that this gives $\ell=\nu/8$). Choosing
   $c=\epsilon^{-2}h$ we arrive at \eqref{Wwlln} provided that
   $\epsilon=\epsilon(K,h)$ was chosen sufficiently small.
 \end{proof}

\appendix

\section{}

\subsection{Proofs of facts regarding BMPE}

\begin{proof}[Proof of Lemma~\ref{BMPEcoup}] {\em Step 1.} We shall
  restate the question in terms of Brownian motion and its running
  maximum $B^*(t)=\max_{0\le s\le t}B(s)$. Note that by
  \cite[p.\,242]{cpyBetaPBM}
  \begin{equation}
    \label{rep}
    W(t)=B(t)+\frac{\theta^+}{1-\theta^+ }B^*(t)
  \end{equation}
  is a pathwise unique solution of the equation
  $W(t)=B(t)+\theta^+ S(t)$ with $ S(0)=W(0)=0$. To allow for non-zero
  initial data we may assume that $B_i(t)$, $i=1,2$, are defined for
  $t\in[-1,\infty)$ and that
  \[B_i(0)=W_i(0)-\theta^+ S_i(0),\quad
    B^*_i(0)=\max_{t\in[-1,0]}B_i(t)=(1-\theta^+ )S_i(0), \quad
    i=1,2.\] Then
\[W_i(t)=B_i(t)+\frac{\theta^+ }{1-\theta^+ }\,B_i^*(t)\] is a solution
of $W_i(t)=B_i(t)+\theta^+ S_i(t)$ for $t\ge 0$ with the given initial
pair $(W_i(0),S_i(0))$,
$i=1,2$. We conclude that \[
    \begin{bmatrix}
      W_i(t)\\S_i(t)
    \end{bmatrix}=\frac{1}{1-\theta^+ }
    \begin{bmatrix}
      1-\theta^+ &\theta^+ \\0&1
    \end{bmatrix}
    \begin{bmatrix}
      B_i(t)\\B^*_i(t)
    \end{bmatrix},\quad i=1,2,
\] and, thus,
  \[(B_1(1),B^*_1(1))=(B_2(1),B^*_2(1))\quad\Rightarrow\quad
  (W_1(1),S_1(1))=(W_2(1),S_2(1)).\] Moreover,
  if $(B_i(t),B^*_i(t))\in[-K,K]^2$ for all $t\in[-1,1]$ then
  \[S_i(0)\vee\max_{t\in[0,1]}|W_i(t)|\le K\left(1+\frac{|\theta^+ |}{1-\theta^+ }\right),\quad i=1,2.
  \] 

  {\em Step 2.} We shall now couple two pairs of Brownian motions and their
  running maxima. Without loss of generality we can shift one starting
  point to the origin and assume that $(B_1(0),B_1^*(0))=(0,b_1)$,
  $(B_2(0),B_2^*(0))=(a_2,b_2)$, where
  $(0,b_1),(a_2,b_2)\in\{(x,y): x\le y\}$. Note that for $b_1>0$ the
  distribution of $(B_1(1),B^*_1(1))$ is not absolutely continuous as
  the line $y=b_1$ carries a positive measure. But $(B_1(1),B^*_1(1))$
  has a density on $\{y>b_1\}$. A similar remark applies to the other
  pair. Denote by $\mu_{0,b_1}$ and $\mu_{a_2,b_2}$ the absolutely
  continuous parts of distributions of $(B_1(1),B^*_1(1))$ and
  $(B_2(1),B^*_2(1))$ respectively . Then there is a $c_0>0$ such that
  for $b_1,|a_2|, |b_2|\le c_0$
  \[\|\mu_{0,b_1}\|_{TV}\ge
    \frac9{10},\quad\|\mu_{a_2,b_2}\|_{TV}\ge\frac9{10},\quad\|\mu_{0,b_1}-\mu_{a_2,b_2}\|_{TV}\le\frac15.\]
  Next we choose $r_0>c_0$ such that for a standard Brownian motion
  $B(\cdot)$
  \[P\left(\max_{0\le t\le 1}B(t)\ge r_0-c_0\right)\le \frac1{10}\]
  and let $\mu^{r_0}_{a,b}$ denote the distribution of $(B(1),B^*(1))$
  with $B(0)=a$, $B^*(0)=b$, $-c_0\le a\le b\le c_0$, restricted to
  $\R\times(b,\infty)$, and killed upon leaving $[-r_0,r_0]^2$. Then
  for $b_1,|a_2|,|b_2|\le c_0$
  \[\|\mu^{r_0}_{0,b_1}\|_{TV},\|\mu^{r_0}_{a_2,b_2}\|_{TV}\ge \frac9{10}-\frac1{10}=\frac45,\quad
    \|\mu^{r_0}_{0,b_1}-\mu^{r_0}_{a_2,b_2}\|_{TV}\le
    \frac15+\frac1{10}=\frac3{10}.\] Thus, for
  $b_1,|a_2|,|b_2|\le c_0$ we can couple $(B_1(t),B_1^*(t))$ and
  $(B_2(t),B_2^*(t))$ so that $(B_1(1),B_1^*(1))=(B_2(1),B_2^*(1))$
  and $\max_{t\in[0,1]}|B_i(t)|\le r_0$, $i=1,2$, with probability $4/5-3/10=1/2$.

  From Steps 1 and 2 we conclude that there are constants
  $c_1>0,\ r_1\in (c_1,\infty)$ such that if $(W_1(0),S_1(0))=(0,\wo_1)$ and $(W_2(0),S_2(0))=(w_2,\wo_2)$ with $\wo_1,|w_2|,|\wo_2|\le c_1$ then there is a coupling such that with probability $1/2$ \[(W_1(1),S_1(1))=(W_2(1),S_2(1))\quad\text{and}\quad \max_{t\in[0,1]}|W_i(t)|\le r_1,\ \ i=1,2.\] 

{\em Step 3.} Let $A_1$ be the event that
\begin{enumerate}[(i)]
\item $W_i(1/2),S_i(1/2)\in [3/2,2], \ i=1,2$, and are within $c_1/r_1$ of each other;
\item $S_i(0)\vee\max_{t\in[0,1/2]}|W_i(t)|\le 2, \ i=1,2$.
\end{enumerate}
Note that without loss of generality we can assume that
$r_1\ge \sqrt{2}$ so that $1/2+1/r_1^2\le 1$.
It is easy to see that $P(A_1)\ge p_1$ for some $p_1=p_1(c_1,r_1)>0$
uniformly over $W_i(0),S_i(0)\in[-1,1]$, $i=1,2$.

Let $A_2$ be the event that $(W_1(t),S_1(t))$ and $(W_2(t),S_2(t))$,
$t\in[1/2,1/2+1/r_1^2]$, are coupled as above and scaled accordingly so that
\[(W_1(1/2+1/r_1^2),S_1(1/2+1/r_1^2))=(W_2(1/2+1/r_1^2),S_2(1/2+1/r_1^2))\
  \ \text{and}\ \ \max_{1/2\le t\le 1/2+1/r_1^2}|W_i(t)|\le 3.\] By Steps 1, 2, and scaling, the conditional probability of $A_2$ given $A_1$ is $1/2$ uniformly over $W_i(1/2),S_i(1/2)$, $i=1,2$. Once we have the coupling, we note that the probability that over the leftover time period $[1/2+1/r_1^2,1]$ the coupled processes do not exit $[-4,4]$ is strictly positive. This finishes the proof.
\end{proof}

\begin{proof}[Proof of Proposition~\ref{DiscBMPE}]
Let ${\tilde W} (t)=\epsilon W(\epsilon^{-2}t)$, $\tau^\epsilon_0=0$, and
\[\tau^\epsilon _k  = \inf\{ s > \tau^\epsilon _{k-1} : \ \vert {\tilde W}(s)\ - \ {\tilde W} (\tau^\epsilon _{k-1}) \vert  = \epsilon \},\quad k\in\N.\]
With this notation, establishing \eqref{labe1} is equivalent to showing
\begin{equation}
\label{labe2}
 \sup_{0 \leq s \leq T} \big\vert {\tilde W}(s) - \ {\tilde W}(\tau^\epsilon _{\fl{\epsilon^{-2}s}}) \big\vert \overset{\text{P}}{\longrightarrow} 0\quad\text{as }\ \epsilon\to 0.
\end{equation}
As ${ \tilde W}(s), s\ge 0,$ is pathwise continuous (and its law does not depend on $\epsilon$)  \eqref{labe2} is implied by 
\[
 \sup_{0 \leq s \leq T} \big\vert s - \ \tau^\epsilon _{\fl{\epsilon^{-2}s}} \big\vert \overset{\text{P}}{\longrightarrow} 0\quad\text{as }\ \epsilon\to 0.
\]
In turn this is equivalent to showing that for each $0 < T < \infty $,
\begin{equation}
\label{labe3}
 \sup_{1 \leq K \leq \epsilon ^{-2}T} \Big\vert \sum_{k=1}^K ( \tau^\epsilon _{k}-\tau^\epsilon _{k-1}- \epsilon ^2) \Big\vert \overset{\text{P}}{\longrightarrow} 0\quad\text{as }\ \epsilon\to 0.
\end{equation}
Again by scaling, we see that \eqref{labe3} is equivalent to ($\tau_k$ and $(I_k,W_k,S_k)$ were defined in Section~\ref{walk})
\[
 \sup_{1 \leq K \leq N} \Big\vert \frac{1}{N}\sum_{k=1}^K ( \tau _{k}-\tau _{k-1}- 1 ) \Big\vert
\overset{\text{P}}{\longrightarrow} 0\quad \text{as }\ N\to\infty.
\]
To this end, first note that when $W_k$ is in the bulk (that is when
$I_k+1\leq W_k \leq S_k-1$) then $\tau_{k+1}-\tau_k$ has the same
distribution as the exit time of a standard Brownian motion from
$(-1,1)$.  On the other hand, if $W_k$ is at the extreme (either
$S_k < W_k+1$ or $I_k > W_k-1$) then the distribution of
$\tau_{k+1}-\tau_k$ depends on the specific values of $S_k-W_k$ or
$S_k-I_k$.  However, using the representation in \eqref{rep} we
  infer that for all $k\ge 1$ the distribution of $\tau_{k+1}-\tau_k$
given $\mathcal{F}_k = \sigma(W(t): \, t\leq \tau_k )$ is
stochastically dominated by
\[
  \inf\{t> 0: W(t)-S(t)=-2\}\overset{\eqref{rep}}{=}\inf \{ t> 0: B(t) - \max_{s \leq t}
    B(s)=-2\}.
\]
In particular, this implies that the conditional mean and variance of
$\tau_k-\tau_{k-1}$ given $\mathcal{F}_{k-1}$ are uniformly
bounded. That is, there exist constants $A,B<\infty$ such that
\[
 t_k := E[\tau_k-\tau_{k-1} \mid \mathcal{F}_{k-1} ] \leq A
\quad \text{and}\quad 
 E[(\tau_k-\tau_{k-1}-t_k)^2 \mid \mathcal{F}_{k-1} ] \leq B < \infty. 
\]
Then, it follows from Doob's martingale inequality that for any $\delta > 0$
\[
 P\left( \sup_{1 \leq K\leq N} \Big\vert \frac{1}{N}\sum_{k=1}^K ( \tau _{k}-\tau _{k-1}- t_k ) \Big\vert \geq \delta \right) \leq \frac{1}{\delta^2} E\left[ \left( \frac{1}{N}\sum_{k=1}^N ( \tau _{k}-\tau _{k-1}- t_k )   \right)^2 \right] \leq \frac{B}{\delta^2 N}.  
\]
Thus, it remains only to show that 
\[
 \sup_{1 \leq K \leq N} \Big\vert \frac{1}{N}\sum_{k=1}^K ( t_k - 1 ) \Big\vert
\overset{\text{P}}{\longrightarrow} 0.
\]
However, since $t_k \equiv 1$ when $W_{k-1}$ is in the bulk and is uniformly bounded otherwise, it is enough to show that 
\[
  \lim_{N\to\infty} \frac{1}{N} \sum_{k=1}^N   \ind{ (I_k,W_k,S_k) \mbox{ is at the extreme} }  = 0, \quad P\text{-a.s..}
\]
It's enough only to consider the right extremes (that is, when $S_k < W_k+1$) since the left extremes can be handled similarly. We'll show that 
\begin{equation}\label{rext}
 \lim_{N\to\infty} \frac{1}{N} \sum_{k=1}^N  \ind{ W_k \geq 1, S_k < W_k+1}  = 0, \quad P\text{-a.s..}
\end{equation}

The proof of this will rely on the following facts. 
\begin{itemize}
 \item If $W_k = m\geq 1$ and $S_k < m+1$, the probability (conditioned on $W(t)$ for $t\leq \tau_k$) that $W_{k+1} = m+1$ is at least $p_- = (1/2)\wedge(1/2)^{1-\theta^+}  > 0$ and at most $p_+ = (1/2)\vee (1/2)^{1-\theta^+}  < 1$. This follows from Corollary \ref{roc1}. 
 \item If $W_k = m\geq 1$ and $S_k \geq m+1$, the probability that $W_{k+1} = m+1$ is exactly $1/2$. 
\end{itemize}

First of all, for any $m\geq 1$ let
$\chi_m= \sum_{k=1}^\infty \ind{W_k = m, \, S_k < m+1}$ be the total
number of times a right extreme occurs and the BMPE-walk is at
location $m$. It is easy to see that the sequence $\{\chi_m \}_{m\geq 1}$
is i.i.d.  Moreover, since whenever $W_k = m$ is at the extreme, the
probability that the next step is to the right is at least
$p_-$ and so $\chi_m$ is stochastically dominated by a Geom($p_-$)
random variable. In particular, $E[\chi_1] < \infty$.
Thus,
\begin{equation}\label{Hlim}
 \lim_{n\to\infty} \frac{1}{n} \sum_{m=1}^n \chi_m = E[\chi_1] < \infty, \quad P\text{-a.s.}
\end{equation}

Next, for $n\geq 0$ let $\rho_n = \inf\{ k\geq 0: W_k = n \}$ be the
time it takes for the walk $W_k$ to reach $n$ for the first time. It
is easy to see that $\rho_{n+1}-\rho_n$ stochastically dominates the
time it takes the Markov chain on $\{0,1,\ldots,n,n+1\}$ shown in
Figure \ref{fig:RWPE} to step from $n$ to $n+1$.
\begin{figure}[ht]
 \includegraphics[width=0.8\textwidth]{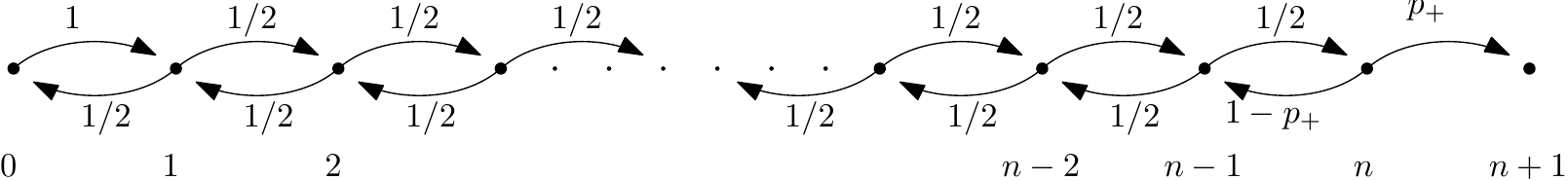}
\caption{The above Markov chain behaves like a simple symmetric random walk at $x=1,2,\ldots,n-1$, an asymmetric simple random walk at $x=n$, and reflects to the right at $x=0$.}\label{fig:RWPE}.
\end{figure}

\noindent 
Let $\{\gamma_n \}_{n\geq 0}$ be a sequence of independent random
variables where for each $n$ the random variable $\gamma_n$ has the
distribution of the time for the Markov chain in Figure \ref{fig:RWPE}
to cross from $n$ to $n+1$. Then  $\rho_n$
stochastically dominates $\sum_{k=0}^{n-1} \gamma_k$ and
thus\footnote{Note that the random variables $\{\gamma_k\}_{k\geq 0}$ are independent and $\gamma_{n+1}$ stochastically
  dominates $\gamma_n$. Moreover, for $n\in\N$ by an easy recursion
  computation,
  $E[\gamma_n]=\frac{1}{p_+}+\frac{1-p_+}{p_+}(2n-1)\to\infty$ as
  $n\to\infty$.}
\begin{equation}\label{rholim}
 \lim_{n\to\infty} \frac{\rho_n}{n} = \infty, \quad P\text{-a.s.}
\end{equation}

Finally, we are ready to prove \eqref{rext}. For each $N \geq 1$ there is a unique $n\geq 0$ such that $S_N \in [n, n+1)$ and note that $S_N \in [n,n+1)$ is equivalent to $\rho_n \leq N < \rho_{n+1}$. Therefore, on the event $\{\rho_n \leq N < \rho_{n+1} \}$ we have 
\[
 \frac{1}{N} \sum_{k=1}^N  \ind{ W_k \geq 1,  S_k < W_k+1}
\leq \frac{1}{N} \sum_{m=1}^n \chi_m
\leq \left(\frac{n}{\rho_n} \right) \left( \frac{1}{n} \sum_{m=1}^n \chi_m \right). 
\]
Since $n\to \infty$ as $N\to\infty$, we have that \eqref{rext} follows from \eqref{Hlim} and \eqref{rholim}. 
\end{proof}

\subsection{Proofs of diffusion approximation results for BLPs}

\begin{proof}[Proof of Theorem~\ref{DA0}]
  (1) The proof of this part is very similar to the one of \cite[Lemma
  7.1]{kpERWMCS} and is based on \cite[Theorem 4.1,\
  p.\,354]{ekMP}. First of all, the martingale problem for
  \[A=\left\{\left(f,Gf=\frac{\nu}{2}\,x_+\,\frac{\partial^2}{\partial
          x^2}+{D}\,\frac{\partial}{\partial x}\right):\,f\in
      C_c^\infty(\R)\right\}\] on $C_\R[0,\infty)$ is well-posed by
  \cite[Corollary 3.4, p.\,295]{ekMP} and the fact that the existence
  and distributional uniqueness hold for solutions of (\ref{daa}) with
  arbitrary initial distributions.\footnote{A more detailed discussion
    of (\ref{daa}) can be found immediately following (3.1) in
    \cite{kzEERW}.} 

Define $A_m(t)$ and $B_m(t)$ for all $t\ge 0$ by 
\[ A_m(t):=\frac1{m^2}\sum_{k=1}^{\fl{mt}}\mathrm{Var}(V^+_{m,k}\,|\,V^+_{m,k-1});\quad 
  B_m(t):= \frac1m\sum_{k=1}^{\fl{mt}}E[V^+_{m,k}-V^+_{m,k-1}\,|\,V^+_{m,k-1}].
\]
Then for each $m\in\N$ the processes $M_m(t):=Y_m(t)-B_m(t)$ and
$M_m^2(t)-A_m(t)$, $t\ge 0$, are martingales with respect to the
natural filtration of $V^+_m$.

Recall that $\tau_r^{Y_m} = m^{-1}\tau_{rm}^{Z_m}$. 
To apply the cited theorem we only need to
check that for all $T,r > 0$ the following five conditions hold.
\begin{align}\label{EKc3}
 &\lim_{m\to\infty} E\left[ \sup_{t\leq T \wedge \tau_r^{Y_m}} \left| Y_m(t) - Y_m(t-) \right|^2 \right]  =0. 
\\
\label{EKc4}
  &\lim_{m\ra\infty} E\left[ \sup_{t\leq T \wedge \tau_r^{Y_m}} \left| B_m(t) - B_m(t-) \right|^2 \right]  =0. 
\\
\label{EKc5}
  &\lim_{m\ra\infty} E\left[ \sup_{t\leq T \wedge \tau_r^{Y_m}} \left| A_m(t) - A_m(t-) \right| \right]  =0. 
\\
\label{EKc6}
 &\sup_{t \leq T \wedge \tau_r^{Y_m}} \left| B_m(t) - (1+\eta\cdot\br^+) t \right| \overset{\text{P}}{\underset{m\ra\infty}{\longrightarrow}}0.
\\
\label{EKc7}
  &\sup_{t \leq T \wedge \tau_r^{Y_m}} \left| A_m(t) - \nu \int_0^t (Y_m(s))_+ \, ds  \right|\overset{\text{P}}{\underset{m\ra\infty}{\longrightarrow}}0.
\end{align}

Recalling the construction of the BLP $V^+$ in terms of the Bernoulli trials $\{\xi_j^x\}_{x\geq 0, \, j\geq 1}$ as in Section \ref{sec:BLP}, let
$G^k_i$ be the number of ``successes'' between the $(i-1)$-th and $i$-th ``failure'' in the sequence of Bernoulli trials $\{\xi_j^k\}_{j\geq 1}$ so that
\begin{equation}\label{VkSumG}
 V^+_{m,k}
 = \sum_{j=1}^{V^+_{m,k-1}+1}G^k_j 
 =V^+_{m,k-1}+1+\sum_{j=1}^{V^+_{m,k-1}+1}(G^k_j-1).
\end{equation}
Using this representation for the $V^+$ processes,
condition \eqref{EKc3} states that for
every $T,r>0$
\[
\lim_{m\to\infty}\frac{1}{m^2}\,E\left[\max_{1\le k\le (Tm)\wedge
    \tau_{rm}^{V^+_m}}\bigg|1+\sum_{j=1}^{V^+_{m,k-1}+1}(G^k_j-1)\bigg|^2\right]=0,
\] 
where $\tau_{rm}^{V^+_m} = \inf\{k\ge 0: V^+_{m,k} \geq rm \}$. 
To see that it holds we write
\begin{align*}
  \frac{1}{m^2}E \left[\max_{1\le k\le (Tm)\wedge
      \tau_{rm}^{V^+_m}}\right. &\left. \Big|\sum_{j=1}^{V^+_{m,k-1}+1}(G^k_j-1)\Big|^2\right]\le
  \frac{1}{m^2}E\left[\max_{1\le k\le Tm} \max_{1\le \ell\le
      rm+1}\Big|\sum_{j=1}^\ell(G^k_j-1)\Big|^2\right]\\ 
&   =
  \frac{1}{m^2} \sum_{y=0}^\infty P\left(\max_{1\le k\le Tm}
    \max_{1\le \ell\le
      rm+1}\Big|\sum_{j=1}^\ell(G^k_j-1)\Big|^2>y\right) 
\\&\le \frac{r^{3/2}}{\sqrt{m}}
+ (r T+1) \sum_{y\ge (rm)^{3/2}} \max_{1\le \ell\le rm+1}P\left( \Big|\sum_{j=1}^\ell(G^k_j-1)\Big|>\sqrt{y}\right).     
  \end{align*}
  Finally we apply Lemma~A.1 from \cite{kpERWMCS} to get that the
  expression in the last line does not exceed
\begin{align*}
  &\frac{r^{3/2}}{\sqrt{m}}+r T\sum_{y\ge (rm)^{3/2}} C
  \left(\exp\left\{-c \left(\frac{y}{\sqrt{y}\vee
          (8rm)}\right)\right\} + \exp\left\{-c\sqrt{y}\right\}\right)\\ \le
  &\frac{r^{3/2}}{\sqrt{m}}+r T\sum_{y\ge (rm)^{3/2}} C
  \left(\exp\left\{-c \left(\frac{y}{\sqrt{y}\vee
          (8y^{2/3})}\right)\right\} + \exp\left\{-c\sqrt{y}
    \right\}\right)\to 0 \ \text{ as } m\to\infty.
\end{align*}

Conditions \eqref{EKc4} and \eqref{EKc5} follow from Propositions 4.1
and 4.2 of \cite{kpERWMCS} respectively. Indeed, by
\cite[Proposition~4.1]{kpERWMCS} for some $c_1,c_2>0$, all 
and $n\ge 0$
\[\left|E[ V^+_1 \mid V^+_0=n]- n
    -(1+\eta\cdot\br^+)\right|\le c_1e^{-c_2n}.\] Using the Markov
property and the fact that $V^+_{m,k-1}\le rm$ for
$k\le \tau_{rm}^{V^+_m}$ we get
\begin{align*}
   \lim_{m\to\infty} &E\left[ \sup_{t\leq T \wedge \tau_r^{Y_m}} \left| B_m(t) - B_m(t-) \right|^2 \right]
= \lim_{m\to\infty} \frac{1}{m^2} E\left[ \max_{1\leq k \leq (Tn) \wedge \tau_{rm}^{V^+_m}} \left( E[ V^+_{m,k} - V^+_{m,k-1} \, | \, V^+_{m,k-1}] \right)^2 \right] \\
&\le \lim_{m\to\infty} \frac{1}{m^2}\, E\left[ \max_{1\leq k \leq (Tm) \wedge \tau_{rm}^{V^+_m}} \left( E[ V^+_{m,k} | \, V^+_{m,k-1}]- V^+_{m,k-1} -(1+\eta\cdot\br^+) \right)^2 \right]\\
&\leq \lim_{m\to\infty} \frac{c_1^2}{m^2}\,E\left[ \max_{1\leq k \leq (Tm) \wedge \tau_{rm}^{V^+_m}}e^{-2c_2V^+_{m,k-1}}\right] = 0, 
\end{align*}
Similarly, by \cite[Proposition~4.2]{kpERWMCS} there is a $c_3>0$ such that $\left|\Var(V^+_1\mid V^+_0=n)-\nu n\right|\le c_3$ for all $n\ge 0$. Therefore,
\begin{multline*}
 \lim_{m\to\infty} E\left[ \sup_{t\leq T \wedge \tau_r^{Y_m}} \left| A_m(t) - A_m(t-) \right| \right]  
= \lim_{m\to\infty} \frac{1}{m^2} E\left[ \max_{1\leq k \leq (Tm)\wedge \tau_{rm}^{V^+_m}} \Var(V^+_{m,k} \, | \, V^+_{m,k-1}) \right] \\
\le \lim_{m\to\infty} \frac{ (\nu rm+c_3)}{m^2} = 0. 
\end{multline*}

To check condition \eqref{EKc6}, note that 
\begin{align*}
 \sup_{t \leq T \wedge \tau_r^{Y_m}} &\left| B_m(t) - (1+\eta\cdot\br^+)t \right| 
\\ &\leq \frac{1+\eta\cdot\br^+}{m} + \sup_{1\leq k \leq (Tm) \wedge \tau_{rm}^{V^+_m}} \frac{1}{m} \sum_{j=1}^k \left| E\left[ V^+_{m,j} - V^+_{m,j-1} \, | \, V^+_{m,j-1} \right] - (1+\eta\cdot\br^+) \right| \\
&\leq \frac{1+\eta\cdot\br^+}{m} + \frac{c_1}{m} \sum_{j=1}^{(Tm) \wedge \tau_{rm}^{V^+_m}} e^{-c_2V^+_{m,j-1}}\le \frac{c_4}{m} + \frac{c_1}{m} \sum_{j=1}^{Tm}\ind{V^+_{m,j-1}\le m^{\alpha}}. 
\end{align*}
By Lemma~\ref{small2}, for any $\alpha \in (0,1-\theta^-)$ the last expression goes to $0$ in probability as $m\to\infty$, and we have shown that condition \eqref{EKc6} holds.

Finally, to check condition \eqref{EKc7} note that 
\begin{align*}
 \sup_{t \leq T \wedge \tau_r^{Y_m}} &\left| A_m(t)-\nu \int_0^t (Y_m(s))_+ \, ds  \right|
\\ &\le \max_{1\leq k \leq (Tm) \wedge \tau_{rm}^{V^+_m}}\left| \frac{1}{m^2} \sum_{j=1}^k \Var(V^+_{m,j} \, | \, V^+_{m,j-1}) - \frac{\nu}{m^2} \sum_{j=1}^{k} V^+_{m,j-1} \right|+\frac{\nu}{m^2}\, V^+_{m,k-1}   \\
&\leq \max_{1\leq k \leq (Tm) \wedge \tau_{rm}^{V^+_m}} \left(\frac{1}{m^2} \sum_{j=1}^k \left|\Var(V^+_{m,j} \, | \, V^+_{m,j-1}) -\nu V^+_{m,j-1}\right|+\frac{\nu}{m^2}\,V^+_{m,k-1}\right)\le \frac{c_3 T+\nu r}{m}\to 0
\end{align*}
as $m\to\infty$.  This completes the proof of condition \eqref{EKc7}
and thus also the proof of part (1).

(2) The process convergence part of the argument is based on
\cite[Theorem 3.2]{bCOPM} which we state below for the reader's
convenience.
\begin{thm}{\bf (\cite[Theorem 3.2]{bCOPM})}\label{Bil}
  Let $(S,d)$ be a metric space. Suppose that
  $Y_{m,\ell},\, Y_m,\, Y^{(\ell)}$ $(m,\ell\in\N)$ and $Y^{(\infty)}$ are
  $S$-valued random variables such that $Y_{m,\ell}$ and $Y_m$ are
  defined on the same probability space with probability measure $P^m$
  for all $m,\ell\in\N$. If
  $Y_{m,\ell}\underset{m\to\infty}{\Longrightarrow}
  Y^{(\ell)}\underset{\ell\to\infty}{\Longrightarrow}Y^{(\infty)}$
  and
  \[\lim_{\ell\to\infty}\limsup_{m\to\infty}P^m(d(Y_{m,\ell},Y_m)>\epsilon)=0\]
  for each $\epsilon>0$, then
  $Y_m\underset{m\to\infty}{\Longrightarrow}Y^{(\infty)}$.
\end{thm}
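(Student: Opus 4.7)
The plan is to use the bounded-Lipschitz characterization of weak convergence in a metric space: $Y_m \Longrightarrow Y^{(\infty)}$ if and only if $E^m[f(Y_m)] \to E[f(Y^{(\infty)})]$ for every bounded Lipschitz function $f\colon S \to \R$. I will therefore fix such an $f$ with $\|f\|_\infty \leq M$ and Lipschitz constant $L$, and establish convergence of expectations by splitting the error via a triangle inequality through the intermediate processes $Y_{m,\ell}$ and $Y^{(\ell)}$:
\begin{equation*}
\bigl|E^m[f(Y_m)] - E[f(Y^{(\infty)})]\bigr|
\leq A_{m,\ell} + B_{m,\ell} + C_\ell,
\end{equation*}
where
\begin{equation*}
A_{m,\ell} = E^m\bigl[|f(Y_m) - f(Y_{m,\ell})|\bigr],\quad
B_{m,\ell} = \bigl|E^m[f(Y_{m,\ell})] - E[f(Y^{(\ell)})]\bigr|,\quad
C_\ell = \bigl|E[f(Y^{(\ell)})] - E[f(Y^{(\infty)})]\bigr|.
\end{equation*}

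The term $A_{m,\ell}$ is the only one that sees the joint distribution of $Y_{m,\ell}$ and $Y_m$; here the hypothesis that they live on a common probability space $(\Omega^m, P^m)$ is essential. Splitting according to whether $d(Y_m, Y_{m,\ell}) \leq \epsilon$ or $>\epsilon$ and using the Lipschitz bound and the uniform bound respectively, I get
\begin{equation*}
A_{m,\ell} \leq L \epsilon + 2M\, P^m\bigl(d(Y_m, Y_{m,\ell}) > \epsilon\bigr).
\end{equation*}
The hypothesis $\lim_{\ell\to\infty}\limsup_{m\to\infty}P^m(d(Y_{m,\ell},Y_m)>\epsilon)=0$ controls the right-hand side after sending $\ell\to\infty$ and then $m\to\infty$. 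Meanwhile $C_\ell \to 0$ as $\ell\to\infty$ because $Y^{(\ell)} \Longrightarrow Y^{(\infty)}$ and $f$ is bounded continuous, and for each fixed $\ell$, $B_{m,\ell} \to 0$ as $m\to\infty$ because $Y_{m,\ell} \Longrightarrow Y^{(\ell)}$ and $f$ is bounded continuous.

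To close the argument, given $\eta > 0$ I will first take $\epsilon = \eta/(3L)$ so that $L\epsilon = \eta/3$; next choose $\ell$ large enough that simultaneously $C_\ell < \eta/3$ and $\limsup_{m\to\infty} 2M\, P^m(d(Y_m, Y_{m,\ell}) > \epsilon) < \eta/6$, and then for all sufficiently large $m$ the term $2M\, P^m(\cdots) < \eta/3$; with this $\ell$ now fixed I finally pick $m$ large enough that also $B_{m,\ell} < \eta/3$. Adding these bounds yields $|E^m[f(Y_m)] - E[f(Y^{(\infty)})]| < 4\eta/3$, and since $\eta$ was arbitrary the desired convergence follows. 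There is no serious obstacle in the argument; it is essentially bookkeeping once one commits to bounded Lipschitz test functions. The only subtlety is the order of quantifiers: $\ell$ must be chosen before $m$, because the third hypothesis is phrased as a $\lim_\ell \limsup_m$ and fails to give any uniform-in-$\ell$ control on $P^m(d(Y_m,Y_{m,\ell})>\epsilon)$ for fixed $m$.
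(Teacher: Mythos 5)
Your proof is correct. Note that the paper does not prove this statement; it is quoted verbatim from Billingsley's \emph{Convergence of Probability Measures} (second edition, Theorem~3.2), whose own proof proceeds via the closed-set form of the Portmanteau theorem: for a closed set $F$ one bounds $P^m(Y_m\in F)\le P^m(Y_{m,\ell}\in F^\epsilon)+P^m(d(Y_m,Y_{m,\ell})>\epsilon)$, takes $\limsup_m$ and applies Portmanteau to get $P(Y^{(\ell)}\in F^\epsilon)$, then sends $\ell\to\infty$ and again uses Portmanteau, and finally lets $\epsilon\downarrow 0$ using the closedness of $F$. Your route through bounded Lipschitz test functions is a genuinely different but equally standard argument: it replaces the topological bookkeeping with the $\epsilon$-split $A_{m,\ell}\le L\epsilon+2M\,P^m(d(Y_m,Y_{m,\ell})>\epsilon)$, which is clean and makes the role of the common probability space for $(Y_m,Y_{m,\ell})$ explicit in one line. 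What you gain is a shorter, more self-contained proof that avoids appealing twice to Portmanteau and to $\epsilon$-enlargements of closed sets; what you rely on instead is the (well-known, but one step removed from the definition) fact that bounded Lipschitz functions are a convergence-determining class in a metric space. Your quantifier handling is correct: $\epsilon$ is fixed first (from the Lipschitz constant), then $\ell$ is chosen so that $C_\ell$ and $\limsup_m 2M\,P^m(d>\epsilon)$ are both small, and only then is $m$ sent to infinity to kill $B_{m,\ell}$; the final bound $4\eta/3$ rather than $\eta$ is harmless since $\eta$ was arbitrary.
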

\begin{rem}\label{corproof}
    The proof of Corollary~\ref{da0.25dead} repeats the argument below
    word for word on the space $D([0,T])$ with the metric $d^\circ_T$
    (see \cite[p.\,166 and (12.16)]{bCOPM}) and use Lemma~\ref{dead}
    instead of Lemma~\ref{sm0}.
  \end{rem}

In addition to processes $Y_m$ and $Y$ defined in the statement, for
$\delta:=1/\ell>0$ we let
$Y_{m,\ell}(t)=m^{-1}U^+_{m,\fl{tm}\wedge \sigma_{m\delta}}$,
$Y^{(\ell)}(t)=Y(t\wedge\sigma_\delta)$,
$Y^{(\infty)}(t)=Y(t\wedge \sigma_0)$, $t\ge 0$, and work in the space
$D[0,\infty)$ with the $J_1$ metric $d^\circ_\infty$ (see
\cite[(16.4)]{bCOPM}). From \cite[Lemma 6.1]{kpERWMCS}\footnote{Lemma
  6.1 is stated and proved in \cite{kpERWMCS} for the processes $V^-$
  with deterministic initial conditions but it holds with the same proof for the other 3 processes and random initial distributions.} or,
alternatively, by repeating essentially word for word the proof of
part (1), we know that $\forall \ell\in\N$,
$Y_{m,\ell}\underset{m\to\infty}{\Longrightarrow} Y^{(\ell)}$.
Moreover,
$Y^{(\ell)}\underset{\ell\to\infty}{\Longrightarrow} Y^{(\infty)}$ as
$\theta^+<1$. Indeed, using the properties of BESQ$^\dim$ with $\dim<2$ we have $\forall \epsilon>0$
\begin{align*}
P\left(\sup_{t\ge 0}|Y(t\wedge \sigma_\delta)-Y(t\wedge \sigma_0)|>\epsilon\right)
&\le P\left(\sup_{t\ge \sigma_\delta}Y(t\wedge \sigma_0)>\frac{\epsilon}{2}\right) 
\\
 & \leq P(\tau^Y_{\epsilon/2}<\sigma^Y_0 \mid Y(0) = \delta ) \to 0\ \text{as $\delta\to 0$.}
\end{align*}
We are left to check the last condition of
Theorem~\ref{Bil}. For all $\delta\in(0,\epsilon/2)$ and $r>0$ we have
that
\begin{multline*}
  P^m\left(d^\circ_\infty(Y_{m,\ell},Y_m)>\epsilon\right)
\le P\left(\sup_{k\ge \sigma_{m\delta}}U^+_{m,k}\ge \epsilon m/2\right) 
\le P\left(\sup_{k\ge 0}\,U^+_{m,k}\ge \epsilon m/2 \mid U^+_0=\fl{\delta m}\right)\\
=P\left(\tau^{U^+}_{\epsilon m/2}<\sigma^{U^+}_0 \mid U^+_0=\fl{\delta m}\right)
\le P\left(\tau^{U^+}_{\epsilon m/2}\le rm \mid U^+_0=\fl{\delta m}\right)
+P\left(\sigma^{U^+}_0>rm \mid U^+_0=\fl{\delta m}\right).
\end{multline*}
By Lemma~\ref{ub} (see below) and Lemma~\ref{sm0} we can control the last two probabilities
and conclude that
\[\lim_{\ell\to\infty}\limsup_{m\to\infty}P^m\left(d^\circ_\infty(Y_{m,\ell},Y_m)>\epsilon\right)=0. \] By Theorem~\ref{Bil},
  $Y_m\underset{m\to\infty}{\Longrightarrow}Y^{(\infty)}$ as claimed.

  We are left to show \eqref{ht}. By the continuous mapping
    theorem, \cite[Lemma 3.3]{kzEERW}, and the a.s.\ continuity of $Y$
    we have that
  $\sigma^{Y_m}_{\delta}\underset{m\to\infty}{\Longrightarrow}\sigma^Y_\delta\underset{\delta\to
    0}{\Longrightarrow}\sigma_0^Y$. To use Theorem~\ref{Bil}
  again, we need to estimate
  $P(\sigma^{Y_m}_0-\sigma^{Y_m}_{\delta}>\epsilon \mid Y^m_0)$. By the
  strong Markov property and monotonicity in the starting point, this
  probability does not exceed
  $P(\sigma^{U^+}_0>\epsilon m \,|\,U^+_0=\ceil{\delta m})$ which
  converges to $0$ as $\delta\to 0$ by Lemma~\ref{sm0}. Thus,
  $\sigma^{Y_m}_0\underset{m\to\infty}{\Longrightarrow}\sigma^Y_0$.
\end{proof}

The proof of Theorem~\ref{da0.25} depends on several facts which we shall state and prove first. Recall that
$\max\{\theta^+,\theta^-\}<1$. The BLP $Z$ below can be any of the
BLPs $U^\pm$ and $V^\pm$.

\begin{lemma}\label{ub}
  For all $T,\epsilon>0$ there is an $L>0$ such that for an arbitrary fixed selection of the first cookies and for all $m\in \N$
  \[P\left(\max_{k\le Tm
      }Z^m_k\le Lm\,|\, Z_0=m\right)>1-\epsilon.\]
\end{lemma}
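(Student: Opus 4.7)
The plan is to control $\max_{k \le Tm} Z^m_k$ via a martingale decomposition of $Z^m$ together with Doob's maximal inequality and optional stopping. The essential ingredient is a uniform bound on the conditional mean and variance of one-step increments: for any of the BLPs $Z \in \{U^\pm, V^\pm\}$ there exists a constant $C_* > 0$ depending only on $K$ and $p(\cdot)$ such that, uniformly over the first cookie at each site and over all $n \ge 0$,
\begin{equation}\label{plan-drift}
|E[Z^m_{k+1} - Z^m_k \mid Z^m_k = n]| \le C_*, \qquad \Var(Z^m_{k+1} \mid Z^m_k = n) \le C_*(n+1).
\end{equation}
For large $n$ this follows from Propositions 4.1 and 4.2 of \cite{kpERWMCS}: the one-step mean converges to $1 + r^\pm(i)$ (or $r^\pm(i)$ in the case of $U^\pm$) at an exponential rate uniform in the cookie state $i$, and $\max_i |r^\pm(i)|$ is finite. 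For small $n$, since $p(\cdot)$ takes only finitely many values strictly inside $(0,1)$, the one-step increment is a sum of at most $n+1$ (correlated) truncated geometric random variables whose moments are uniformly bounded, yielding \eqref{plan-drift} for small $n$ as well.

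Given \eqref{plan-drift}, I would write $Z^m_k = m + A_k + M_k$, where $A_k = \sum_{j=1}^k E[Z^m_j - Z^m_{j-1} \mid \mathcal{F}_{j-1}]$ is the predictable compensator and $M_k$ is a martingale. The drift bound gives $|A_k| \le C_* k$, so for any $L > 1 + C_* T$ and $\tau_L := \inf\{k \ge 0 : Z^m_k > Lm\}$,
\[
\{\max_{k \le Tm} Z^m_k > Lm\} = \{\tau_L \le Tm\} \subseteq \{\max_{k \le Tm} M_{k \wedge \tau_L} > (L-1-C_*T)m\}.
\]
The second moment of the stopped martingale is controlled by the variance bound: on $\{\tau_L \ge j\}$ we have $Z^m_{j-1} \le Lm$, so the orthogonality of martingale increments gives
\[
E[M_{Tm \wedge \tau_L}^2] = \sum_{j=1}^{Tm} E\bigl[\Var(Z^m_j \mid \mathcal{F}_{j-1}) \ind{\tau_L \ge j}\bigr] \le C_* T m (Lm + 1).
\]

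Finally, Doob's $L^2$ maximal inequality applied to the stopped martingale yields
\[
P\left(\max_{k \le Tm} Z^m_k > Lm\right) \le \frac{E[M_{Tm \wedge \tau_L}^2]}{((L-1-C_*T)m)^2} \le \frac{C_* T (Lm+1)}{(L-1-C_*T)^2 \, m},
\]
which for $m \ge 1$ is bounded above by $2 C_* T L / (L - 1 - C_* T)^2$. This quantity tends to $0$ as $L \to \infty$, so choosing $L$ sufficiently large (depending on $T$ and $\epsilon$, but not on $m$ or on the fixed first cookie selection) completes the proof. The main obstacle is verifying \eqref{plan-drift} uniformly over arbitrary first cookie environments; I expect this to follow from a close reading of the proofs of Propositions 4.1 and 4.2 in \cite{kpERWMCS}, whose estimates depend only on properties of the cookie Markov chain $(K, p(\cdot))$ and not on any particular initial distribution of first cookies.
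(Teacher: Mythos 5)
Your proposal is correct and follows essentially the same route as the paper: both decompose $Z^m_k$ into a predictable drift part (uniformly bounded per step by the estimates of Propositions 4.1--4.2 of \cite{kpERWMCS}) plus a martingale, and both then control the martingale's second moment and apply Doob's maximal inequality. The only cosmetic difference is that you stop the martingale at $\tau_L$ to exploit the per-step variance bound $\Var(Z^m_j\mid\mathcal{F}_{j-1})\le C_*(Z^m_{j-1}+1)$, whereas the paper bounds $E[(Z^m_{\fl{Tm}})^2]$ directly by iterating the one-step second-moment inequality; both routes give $E[M^2]\lesssim m^2$ uniformly in the first cookies, and the rest is identical.
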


\begin{proof}
  By Propositions 3.1, 3.6, 4.1, 4.2 of \cite{kpERWMCS} we have that for all $k\in\N$
  \begin{equation}
    \label{mv}
    |E[Z^m_k\,|\,Z^m_{k-1}]-Z^m_{k-1}|\le \gamma;\quad E[(Z^m_k)^2\,|\,Z^m_{k-1}]\le (Z^m_{k-1})^2+\alpha Z^m_{k-1}+\beta,
  \end{equation}
  where constants $\alpha,\beta,\gamma$ do not depend on $k,m$ or
  a choice of the first cookies. If we
  set \[b_k:=E[(Z^m_k)^2\,|\,Z^m_0=m],\quad a_k:=E[Z^m_k\,|\,Z^m_0=m], \] then
  estimates \eqref{mv} imply that
  \[ a_k\le m+\gamma k,\quad b_k\le b_{k-1}+\alpha\gamma(k-1)+\alpha
    m+\beta.\] We conclude that
  \[E[Z^m_k\,|\,Z^m_0=m]\le m+\gamma k, \quad E[(Z^m_k)^2\,|\,Z^m_0=m]\le
    k(\alpha m +\beta) +\frac12\alpha\gamma k(k-1).\] Let $M^m_0=m$,
  $M^m_k:=Z^m_k-\sum_{j=1}^kE[Z^m_j-Z^m_{j-1}\,|\,Z^m_{j-1}]$,
  $k\in\N$. Then $M^m_k, k\ge 0$, is a martingale with respect to its
  natural filtration. Since
  $|M^m_{\fl{Tm}}-Z^m_{\fl{Tm}}|\le \gamma Tm$, we have that
  \[E[(M^m_{\fl{Tm}})^2]\le
    2E[(Z^m_{\fl{Tm}})^2\,|\,Z^m_0=m]+2(\gamma Tm)^2\le
    C(\alpha,\beta,\gamma,T)m^2.\] By the maximal inequality, for
  $L>\gamma T$ and all $m\in\N$,
  \begin{multline*}
    P\left(\max_{k\le Tm}Z^m_k\ge mL\right)\le P\left(\max_{k\le Tm}|M^m_k|\ge m(L-\gamma T)\right)\le
    \frac{4E[(M_{\fl{mT}})^2]}{(L-\gamma T)^2m^2}\le \frac{4C(\alpha,\beta,\gamma,T)}{(L-\gamma T)^2}.
  \end{multline*}
We can choose $L$ large enough to ensure that the last expression is less than $1-\epsilon$.
\end{proof}

\begin{lemma}\label{ct}
  For each $m\in\N$ let $Z^m$ be 
one of the four kinds of BLPs
and $Z^m_0\le Km$ for some $K>0$. Fix $\epsilon>0$ and
  define
  \[Y^{\epsilon,m}_t:=\frac{Z^m_{\fl{tm}}}{m},\quad
    \tilde{Y}^{\epsilon,m}_t:=\frac{ Z^m_{\fl{\fl{tm^{3/4}}m^{1/4}}} }{m},\quad t\ge 0.\] Then uniformly over all first cookie environments for every $T,\delta>0$ \[P\left(\sup_{0\le t\le T}|\tilde{Y}^{\epsilon,m}_t-Y^{\epsilon,m}_t|>\delta\right)\to 0\quad\text{as }m\to\infty.\]
\end{lemma}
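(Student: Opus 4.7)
The proof is a quantitative BLP estimate once we unpack the definitions. Note that $\tilde Y^{\epsilon,m}_t = Z^m_{\fl{jm^{1/4}}}/m$ is constant on $t\in[jm^{-3/4},(j+1)m^{-3/4})$, while on this same interval $\fl{tm}$ ranges over integers differing from $\fl{jm^{1/4}}$ by at most $\fl{m^{1/4}}+1$. Hence
\[
\sup_{0\le t\le T}\bigl\vert\tilde Y^{\epsilon,m}_t-Y^{\epsilon,m}_t\bigr\vert\le\frac{1}{m}\max_{0\le j\le \fl{Tm^{3/4}}}\ \sup_{0\le i\le \fl{m^{1/4}}+1}\bigl\vert Z^m_{\fl{jm^{1/4}}+i}-Z^m_{\fl{jm^{1/4}}}\bigr\vert,
\]
and it suffices to prove this maximum is $o_P(1)$ uniformly in the first cookie environment.

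Given $\delta>0$, use Lemma~\ref{ub} to choose $L$ so that for every $m$ the stopping time $\tau_L\coloneqq \inf\{k:Z^m_k>Lm\}$ satisfies $P(\tau_L\le (T+1)m)<\delta/2$. Outside this event one can work with the stopped process $\{Z^m_{k\wedge\tau_L}\}$, which agrees with $\{Z^m_k\}$ on the relevant range of indices. Write $Z^m_{k\wedge\tau_L}=m+M^m_k+D^m_k$, where $M^m$ is a martingale starting at $0$ and $D^m$ is predictable. The bounds in \eqref{mv} apply deterministically to the stopped process: the drift satisfies $|D^m_{k+j}-D^m_k|\le\gamma j$ uniformly in the environment, so its contribution of at most $\gamma(\fl{m^{1/4}}+1)=o(m)$ is negligible; and the conditional variance of each martingale increment is bounded by $\nu Lm+O(1)$, giving the deterministic estimate $\langle M^m\rangle_{k+n}-\langle M^m\rangle_k\le c_1 nm$ for a constant $c_1$ depending only on $L$ and $\nu$.

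The Burkholder--Davis--Gundy inequality at exponent $4$ combined with Doob's $L^4$ maximal inequality then yields, uniformly in $k$,
\[
E\Bigl[\sup_{0\le i\le \fl{m^{1/4}}+1}\bigl(M^m_{k+i}-M^m_k\bigr)^4\Bigm|\cal F_k\Bigr]\le C\bigl(c_1(\fl{m^{1/4}}+1)m\bigr)^2=O(m^{5/2}).
\]
By Markov's inequality, each single event $\{\sup_{i\le m^{1/4}+1}|M^m_{k+i}-M^m_k|>\delta m/3\}$ has probability $O(\delta^{-4}m^{-3/2})$, and a union bound over the $\fl{Tm^{3/4}}+1$ starting indices $k=\fl{jm^{1/4}}$ produces a total of $O(\delta^{-4}m^{-3/4})\to 0$. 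Combined with the bounds on the drift and on the probability of $\{\tau_L\le (T+1)m\}$, this finishes the proof.

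The only point requiring care is the choice of exponent at which to invoke BDG: a plain second-moment bound followed by the same union bound gives only an $O(1)$ estimate, which is useless, whereas the fourth moment is exactly strong enough. Crucially, this uses only the deterministic $\langle M^m\rangle$-bound provided by \eqref{mv}, so no moment information on the BLP increments is required beyond what is already established in \cite{kpERWMCS}. Everything is then uniform in the first cookie environment and in the choice of BLP ($U^\pm$ or $V^\pm$).
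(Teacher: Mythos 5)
Your overall strategy (control the block maxima using Lemma~\ref{ub} plus a martingale decomposition) is different from the paper's, which instead applies a one-step increment tail bound (Lemma~A.1 of \cite{kpERWMCS}) directly to the events $B_k = \{\forall j: |Z^m_{\fl{j+1+(k-1)m^{1/4}}} - Z^m_{\fl{j+(k-1)m^{1/4}}}|\le m^{3/5}\}$ and then unions over the $\fl{Tm^{3/4}}$ blocks, getting a stretched-exponential rather than polynomial bound. Both routes can be made to work, but your version as written has a genuine gap at the Burkholder--Davis--Gundy step.

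The issue is that you invoke BDG at exponent $4$ using the \emph{predictable} quadratic variation $\langle M^m\rangle$ and conclude $E\bigl[\sup_{i\le n}(M^m_{k+i}-M^m_k)^4\,|\,\mathcal F_k\bigr]\le C\bigl(\langle M^m\rangle_{k+n}-\langle M^m\rangle_k\bigr)^2$. This inequality is \emph{not} true for discrete-time martingales in general: BDG compares $\sup|M|^p$ with the optional quadratic variation $[M]_n=\sum_{j\le n}(\Delta M_j)^2$, and passing from $[M]$ to $\langle M\rangle$ at $p\ge 2$ costs an extra term controlling $\max_j|\Delta M_j|^p$ (this is exactly Rosenthal's/Burkholder's inequality, $E[|M_n|^p]\le C_p\bigl(E[\langle M\rangle_n^{p/2}]+E[\max_j|\Delta M_j|^p]\bigr)$). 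A two-point example in which a single increment is $\pm\epsilon^{-1}$ with total probability $\epsilon^2$ has $\langle M\rangle_1=1$ but $E[M_1^4]=\epsilon^{-2}$, so the missing term genuinely matters. For BLPs the increments are unbounded (roughly negative-binomial on the scale $\sqrt{Z}$), so you cannot simply drop this term; you would need to add a bound on $E[\max_j|\Delta M_j|^4\,|\,\mathcal F_k]$. The exponential tail bounds of Lemma~A.1 of \cite{kpERWMCS} (or the explicit fourth-moment bound of Lemma~A.3, which is what the paper quotes in the proof of Theorem~\ref{da0.25}) do supply the missing control — with $Z\le Lm$ the fourth moment of a single increment is $O(m^2)$, so $\max_j|\Delta M_j|^4$ over a block of length $m^{1/4}$ is $O(m^{9/4})$ and dominated by your $m^{5/2}$ term. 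If you insert that step, the union bound as you wrote it gives $O(\delta^{-4}m^{-3/4})\to 0$, uniformly over first cookie environments. But note that once you are reaching for the per-increment tail bound anyway, the paper's more direct union bound over the events $B_k$ bypasses the martingale machinery entirely and yields a sharper (stretched-exponential) estimate.
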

\begin{proof}
  Let $A_L$ be the event that $\max_{j\le Tm}Z^m_j\le Lm$. By
  Lemma~\ref{ub}, given an arbitrary $\epsilon'>0$, there is an $L$ such
  that $P(A_L)>1-\epsilon'$. Denote by $B_k$ the event
  \[\{\forall j\in\bint{1,m^{1/4}}:\,
    |Z^m_{\fl{j+1+(k-1)m^{1/4}}}-Z^m_{\fl{j+(k-1)m^{1/4}}}|\le m^{3/5}\}.\]
  Then by Lemma A.1 from \cite{kpERWMCS} there are $c,C>0$ such
  that \[P(B_k^c\cap A_L)\le Cm^{1/4}e^{-cm^{1/5}/L}.\] We conclude that 
  \begin{equation*}
    P\left(\sup_{0\le t\le T}|\tilde{Y}^{\epsilon,m}_t-Y^{\epsilon,m}_t|>\delta\right)\le P\left(\cup_{k\le Tm^{3/4}}(B_k^c\cap A_L)\right)+P(A_L^c)\le CTme^{-cm^{1/5}/L}+\epsilon'.
  \end{equation*}
Since $\epsilon'$ was arbitrary, the proof is complete.
\end{proof}

The proof of the following lemma is identical to the one of Lemma 7.1 in \cite{kpERWMCS}, and is, thus, omitted.

\begin{lemma}\label{al1}
Let $D\in \mathbb{R}$, $\nu >0$, and $\{Y(t)\}_{t\ge 0}$ be a solution of \eqref{daa}
with $D(t)\equiv D$ and $Y(0)\sim \kappa$. Let
(time-inhomogeneous countable) Markov chains 
$Z^n_k:=\{ Z^n_k \}_{k\ge 0}$ with values in $\R$ satisfy the following
conditions:
\begin{enumerate}
\item \label{DAas1} for each $T,r>0$ there is a deterministic function $g:\R_+\to\R_+$ such that $g(x)\to 0$ as $x\to\infty$,
\begin{align*}
  &\mathrm{(E)}\quad \max_{1\le k\le (Tn)\wedge (\tau^{Z^n}_{rn}+1)}|E[Z^n_k-Z^n_{k-1}\,|\,Z^n_{k-1}]-D|\le g(n)
    ;\\&\mathrm{(V)}\quad
  \max_{1\le k\le (Tn)\wedge (\tau^{Z^n}_{rn}+1)}\Big|\frac{\mathrm{Var}(Z^n_k\,|\,Z^n_{k-1})}{Z^n_{k-1}\vee N_n}- \nu  \Big|
   \le g(n)
  \\ &\text{for some sequence $\{N_n\}_{n\in\N}$, $N_n\to\infty$, $N_n=o(n)$ as $n\to\infty$; }
\end{align*}
 \item \label{DAas2} for each $T,r>0$ 
\[
E\left[ \max_{1\le k\le (Tn)\wedge (\tau^{Z^n}_{rn}+1)}(Z^n_k-Z^n_{k-1})^2\right]=o(n^2) \text{ as $n\to\infty$}.
\]  
  \end{enumerate}
  Set 
  $Y_n(t)=n^{-1}Z^n_{\fl{nt}}$, $t\ge 0$, and assume that $Y_n(0)\sim \kappa_n$ where $\kappa_n\underset{n\to\infty}{\Longrightarrow}\kappa$. Then
 $Y_n\overset{J_1}{\underset{n\to\infty}{\Longrightarrow}} Y$.
\end{lemma}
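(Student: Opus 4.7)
The plan is to follow essentially the same route as in the proof of Theorem~\ref{DA0}(1), but with the abstract hypotheses (E), (V), and (2) replacing the explicit BLP estimates used there. The target SDE \eqref{daa} with constant drift $D$ and diffusion coefficient $\sqrt{\nu x_+}$ has a well-posed martingale problem for the generator $Gf(x) = \tfrac{\nu}{2}\,x_+ f''(x) + D f'(x)$ on $C_\R[0,\infty)$ (by \cite[Corollary 3.4]{ekMP}, since existence and distributional uniqueness hold for \eqref{daa} with arbitrary initial distributions). Hence it suffices to invoke \cite[Theorem 4.1, Chapter 7]{ekMP}: I will verify its five hypotheses localized at times $T \wedge \tau^{Y_n}_r$, yielding $\{Y_n(t \wedge \tau^{Y_n}_r)\}_{t \ge 0} \overset{J_1}{\Longrightarrow} \{Y(t \wedge \tau^Y_r)\}_{t \ge 0}$, and then pass $r \to \infty$.

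Write $Y_n(t) = Y_n(0) + M_n(t) + B_n(t)$, with predictable drift and quadratic variation
\[
B_n(t) = \tfrac{1}{n} \sum_{k=1}^{\fl{nt}} E[Z^n_k - Z^n_{k-1} \mid Z^n_{k-1}], \qquad A_n(t) = \tfrac{1}{n^2} \sum_{k=1}^{\fl{nt}} \Var(Z^n_k \mid Z^n_{k-1}).
\]
The three jump conditions \eqref{EKc3}--\eqref{EKc5} of the Ethier--Kurtz setup reduce, via the pointwise bounds $|B_n(t) - B_n(t-)| \le n^{-1}E[|Z^n_{\fl{nt}} - Z^n_{\fl{nt}-1}| \mid Z^n_{\fl{nt}-1}]$ and $|A_n(t)-A_n(t-)| \le n^{-2} E[(Z^n_{\fl{nt}} - Z^n_{\fl{nt}-1})^2 \mid Z^n_{\fl{nt}-1}]$, to hypothesis (2), which asserts the corresponding maximal second moment over $1 \le k \le (Tn) \wedge (\tau^{Z^n}_{rn}+1)$ is $o(n^2)$. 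The drift condition \eqref{EKc6} is immediate from hypothesis (E): on the event $\{k \le (Tn) \wedge (\tau^{Z^n}_{rn}+1)\}$ each summand is within $g(n)/n$ of $D/n$, so $\sup_{t \le T \wedge \tau^{Y_n}_r}|B_n(t) - Dt| \le T g(n) + D/n \to 0$.

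The verification of the quadratic-variation condition \eqref{EKc7} is the main point requiring care, because of the $\vee N_n$ in hypothesis (V). On the localizing event, (V) yields $\Var(Z^n_k \mid Z^n_{k-1}) = \nu(Z^n_{k-1} \vee N_n)(1 + \delta^n_k)$ with $|\delta^n_k| \le g(n)$, so
\[
\sup_{t \le T \wedge \tau^{Y_n}_r} \Big| A_n(t) - \tfrac{\nu}{n^2} \sum_{k=1}^{\fl{nt}} (Z^n_{k-1} \vee N_n) \Big| \le g(n)\, T \nu (r + N_n/n) \xrightarrow[n\to\infty]{} 0.
\]
The remaining gap between $\tfrac{\nu}{n^2}\sum (Z^n_{k-1} \vee N_n)$ and $\nu\int_0^t (Y_n(s))_+\,ds$ splits into a Riemann-sum error of order $1/n$ and the contribution of replacing $Z^n_{k-1}$ by $Z^n_{k-1} \vee N_n$, bounded by $\nu T N_n/n$, both of which vanish because $N_n = o(n)$. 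This completes the verification of the Ethier--Kurtz hypotheses up to $T \wedge \tau^{Y_n}_r$, giving the localized functional convergence.

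To pass to $[0,\infty)$ and remove the localization I will use Theorem~\ref{Bil} in the style of part (2) of Theorem~\ref{DA0}: set $Y_{n,r}(t) = Y_n(t \wedge \tau^{Y_n}_r)$, $Y^{(r)}(t) = Y(t \wedge \tau^Y_r)$, and $Y^{(\infty)} = Y$. The localized convergence gives $Y_{n,r} \Longrightarrow Y^{(r)}$ for each $r$; since $Y$ has continuous paths and its diffusion coefficient grows only like $\sqrt{x}$, no explosion occurs and $\tau^Y_r \to \infty$ almost surely, so $Y^{(r)} \Longrightarrow Y^{(\infty)}$ as $r \to \infty$. The hardest step will be establishing the remaining condition of Theorem~\ref{Bil}, namely
\[
\lim_{r\to\infty}\limsup_{n\to\infty} P\bigl( d^\circ_\infty(Y_{n,r}, Y_n) > \epsilon \bigr) = 0,
\]
which on any fixed time window $[0,T]$ reduces to controlling $P(\tau^{Y_n}_r \le T)$ uniformly in $n$. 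This is precisely the content of Lemma~\ref{ub} (applied to the tight family of initial conditions $\{\kappa_n\}$, which is tight because $\kappa_n \Rightarrow \kappa$), together with the second-moment control from hypothesis (2), which prevents $Y_n$ from spiking outside the localizing window. With this uniform non-explosion bound in hand, Theorem~\ref{Bil} delivers $Y_n \Longrightarrow Y$ in the $J_1$ topology on $D([0,\infty))$, completing the proof.
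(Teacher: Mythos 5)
Your core argument is the same as the paper's: the paper omits the proof of Lemma~\ref{al1} by referring to Lemma~7.1 of \cite{kpERWMCS}, and that proof is precisely the verification of the localized Ethier--Kurtz conditions that you carry out (the same scheme as the paper's proof of Theorem~\ref{DA0}(1), with the abstract hypotheses (E), (V) and (2) replacing the BLP-specific estimates). Your verification of the analogues of \eqref{EKc3}--\eqref{EKc7} is sound in outline, with two small caveats: the jump condition on $A_n$ is most easily obtained from (V) together with the deterministic bound $Z^n_{k-1}<rn$ for $k\le\tau^{Z^n}_{rn}$ (your route through $E[(Z^n_k-Z^n_{k-1})^2\mid Z^n_{k-1}]$ would require bounding a maximum of conditional second moments by the expectation of the pathwise maximum, which needs an extra argument), and the jump condition on $B_n$ follows deterministically from (E) alone.

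The one point that needs correcting is your final de-localization step. \cite[Theorem 4.1, p.~354]{ekMP} already concludes the unrestricted convergence $Y_n\Rightarrow Y$ on $D([0,\infty))$ from the localized conditions, precisely because the limiting martingale problem is well-posed; this is exactly how the paper uses it in Theorem~\ref{DA0}(1), where no de-localization appears. So the passage through Theorem~\ref{Bil} is redundant, and as written it is also not justified at the level of generality of Lemma~\ref{al1}: Lemma~\ref{ub} is a statement about BLPs in cookie environments, proved from the BLP moment estimates of \cite{kpERWMCS}, and it says nothing about the abstract chains $Z^n$, whose hypotheses (E), (V), (2) control increments only up to time $(Tn)\wedge(\tau^{Z^n}_{rn}+1)$. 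If a uniform bound on $P(\tau^{Y_n}_r\le T)$ were actually needed, you would have to re-derive it from (E) and (V) via a Doob maximal-inequality argument for the stopped chain (mimicking the proof of Lemma~\ref{ub}); the simplest fix, however, is to delete that step and let the Ethier--Kurtz theorem finish the proof directly.
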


Now we have all ingredients for the proof of Theorem~\ref{da0.25}.

\begin{proof}[Proof of Theorem~\ref{da0.25}] We give a detailed proof
  only for the case $Z^m_j=:V^+_{m,j}$, $j\ge 0$, but the same proof works
  for the other BLPs.
 
  We start by modifying our process $\{V^+_{m,j}\}_{j\ge 0}$. Let
  $N_m\in\N$ satisfy $N_m\to\infty$ and $N_m=o(m^{3/4})$ as
  $m\to\infty$.  We define $\tilde{V}^+_{m,0}=V^+_{m,0}$ and
recalling the representation in \eqref{VkSumG} for $V^+_{m,k}$ we let
\begin{equation}\label{tVkSumG}
\tvmj=\tvmjj+1+\sum_{\ell=1}^{(\tvmjj+1)\vee \fl{N_mm^{1/4}}}(G^j_\ell-1).
\end{equation}
Note that
  the modified process is identical to our original process
  $\{V^+_{m,j}\}_{j\ge 0}$ up to the first entrance time in the
  interval $(-\infty, N_mm^{1/4})$. Given the conditions of our
  theorem, it is enough to prove the result for the modified
  process. For convenience of the reader, we state the expectation and
  variance estimates for $\tilde{V}^+_m$ (Propositions 4.1 and 4.2
  from \cite{kpERWMCS}). For all $m, j\in\N$
  \begin{align}
    \label{exp}
    &|E[\tvmj-\tvmjj \mid \tvmjj]-(r^+(R^j_1)+1)|\le c_{12}e^{-c_{13}(\tvmjj\vee N_mm^{1/4})}\le c_{12}e^{-c_{13}N_mm^{1/4}} =:\epsilon_m;\\\label{var} &|\text{Var}(\tvmj\,|\tvmjj)-\nu(\tvmjj\vee \fl{N_mm^{1/4}})|\le c_{14}.
  \end{align}
We are planning to apply Lemma~\ref{al1} to the process
  $Z_k^n:=m^{-1/4}\tvmmk$ with $n=\fl{m^{3/4}}$ and then
  conclude by Lemma~\ref{ct}. We just need to check the conditions of
  Lemma~\ref{al1}. 

{\em Step 1.}  Given the first cookies on
$\bint{(k-1)m^{1/4},km^{1/4}}$, we get by the properties of
conditional expectation and \eqref{exp} that
  \begin{multline*}
    \Big|E\left[ \tvmmk - \tvmmkk \mid \tvmmkk\right]-\sum_{j=\fl{(k-1)m^{1/4}}+1}^{\fl{km^{1/4}}}(r^+(R^j_1)+1)\Big|\\ 
\le  \sum_{j=\fl{(k-1)m^{1/4}}+1}^{\fl{km^{1/4}}}E\left[\left|E\left[\tvmj-\tvmjj\,|\,\tvmjj\right]-(r^+(R^j_1)+1)\right|\,|\,\tvmmkk\right]\le \epsilon_mm^{1/4}.
  \end{multline*}
Recalling the meaning of the condition that the first cookie environment is $(m^{1/4},\rho)$-good we see that for all $m$ and $k$
\begin{equation}
  \label{1e}
  \left|\frac{1}{m^{1/4}}E\left[\tvmmk-\tvmmkk\,|\,\tvmmkk\right]-(\rho+1)\right|\le \frac{1}{\ln m}+\epsilon_m.
\end{equation}

{\em Step 2.} Our next task is to deal with conditional variance over
intervals $\bint{(k-1)m^{1/4},km^{1/4}}$ for $k\le Tm^{3/4}\wedge \tau_{rm}$ with
arbitrary fixed $T,r>0$. We want to show that
\begin{multline}
  \label{1v}
  \max_{1\le k\le Tm^{3/4}\wedge \tau_{rm}}\left|\text{Var}( \tvmmk \mid \tvmmkk)-\nu \fl{m^{1/4}}(\tvmmkk\vee (N_mm^{1/4}))\right|\\=o(N_mm^{1/2}),
  \end{multline}
  where $\tau_{rm}$ is the first time the process
  $\tilde{V}^+_{m,\fl{km^{1/4}}}, k\ge 0$, enters $(rm,\infty)$.

  Fix an arbitrary $m\in\N$ and $k, 1\le k\le Tm^{3/4}\wedge
  \tau_{rm}$. To simplify the notation, we shall use $V_j$ instead of
  $\tilde{V}^+_{m,\fl{(k-1)m^{1/4}+j}}$ and $V_{j+}$ instead of
  $V_j\vee N_mm^{1/4}$ for $j\in\bint{0,m^{1/4}}$. We shall also
  write $E_0[\cdot]$ and $\text{Var}_0(\cdot)$ instead of
  $E[\cdot\,|\,V_0]$ and $\text{Var}(\cdot\,|\,V_0)$.

  With this notation, the $k$-th term in \eqref{1v} can be estimated as follows:
  \begin{equation}\label{tel}
    \left|\text{Var}_0(V_{\fl{m^{1/4}}})-\nu \fl{m^{1/4}}V_{0+}\right|\le \sum_{j=1}^{\fl{m^{1/4}}}\left|\text{Var}_0(V_j)-\text{Var}_0(V_{j-1})-\nu V_{0+}\right|.
  \end{equation}
  We shall show that for $N_m$ such that $N_m/m^{3/5}\to\infty$
  (retaining the property that $N_m=o(m^{3/4})$) each term in the
  above sum is $o(N_mm^{1/4})$ as $m\to\infty$.

  First we apply the conditional variance formula (conditioning on
  ${\cal F}_{j-1}$ and using the Markov property to replace
  ${\cal F}_{j-1}$ with $V_{j-1}$) and get that
\begin{multline}\label{cv}
  \left|\text{Var}_0(V_j)-\text{Var}_0(V_{j-1})-\nu V_{0+}\right|\ =
  \left|E_0[\text{Var}(V_j\,|\,V_{j-1})]+\text{Var}_0(E(V_j\,|\,V_{j-1}))-\text{Var}_0(V_{j-1})-\nu V_{0+}\right| \\ 
\le |E_0\left[ \text{Var}(V_j\,|\,V_{j-1})-\nu V_{j-1+} \right] |+\nu |E_0(V_{j-1+}-V_{0+})|\\
+\left|\text{Var}_0\left((E[V_j\,|\,V_{j-1}]-V_{j-1})+V_{j-1}\right)-\text{Var}_0\left(V_{j-1}\right)\right|.
\end{multline}
We know from \eqref{exp} that $|E[V_j\,|\,V_{j-1}]-V_{j-1}|\le \alpha$
for some constant $\alpha$. Note that if $|Y|\le \alpha$ then
$\text{Var}(Y)\le \alpha^2$ and
\[|\text{Var}(X+Y)-\text{Var}(X)|\le \alpha^2+2\alpha \sqrt{\text{Var}(X)}.\]
Applying this inequality with $X=V_{j-1}$ and
$Y=E[V_j\,|\,V_{j-1}]-V_{j-1}$ to the last term of
\eqref{cv} and using \eqref{var} to estimate the first term we obtain for some constant $\Cl{aux}>0$
\begin{equation*}
  \left|\text{Var}_0(V_j)-\text{Var}_0(V_{j-1})-\nu V_{0+}\right|\le
  \Cr{aux}+ \nu |E_0(V_{j-1+}-V_{0+})|+2\alpha
  \sqrt{\text{Var}_0(V_{j-1})}.
\end{equation*}
Let
\begin{equation}
  \label{bk}
  B_k=\{\forall j\in\bint{1,m^{1/4}}, |V_j-V_{j-1}|\le m^{3/5}\}.
\end{equation}
 Since we are considering only $k\le Tm^{3/4}\wedge(\tau_{rm}+1)$, we can
assume that $V_0\le rm$. Then by Lemma A.1 from \cite{kpERWMCS} there are
$c,C > 0$ such that
\[P(B_k^c)\le Cm^{1/4}e^{-cm^{1/5}/(16r)}.\] Recall that $N_m/m^{3/5}\to\infty$ and $N_m=o(m^{3/4})$ as $m\to\infty$. If $V_0\ge N_mm^{1/4}$ then on the set $B_k$ \[|V_{j+}-V_{0+}|=|V_{j+}-V_0|\le m^{1/4}m^{3/5}=o(N_mm^{1/4}),\]  and if $V_0<N_mm^{1/4}$ then on 
$B_k$
\[|V_{j+}-V_{0+}|=|V_{j+}-\fl{N_mm^{1/4}}|\le
  m^{1/4}m^{3/5}\ind{V_{j+}>N_mm^{1/4}}=o(N_mm^{1/4}).\]
Using these estimates we get 
\begin{align*}
  &\left|\text{Var}_0(V_j)-\text{Var}_0(V_{j-1})-\nu V_{0+}\right|\\ &\le
  \Cr{aux}+ \nu |E_0[(V_{j-1+}-V_{0+})\ind{B_k}]|+\nu
  |E_0[(V_{j-1+}-V_{0+})\ind{B_k^c}]|+ 2\alpha
  \sqrt{\text{Var}_0(V_{j-1})}\\ &\le o(N_mm^{1/4})+\nu\sqrt{E_0[(V_{j-1+}-V_{0+})^2]P(B_k^c)}+2\alpha\sqrt{E_0[(V_{j-1}-V_0)^2]}.
\end{align*}
Now we observe that
\[(V_{j-1+}-V_{0+})^2\le 3((V_{j-1+}-V_{j-1})^2+(V_{j-1}-V_0)^2+(V_0-V_{0+})^2),\] where
$0\le V_{i+}-V_i\le N_mm^{1/4}$ for all $i$. Taking into account
a stretched exponential decay of $P(B_k^c)$  we arrive at the inequality
\[\left|\text{Var}_0(V_j)-\text{Var}_0(V_{j-1})-\nu V_{0+}\right|\le
  o(N_mm^{1/4})+2(\nu+\alpha)\sqrt{E_0[(V_{j-1}-V_0)^2]}.\] To bound the last term, we let $j\in\bint{1,m^{1/4}}$ and use \eqref{exp}, \eqref{var} to obtain
\begin{multline}\label{sm}
  E_0[(V_j-V_0)^2] \le j\sum_{i=1}^jE_0\left[(V_i-V_{i-1})^2\right] = j\sum_{i=1}^jE_0\left[E\left[(V_i-V_{i-1})^2|\,V_{i-1}\right]\right] \\ \le j\sum_{i=1}^jE_0\left|\text{Var}(V_i-V_{i-1}\,|\,V_{i-1})-\nu V_{i-1+}\right|+j\sum_{i=1}^jE_0\left[\left(E\left[V_i-V_{i-1}\,|\,V_{i-1}\right]\right)^2\right]+j\nu\sum_{i=1}^jE_0[V_{i-1+}]\\ \le \Cl{15} m^{1/2}+j\nu\sum_{i=1}^j(E_0[V_{i-1+}-V_j]+E_0[V_j-V_0])+\nu m^{1/2}V_0=O(m^{3/2}),
\end{multline}
where $\Cr{15}$ is some fixed constant appropriately larger than $c_{14}$.
This implies that the right hand side of \eqref{tel} is $o(N_mm^{1/4})$
and, thus, completes the proof of \eqref{1v}.

{\em Step 3.} We need to show that
\begin{equation}
  \label{ii}
  E\left(\max_{1\le k\le Tm^{3/4}\wedge
      (\tau_{rm}+1)}(\tvmmk-\tvmmkk)^2\right)=o(m^2).
\end{equation}
Let $B_k$ be defined as in \eqref{bk}. Then the right hand side of the above expression is equal to
\begin{align*}
  E&\left[ \max_{1\le k\le Tm^{3/4}\wedge
      (\tau_{rm}+1)}\left\{ (\tvmmk-\tvmmkk)^2\left(\ind{B_k}+\ind{B_k^c}\right)\right\} \right] \\ 
&\le (m^{3/5+1/4})^2 +Tm^{3/4}\max_{1\le k\le
    Tm^{3/4}\wedge
    (\tau_{rm}+1)}E\left[\left(\tvmmk-\tvmmkk\right)^2\ind{B^c_k}\right]\\ 
&\le o(m^2) +Tm^{3/4}\max_{1\le
    k\le Tm^{3/4}\wedge
    (\tau_{rm}+1)}\left(E\left[\left(\tvmmk-\tvmmkk\right)^4\right]\right)^{1/2}\left(P\left(B^c_k\right)\right)^{1/2}.
\end{align*}
Given the stretched exponential decay of the last probability, any
polynomial in $m$ bound  on the 4-th moment above will suffice.

Fix an arbitrary $k,\ 1\le k\le Tm^{3/4}\wedge (\tau_{rm}+1)$ and
recall our shortcut notation from the previous step. For each
$j\in\bint{1,m^{1/4}}$, 
using the representation in \eqref{tVkSumG} together with Lemma A.3 from \cite{kpERWMCS} we can obtain that 
\begin{align*}
  E\left[\left(V_j-V_{j-1}\right)^4\right]
&=E\left[E\left[\left(V_j-V_{j-1}\right)^4\big|\,V_{j-1}\right]\right] \\
&\leq \Cl{16} E\left[ ((V_{j-1}+1)\vee N_mm^{1/4})^2 \right] 
\leq \Cr{16} E\left[ V_{j-1}^2 \right] + o(m^2). 
\end{align*}

Finally, by
\eqref{sm},
\[E[V_{j-1}^2|\,V_0]\le 2E[(V_{j-1}-V_0)^2|\,V_0]+2V_0^2\le
  O(m^{3/2})+2(rm)^2.\] Collecting all these estimates we get a
desired polynomial bound, and we are done.

{\em Step 4.} Estimates \eqref{1e}, \eqref{1v}, and \eqref{ii} imply that the process $Z_k^n=m^{-1/4}\tvmmk$ with $n=\fl{m^{3/4}}$ satisfies the conditions of Lemma~\ref{al1} with $D=1+\rho$. An application of Lemma~\ref{al1} and Lemma~\ref{ct} completes the proof.  
\end{proof}

\subsection{Other results needed}

In the proof of Lemma \ref{time}, we need some large deviation estimates for the supremum of a concatenation of BLPs. 
We show this below as a corollary of an analogous result for concatenation of BESQ processes. 

\begin{lemma}
  \label{lem0n}
  Let $(Y(t))_{t\ge 0}$ be a solution of
  \[dY(t)=D(t)\,dt+\sqrt{\nu (Y(t))_+}dB(t),\quad 0\le t\le T,\quad
    Y(0)=y\in(0,T],\] where $\nu > 0$ and  $D:[0,T]\to\R$ is a piecewise constant
  non-random function bounded above by some $d>0$. Then
  there exist $\Cl{sd},\Cl{d}>0$ (which depend on $d$ and $\nu$ but
  not on $y$ and $T$) such that
  \[
P \left( \sup _ {t\le T} Y(t) \ge  x T \right)  \le \Cr{d} e ^{- \Cr{sd} x}\quad\text{for all }x\ge 0.
\]


\end{lemma}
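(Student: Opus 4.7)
The plan is to exhibit an explicit nonnegative exponential supermartingale associated with $Y$ and then obtain the desired tail bound by applying optional stopping at the first hitting time of level $xT$.

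Concretely, for each $\lambda>0$ I will consider the process
\[
  M(t)\ :=\ \exp\!\Bigl(\lambda Y(t)\,-\,\lambda\!\int_0^t D(s)\,ds\,-\,\tfrac{\nu\lambda^2}{2}\!\int_0^t Y(s)_+\,ds\Bigr).
\]
A direct Itô computation (using $d\langle Y\rangle_t = \nu\,Y(t)_+\,dt$ together with the SDE for $Y$) will give $dM(t) = M(t)\,\lambda\sqrt{\nu\,Y(t)_+}\,dB(t)$, so that the drift contributions from $\lambda\,dY$ and from $\tfrac12\lambda^2 d\langle Y\rangle$ are exactly cancelled by the two compensator terms in the definition of $M$. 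Hence $M$ is a nonnegative local martingale; Fatou's lemma upgrades it to a supermartingale with $M(0)=e^{\lambda y}$. Setting $\tau := \inf\{t\ge 0:\,Y(t)\ge xT\}$, optional stopping at $\tau\wedge T$ yields $E[M(\tau\wedge T)]\le e^{\lambda y}$, while on the event $\{\tau\le T\}$ path continuity gives $Y(\tau)=xT$ and the bounds $D(s)\le d$ and $Y(s)_+\le xT$ for $s\le\tau\le T$ produce the lower estimate
\[
  M(\tau)\ \ge\ \exp\!\bigl(\lambda xT\,-\,\lambda dT\,-\,\tfrac{\nu\lambda^2}{2}\,xT^{2}\bigr).
\]

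Next I will choose the parameter that erases all $T$-dependence from these bounds, namely $\lambda=1/(\nu T)$. This reduces the exponent in the displayed lower bound on $M(\tau)$ to $x/(2\nu)-d/\nu$, while $\lambda y\le 1/\nu$ since $y\le T$. Combining the two one-line estimates gives
\[
  P\!\Bigl(\sup_{t\le T}Y(t)\ge xT\Bigr)\ =\ P(\tau\le T)\ \le\ e^{(1+d)/\nu}\,e^{-x/(2\nu)},
\]
so the constants $\Cr{d}=e^{(1+d)/\nu}$ and $\Cr{sd}=1/(2\nu)$ depend only on $d$ and $\nu$, as required.

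The hard part, really just a point to be checked rather than a genuine obstacle, is the justification of optional stopping for $M$ at the bounded time $\tau\wedge T$. Nonnegativity of $M$ (which is built into the construction via the exponential form) combined with boundedness of the stopping time makes this immediate from the standard supermartingale optional stopping theorem, so no further integrability hypothesis needs to be verified. Beyond that, because only the upper bound $D(s)\le d$ is used (through $\int_0^\tau D(s)\,ds\le d\tau\le dT$), the argument is insensitive to the piecewise-constant structure of $D$ and to how negative $D$ may become; all remaining work is the choice of $\lambda$ and elementary algebra.
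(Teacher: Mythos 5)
Your proof is correct, and it takes a genuinely different route from the paper's. The paper's argument rescales $Y$ to $4Y/\nu$, invokes the comparison theorem for one-dimensional SDEs to stochastically dominate it by an integer-dimensional BESQ process (of dimension $\lceil 4d/\nu\rceil$), decomposes that BESQ as a sum of squares of independent Brownian motions, and then applies the reflection-principle maximal inequality for Brownian motion after a union bound. You instead build an explicit nonnegative exponential local martingale $M(t)$ by Itô's formula (your drift-cancellation computation is exactly right, yielding $dM = M\lambda\sqrt{\nu Y_+}\,dB$), pass to a supermartingale via nonnegativity, and apply optional stopping at the bounded time $\tau\wedge T$; the choice $\lambda = 1/(\nu T)$ then collapses all $T$-dependence and yields $\Cr{d}=e^{(1+d)/\nu}$, $\Cr{sd}=1/(2\nu)$. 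The chain $\{\tau\le T\}\Rightarrow Y(s)_+\le xT$ for $s\le\tau$ (trivially so when $y>xT$ and $\tau=0$) and $\int_0^\tau D\,ds\le dT$ gives the needed pointwise lower bound on $M(\tau)$, and everything is accounted for. Compared with the paper's route, yours avoids the SDE comparison theorem (which itself requires a non-Lipschitz Yamada--Watanabe-type argument to handle $\sqrt{x_+}$) and the passage through integer-dimensional BESQ decompositions, while producing explicit constants; the paper's version makes the underlying BESQ structure visible, which may be conceptually more in line with the rest of that section, but is not more elementary.
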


\begin{proof}
Without loss of generality we can assume that $x \geq 2$. 
By the
comparison theorem for one-dimensional SDEs the process $4Y/\nu$ is
stochastically dominated by a BESQ$^{\ceil{4d/\nu}}(4y/\nu)$ process. The last
process is just $4y/\nu$ plus the sum of squares of $\ceil{4d/\nu}$
independent one-dimensional Brownian motions. Therefore, the probability in
question does not exceed
  \[P\left(\max_{t\le T}\sum_{i=1}^{\ceil{4d/\nu}}B_i^2(t)\ge
      \frac{4(T x-y)}{\nu}\right)\le \ceil{4d/\nu}P\left(\max_{t\le
        T}|B(s)|\ge \sqrt{\frac{2T
          x}{\nu\ceil{4d/\nu}}}\right)\le \Cr{d}e^{- \Cr{sd}x}.\] 
 
\end{proof}

\begin{cor}
\label{C0}
For $m\in\N$ let $\{Z^m_j\}_{j \geq 0} $ be a BLP starting from
$0$ that is the concatenation of $V^+ $ and then two $U^+$ processes
on $3$ intervals $I_1, I_2 $ and $I_3 $ where
$I_1 \cup I_2 \cup I_3 = \bint{ 0, 2\epsilon m} $ and assume that
the first cookie environment on $I_1$ is
$(m^{1/4}, \frac{\nu}{2}-1)$-good, the first cookie environment on
$I_2$ is $(m^{1/4}, 0)$-good) and the first cookie environment on
$I_3$ is i.i.d.\ with distribution $\eta$.

Then  for $\Cr{sd},\Cr{d} $ as in Lemma \ref{lem0n} we have that for every $K <\infty ,$ there exists $m_0 (K)< \infty $ such that 
 \[
 P \Big( \sup_{ j \leq 2 \epsilon m} Z^m_j\ge 2\epsilon m x\Big) \leq 2 \Cr{d} e ^{-\Cr{sd}x}\quad  \text{for all $m\ge m_0 (K) $  and $x \leq K$.}
\]
 \end{cor}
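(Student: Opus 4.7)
The plan is to combine the diffusion approximations of Section~\ref{sec:tb} with the explicit tail bound of Lemma~\ref{lem0n}. First, I would rescale by setting $Y_m(t) := m^{-1} Z^m_{\fl{mt}}$ for $t \in [0, 2\epsilon]$, and concatenate three diffusion approximations: Theorem~\ref{da0.25} on $I_1$ (the $V^+$ piece, giving a limiting diffusion with drift $D = \nu/2$ since there $\rho = \nu/2 - 1$), Corollary~\ref{da0.25dead} on $I_2$ (the $U^+$ piece with $\rho = 0$, drift $D = 0$), and Theorem~\ref{DA0} on $I_3$ (the $U^+$ piece in the i.i.d.\ $\eta$ environment, drift $D = \eta \cdot \br^+ = \nu\theta^+/2$). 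Joining these three limits at the interval endpoints via the Markov property, exactly as in the proof of Lemma~\ref{hitpr}, one obtains that $Y_m$ converges in the Skorokhod $J_1$ topology on $D[0, 2\epsilon]$ to the process $Y$ solving \eqref{daa} with $Y(0) = 0$ and piecewise constant drift $D(t)$. Since $\theta^+ < 1$, this drift is bounded above by $d := \nu/2$, uniformly in the partition of $I_1 \cup I_2 \cup I_3$.

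Second, since $Y$ has continuous paths, the supremum functional on $D[0, 2\epsilon]$ is a.s.\ continuous at $Y$, so the continuous mapping theorem gives
\[
 m^{-1}\sup_{j \leq 2\epsilon m} Z^m_j \;\Longrightarrow\; \sup_{t \leq 2\epsilon} Y(t) \quad \text{as } m \to \infty.
\]
Applying Lemma~\ref{lem0n} with $T = 2\epsilon$ and $y = 0$ (its proof works equally at $y = 0$, as inspection shows) yields $P\bigl(\sup_{t \leq 2\epsilon} Y(t) \geq 2\epsilon x\bigr) \leq \Cr{d} e^{-\Cr{sd} x}$ for all $x \geq 0$, with $\Cr{sd}, \Cr{d}$ depending only on $d$ and $\nu$, and in particular not on $\epsilon$ or $x$. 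Since $Y$ is non-degenerate, the law of $\sup_{t \leq 2\epsilon} Y(t)$ is continuous on $(0, \infty)$, so the estimate passes to the prelimit at every evaluation point $2\epsilon x$ with $x > 0$.

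Third, to upgrade this pointwise-in-$x$ statement to the uniform bound on $[0, K]$ claimed in the corollary, I would discretize: choose $\delta = \delta(K) > 0$ small enough that $\tfrac{3}{2}\, e^{\Cr{sd} \delta} \leq 2$, and take a grid $0 = x_0 < x_1 < \dots < x_N = K$ of spacing at most $\delta$. For each $x_i$ the convergence above furnishes an $m_i$ with $P\bigl(\sup_{j \leq 2\epsilon m} Z^m_j \geq 2\epsilon m x_i\bigr) \leq \tfrac{3}{2} \Cr{d} e^{-\Cr{sd} x_i}$ for $m \geq m_i$; setting $m_0(K) := \max_i m_i$ and using monotonicity of $x \mapsto P(\sup_j Z^m_j \geq 2\epsilon m x)$ on any interval $(x_{i-1}, x_i]$ gives
\[
P\Bigl(\sup_{j \leq 2\epsilon m} Z^m_j \geq 2\epsilon m x\Bigr) \leq \tfrac{3}{2} \Cr{d} e^{-\Cr{sd} x_{i-1}} \leq \tfrac{3}{2} e^{\Cr{sd} \delta} \Cr{d} e^{-\Cr{sd} x} \leq 2 \Cr{d} e^{-\Cr{sd} x},
\]
which is the desired bound. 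The only non-trivial step is the concatenation of three diffusion approximations across pieces with different drifts and different ``goodness'' regimes of the environment; but this mirrors the argument used in Lemma~\ref{hitpr} and introduces no genuinely new difficulty.
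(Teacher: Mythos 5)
Your overall strategy matches the paper's: combine the diffusion approximations with the exponential tail bound of Lemma~\ref{lem0n}, then discretize in $x$ to get the uniform bound on $[0,K]$. The final discretization step and the observation that the bound in Lemma~\ref{lem0n} only depends on $d$ and $\nu$ are both correct and agree with the paper. However, there is a genuine gap in how you obtain the process-level weak convergence.

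You start $Z^m$ from $Z^m_0=0$ and assert that $Y_m(t):=m^{-1}Z^m_{\fl{mt}}$ converges in $D[0,2\epsilon]$ to a concatenated diffusion $Y$ with $Y(0)=0$, ``joining the three limits at the interval endpoints via the Markov property, exactly as in the proof of Lemma~\ref{hitpr}.'' But the toolbox does not actually deliver this. Theorem~\ref{da0.25} only gives convergence of the \emph{stopped} process $m^{-1}Z^m_{\fl{mt}\wedge\sigma_{\delta m}}$; if the $V^+$ piece starts at $0$, then $\sigma_{\delta m}=0$ and that theorem is vacuous on $I_1$. The device used in Lemma~\ref{hitpr} when $\wu=0$ is Corollary~\ref{isBESQ2}, which requires a lifting hypothesis on a sub-interval and returns only a total-variation bound (with error $3\epsilon^3/2$) on a one-time marginal --- not weak convergence of the process, and not something you can feed directly into a continuous-mapping argument for the supremum. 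The paper sidesteps this with one small but essential observation you miss: by monotonicity of BLPs in the initial value, it suffices to bound the tail for $Z^m_0=\fl{\epsilon m}$. With that starting point, the limit on $I_1$ is a $\frac{\nu}{4}$BESQ$^2$ process started at $\epsilon>0$, which a.s.\ never hits $0$; hence the stopped convergence from Theorem~\ref{da0.25} upgrades to unstopped convergence by letting $\delta\downarrow 0$, and the concatenation across $I_2$ and $I_3$ then works cleanly. A second, more minor omission is that the proportions of $I_1,I_2,I_3$ may vary with $m$; the paper handles this by arguing by contradiction and extracting a subsequence along which the rescaled intervals converge, so that there is a well-defined limit diffusion. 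You gloss over this, though your remark that the drift bound $d=\nu/2$ and the ensuing Lemma~\ref{lem0n} constants are uniform over partitions is the reason the contradiction argument succeeds.

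You are right that the proof of Lemma~\ref{lem0n} works at $y=0$ (the bound there is uniform in $y$), but that is not where the difficulty lies; it lies in justifying the diffusion approximation on $I_1$ when the process starts at $0$.
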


 \begin{proof} We fix $K\in(0,\infty)$.  Though the interest in the
   corollary is for BLPs starting at value $0$, by monotonicity of
   these processes, it is enough to show the desired result for BLPs
   satisfying $Z^m_0 = \fl{ \epsilon m } $.  We argue by
     contradiction and suppose that the result is not true.  This
     implies the existence of a sequence $\{m_k\}_{k \ge 0}$,
     intervals $I_1^{m_k}$, $I_2^{m_k}$, and $I_3^{m_k}$ partitioning
     $ \bint{ 0, 2\epsilon m} $ and suitable $m_k$ indexed
     environments satisfying the stated hypotheses on these intervals
     so that the stated probability bound is violated for all $k$.  Taking
     a subsequence if needed we may suppose that, in the obvious
     sense, that the intervals $I_j^{m_k}$ divided by $\epsilon m_k $
     converge to intervals $I_j$ for $j=1,2 $ and $3$.  In the
     following, to avoid a burdensome notation, we write $m_k$ as $m$.
     It is sufficient to show that under these conditions the claimed
     probability bounds hold.

By Theorem~\ref{da0.25} , Corollary \ref{da0.25dead} and then Theorem \ref{DA0}, the
   processes $\{m^{-1}Z^m_{\fl{ms}}\}_{s \geq 0}$ converge weakly 
   to a concatenation of a $ \frac{\nu}{4}$\,BESQ$^2$ process starting
   at value $\epsilon$ (on interval $I_1$) with a
   $\frac{\nu}{4}$\,BESQ$^{0}$ process on $I_2$ and then a
   $\frac{\nu}{4}$\,BESQ$^{2 \theta_+}$ process on $I_3$. Note that for the
 interval $I_1$, Theorem \ref{da0.25} suffices since
   a BESQ$^2$ process starting at $\epsilon$ never hits zero.  Lemma
   \ref{lem0n} is applicable to this limit process, and we get that for every
   $x \geq 0$, $\limsup _{m\to\infty} P( \sup_{ j \leq 2 \epsilon m}
   Z^m_j \ge 2\epsilon m x ) \leq \Cr{d} e ^{-\Cr{sd}x }$.

   To complete the proof we 
   take $0=x_0 < x _1 < \ldots < x_r= K$ so that
   $\forall i,\ x_i- x_{i-1} < \delta $ where
   $ e^ {- \Cr{sd} \delta } < 3/2$.  For $m$ sufficiently
   large and all $x_i$, $i\in\bint{0,r}$, we have 
   $P( \sup_{ j \leq 2 \epsilon m} Z^m_j \ge 2\epsilon m x_i ) \leq 
   \frac{4}{3}\, \Cr{d} e ^{-\Cr{sd}x_i }$ and so for such $m$ by
   monotonicity
\[
\forall x \le K, \ P \left( \sup_{j \le 2 \epsilon m }Z^m_j \ge  2\epsilon m x\right)   \le  \frac{4}{3}\,\Cr{d}\,e ^{- \Cr{sd} (x-\delta)}\le 2\Cr{d} e ^{- \Cr{sd} x}.\qedhere
\]
 \end{proof}

Finally, we need the following general lemma about couplings which is used in the proof of Lemma \ref{lemH}.
For this, recall the definition of the family of probability measures $\mathcal{H}_{\delta,\epsilon}$ in Definition \ref{Hde}.

\begin{lemma}\label{triple}
    For every $\lambda\in \cal{H}_{\delta, \epsilon}$ there is a
    coupling $\nu$ of probability measures $\lambda$ and $\lambda_0$
    such that $\nu(\{(x,y)\in\R^2:\,|x-y|>\delta\})< 8\epsilon^3$.
  \end{lemma}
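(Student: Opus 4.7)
The plan is to build the coupling in two stages, combining a maximal coupling between $\lambda_0$ and $\lambda_1$ with the kernel representation of $\lambda$ built into the definition of $\mathcal{H}_{\delta,\epsilon}$.

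First, since $\|\lambda_0-\lambda_1\|_{TV}<8\epsilon^3$ by Definition~\ref{Hde}, there is a standard maximal coupling: a joint law $\mu$ on $\mathbb{R}^2$ with marginals $\lambda_1$ (first coordinate) and $\lambda_0$ (second coordinate) such that if $(Z,Y)\sim\mu$ then
\[ P(Z\neq Y)=\|\lambda_0-\lambda_1\|_{TV}<8\epsilon^3. \]
Second, on the same probability space, sample $X$ conditionally on $Z$ from the kernel $K(Z,\cdot)$, where $\lambda=\int K(z,\cdot)\lambda_1(dz)$ and $K(z,[z-\delta,z+\delta]^c)=0$ for all $z$. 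By construction $X$ has marginal law $\lambda$, and $|X-Z|\le\delta$ almost surely. Let $\nu$ denote the joint law of $(X,Y)$.

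It remains to check that $\nu$ has the required properties. The first coordinate $X$ has law $\lambda$ by construction. The second coordinate $Y$ has law $\lambda_0$, inherited from $\mu$ (since the additional randomization used to produce $X$ from $Z$ does not affect the marginal of $Y$). Finally, on the event $\{Z=Y\}$ we have $|X-Y|=|X-Z|\le\delta$, so
\[ \nu\bigl(\{(x,y):|x-y|>\delta\}\bigr)=P(|X-Y|>\delta)\le P(Z\neq Y)<8\epsilon^3, \]
as required. No step here presents any obstacle, as everything follows from the two hypotheses in Definition~\ref{Hde} combined with the existence of maximal couplings.
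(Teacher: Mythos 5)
Your proposal is correct and follows essentially the same route as the paper's proof: build the maximal coupling of $\lambda_1$ and $\lambda_0$, then generate the $\lambda$-distributed coordinate conditionally on the $\lambda_1$-variable via the kernel $K$, and observe that on the (high-probability) event where the maximal coupling succeeds, the two output coordinates are within $\delta$. The only cosmetic difference is that the paper first samples $\zeta^{(1)}\sim\lambda_1$ and then draws $\zeta$ and $\zeta^{(0)}$ conditionally independently given $\zeta^{(1)}$, whereas you start from the maximal coupling $(Z,Y)$ and then attach $X$; these give the same joint law.
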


  \begin{proof}
    We shall construct a random vector
    $(\zeta,\zeta^{(0)},\zeta^{(1)})$ with respective marginal
    distributions $\lambda,\lambda_0,\lambda_1$ so that
    $P(|\zeta-\zeta^{(0)}|>\delta)< 8\epsilon^3$. Then $\nu$ is the
    joint distribution of $(\zeta,\zeta^{(0)})$.

Recall that $\lambda \in \cal{H}_{\delta,\epsilon}$ can be represented as $\lambda = \int K(z, \cdot)\, \lambda_1(dz)$ 
with $K$ and $\lambda_1$ satisfying the conditions in Definition \ref{Hde}. 
    Let $\nu_0$ be a maximal coupling of $\lambda_0$ and $\lambda_1$
and
 $(\zeta^{(0)},\zeta^{(1)})$ be a random vector with
    distribution $\nu_0$. Then
    \[\nu_0(\{(y,z)\in\R:\,y\ne
      z\})=P(\zeta^{(0)}\ne
      \zeta^{(1)})=\|\zeta^{(0)}-\zeta^{(1)}\|_{TV}< 8\epsilon^3.\]
    Denote the regular conditional probability distribution of
    $\zeta^{(0)}$ given $\zeta^{(1)}=z$ by $K_0(z,\cdot)$. We
    construct $(\zeta,\zeta^{(0)},\zeta^{(1)})$ as follows.
    \begin{itemize}
    \item draw $\zeta^{(1)}$ according to $\lambda_1$;
    \item given $\zeta^{(1)}=z$, draw $\zeta$ from $K(z,\cdot)$
    and $\zeta^{(0)}$ from $K_0(y,\cdot)$ independently from each other.
  \end{itemize}
  We have
  \begin{multline*}
    P(|\zeta-\zeta^{(0)}|>\delta)=P(|\zeta-\zeta^{(1)}|>\delta,\ \zeta^{(0)}=\zeta^{(1)})+P(|\zeta-\zeta^{(0)}|>\delta,\ \zeta^{(0)}\ne\zeta^{(1)})\\ \le P(|\zeta-\zeta^{(1)}|>\delta)+P(\zeta^{(0)}\ne\zeta^{(1)})= \int K(z,[z-\delta,z+\delta]^c)\lambda_1(dz)+\nu_0(\{(y,z)\in\R:\,y\ne
      z\})< 8\epsilon^3.
  \end{multline*}
\end{proof}

\bibliographystyle{alpha}
\bibliography{CookieRW}
\end{document}